\documentclass[11pt]{article}

\usepackage[a4paper]{geometry}
\usepackage[english]{babel} 

\usepackage{mathrsfs}

\usepackage{amsmath, amssymb, amsthm, mathtools, dsfont} 
\usepackage[dvipsnames]{xcolor} 
\usepackage{graphicx, booktabs} 
\usepackage[bookmarks=false]{hyperref} 
\usepackage[normalem]{ulem}
\usepackage{comment}
\usepackage{enumerate}
\newcommand{\R}{\mathbb{R}}
\newtheorem{theorem}{Theorem}[section]
\newtheorem{definition}[theorem]{Definition}
\newtheorem{lemma}[theorem]{Lemma}

\newtheorem{remark}[theorem]{Remark}
\newtheorem{assumption}[theorem]{Assumption}
\newtheorem{proposition}[theorem]{Proposition}
\numberwithin{equation}{section}
\numberwithin{theorem}{section}

\usepackage{accents}
\newcommand{\dbtilde}[1]{\accentset{\approx}{#1}}
\newcommand{\mythmname}[1]{\emph{\textbf{{(#1)}}}}
\usepackage{comment}
\usepackage{tikz}

\usepackage{upgreek}

\newcommand{\supp}{\operatorname{supp}}

\newcommand{\beq}{\begin{equation}}
\newcommand{\eeq}{\end{equation}}
\newcommand{\beqs}{\begin{equation*}}
\newcommand{\eeqs}{\end{equation*}}
\newcommand{\ble}{\begin{lemma}}
\newcommand{\ele}{\end{lemma}}

\providecommand{\freqk}{\textcolor{black}}
\newcommand{\cH}{\mathcal{H}}
\newcommand{\cP}{\mathcal{P}}
\newcommand{\cR}{\mathcal{R}}
\newcommand{\cT}{\mathcal{T}}

\newcommand{\ad}{\operatorname{ad}}

\renewcommand{\Re}{\mathrm{Re}}
\usepackage[show]{ed}

\usepackage{soul}

\definecolor{escol}{rgb}{0,0,0.8}
\definecolor{estcol}{rgb}{0,0.5,0}
\definecolor{esnewcol}{rgb}{0,0.5,0}

\newcommand{\tfa}{\text{ for all }}
\newcommand{\tfor}{\text{ for }}

\newcommand{\ton}{\text{ on }}

\newcommand{\tand}{\text{ and }}
\newcommand{\tr}{\rm tr}

\newcommand{\Rea}{\mathbb{R}}

\newcommand{\e}{\epsilon}
\newcommand{\Jbullet}{\,\cdot\,}

\title{
The $hp$-FEM does not suffer from the pollution effect for piecewise-smooth Helmholtz problems with Gevrey regularity at boundaries
}
\author{ Jeffrey Galkowski\footnote{Department of Mathematics, University College London, London, UK. E-mail: \href{mailto:j.galkowski@ucl.ac.uk}{j.galkowski@ucl.ac.uk}.}, \,\,
 Mostafa Meliani\footnotemark[2]\footnote{Department of Mathematical Sciences, University of Bath, Bath, UK. E-mails: \href{mailto:mm4138@bath.ac.uk}{mm4138@bath.ac.uk}, \href{mailto:eas25@bath.ac.uk}{eas25@bath.ac.uk}.}, \,\, 
 Euan A.~Spence\footnotemark[2]}
\date{}
\begin{document}
\maketitle
\begin{abstract}
We consider the $hp$-FEM applied to the Helmholtz scattering problem with wavenumber $k$, truncated with a perfectly-matched layer. The scatterer consists of a combination of Dirichlet, Neumann, and penetrable obstacles together with variable coefficients. Provided that the Helmholtz solution operator is polynomially bounded in $k$, all coefficients are piecewise smooth, all boundary surfaces are Gevrey and all coefficients restricted to boundary surfaces are Gevrey together with all their normal derivatives, we show that the $hp$-FEM is quasioptimal when $p\geq 1+\e \log k$ and $hk/p$ is sufficiently small; i.e., the $hp$-FEM does not suffer from the pollution effect.
This result generalises the analogous results in 
both \cite{bernkopf2025wavenumber} (proved for piecewise analytic coefficients and analytic boundaries) and \cite{galkowski2024hp} (proved for smooth coefficients that are analytic near analytic obstacles) to a much larger class of scatterers.
\end{abstract}

\section{Introduction}

\subsection{The scattering problem and its approximation using a PML}

We consider scattering by a bounded open obstacle $\Omega_-\subset \mathbb{R}^d$ with  connected complement, $\Omega_+:=\mathbb{R}^d\setminus \overline{\Omega_-}$: given $f\in L^2_{\rm comp}(\overline{\Omega_+})$, find $u\in H^1_{\rm loc}(\overline{\Omega_+})$ such that
\begin{equation}
\label{e:edp}
-k^{-2}\operatorname{div}( A\nabla u)-n u=f\text{ in }\Omega_+,\qquad (\mathcal{B}u)|_{\partial\Omega_+}=0,\qquad (k^{-1}\partial_r-i)u=o_{r\to \infty}(r^{\frac{1-d}{2}})
\end{equation}
where $\Gamma_s := \partial\Omega_+=\Gamma_{\textup{hard}}\sqcup \Gamma_{\textup{soft}} $ with $\Gamma_{\textup{hard}}$ and $\Gamma_{\textup{soft}}$ smooth, closed hypersurfaces and
$(\mathcal{B}u)|_{\Gamma_{\textup{soft}}}= u$ in the sound-soft case and $(\mathcal{B}u)|_{\Gamma_{\textup{hard}}}= \nu\cdot (A\nabla u)$, with $\nu$ the normal to $\Gamma_s$ in the sound-hard case.

We assume that there is $\Gamma_p\Subset \Omega_+$ a smooth, closed hypersurface such that $A\in \overline{C^\infty}(\Omega_+\setminus \Gamma_p)$ is a symmetric, positive-definite matrix with real coefficients that equals the identity outside a compact set and 
$n\in \overline{C^\infty}(\Omega_+\setminus \Gamma_p)$ is a real-valued function that equals one outside a compact set. Here, a function $u\in \overline{C^\infty}(V)$ if for all connected components $V_j$ of $V$ there exists $U_j \in C^{\infty}(\Rea^d)$ such that $U_j|_{V_j}= u|_{V_j}$ (i.e., $u$ is $C^{\infty}$ ``up to the boundary").
Under these assumptions, the solution to \eqref{e:edp} exists and is unique; see \cite[\S4.2]{galkowski2025numerical} and the references therein.

We approximate the Sommerfeld radiation condition using a radial perfectly matched layer (PML):~let $\Omega_{\tr} \subset \mathbb{R}^d$ be open, bounded, connected, with $C^{1,1}$ boundary, and contain  
a closed ball containing
$\Omega_-\cup \supp (A-I)\cup \supp (n-1)$. We truncate the problem~\eqref{e:edp} to the computational domain $\Omega:=\Omega_+\cap \Omega_{\tr}$ and apply the finite-element method to the problem:~given $f\in L^2(\Omega)$, find $u\in H^1(\Omega)$ such that
\begin{equation}
\label{e:PML1}
P_ku:=-k^{-2}\operatorname{div}(A_\theta\nabla u)+k^{-2} b_\theta\cdot\nabla u-n_\theta u=f\text{ in }\Omega,\qquad (\mathcal{B}u)|_{\partial\Omega_+}=0,\qquad u|_{\partial\Omega_{\tr}}=0,
\end{equation} 
where $A_\theta$, $b_\theta$, and $n_\theta$ are defined in, e.g., \cite[\S4.4 and Definition 4.7]{galkowski2025numerical} and, in particular, equal $A$, $0$, and $n$ (respectively) in the non-PML region. For all but finitely-many values of $k$
the solution to \eqref{e:PML1} exists and is unique.
When the data $f$ is away from the PML region, the difference between the solutions of \eqref{e:edp} and \eqref{e:PML1} (measured away from the PML region) is exponentially small in $k$ \cite{GLS2}.
 
Let 
$$
\rho(k):=\sup\Big\{ \|u\|_{L^2(\Omega)}\,:\, u\text{ solves~\eqref{e:PML1} with }\, \|f\|_{L^2(\Omega)}=1\Big\}.
$$
Recall that, with the normalisation used in~\eqref{e:edp}, for all $k_0>0$ there exists $c>0$ such that  $\rho(k)\geq ck$ for $k\geq k_0$. By~\cite[Theorem 1.6]{GLS2}, for a radial PML, there exist $C, k_1>0$ such that for $k>k_1$ and $\chi\equiv 1$ on the convex hull of $\Omega$, 
\beq\label{e:PMLcontrol}
\rho(k)\leq C\sup\Big\{ \|\chi u\|_{L^2(\Omega^+)}\,:\, u\text{ solves~\eqref{e:edp} with }\, \|\chi f\|_{L^2(\Omega^+)}=1\Big\};
\eeq
i.e. the PML solution operator is controlled by the scattering solution operator.  

\subsection{Gevrey-type regularity assumptions}

Recall the definition of the Gevrey-$\sigma$ class.

\begin{definition}[Gevrey-$\sigma$ class]\label{d:Gevrey_class}
    Let $\sigma \geq 1$. A function $g\in C^\infty(\Omega)$ is \emph{Gevrey class $G^\sigma(\Omega)$} with constant $M$ if for every compact $K \subset \Omega$ and every $\gamma \in \mathbb{N}^{d}$
    \beq\label{e:Gevrey}
    \sup_{x\in K}|\partial^\gamma g(x)| \leq M^{|\gamma|+1} (\gamma!)^\sigma.
    \eeq
\end{definition}

Note that a Gevrey-$1$ function is analytic; thus the Gevrey class of functions can be seen as an intermediate regularity class between the classes of $C^{\infty}$ functions and analytic functions.

\begin{definition}[Gevrey-$\sigma$ hypersurface]
    A hypersurface $\Gamma$ is Gevrey-$\sigma$ if for every point $x_0\in \Gamma$ there is a neighbourhood $U$ of $x_0$, an orthogonal matrix $O$, 
    and an open set $V\subset\mathbb{R}^d$ a neighbourhood of $0$ and $\psi:U\to V$ a $G^\sigma$ diffeomorphism such that 
    $$
    \psi(U\cap \Gamma)=\big\{(x',0)\,:\,x'\in\mathbb{R}^{d-1},\, (x',0)\in V\big\}. 
    $$
\end{definition}

\begin{definition}[Gevrey-$\sigma$ at a hypersurface $\Gamma$]\label{d:Gev_at_Gamma}
Let $\sigma\geq 1$, $\Gamma \subset \overline{\Omega}$ a smooth, closed hypersurface and $u\in \overline{C^\infty}(\Omega\setminus \Gamma)$. A function $u$ is Gevrey-$\sigma$ at $\Gamma$ if for all connected components, $\Omega_i$ of $\Omega\setminus \Gamma$, there is $M>0$ such that for all $\gamma\in\mathbb{N}^d$
\beqs
\sup_{x\in\Gamma}|\partial^\gamma (u|_{\Omega_i}) (x)|\leq M^{|\gamma|+1}(\gamma!)^{\sigma}.
\eeqs
\end{definition}

\subsection{The finite element method with piecewise polynomials}

 Let $a_k(\cdot,\cdot)$ be the sesquilinear form associated with~\eqref{e:PML1}. We work in the 
Hilbert space
\begin{equation*}
    \cH:= 
\big\{u\in H^1(\Omega)\,:\,u|_{\Gamma_{\textup{soft}}}=0,\, u|_{\partial\Omega_{\tr}}=0\big\}
 \end{equation*}
 with norm 
  \begin{equation}\label{e:1knorm}
     \|u\|^2_{H^1_k(\Omega)}:= k^{-2} \big\|\nabla u\big\|_{L^2(\Omega)}^2 + \|u\|^2_{L^2(\Omega)}.
 \end{equation}

\begin{definition}[Galerkin approximation]
Given  a closed subspace $V\subset \cH$, a \emph{Galerkin approximation of $u\in \cH$} is an element $u_h\in V$ such that 
\begin{equation}
\label{e:galerkinDef}
a_k( u_h,w_h) =a_k(u,w_h)\quad \tfa w_h\in V.
\end{equation}
\end{definition}
Given a triangulation $\mathcal{T}$, a reference element $\widehat{T}$, and element maps $\{\mathcal{F}\}_{T\in \mathcal{T}}$, the space of mapped polynomials of degree $p$ on triangulation $\mathcal{T}$ is defined by 
$$ V^p(\mathcal{T},\{\mathcal{F}_T\}_{T\in \mathcal{T}}):= \big\{ 
v\in H^1(\Omega) \, :\, \text{ for all }  T \in \cT, \, v|_T\circ \mathcal{F}_T \in \mathbb{P}^p(\widehat{T})
\big\}.
$$
We consider a $G^\sigma$ simplicial, affine-conforming triangulation, $\mathcal{T}$ of $\Omega$, where the notions of ``$G^\sigma$'' and ``affine-conforming" are defined below in Definitions \ref{d:Gsigma} and \ref{d:AffineConforming}, respectively. These definitions correspond, roughly speaking, to 
$\mathcal{T}$ consisting of curved elements with  Gevrey-$\sigma$ element maps from a reference simplex, and 
a compatibility condition on neighbouring element maps ensuring that there exist non-trivial globally $H^1$ mapped polynomials (i.e., ensuring that $V^p(\mathcal{T},\{\mathcal{F}_T\}_{T\in \mathcal{T}})$ is not the zero function).
Let  
$$
V_{\mathcal{T}}^p:= V(\mathcal{T},\{\mathcal{F}_T\}_{T\in \mathcal{T}})\cap \mathcal{H},
$$
and let $h(\mathcal{T})$
 to be an upper bound for the diameter of any element of the triangulation. 
Appendix \ref{sec:projection} proves new results about approximation in $V_{\mathcal{T}}^p$, with the bounds explicit in both the polynomial degree and the regularity of the approximated function.
\subsection{Statement of the main result}

\begin{theorem}[$k$-uniform quasi-optimality of both the $hp$-FEM and the $p$-FEM]\label{t:main}
Let $\sigma \geq  1$, $\Gamma_s$, $\partial \Omega_{\tr}$, and $\Gamma_p$ be Gevrey-$\sigma$, $A_\theta$ and $n_\theta$ be Gevrey-$\sigma$ at both $\Gamma_p$ and $\Gamma_s$, $K\subset (0,\infty)$ be such that 
\beq\label{e:polyBound}
\sup_{k\in K}\frac{\log \rho(k)}{\log k}<\infty,
\eeq
 and $\varepsilon, k_0,\Upsilon>0$.
Then there exist $C$, $C_\textup{qo}, h_0 > 0$ such that
for all $k \in K \cap [k_0,\infty)$, 
and all $G^\sigma$ simplicial,  affine-conforming triangulations $\cT$ with constant $\Upsilon$, if 
\beq\label{e:thresholds}
0<h(\cT)<h_0, \qquad \qquad  \frac{h(\cT)k}{p} \leq C, \qquad\qquad p\geq 1 + \varepsilon\log k,
 \eeq
 and 
 \[
 \partial \Omega\cup \Gamma_p \subset \bigcup_{T\in\cT} \partial T
 \]
 then for all $u\in H^1_k(\Omega)$ the Galerkin approximation of $u$ in $V_{\mathcal{T}}^p$ exists and satisfies
 \beq\label{e:qo}
 \|u-u_h\|_{H^1_k(\Omega)} \leq C_\textup{qo} \min_{w_h \in \mathcal{V}^p_{\cT}}\|u-w_h\|_{H^1_k(\Omega)}.
 \eeq
 \end{theorem}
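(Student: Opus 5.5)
The plan is the standard ``elliptic projection / decomposition'' framework used in \cite{galkowski2024hp, bernkopf2025wavenumber}. It is an abstract Schatz-type duality argument in which the adjoint solution operator is split into a piece with low $k$-regularity but small norm and a piece with high regularity.

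\textbf{Step 1: abstract reduction.} We would first write $a_k = a_k^{\sharp} - S$, where $a_k^\sharp$ is coercive on $\cH$ (with $k$-independent constants) and $S$ is a smoothing perturbation. Concretely, $S = k^{-2}$-scaled multiplication by a frequency cutoff $\Pi$, i.e.\ a smoothing operator localising to semiclassical frequencies $|\xi|\lesssim 1$, built so that $P_k^* + S$ is invertible with $O(1)$ bounds. This is where the PML enters: it gives coercivity near infinity. With the duality argument we then get quasi-optimality (\ref{e:qo}) once
\[
\eta:=\big\|(I-\Pi_h)\,R^*\,S\big\|_{L^2\to H^1_k}
\]
is sufficiently small, where $R^*$ is the adjoint solution operator of (\ref{e:PML1}) and $\Pi_h$ is the $H^1_k$-orthogonal projection onto $V^p_\cT$. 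Galerkin existence then follows by the usual Gårding argument.

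\textbf{Step 2: regularity of $R^*S$.} For $g\in L^2$, set $v:=R^*Sg$. Then $P_k^* v = Sg$, where $Sg$ is smooth (indeed analytic/Gevrey) with semiclassical derivative bounds $\|(k^{-1}\partial)^\alpha Sg\|\le C^{|\alpha|}\alpha!^{\sigma}\|g\|$ in any smooth region. We would then prove that $v$ is piecewise Gevrey-$\sigma$ in the following $k$-explicit sense: on each component of $\Omega\setminus(\Gamma_p\cup\Gamma_s)$,
\[
\|\partial^\alpha v\|_{L^2}\le C\rho(k)\, C^{|\alpha|}\big(k^{|\alpha|}+\alpha!^{\sigma}\big)\|g\|.
\]
We would prove this by Gevrey elliptic regularity for transmission and boundary problems, shifting the scale-dependent term: a nested-domain induction à la Morrey--Nirenberg, applied both in the interior and near $\Gamma_s\cup\Gamma_p$ after flattening by the $G^\sigma$ charts. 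The Gevrey-at-$\Gamma$ hypotheses on $A_\theta,n_\theta$ are exactly what the commutator estimates need in tangential directions. Normal derivatives are recovered from the equation, as in the analytic case of \cite{bernkopf2025wavenumber}. The factor $\rho(k)$ enters via the low-order norm $\|v\|_{H^1_k}\lesssim\rho(k)\|g\|$.

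\textbf{Step 3: approximation.} Since $\partial\Omega\cup\Gamma_p$ is resolved by element faces, we apply the appendix approximation results (Appendix~\ref{sec:projection}) elementwise on the $G^\sigma$ curved elements. For a function with the above bounds, the $hp$ best-approximation error in $H^1_k$ is bounded by
\[
C\rho(k)\Big[\big(\tfrac{hk}{\sigma_0 p}\big)^{p}+k\,\big(\tfrac{h}{h+\sigma_0}\big)^{p/\sigma}\Big]\|g\|.
\]
Under (\ref{e:thresholds}) with $hk/p\le C$ small and $h<h_0$, both terms are $\rho(k)e^{-cp}$. Using (\ref{e:polyBound}), $\rho(k)\le k^N$, so $p\ge 1+\varepsilon\log k$ makes $\eta\le k^{N+1}e^{-c\varepsilon\log k}$. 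The $\varepsilon$ must be absorbed by also taking $C$, $h_0$ small, since the decay rate $c$ grows as $hk/p$ and $h$ shrink. This last point is the one to check carefully: we need the rate in $p$ to be tunable, i.e.\ of the form $\log(1/\text{(small parameter)})$, to beat the fixed polynomial power for arbitrary $\varepsilon$.

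\textbf{Main obstacle.} I expect Step~2 to be the hardest: the $k$-explicit Gevrey regularity up to curved Gevrey interfaces and boundaries, with only piecewise smooth coefficients away from them. The decomposition must place high-frequency content in the $k^{|\alpha|}$ term while keeping the Gevrey factorial growth near the interfaces. I would attempt it by induction on tangential derivatives in flattened coordinates, controlling commutators with the Gevrey bounds, and then recovering normal derivatives from the equation.
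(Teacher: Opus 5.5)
The gap is in Step 2, and it is structural rather than technical. Your low-frequency piece $v=\cR^*Sg$ is itself a solution of $P_k^*v=Sg$. So, exactly as in \cite{melenk2011wavenumber,bernkopf2025wavenumber}, it can be no more regular than $A_\theta,n_\theta$.

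The theorem only assumes these coefficients are $\overline{C^\infty}$ on $\Omega\setminus\Gamma_p$, with Gevrey bounds on their jets \emph{on} $\Gamma_s\cup\Gamma_p$ (Definition~\ref{d:Gev_at_Gamma}). Adding to $n$ a smooth, non-Gevrey function that vanishes to infinite order on $\Gamma_s\cup\Gamma_p$ leaves every hypothesis intact. In the interior nothing beyond smoothness is assumed at all.

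Where the coefficients are not Gevrey, neither is $v$. For example, with $A=I$ one has $n=-(Sg+k^{-2}\Delta v)/v$ on $\{v\neq 0\}$, which would be Gevrey if $v$ were. Hence your bound $\|\partial^\alpha v\|\le C\rho(k)C^{|\alpha|}(k^{|\alpha|}+\alpha!^\sigma)\|g\|$ is false in general.

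Relatedly, ``Gevrey at $\Gamma$'' is not what the commutator estimates use. Lemma~\ref{lemma::comutator_anisotropic} needs bounds on $\partial^\gamma a$ on the whole half-ball, not only on its flat side. At best your route covers coefficients that are Gevrey on whole subdomains, including in the PML. That is the Gevrey analogue of \cite{bernkopf2025wavenumber}, and precisely the hypothesis this theorem is designed to remove.

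The missing idea is to frequency-cut the \emph{solution}, not the data. One uses the functional calculus of an auxiliary self-adjoint operator whose low spectral subspaces are Gevrey even though Helmholtz solutions are not.

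\begin{itemize}
\item Lemma~\ref{eq:neighborhood_extension} (Taylor series for $\sigma=1$, Bruna's Whitney-type extension for $\sigma>1$) gives $\widetilde A,\widetilde n$ that are Gevrey in a neighbourhood of $\Gamma_s\cup\Gamma_p$ and agree with $A,n$ to infinite order there.
\item The resulting $\widetilde\cP$ (Lemma~\ref{l:construction}) satisfies $\mathcal D(\widetilde\cP^N)=\mathcal D(\cP^N)$ for all $N$. This is what makes the iterated commutators of (localised) $\widetilde\cP$ with $P_k$ be $O(k^{-N})$ on the domains $\mathcal D(\cP^n)$. It yields spatial and frequency pseudolocality (Lemmas~\ref{l:spatialPseudolocalFinal} and~\ref{l:frequencyPseudolocalFinal}).
\item The high-frequency part $(1-\psi(\widetilde\cP))\chi\cR^*$ is bounded $L^2\to H^2_k(\Omega\setminus\Gamma_p)$ by $C(1+k^{-N}\rho(k))$, via coercivity of $P_k^*+\psi_0(\widetilde\cP)$ (Lemma~\ref{l:trainLate1}). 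Hence it is approximable to $O(hk/p)$.
\item The low-frequency part $\psi(\widetilde\cP)\chi\cR^*$ costs only $\rho(k)$ times the approximation error of $\psi(\widetilde\cP)$. That operator satisfies $\|\partial^\alpha\psi(\widetilde\cP)\|_{L^2(\Omega)\to L^2(\widetilde V\setminus\Gamma_p)}\le C^{|\alpha|+1}\max\{k,|\alpha|^\sigma\}^{|\alpha|}$ for a neighbourhood $\widetilde V$ of $\Gamma_s\cup\Gamma_p$ (Theorem~\ref{thm:psi_low_freq_est}). This is where Gevrey elliptic regularity enters, applied to $\widetilde\cP$, whose coefficients \emph{are} Gevrey there.
\item Away from $\Gamma_s\cup\Gamma_p$ one uses $-k^{-2}\Delta-1$ cutoffs, whose low frequencies are analytic regardless of the coefficients. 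In the PML one uses only ellipticity.
\end{itemize}

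Step~3 also needs repair. For $\sigma>1$ there is no $e^{-cp}$ rate at fixed $h$: with regularity index $m=p$, the factorial term is of size $(Chp^{\sigma-1})^p$. Keep $m\le p$ free in Theorem~\ref{t:polyApprox}, and take $m=p$ if $(p+1)^\sigma\le k$ and $m+1\approx k^{1/\sigma}$ otherwise.
\begin{itemize}
\item In the first case the error is $(Chk/p)^p\rho(k)\le (Cc)^{\varepsilon\log k}\rho(k)$. This beats $\rho(k)\le k^N$ once $hk/p\le c$ with $c<C^{-1}e^{-N/\varepsilon}$; that is exactly the tunability you were worried about.
\item In the second case the error is $(Chk/p)^{k^{1/\sigma}-1}\rho(k)$, which is superpolynomially small in $k$.
\end{itemize}
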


 \begin{remark}
If the triangulation is quasi-uniform 
then the dimension of $V_\mathcal{T}^p$ is proportional to $\big(p/h(\mathcal{T})\big)^d$. Thus Theorem \ref{t:main} shows that there exist piecewise-polynomial spaces with dimension $\sim k^d$ for which the Galerkin approximation is $k$-uniformly quasi-optimal.
 \end{remark}

\begin{remark}[Polynomial boundedness of the solution operator occurs at ``most" frequencies]\label{r:polyBound}
The bound \eqref{e:polyBound} is equivalent to $\rho(k)$ being polynomially bounded in $k$ for $k\in K$. Furthermore, 
given $\delta,k_0>0$, there exists $K\subset [k_0,\infty)$ with the measure of $[k_0,\infty)\setminus K$ less than $\delta$ such that \eqref{e:polyBound} holds for $k\in K$. 
To see this, recall that 
(i) the solution of the PML problem \eqref{e:PML1} exists and is unique for all but finitely-many values of $k$ -- this is a consequence of well-posedness for sufficiently-large $k$ \cite{GLS2} and Fredholm theory (showing well-posedness for all but a discrete set of $k$s),
(ii) by \eqref{e:PMLcontrol}, the PML solution operator inherits the bound on the scattering solution operator for sufficiently-large $k$, and 
(iii) the  scattering solution operator is polynomially bounded for ``most" $k$ (in the sense above) 
by \cite{lafontaine2021most}.
\end{remark}
 
\subsection{Discussion of Theorem \ref{t:main}}\label{s:discussion}

\paragraph{The context and novelty of Theorem~\ref{t:main}}
It has been known since the seminal works of Ihlenburg and Babu\v{s}ka~\cite{ihlenburg1995finite,ihlenburg1997finite}  that, despite the fact that a fixed number of points per wavelength, $hk\sim 1$, is enough to accurately approximate waves with wavenumber $k$, the $h$-version of the FEM requires $hk$ to tend to 0 as $k\to \infty$ to maintain accuracy. This phenomenon is known as \emph{the pollution effect} \cite{babuska2000pollution}. 

The pollution effect becomes less pronounced as $p$ increases. Indeed, 
for general Helmholtz problems the Galerkin solution is $k$-uniformly quasi-optimal if $(hk)^p \rho(k)$ is sufficiently small \cite{ChNi:20} and has controllably-small relative error (for $k$-oscillating data) if $(hk)^{2p}\rho(k)$ is sufficiently small \cite{galkowski2025sharp}.  It is therefore natural to consider the $p$- or $hp$-FEM, with $p$ increasing as $k$ increases.

It was first shown in \cite{melenk2010convergence, melenk2011wavenumber} (see also \cite{spence2023simple}) that, if
\begin{itemize}
    \item[(i)] 
    the solution operator is polynomially bounded in $k$ (which holds for ``most'' $k$ by \cite{lafontaine2021most}; see Remark \ref{r:polyBound}), and 
    \item[(ii)] the domain is analytic, and the PDE coefficients are constant,
\end{itemize}
    then both the $hp$-FEM and $p$-FEM do not suffer from the pollution effect -- i.e., the Galerkin solution is $k$-uniformly quasioptimal -- if 
    \begin{equation}\label{e:threshold}
       h\leq c,\quad \frac{hk}{p}\leq c_\e\quad \text{ and }\quad  p\geq 1+\e \log k.
    \end{equation}
    These results were then generalised to wider classes of Helmholtz problems in \cite{lafontaine2022wavenumber,galkowski2023decompositions,galkowski2024hp, bernkopf2025wavenumber}.

The current state-of-the-art analyses of the $hp$-FEM and $p$-FEM yielding $k$-uniform quasioptimality require regularity assumptions that are substantially weaker than (ii) above. Indeed, 
under \eqref{e:threshold},  $k$-uniform quasioptimality of the $hp$-FEM and $p$-FEM is proved 
for the variable-coefficient Helmholtz Dirichlet problem when \emph{either}
\begin{itemize}
    \item[(a)] $\Omega_-$ is analytic and the coefficients, $A$ and $n$ are smooth in $\Omega$ and analytic in a neighbourhood of $\partial\Omega_-$~\cite{lafontaine2022wavenumber, galkowski2023decompositions, galkowski2024hp}, \emph{or}
    \item[(b)]  $\Omega_-$ is analytic and the coefficients $A$, and $n$ are piecewise analytic with respect to an analytic interface $\Gamma_p$~\cite{bernkopf2025wavenumber}. 
\end{itemize}
In comparison, Theorem~\ref{t:main} requires
\begin{itemize}
    \item[(c)] $\Omega_-$ is Gevrey-$\sigma$ and the coefficients $A$ and $n$ are piecewise smooth with respect to a Gevrey-$\sigma$ interface $\Gamma_p$, and $A$ and $n$ are Gevrey-$\sigma$ at $\partial\Omega_-$ and at $\Gamma_p$ (in the sense that the restriction from either side of $\Gamma_p$ is Gevrey-$\sigma$; see Definition~\ref{d:Gev_at_Gamma}). 
\end{itemize}
That is, Theorem~\ref{t:main} weakens the regularity requirements of~\cite{lafontaine2022wavenumber,galkowski2023decompositions,galkowski2024hp,bernkopf2025wavenumber} in two substantial ways: 1) analyticity requirements are replaced by Gevrey-type requirements and 2) all regularity requirements beyond piecewise smoothness are imposed only \emph{at} interfaces and boundaries rather than in neighbourhoods thereof.

\paragraph{The proof of Theorem~\ref{t:main}: high- and low-frequency splitting}

The analysis of the $hp$-FEM  via splitting the solution into ``well-behaved" and ``poorly-behaved" parts has been known since the work of Babu\v{s}ka and Guo \cite{BabuskaGuo1988RegularityI, BabuskaGuo1989RegularityII, GuoBabuska1986Part1, GuoBabuska1986Part2} on domains with corner singularities, where the poorly-behaved part arises as a result of failure of full elliptic regularity at the corner; this approach was then further developed in the books \cite{Schwab1998, melenk2004hp}.

In the context of $k$-explicit analyses of the Helmholtz equation, this splitting is one into low and high frequencies:~low frequencies should be well approximated by polynomials but may be large in $k$, while high frequencies have less good approximation, but should be well controlled in $k$ because the Helmholtz operator is, in some sense, uniformly elliptic on high frequencies (in the absence of boundaries, this can be understood via the notion of \emph{semiclassical ellipticity} \cite[Theorem E.33]{DyZw:19}).

The key technical ingredient in the analysis of the Helmholtz $hp$-FEM is then the method for splitting the solution $u$ to~\eqref{e:PML1} into high- and low-frequencies:
$$
u=u_{\mathcal{L}}+u_{\mathcal{H}}
$$
so that $u_{\mathcal{L}}$ is well approximated by piecewise polynomials and $u_{\mathcal{H}}$ is controlled in $k$. 

In the approach of 
\cite{melenk2010convergence, melenk2011wavenumber, bernkopf2025wavenumber}
(also used in 
\cite{melenk2021wavenumber, melenk2024wavenumber,melenk2026wavenumber} for the time-harmonic Maxwell equations), suitable frequency cutoffs are applied to the data $f$ in \eqref{e:PML1} -- on $\mathbb{R}^d$ these cutoffs are defined via the Fourier transform, and on subsets of $\mathbb{R}^d$ these are defined via first extending to $\mathbb{R}^d$ and then using the Fourier transform. The function $u_{\mathcal{L}}$ is then defined 
as the Helmholtz solution with right-hand side given by the low-frequencies of the data. Since these 
low frequencies are automatically analytic,
if the domain and coefficients are piecewise analytic, then  $u_{\mathcal L}$ is piecewise analytic
by analytic elliptic regularity.
The proof that $u-u_{\mathcal L}=: u_{\mathcal H}$ is ``well-behaved" with respect to $k$ relies crucially on the fact that, for high-frequency data, the Helmholtz solution is ``close" (in an appropriate sense) to the solution of Helmholtz problem with a sign flipped (so the PDE is coercive, uniformly in $k$); i.e., semiclassical ellipticity on high frequencies.  

In the approach of \cite{lafontaine2022wavenumber,galkowski2023decompositions, galkowski2024hp, galkowski2025numerical}, frequency cut-offs are applied direct to the Helmholtz solution $u$ in \eqref{e:PML1}. 
There are two basic ideas that have been used to achieve frequency splittings in this way (see also the discussion in~\cite[Section 7.1]{galkowski2025numerical}):
\begin{enumerate}
    \item use the Fourier transform  
    \cite{lafontaine2022wavenumber}, or 
    \item apply a function of a self-adjoint operator, $\mathcal{P}$,~\cite{galkowski2023decompositions, galkowski2024hp,galkowski2025numerical} that agrees with the Helmholtz operator $P_k$ near the scatterer.
\end{enumerate}
By using the Fourier transform-- specifically defining $u_{\mathcal{L}}:=1_{[-2k,2k]}(|D|)u$-- one can, essentially always, split $u$ into a low-frequency part that is well approximated and a high-frequency part. The difficulty is then to show that the high frequency part is controlled by the data $f$ without growth in $k$. One of the surprising features of~\cite{lafontaine2022wavenumber} was that, in contrast to 
the approach in \cite{melenk2010convergence, melenk2011wavenumber, bernkopf2025wavenumber},
it required no analyticity assumptions, instead showing that, despite the fact $A$ and $n$ may not be better than smooth, the high frequencies (as measured by the Fourier transform) are nevertheless well controlled.

The papers~\cite{galkowski2023decompositions,galkowski2024hp,galkowski2025numerical} then used this insight to remove analyticity assumptions strictly away from boundaries, but still require analyticity in neighbourhoods thereof. This analyticity is used to guarantee that the eigenfunctions of $\mathcal{P}$ are approximated well. Up to now, however, there was no conceptual explanation for why substantial additional regularity assumptions (beyond smoothness) might be needed near boundaries, but not in the interior. 

In this paper, we provide such an explanation using a new method for frequency splitting. Specifically, we show that it is enough to construct an auxiliary self-adjoint, elliptic, second-order differential operator $\widetilde{\mathcal{P}}$, such that, for any $N$, the domain of $\widetilde{\mathcal{P}}^N$ agrees with that of $ P_k^N$ (away from the PML boundary  $\partial\Omega_{\tr}$) and such that the eigenfunctions of $\widetilde{\mathcal{P}}$ are approximated well by piecewise polynomials. This new approach (i) explains why, in the absence of boundaries and interfaces, one can use Fourier multipliers that are based on the operator $-\Delta$ and (ii) enables us to impose  regularity assumptions (beyond smoothness) only \emph{at} boundaries and interfaces. Specifically, we construct $\widetilde{\mathcal{P}}$ by finding $\widetilde{a}$ and $\tilde{n}$ that, together with all derivatives, agree with $A$ and $n$ at $\partial\Omega_+$ and $\Gamma_p$, are Gevrey-$\sigma$ in a neighbourhood of these interfaces, and are piecewise smooth. The operator defined by sesquilinear form
$$
\langle \widetilde{\mathcal{P}}u,v\rangle := \langle \widetilde{a}k^{-1}\nabla u,k^{-1}\nabla v\rangle -\langle \tilde{n}u,v\rangle 
$$
acting on the same space as the sesquilinear form associated to $P_k$ then has the required properties. 

\paragraph{Potential extensions to Theorem \ref{t:main}.}
The proof of Theorem~\ref{t:main} suggests that one can weaken the regularity requirements for the triangulation, $\Gamma_p$, and $\partial\Omega_+$ as well as those for $A$ and $n$ at $\Gamma_p$ and $\partial\Omega_+$ to the following Denjoy--Carleman class of functions
\begin{equation}
\label{e:denjoyCarleman}
\sup_{x\in K}|\partial^\gamma g(x)|\leq M^{|\gamma|+1}e^{\sigma |\gamma|^2}
\end{equation}
(compare to \eqref{e:Gevrey}); this is discussed further in Remark \ref{r:epsquared} below. 

The only missing component to proving Theorem~\ref{t:main} under this assumption is the relevant elliptic regularity theory. Although we are not aware of such results for the classes in~\eqref{e:denjoyCarleman}, we do not believe there are any fundamental difficulties in proving such elliptic-regularity results. We have chosen not to pursue this here since Gevrey classes of functions are much more standard in the numerical-analysis literature (see, e.g., \cite{FeSc:20}).

A natural question is whether one can entirely remove assumptions -- on either $\partial\Omega_-$ and $\Gamma_p$ or the coefficients -- at the boundary stronger than just smoothness. 

We do not expect 
removing assumptions on
$\partial\Omega_-$ or $\Gamma_p$ 
for general (i.e., not constructed using knowledge of both the coefficients and the geometry) $C^\infty$ triangulations 
to be possible for the following reason:~in a conforming triangulation, there will always be a simplex, $T$, one of whose faces is contained in the boundary. Because of this, the element map $\mathcal{F}_T$ cannot be more regular than the boundary and hence, for a typical smooth $u$, $u\circ \mathcal{F}_T$ cannot be more regular than the boundary itself and therefore, the quality of the approximation by a polynomial will not be well controlled as the degree $p$ tends to infinity. Specifically, if the $\ell^{\text{th}}$ derivative of the boundary is of size $M_\ell$, one expect an approximation no better than
$$
\bigg(\frac{Chk}{p}\bigg)^p\max(1,k^{-p}M_p)
$$
and hence, if $M_p$ grows arbitrarily fast and $Chk/p$ is bounded below, it will not be possible to make this error superpolynomially small in $k$. (The exception to this would be to construct the triangulation and element maps using knowledge of $u$ so that $u\circ \mathcal{F}_T$ has better regularity than either $u$ or $\mathcal{F}_T$.)

The question of whether the regularity of $A$ and $n$ on the boundaries can be relaxed is somewhat less clear, but an indication that it may not be possible is the following:~all splittings of $u$ into high and low frequencies to date require that $u_{\mathcal{L}}$ be in the domain of any fixed power of the operator $P$. 
This implies that $u_{\mathcal{L}}$ cannot be more regular than the coefficients (at the boundaries); thus, 
when coefficients do not have additional regularity at the boundary $u_{\mathcal{L}}$ is poorly approximated as $p\to \infty$. 

\subsection{Organization of the paper}
Section~\ref{sec:pseudo_reg}
describes the operator $\widetilde{\cP}$ used to define the low-frequency cut-offs and proves related pseudolocality results. 
Section \ref{s:abstract} proves a bound on the adjoint-approximability constant modulo controlling the low-frequencies. 
Section \ref{s:construction} constructs $\widetilde{\cP}$ under Gevrey-type regularity assumptions. 
Section \ref{s:proof} proves Theorem \ref{t:main} using the results of Sections \ref{s:abstract} and  \ref{s:construction}. 
Appendix~\ref{sec:Schatz} recaps the Schatz argument for proving quasi-optimality.
Appendix~\ref{sec:projection} 
proves $p$-explicit polynomial approximation results tailored to Gevrey-$\sigma$ functions, and  Appendix~\ref{sec:elliptic_reg_theory} 
proves elliptic regularity results under Gevrey-$\sigma$ regularity.

\section{
Low-frequency cutoffs defined by functional calculus and related 
pseudolocality results}\label{sec:pseudo_reg}

\subsection{Notation}

We first define the Helmholtz PML problem under less regularity than in Theorem \ref{t:main}. 

\begin{definition}[$H^\ell$ Helmholtz PML problem]
\label{d:helmholtzProblem}
The PML problem~\eqref{e:PML1} is a \emph{$H^{\ell}$ Helmholtz PML problem} if $\Omega_{\textup{tr}}$ and $\Omega_-$ and $C^{\ell-1,1}$, and there is $\Gamma_{ p}\Subset \Omega_+$ a $C^{\ell-1,1}$ closed, embedded hypersurface such that $A_\theta,b_\theta,n_\theta\in \overline{C^{\ell-2,1}}(\Omega_+\setminus \Gamma_{ p})$.  We then call the operator $P_k$ in~\eqref{e:PML1} an $H^\ell$ Helmholtz PML operator.
The problem \eqref{e:edp} is an $H^\infty$ Helmholtz problem if it is an 
$H^\ell$ Helmholtz problem for all $\ell$ and we call $P_k$ an $H^\infty$ Helmholtz PML operator.
\end{definition}

In a similar way to how the space $\overline{C^\infty}(V)$ was defined, here, a function $u\in \overline{C^\ell}(V)$ if for all connected components $V_j$ of $V$ there exists $U_j \in C^{\ell}(\Rea^d)$ such that $U_j|_{V_j}= u|_{V_j}$.

\begin{definition}[Semiclassical differential operator]\label{d:semicl_diff_op}
A \emph{semiclassical differential operator} of order $m$ on $\Omega\setminus \Gamma_p$ is an operator of the form
\begin{equation}
\label{e:semiclassicalDifferential}
 \sum_{|\alpha| \leq m} a_\alpha(x,k)\,(k^{-1} D_x)^\alpha,
\end{equation}
where $D_x = \frac{1}{i}\partial_x$ and $a_\alpha\in S(\Omega)$ where  
$$
S(\Omega\setminus \Gamma_p):=\Big\{ a\in C^0( (0,\infty);\overline{C^\infty}(\Omega\setminus \Gamma_p))\,:\,  
\forall \, k_0>0,\gamma \in \mathbb{N}^d
\sup_{k\in[k_0,\infty)}
\|\partial_x^\gamma a(x,k)\|_{L^\infty(\Omega\setminus \Gamma_p)}<\infty\Big\}.
$$
We say that a semiclassical differential operator of order $m$ is elliptic if for all $k_0>0$ there is $c>0$ such that
$$
\inf_{k\in[k_0,\infty),\,x\in\Omega\setminus \Gamma_p}\Big|\sum_{|\alpha|=m}a_{\alpha}(x,k)\xi^\alpha\Big|\geq c |\xi|^m,\qquad \xi \in\mathbb{R}^d,
$$
where $a_{\alpha}$ are the coefficients appearing in~\eqref{e:semiclassicalDifferential}. 

We say that a semiclassical differential operator of order $m$ has Gevrey-$\sigma$ coefficients if for all $k_0>0$ there is $C>0$ such that for all $\alpha\in\mathbb{N}^d$ with $|\alpha|\leq m$ and all  $\gamma \in \mathbb{N}^d$, 
$$
\sup_{k\in [k_0,\infty)}\|\partial_x^\gamma a_{\alpha}(x,k)\|_{L^\infty(\Omega\setminus \Gamma_p)}\leq C^{|\gamma|+1}(\gamma !)^{\sigma}
$$
where $a_{\alpha}$ are the coefficients appearing in~\eqref{e:semiclassicalDifferential}. 
\end{definition}

The following notation is used to state the pseudolocality results. 
\begin{definition}
Let $X,Y$ be Banach spaces, $f:(0,\infty)\to (0,\infty)$, and $B_k:X\to Y$ a $k$-dependent operator. 
$$
B_k=O(f(k))_{X\to Y}
$$
if for all $k_0>0$ there is $C>0$ such that 
$$
\|B_k\|_{X\to Y}\leq Cf(k),\qquad k>k_0.
$$
Furthermore, 
$$
B_k=O(k^{-\infty})_{X\to Y}
$$
if for all $k_0>0$ and $N>0$ there is $C>0$ such that 
$$
B_k=O(k^{-N})_{X\to Y}.
$$
\end{definition}

\subsection{The operator $S_k= \psi(\widetilde{\cP})$ and pseudolocality results}

We begin with an abstract result about two operators satisfying G\aa rding inequalities.
\begin{lemma}
\label{l:coerce}
Let $\mathcal{H}\subset H^1_k(\Omega)$ be a Hilbert space with $$
\|u\|_{\mathcal{H}}=\|u\|_{H^1_k(\Omega)},\qquad for u\in \mathcal{H},
$$
$c_{\rm G}, C_{\rm G}>0$, and $P, \tilde{\mathcal{P}}:\mathcal{H}\to \mathcal{H}^*$ satisfy 
\begin{gather*}
\langle \widetilde{\cP}u,u\rangle\geq c_{\rm G}\|u\|_{H^1_k}^2-C_{\rm G}\|u\|_{L^2}^2 \quad\tfa u \in \mathcal{H},\\
\Re\langle Pu,u\rangle\geq c_{\rm G}\|u\|_{H^1_k}^2-C_{\rm G}\|u\|_{L^2}^2 \quad\tfa u \in \mathcal{H},
\end{gather*}
and $\widetilde{\mathcal{P}}$ selfadjoint.  Then there is $\psi\in C_c^\infty(\mathbb{R})$ such that, with $S_k:=\psi(\widetilde{\mathcal{P}})$, 
$$
\Re \big\langle \big(P_k+S_k\big)v,v\big\rangle \geq c\|v\|_{H_k^1}^2\quad\textrm{ for all }  v\in \mathcal{H}.
$$
\end{lemma}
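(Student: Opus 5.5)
We use the spectral theorem for the selfadjoint operator $\widetilde{\cP}$, realised as an unbounded selfadjoint operator on $L^2(\Omega)$ with form domain $\mathcal{H}$. The idea is to choose $\psi\ge 0$ large on the bottom of the spectrum of $\widetilde{\cP}$. On that spectral region $S_k$ adds positivity, while on the complementary region the G\aa rding inequality for $\widetilde{\cP}$ controls the $L^2$ norm by the $H^1_k$ norm.

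First, the G\aa rding inequality for $\widetilde{\cP}$ gives $\widetilde{\cP}\ge -C_{\rm G}$, so the spectrum lies in $[-C_{\rm G},\infty)$. Fix $\Lambda>0$, to be chosen. Let $\Pi:=1_{(-\infty,\Lambda]}(\widetilde{\cP})$ and $\Pi^\perp:=I-\Pi$. For $w=\Pi^\perp v$ we have
\[
\langle \widetilde{\cP}w,w\rangle\ge \Lambda\|w\|_{L^2}^2 .
\]
We also have $\langle\widetilde{\cP}w,w\rangle\le C\|w\|_{H^1_k}^2$ by boundedness of $\widetilde{\cP}:\mathcal{H}\to\mathcal{H}^*$. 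Hence $\|\Pi^\perp v\|_{L^2}^2\le (C/\Lambda)\|\Pi^\perp v\|_{H^1_k}^2$. This means high spectral components have small $L^2$ mass relative to their $H^1_k$ norm.

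Choose $\psi\in C_c^\infty(\mathbb{R})$ with $0\le\psi\le M$ and $\psi\equiv M$ on $[-C_{\rm G},\Lambda]$, where $M=2C_{\rm G}$. Then $S_k$ is bounded and selfadjoint on $L^2$, commutes with $\Pi$, and satisfies
\[
\langle S_kv,v\rangle\ge M\|\Pi v\|_{L^2}^2 .
\]
Combining with the G\aa rding inequality for $P_k$ and $\|v\|_{L^2}^2=\|\Pi v\|_{L^2}^2+\|\Pi^\perp v\|_{L^2}^2$, we obtain
\[
\Re\langle (P_k+S_k)v,v\rangle\ge c_{\rm G}\|v\|_{H^1_k}^2+C_{\rm G}\|\Pi v\|_{L^2}^2-C_{\rm G}\|\Pi^\perp v\|_{L^2}^2 .
\]
It remains to bound $\|\Pi^\perp v\|_{L^2}^2$ by a small multiple of $\|v\|_{H^1_k}^2$.

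The main obstacle is that $\Pi^\perp$ need not be bounded on $H^1_k$ uniformly, so $\|\Pi^\perp v\|_{H^1_k}$ must be controlled by $\|v\|_{H^1_k}$ using the form. We do this as follows. The form $\langle(\widetilde{\cP}+C_{\rm G}+1)u,u\rangle$ is equivalent to $\|u\|_{H^1_k}^2$ with constants depending only on $c_{\rm G}$, $C_{\rm G}$ and the bound on $\widetilde{\cP}$. Moreover, $\Pi^\perp$ is a spectral projection, so it is a contraction in the norm induced by this form. Therefore $\|\Pi^\perp v\|_{H^1_k}\le C'\|v\|_{H^1_k}$. This gives
\[
C_{\rm G}\|\Pi^\perp v\|_{L^2}^2\le \frac{C_{\rm G}CC'^2}{\Lambda}\|v\|_{H^1_k}^2 .
\]
Choosing $\Lambda$ large makes this at most $\tfrac12 c_{\rm G}\|v\|_{H^1_k}^2$. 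This yields the claim with $c=c_{\rm G}/2$.

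Note that the resulting $\psi$ depends only on the constants $c_{\rm G}$, $C_{\rm G}$ and the continuity constant of $\widetilde{\cP}$, so it is independent of $k$ when these constants are.
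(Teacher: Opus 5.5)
Your proof is correct, but it takes a different route from the paper's.

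\textbf{The paper's route.} It never splits $v$ spectrally. It first borrows a small multiple of $\widetilde{\cP}$ from the coercivity of $P$. Using continuity of $\widetilde{\cP}:\mathcal{H}\to\mathcal{H}^*$, it gets $\Re\langle (P-\varepsilon\widetilde{\cP})v,v\rangle\geq c_1\|v\|_{H^1_k}^2-c_2\|v\|_{L^2}^2$. It then adds $\varepsilon\big(\widetilde{\cP}+\psi_0(\widetilde{\cP})\big)\geq \varepsilon M$ with $\varepsilon M>c_2$. The existence of $\psi_0\in C_c^\infty(\mathbb{R})$ with $\widetilde{\cP}+\psi_0(\widetilde{\cP})\geq M$ is a one-line consequence of functional calculus, and the final cutoff is $\psi=\varepsilon\psi_0$.

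\textbf{Your route.} You decompose $v=\Pi v+\Pi^\perp v$ with $\Pi=1_{(-\infty,\Lambda]}(\widetilde{\cP})$. The cutoff $\psi$ absorbs the $L^2$ term on $\Pi v$, and you show that $\Pi^\perp v$ has small $L^2$ mass relative to $\|v\|_{H^1_k}$. This costs an extra step, namely that $\Pi^\perp$ is bounded on $\mathcal{H}$. You justify it correctly by contraction in the equivalent form norm $\langle(\widetilde{\cP}+C_{\rm G}+1)u,u\rangle$. You could even skip that step, since $\|\Pi^\perp v\|_{L^2}^2\leq (\Lambda+C_{\rm G}+1)^{-1}\langle(\widetilde{\cP}+C_{\rm G}+1)v,v\rangle$ directly.

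\textbf{What each buys.} The paper's argument is shorter. It needs no discussion of how spectral projections act on the form domain, because the $H^1_k$ control that absorbs $\varepsilon\langle\widetilde{\cP}v,v\rangle$ comes from $P$ itself. Yours is more explicit and self-contained: $\psi\equiv 2C_{\rm G}$ on $[-C_{\rm G},\Lambda]$ and $c=c_{\rm G}/2$. It also makes clear that the extra positivity is needed only on the low spectral part.

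Both arguments rely on the same implicit facts: $\widetilde{\cP}$ is bounded $\mathcal{H}\to\mathcal{H}^*$, and its selfadjoint realisation on $L^2$ has form domain $\mathcal{H}$. Both give a $\psi$ depending only on $c_{\rm G}$, $C_{\rm G}$ and $\|\widetilde{\cP}\|_{\mathcal{H}\to\mathcal{H}^*}$.
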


\begin{proof}
There is $\varepsilon>0$ and $c_1, c_2$ such that 
\[
\Re \langle (P-\varepsilon \widetilde{\mathcal{P}}) v, v\rangle \geq c_1 \|v\|^2_{H_k^1}- c_2 \|v\|^2_{L^2}.
\]
Next, let $M$ be large enough such that $\varepsilon M> c_2$. Since $\widetilde{\cP}$ is self-adjoint and  satisfies a G\aa rding inequality, we can define functions of this operator $\widetilde{\cP}$; see, e.g., 
\cite[Theorem 2.37]{mclean2000strongly}, \cite[Ch. 5]{schmudgen2012unbounded}. In particular, there exist $\psi_0 \in  C_c^\infty(\R)$ such that $\widetilde{\cP} + \psi_0(\widetilde{\mathcal{P}}) \geq M$; see, e.g., \cite[Lemma 2.1]{galkowski2025sharp} and \cite[\S 5.3]{AGS2}. 

Thus a simple computation gives us the desired result for $\psi := \varepsilon \psi_0$
\[
\Re \big\langle \big(P+\varepsilon \psi_0(\widetilde{\mathcal{P}})\big) v, v\big\rangle =\Re \big\langle \big(P-\varepsilon \widetilde{\mathcal{P}} + \varepsilon\big(\widetilde{\mathcal{P}}+ \psi_0(\widetilde{\mathcal{P}})\big)\big) v, v\big\rangle \geq c_1 \|v\|^2_{H_k^1}.
\]
\end{proof}

We require two sets of pseudolocality results -- spatial and in frequency.
The spatial ones needed here are proved in 
\cite[Theorems 5.27 and 6.2]{AGS2} and can be applied directly to the semiclassical operator $\widetilde{\cP}$. We recall them here without proof.

\begin{lemma}\mythmname{Spatial Pseudolocality \cite[Theorems 5.27 and 6.2]{AGS2}}
\label{l:spatialPseudolocalFinal}
Let $P_k$ be an $H^\infty$ Helmholtz PML operator and $\widetilde{\cP}$ be a self adjoint realisation of an elliptic, semiclassical differential operator of order 2.
Suppose that $\psi\in \mathcal{S}(\mathbb{R})$, $\chi_1,\chi_2 \in \overline{C^\infty}(\Omega\setminus\Gamma_p)$  with $\supp \chi_1\cap \supp \chi_2=\emptyset$. Then for all $N>0$, 
\begin{align*}
\chi_1\psi(\widetilde{\mathcal{P}})\chi_2&=O(k^{-\infty})_{(H_k^N(\Omega\setminus \Gamma_{ p}))^*\to H_k^N(\Omega\setminus \Gamma_{ p})}.
\end{align*}
Moreover, for $\psi\in C_c^\infty(\mathbb{R})$ constructed in Lemma~\ref{l:coerce} and $S_k:=\psi(\widetilde{\mathcal{P}})$,
\begin{align*}
\chi_1
(P_k+S_k)^{-1}
\chi_2&=O(k^{-\infty})
_{
 (H_k^N(\Omega\setminus \Gamma_{ p}))^*
\to H_k^N(\Omega\setminus \Gamma_{ p})},\\ 
\chi_1
(P_k^*+S_k)^{-1}
\chi_2&=O(k^{-\infty})_{( H_k^N(\Omega\setminus \Gamma_{ p}))^*\to H_k^N(\Omega\setminus \Gamma_{p})}.
\end{align*}
\end{lemma}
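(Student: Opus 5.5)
The plan is to obtain the first estimate (for $\psi(\widetilde{\cP})$) from a Helffer--Sjöstrand representation combined with a commutator/nested-cutoff argument, and then to transfer it to $(P_k+S_k)^{-1}$ by a parametrix identity. Let $\tilde\psi$ be an almost-analytic extension of $\psi\in\mathcal{S}(\mathbb{R})$. Then
\[
\psi(\widetilde{\cP})=\frac{1}{\pi}\int_{\mathbb{C}}\bar\partial\tilde\psi(z)\,(\widetilde{\cP}-z)^{-1}\,dm(z),
\]
and it suffices to show that for each $N$ there are $M$ and $C$ such that
\[
\|\chi_1(\widetilde{\cP}-z)^{-1}\chi_2\|_{(H^N_k)^*\to H^N_k}\leq C\,k^{-N}\,\langle z\rangle^{M}\,|\Im z|^{-M}.
\]
Since $\bar\partial\tilde\psi=O(|\Im z|^\infty\langle z\rangle^{-\infty})$, this resolvent bound is integrable against it.

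To prove the resolvent bound, I would choose a nested family of cutoffs $\chi_2\prec\varphi_1\prec\dots\prec\varphi_L$, each equal to $1$ near $\supp\chi_2$ and vanishing near $\supp\chi_1$, all in $\overline{C^\infty}(\Omega\setminus\Gamma_p)$. The $\varphi_j$ must be chosen so that they preserve the form domain of $\widetilde{\cP}$ and its higher-order domains (transmission conditions at $\Gamma_p$, boundary conditions at $\Gamma_s$). Taking $\varphi_j$ constant near $\Gamma_p$ and near the boundary on each side achieves this where possible; in general one uses the fact that multiplication by $\varphi_j$ preserves the relevant domains modulo commutators. Set $w=(\widetilde{\cP}-z)^{-1}\chi_2 f$. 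Since $\chi_1\varphi_L=0$, we have $\chi_1 w=\chi_1(1-\varphi_L)w$, and
\[
(\widetilde{\cP}-z)(1-\varphi_j)w=-[\widetilde{\cP},\varphi_j]w .
\]
The commutator is a first-order semiclassical operator with an extra factor $k^{-1}$, supported where $\nabla\varphi_j\neq0$, i.e. where $\varphi_{j-1}\equiv0$ (for $j\ge 2$). Iterating, each step gains $k^{-1}$ at a cost of a factor $|\Im z|^{-1}\langle z\rangle$ from the resolvent.

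Elliptic regularity for $\widetilde{\cP}$ upgrades $L^2$ bounds to $H^N_k$ bounds, and duality handles the input space $(H^N_k)^*$. Both are available because $\widetilde{\cP}$ is elliptic of order two with smooth coefficients up to $\Gamma_p$ and $\partial\Omega$.

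For the resolvent $(P_k+S_k)^{-1}$, which exists by Lemma~\ref{l:coerce}, I would write the corresponding identity
\[
\chi_1(P_k+S_k)^{-1}\chi_2=-\chi_1(P_k+S_k)^{-1}\bigl([P_k,\varphi]+S_k\varphi-\varphi S_k\bigr)\,\cdots,
\]
expanded iteratively. Two facts make each term small:
\begin{itemize}
\item Lemma~\ref{l:coerce} gives $(P_k+S_k)^{-1}=O(1)$ on $H^1_k$, and higher regularity follows from elliptic regularity of $P_k$.
\item The commutator terms $[S_k,\varphi]$ are $O(k^{-\infty})$ between separated supports by the first part, while $[P_k,\varphi]$ gains $k^{-1}$ per iteration.
\end{itemize}
The adjoint estimate for $(P_k^*+S_k)^{-1}$ is identical, since $P_k^*$ satisfies the same Gårding inequality and $S_k$ is self-adjoint.

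The main obstacle is compatibility of the cutoffs with the domains near $\Gamma_p$ and $\partial\Omega$ when $\supp\chi_i$ meet these sets. At interfaces the needed cutoffs cannot simply be constant on both sides, so the commutators $[\widetilde{\cP},\varphi]$ produce jump terms. To handle these I would first try working with the sesquilinear form and only $H^1_k$-level commutators, which need no transmission conditions, and then recover $H^N_k$ regularity afterwards from piecewise elliptic regularity of the transmission problem.
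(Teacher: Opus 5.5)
The paper gives no argument of its own for this lemma; it is quoted from \cite{AGS2}. Your Helffer--Sj\"ostrand/nested-commutator scheme is therefore an independent route, and its first half is sound once you commit to the form-level version you sketch at the end.

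Take $\widetilde{\cP}$ given by the form $\langle \widetilde{A}k^{-1}\nabla u,k^{-1}\nabla v\rangle-\langle\tilde{n}u,v\rangle$ on $\mathcal{H}$, and real $\varphi\in C^\infty(\overline{\Omega})$. Multiplication by $\varphi$ preserves $\mathcal{H}$, and $\langle[\widetilde{\cP},\varphi]u,v\rangle=k^{-2}\big(\langle \widetilde{A}\,u\nabla\varphi,\nabla v\rangle-\langle\widetilde{A}\nabla u,v\nabla\varphi\rangle\big)$. This is $O(k^{-1})_{H^1_k\to(H^1_k)^*}$ with no boundary or interface terms. The iteration then gives $O\big(k^{-L}(\langle z\rangle/|\Im z|)^{L+1}\big)$ from $(H^1_k)^*$ to $H^1_k$, and piecewise elliptic regularity lifts the output to $H^N_k$.

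Drop the alternative of cutoffs constant near $\Gamma_p$ and $\partial\Omega$ ``on each side''. It is impossible when $\supp\chi_1$ and $\supp\chi_2$ meet the same component of $\Gamma_s$ or $\Gamma_p$. Also, different constants on the two sides of $\Gamma_p$ would take $\varphi w$ out of $H^1(\Omega)$. The separating cutoffs must be smooth across $\Gamma_p$, which is possible because the supports are at positive distance in $\overline{\Omega}$.

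The genuine gap is in the step for $(P_k+S_k)^{-1}$. With $w=(P_k+S_k)^{-1}\chi_2f$, the iteration reads $(1-\varphi_j)w=-(P_k+S_k)^{-1}\big([P_k,\varphi_j]w+[S_k,\varphi_j]w\big)$. Split $w=\varphi_{j-1}w+(1-\varphi_{j-1})w$.

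The piece $[S_k,\varphi_j]\varphi_{j-1}w=(1-\varphi_j)S_k\varphi_{j-1}w$ is indeed $O(k^{-\infty})\|w\|$ by the first part. The other piece, $[S_k,\varphi_j](1-\varphi_{j-1})w$, is not between separated supports, because $\supp\nabla\varphi_j\subset\supp(1-\varphi_{j-1})$. It is in general genuinely of size $k^{-1}$, and pseudolocality only bounds it by $O(1)\|(1-\varphi_{j-1})w\|_{H^1_k}$. So with the two facts you list, the iteration gains nothing.

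What is missing is the commutator estimate $[S_k,\varphi]=O(k^{-1})_{(H^1_k)^*\to H^1_k}$. It follows from tools you already use, with $\tilde\psi$ your almost-analytic extension:
\[
[\psi(\widetilde{\cP}),\varphi]=\frac{1}{\pi}\int_{\mathbb{C}}\bar\partial\tilde\psi(z)\,(\widetilde{\cP}-z)^{-1}[\varphi,\widetilde{\cP}](\widetilde{\cP}-z)^{-1}\,dm(z).
\]
Combine this with the form-level bound on $[\widetilde{\cP},\varphi]$ and $\|(\widetilde{\cP}-z)^{-1}\|_{(H^1_k)^*\to H^1_k}\lesssim\langle z\rangle/|\Im z|$. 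Then each step gains $k^{-1}$ up to an $O(k^{-\infty})\|w\|$ error, and the outline closes.

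A smaller point of the same kind concerns reaching $(H^N_k)^*\to H^N_k$. Duality gives $(H^N_k)^*\to H^1_k$ and elliptic regularity gives $(H^1_k)^*\to H^N_k$, but you must combine them through one more commutator step with an intermediate cutoff. For $(P_k+S_k)^{-1}$ that step again produces an $[S_k,\cdot\,]$ term, which has to be split and estimated in the same way.
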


The following frequency pseudolocality result 
is a consequence of \cite[Theorem 5.37]{AGS2} after we check certain commutator relations between $P_k$ and $\widetilde{\cP}$
(see Lemma \ref{l:commuteP} below). When $\widetilde{\cP} = \Re P_k$, the analogous result was proved 
in \cite[Theorem 5.37 and Lemmas 6.7 and 6.9]{AGS2}.
To simplify notations below, it is useful to define the $N$-commutator:
\begin{equation}
\ad^0_g f = f, \qquad
\ad^N_g f = [g,\ad^{N-1}_gf];  
\end{equation}
with $N \geq 1$.

\begin{lemma}
\mythmname{Pseudolocality in Frequency}
\label{l:frequencyPseudolocalFinal}
Let $k_0>0$ and let $P_k$ be an $H^\infty$ Helmholtz PML operator, $\cP:=\Re P_k$, and $\widetilde{\cP}$ be a self adjoint realisation of an elliptic, semiclassical differential operator of order 2 such that 
for all $N \in \mathbb{N}$,
$\mathcal{D}(\widetilde{\cP}^N) =\mathcal{D}( \cP^N) = D_k^{2N}$ 
where
$$
D_k^{2N}:= \left\{ u \in H^{2N}(\Omega\setminus \Gamma_p) \,: \,\begin{gathered} \mathcal{B} \cP^j u|_{\Gamma} = 0,\, (\cP^ju)|_{\partial\Omega_{\tr}}=0,\\ \cP^ju\in H^1(\Omega),\, 
\partial_{\nu_{A,+}}\cP^ju|_{\Gamma_p}=\partial_{\nu_{A,-}}\cP^ju|_{\Gamma_p}\end{gathered}  \tfor j = 0, \ldots, N-1,\,  \right\}
$$
and for all $N_0>0$ there is $C>0$ such that for all $k\geq k_0$ and $u\in\mathcal{D}(\mathcal{P}^{N_0})$,  
\beq \label{e:graphNorm}
\begin{gathered}
C^{-1}\sum_{m=0}^{N_0}\|\widetilde{\mathcal{P}}^mu\|_{L^2}\leq \sum_{m=0}^{N_0}\|\mathcal{P}^mu\|_{L^2}\leq 
C\sum_{m=0}^{N_0}\|\widetilde{\mathcal{P}}^mu\|_{L^2}.
    \end{gathered}
    \end{equation}
For all $M>0$, $N>0$, $\psi_1,\psi_2\in C^\infty(\mathbb{R})$, $\chi \in \overline{C^\infty}(\Omega)$ such that 
$$
\|\langle \Jbullet\rangle^{-M}\psi_i\|_{L^\infty}<\infty,
$$
either $\psi_1\in \mathcal{S}(\mathbb{R})$ or $\psi_2\in \mathcal{S}(\mathbb{R})$,  $\supp\psi_1\cap \supp\psi_2=\emptyset$, $\supp \nabla \chi\cap (\partial \Omega_{\tr}\cup \Gamma_s\cup \Gamma_{ p}) =\emptyset$, and $\supp \chi\cap \partial \Omega_{\tr}=\emptyset$, we have 
\begin{align}
\psi_1(\widetilde{\mathcal{P}})\chi\psi_2(\widetilde{\mathcal{P}})&=O(k^{-\infty})_{\mathcal{D}(\mathcal{P}^{-N})\to \mathcal{D}(\mathcal{P}^{N})},\label{ineq:freq_pseudo}\\
\psi_1(\widetilde{\mathcal{P}})\chi 
(P_k+S_k)^{-1}
\chi\psi_2(\widetilde{\mathcal{P}})&=O(k^{-\infty})_{\mathcal{D}(\mathcal{P}^{-N})\to \mathcal{D}(\mathcal{P}^{N})},\label{ineq:freq_pseudo_2}\\
\psi_1(\widetilde{\mathcal{P}})\chi 
(P_k^*+S_k)^{-1}
\chi\psi_2(\widetilde{\mathcal{P}})&=O(k^{-\infty})_{\mathcal{D}(\mathcal{P}^{-N})\to \mathcal{D}(\mathcal{P}^{N})},\label{ineq:freq_pseudo_3}
\end{align}
where we write $\mathcal{D}(\cP^{-N}):=[\mathcal{D}(\cP^N)]^*.$
\end{lemma}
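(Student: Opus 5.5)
The plan is to reduce Lemma~\ref{l:frequencyPseudolocalFinal} to the abstract frequency-pseudolocality result \cite[Theorem 5.37]{AGS2}. That theorem is stated for a self-adjoint reference operator whose scale of domains $\mathcal{D}(\cdot^N)$ is the natural one. Its output is that functions of the reference operator with disjoint supports, sandwiched around a cutoff $\chi$ or around a resolvent of the form $\chi(P_k+S_k)^{-1}\chi$, are $O(k^{-\infty})$ between the dual and primal domain scales. The hypotheses to verify are of three kinds: (i) the domain scales agree, (ii) the graph norms are uniformly equivalent in $k$, and (iii) iterated commutators of the reference operator with $\chi$ and with $P_k$ gain powers of $k^{-1}$ in the domain scale.

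\textbf{Step 1 (domain scale).} Using $\mathcal{D}(\widetilde{\cP}^N)=\mathcal{D}(\cP^N)=D_k^{2N}$ together with \eqref{e:graphNorm}, the Sobolev-type scales $\mathcal{D}(\widetilde{\cP}^{\pm N})$ and $\mathcal{D}(\cP^{\pm N})$ coincide with uniformly equivalent norms, the negative ones by duality. So it suffices to prove the estimates with $\widetilde{\cP}$ as the reference operator on its own scale. This is exactly the setting of \cite[Theorem 5.37]{AGS2}, with $\widetilde{\cP}$ in place of $\Re P_k$.

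\textbf{Step 2 (commutators with $\chi$).} Since $\nabla\chi$ vanishes near $\partial\Omega_{\tr}\cup\Gamma_s\cup\Gamma_p$ and $\chi$ vanishes near $\partial\Omega_{\tr}$, multiplication by $\chi$ preserves $D_k^{2N}$. Moreover $\ad^N_{\widetilde{\cP}}\chi$ is a semiclassical differential operator with coefficients supported in $\supp\nabla\chi$, of order $N$, and carrying a factor $k^{-N}$. Hence $\ad^N_{\widetilde{\cP}}\chi=O(k^{-N})_{\mathcal{D}(\widetilde{\cP}^{s})\to\mathcal{D}(\widetilde{\cP}^{s-N/2})}$. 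This is the key hypothesis for \eqref{ineq:freq_pseudo}. I would then obtain \eqref{ineq:freq_pseudo} by the standard Helffer--Sjöstrand expansion: write $\psi_1(\widetilde{\cP})\chi\psi_2(\widetilde{\cP})$ and commute $\chi$ through using the almost-analytic extension of $\psi_1$. Each commutator gains $k^{-1}$, and the disjointness of the supports kills the leading term to infinite order.

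\textbf{Step 3 (commutators with $P_k$, Lemma~\ref{l:commuteP}).} This is the main obstacle. For \eqref{ineq:freq_pseudo_2}--\eqref{ineq:freq_pseudo_3} one needs $\chi\ad^N_{\widetilde{\cP}}(P_k)\chi$ to be bounded by $k^{-N}$ between the appropriate levels of the scale. Away from the boundaries this is a symbol computation, since $\widetilde{\cP}$ and $P_k$ are both second-order semiclassical operators and each commutator gains $k^{-1}$. At $\Gamma_s$ and $\Gamma_p$, however, the commutator must respect the boundary and transmission conditions; otherwise it does not map $D_k^{2N}$ into the correct space. Here I would use the domain equality $\mathcal{D}(\widetilde{\cP}^N)=\mathcal{D}(P_k^N)$, which holds because $\chi$ cuts off the PML region where $P_k=\cP$. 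This gives that $P_k$ and $\widetilde{\cP}$ map $D_k^{2N+2}\to D_k^{2N}$, so compositions and commutators are well defined on the scale. The order and $k^{-1}$ counting is then done locally, using the $H^{2N}(\Omega\setminus\Gamma_p)$ norms and elliptic regularity for each operator.

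Once these commutator bounds are in hand, \cite[Theorem 5.37]{AGS2} yields \eqref{ineq:freq_pseudo_2}. The adjoint estimate \eqref{ineq:freq_pseudo_3} follows identically, with $P_k^*$ satisfying the same hypotheses. Invertibility of $P_k+S_k$ and $P_k^*+S_k$ is given by Lemma~\ref{l:coerce}.
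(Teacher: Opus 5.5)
Your overall route is the paper's. You reduce to \cite[Theorem 5.37]{AGS2} (the paper combines it with \cite[Theorem 5.31]{AGS2}) and get \eqref{ineq:freq_pseudo} directly; the paper simply cites \cite[Lemma 7.5]{galkowski2025numerical} rather than redoing the Helffer--Sj\"ostrand argument. You use boundary compatibility of $\chi$ (\cite[Lemma 6.7]{AGS2}). You bound the commutators with $P_k$ as in Lemma~\ref{l:commuteP}: a piece near the scatterer boundary (and interface) is handled by Lemma~\ref{lem:adNAB_domains}, and an interior piece by direct differentiation.

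However, the justification you give for the boundary piece in Step~3 is false as stated. Neither $\mathcal{D}(\widetilde{\cP}^N)=\mathcal{D}(P_k^N)$ nor ``$P_k$ maps $D_k^{2N+2}\to D_k^{2N}$'' holds. Since $P_k\neq\cP$ in the PML, for $u\in D_k^{2N+2}$ with $N\geq 1$ one has $(P_ku)|_{\partial\Omega_{\tr}}=\big((P_k-\cP)u\big)|_{\partial\Omega_{\tr}}$, which is generally nonzero. Nor does $\chi$ ``cut off the PML region'': it is only assumed to vanish near $\partial\Omega_{\tr}$. In general it is nonzero on much of the PML, where $P_k$ has complex coefficients.

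What actually makes the argument work is localisation. Near $\Gamma_s\cup\Gamma_p$ one has $P_k=P_k^*=\cP$, because the PML is away from the scatterer and the interface, and the cutoffs vanish near $\partial\Omega_{\tr}$. So only localised objects act on the scale: for example $\chi P_k\chi$, $\tilde\chi\widetilde{\cP}\tilde\chi$, or the near-boundary piece $\psi_0\ad^N_{\widetilde\cP}P_k$ of Lemma~\ref{l:commuteP}, which only sees $P_k$ where it equals $\cP$. In the part of the PML where $\chi\neq 0$, the commutator is a purely interior differential-operator computation.

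Two smaller points. First, the $k^{-N}$ gain in the $\mathcal{D}$-norms does not follow from composition plus order counting alone. For $u\in\mathcal{D}(\cP^n)$, composing $N+1$ operators only places $\ad^N_{\widetilde\cP}\cP u$ in $\mathcal{D}(\cP^{n-N-1})$, with $O(1)$ bounds, whereas the target is $\mathcal{D}(\cP^{n-N/2-1})$. Lemma~\ref{lem:adNAB_domains} obtains the missing boundary conditions by starting much higher in the scale. It then uses the Sobolev $k^{-N}$ bound for the resulting order-$(N+2)$ operator and concludes by density.

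Second, you never treat the $S_k$ part of $\chi(P_k+S_k)\chi$. If you commute with $\widetilde{\cP}$ itself this is harmless, since $\ad_{\widetilde\cP}\psi(\widetilde\cP)=0$ and the $S_k$-terms of the Leibniz expansion involve only $\ad^j_{\widetilde\cP}\chi$. The price is that the $P_k$-terms make sense only after the localisation above. The paper instead commutes with $\tilde\chi\widetilde{\cP}\tilde\chi$, which keeps the $P_k$-terms on the scale but requires \cite[Proposition 5.33]{AGS2} for the $S_k$-term.
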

\begin{remark}
    Note that since $\mathcal{P}\geq -C$, $\mathcal{D}(\cP^{-N})=(\cP+C+1)^{N}L^2$. 
\end{remark}

We start by giving a general result on repeated commutators of semiclassical operators with matching domains.
\begin{lemma}\label{lem:adNAB_domains}
Let $A,\,B$ be two semiclassical second-order differential operator and suppose that for all $k_0>0$ and $n\in\mathbb{N}$ there is $C_0>0$ such that for all $k>k_0$,
$$
\|A\|_{\mathcal{D}(\mathcal{P}^{n})\to \mathcal{D}(\cP^{n-1})}+\|B\|_{\mathcal{D}(\mathcal{P}^{n})\to \mathcal{D}(\cP^{n-1})}\leq C_0.
$$
Then, for all $N,n \in \mathbb{N}$, $k_0>0$ there exist $C>0$ such that for $k>k_0$,
\begin{equation}\label{N_commutation_map}
\ad^N_A B: \mathcal{D}(\cP^n) \to  \mathcal{D}(\cP^{n-N/2-1})
\end{equation}
and \begin{equation}\label{k_depend}
\|\ad^N_A B\|_{\mathcal{D}(\cP^n) \to  \mathcal{D}(\cP^{n-N/2-1})} \leq C k^{-N}.
\end{equation}
\end{lemma}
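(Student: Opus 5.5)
I will argue by induction on $N$, treating the mapping property \eqref{N_commutation_map} and the $k$-dependence \eqref{k_depend} at the same time. The fractional index $n-N/2-1$ should be read with the convention $\mathcal{D}(\cP^{s})$ for half-integer $s$ defined by interpolation, or equivalently via $(\cP+C+1)^{-s}L^2$. I will also use the standard identification, coming from semiclassical elliptic regularity on $\Omega\setminus\Gamma_p$, of $\mathcal{D}(\cP^{m})$ with the subspace of $H^{2m}_k(\Omega\setminus\Gamma_p)$ carrying the boundary and transmission conditions in $D^{2m}_k$, with $k$-uniformly equivalent norms.

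The structural observation is the following. The commutator of two semiclassical differential operators of orders $m_1$ and $m_2$ is $k^{-1}$ times a semiclassical differential operator of order $m_1+m_2-1$, because the leading symbols commute. Hence $\ad^N_A B$ is, in the interior of each component of $\Omega\setminus\Gamma_p$, of the form $k^{-N}Q_N$, where $Q_N$ is a semiclassical differential operator of order $N+2$ with coefficients in $S(\Omega\setminus\Gamma_p)$. This follows by an immediate induction: $[A,k^{-N+1}Q_{N-1}]=k^{-N+1}\cdot k^{-1}Q_N$ with $\operatorname{ord}Q_N=2+(N+1)-1$. By itself this gives $\ad^N_A B:H^{2n}_k\to H^{2n-N-2}_k$ with norm $O(k^{-N})$, and $2n-N-2=2(n-N/2-1)$ is exactly the Sobolev index in \eqref{N_commutation_map}.

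The remaining task is to check that the boundary and transmission conditions defining $\mathcal{D}(\cP^{n-N/2-1})$ are preserved. For this I will not expand the commutator. Instead I will write $\ad^N_A B$ as the alternating binomial sum $\sum_{j}\binom{N}{j}(-1)^{N-j}A^{j}BA^{N-j}$. Each term is a composition of $N+1$ operators, each mapping $\mathcal{D}(\cP^{m})\to\mathcal{D}(\cP^{m-1})$ boundedly by hypothesis. So for $n\ge N+1$, $\ad^N_A B$ maps $\mathcal{D}(\cP^n)\to\mathcal{D}(\cP^{n-N-1})$ with $O(1)$ norm, and in particular it respects the boundary conditions, though with a loss of integer index.

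I will combine the two pieces of information. The interior/Sobolev computation gives $\|\ad^N_A Bu\|_{H^{2n-N-2}_k}\le Ck^{-N}\|u\|_{H^{2n}_k}$. The composition argument guarantees that $\ad^N_A Bu$ lies in $\mathcal{D}(\cP^{n-N-1})$, so it satisfies the lower-order boundary conditions. I then need that an element of $H^{2n-N-2}_k$ which satisfies the compatibility conditions up to that order lies in $\mathcal{D}(\cP^{n-N/2-1})$. To get this, I will apply the composition argument on the dense set $\mathcal{D}(\cP^{M})$ for large $M$, where all conditions up to any order hold because $\ad^N_AB$ maps $\mathcal{D}(\cP^M)$ into $\mathcal{D}(\cP^{M-N-1})$. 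The graph-norm identification with $k$-uniform constants then turns the $H^{2n-N-2}_k$ bound into the $\mathcal{D}(\cP^{n-N/2-1})$ bound, and density extends the estimate to all of $\mathcal{D}(\cP^n)$. For small $n$, where the target index is $\le 0$, I will argue by duality using the formal adjoints, or use the definition $\mathcal{D}(\cP^{-s})=[\mathcal{D}(\cP^{s})]^*$.

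The main obstacle is this boundary step. One has to justify that the norm of $\mathcal{D}(\cP^{s})$, for half-integer and negative $s$, is $k$-uniformly equivalent to the semiclassical Sobolev norm restricted to functions satisfying the compatibility conditions. That equivalence is what lets the $k^{-N}$ gain, obtained from the symbolic calculus, pass from the Sobolev norm to the domain norm.
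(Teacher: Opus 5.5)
Your proposal is correct and follows essentially the same route as the paper's proof. The paper likewise expands $\ad^N_A B$ as an alternating sum on the dense subspace $\mathcal{D}(\cP^M)$, $M$ large, to secure the boundary and transmission conditions, and uses the identity $\ad^N_A B=k^{-N}C_{N+2}$ to get the $O(k^{-N})$ bound $H_k^{2n}\to H_k^{2n-N-2}$. It then extends to all of $\mathcal{D}(\cP^n)$ by density, using that $\mathcal{D}(\cP^{n-N/2-1})$ is closed in $H_k^{2n-N-2}$. The $k$-uniform norm equivalence you single out as the main obstacle is exactly what the paper uses implicitly when it identifies $\overline{\mathcal{D}(\cP^{m/2-N-1})\cap H_k^{m-N-2}}^{H_k^{n-N-2}}$ with $\mathcal{D}(\cP^{n/2-N/2-1})$.
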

\begin{proof}
By interpolation, it is sufficient to assume that $n$ is an even integer.
The proof relies on a density argument, specifically that for all $m, n, N \in \mathbb{N}$ such $m \geq n+N$
\[\overline{\mathcal{D}(\mathcal{P}^{m/2})}^{H^n_k} = \mathcal{D}(\mathcal{P}^{n/2}), \qquad \overline{\mathcal{D}(\mathcal{P}^{m/2-N-1})\cap H_k^{m-N-2}}^{H_k^{n-N-2}} = \mathcal{D}(\mathcal{P}^{n/2-N/2-1}). \]
It therefore suffices to show the intermediate result
\begin{equation}\label{N_commutation_map_inter}
\ad^N_A B: \mathcal{D}(\mathcal{P}^{m/2}) \to \mathcal{D}(\mathcal{P}^{m/2-N-1})  \cap H^{m-N-2}_k
\end{equation}
\begin{equation}\label{k_depend_inter}
\|\ad^N_A B\|_{\mathcal{D}(\mathcal{P}^{m/2}) \to \mathcal{D}(\mathcal{P}^{m/2-N-1})\cap H_k^{m-N-2}} \leq C k^{-N},
\end{equation}
for all $m\geq n+N$.
    The mapping \eqref{N_commutation_map_inter} follows from the fact that $\ad_A^N B$ is an alternating sum of applications of the operator $A$, $N$ times, and $B$, once. Since, for all $j \in \mathbb{N}$, 
    $$
    \|A^j\|_{\mathcal{D}(\mathcal{P}^{n})\to \mathcal{D}(\mathcal{P}^{n-j})}+\|B^j\|_{\mathcal{D}(\mathcal{P}^{n})\to \mathcal{D}(\mathcal{P}^{n-j})}\leq C
    $$
    $\ad^N_A B$ maps then $\mathcal{D}(\mathcal{P}^{m/2})$ to $\mathcal{D}(\mathcal{P}^{m/2-N-1})$. By the fact that 
    \[
    \mathrm{ad}_A^N B = k^{-N} C_{N+2}
    \]
    where $C_{N+2}$ is a $N+2$-th order semiclassical differential operator in the sense of Definition~\ref{d:semicl_diff_op}, then $\ad_A^N B$ also maps $\mathcal{D}(\mathcal{P}^{m/2})$ to $H_k^{m-N-2}$.
Thus, estimate \eqref{k_depend_inter} follows. The desired \eqref{N_commutation_map} and \eqref{k_depend} are then obtained by density, using the fact that $\ad^N_A B$ is a bounded (i.e., continuous) mapping from $H_k^m$ to $H_k^{m-N-2}$.
\end{proof}

\begin{lemma}
\label{l:commuteP}
Let $\widetilde{\cP}$ be as in Lemma~\ref{l:frequencyPseudolocalFinal}. Suppose that $\varphi\in C^\infty(\overline{\Omega})$, $\supp \nabla\varphi \cap (\partial\Omega\cup\Gamma_p)=\emptyset$, and $\supp \varphi\cap \partial \Omega_{\tr}=\emptyset$. Then,
\begin{equation}\label{eq:HRevil1}
\|\ad_{\varphi\widetilde{\cP}\varphi }^NP_k\|_{\mathcal{D}(\mathcal{P}^{n/2})\to \mathcal{D}(\mathcal{P}^{n/2-N/2-1})}\leq Ck^{-N}, 
\quad 
\|\ad_{\varphi\widetilde{\cP}\varphi }^N P_k^*\|_{\mathcal{D}(\mathcal{P}^{n/2})\to \mathcal{D}(\mathcal{P}^{n/2-N/2-1})}\leq Ck^{-N}.
\end{equation}
\end{lemma}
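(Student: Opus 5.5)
The plan is to deduce the estimate from Lemma~\ref{lem:adNAB_domains}, applied with $A:=\varphi\widetilde{\cP}\varphi$ and $B:=P_k$ (respectively $B:=P_k^*$). Both $A$ and $B$ are second-order semiclassical differential operators: $A$ because $\varphi$ is smooth and $\widetilde{\cP}$ is a second-order semiclassical differential operator, and $B$ by the definition of the PML operator. So all that remains is to check the hypothesis of Lemma~\ref{lem:adNAB_domains}, namely that for every $n$
\[
\|A\|_{\mathcal{D}(\cP^{n})\to\mathcal{D}(\cP^{n-1})}+\|B\|_{\mathcal{D}(\cP^{n})\to\mathcal{D}(\cP^{n-1})}\leq C_0,
\]
uniformly in $k>k_0$. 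Once this holds, \eqref{N_commutation_map} and \eqref{k_depend} (with $n$ replaced by $n/2$) give \eqref{eq:HRevil1} directly.

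\textbf{The operator $B$.} Split $P_k=\cP+i\,\mathrm{Im}P_k$. The term $\cP$ maps $\mathcal{D}(\cP^n)\to\mathcal{D}(\cP^{n-1})$ with norm $1$ in the graph norms. The imaginary part is supported in the PML region, away from $\Gamma_s\cup\Gamma_p$. The part $\cP$ commutes with itself trivially, so the content is to show that $\mathrm{Im}P_k$ preserves the boundary conditions defining $D_k^{2N}$. I would take this from the corresponding statement in \cite[Lemmas 6.7 and 6.9]{AGS2}, where exactly this mapping property is established for the case $\widetilde{\cP}=\Re P_k$. Those statements depend only on $P_k$ and on $\mathcal{D}(\cP^N)$, not on $\widetilde{\cP}$, so they transfer unchanged. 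The same argument handles $P_k^*$.

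\textbf{The operator $A$.} Write $A=\varphi^2\widetilde{\cP}+\varphi[\widetilde{\cP},\varphi]$. First, $\widetilde{\cP}$ maps $\mathcal{D}(\widetilde{\cP}^n)\to\mathcal{D}(\widetilde{\cP}^{n-1})$. Since $\mathcal{D}(\widetilde{\cP}^N)=\mathcal{D}(\cP^N)$ with equivalent graph norms by \eqref{e:graphNorm}, this is a bounded map $\mathcal{D}(\cP^n)\to\mathcal{D}(\cP^{n-1})$ uniformly in $k$. It then suffices to show that multiplication by $\varphi$ and by $\varphi^2$, and the first-order operator $[\widetilde{\cP},\varphi]$ (which carries a factor $k^{-1}$), preserve each $\mathcal{D}(\cP^m)$ with bounds uniform in $k$. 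This is where the support hypotheses are used. Near $\partial\Omega\cup\Gamma_p$ the function $\varphi$ is locally constant, and it vanishes near $\partial\Omega_{\tr}$. Consequently, for $u\in D_k^{2m}$, the functions $\cP^j(\varphi u)$ and $\cP^j(u)$ coincide up to a constant factor in a neighbourhood of $\Gamma_s\cup\Gamma_p$ and both vanish near $\partial\Omega_{\tr}$. So the boundary, transmission and Dirichlet conditions in $D_k^{2m}$ persist. The commutator $[\widetilde{\cP},\varphi]$ has coefficients involving $\nabla\varphi$, which are supported away from all interfaces, so its output vanishes near them and trivially satisfies every condition. The $H^{2m}(\Omega\setminus\Gamma_p)$ regularity, with $k$-uniform bounds in the $H_k$ scale, follows from the semiclassical form of the operators together with the equivalence of $\|\cdot\|_{\mathcal{D}(\cP^m)}$ and the $H^{2m}_k$ norm on $D_k^{2m}$. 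That equivalence is semiclassical elliptic regularity, which I expect to be standard or contained in \cite{AGS2}.

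\textbf{Main obstacle.} The step I expect to require the most care is the uniform-in-$k$ equivalence between the $\mathcal{D}(\cP^m)$ graph norm and the $H^{2m}_k$ norm on $D_k^{2m}$. It is needed both to show that multiplication by cutoffs is bounded on $\mathcal{D}(\cP^m)$ and to control $\mathrm{Im}P_k$. I would try to quote this from \cite{AGS2} rather than reprove it. With it in hand, Lemma~\ref{lem:adNAB_domains} yields \eqref{eq:HRevil1} for both $P_k$ and $P_k^*$.
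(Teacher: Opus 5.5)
There is a genuine gap in your treatment of $B=P_k$. Lemma~\ref{lem:adNAB_domains} needs $P_k:\mathcal{D}(\cP^{n})\to\mathcal{D}(\cP^{n-1})$ with $k$-uniform bounds. You justify this by saying that $\mathrm{Im}\,P_k$ lives in the PML, away from $\Gamma_s\cup\Gamma_p$. But the PML reaches $\partial\Omega_{\tr}$, where $D_k^{2n}$ imposes the iterated conditions $(\cP^ju)|_{\partial\Omega_{\tr}}=0$ with $\cP=\Re P_k$, and there $\mathrm{Im}\,P_k$ is a genuinely second-order operator.

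The failure is visible already for $n=2$. In boundary-normal coordinates, $u|_{\partial\Omega_{\tr}}=0$ and $(\cP u)|_{\partial\Omega_{\tr}}=0$ only determine $\partial_\nu^2u$ on $\partial\Omega_{\tr}$ in terms of $\partial_\nu u$ and its tangential derivatives, which remain free. Substituting into $(\mathrm{Im}\,P_k u)|_{\partial\Omega_{\tr}}$ leaves a nonzero combination of these, unless the coefficients of $\mathrm{Im}\,P_k$ and $\cP$ satisfy special compatibility relations. That happens, e.g., if the PML is an exact complex dilation near $\partial\Omega_{\tr}$, so that $P_k$ is locally an affine function of $\cP$. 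For a general radial PML, however, $P_ku\notin\mathcal{D}(\cP)$ for some $u\in\mathcal{D}(\cP^2)$. In particular, in that generality the case $N=0$ of the displayed estimate fails, so the lemma must be read with $N\geq1$.

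The results of \cite{AGS2} you cite do not supply this global property. As they are used in this paper, they concern boundary compatibility of cutoffs and commutator bounds for cut-off operators, which is exactly why the hypothesis $\supp\varphi\cap\partial\Omega_{\tr}=\emptyset$ appears. Your argument uses that hypothesis only for $A=\varphi\widetilde{\cP}\varphi$, where local constancy of $\varphi$ near $\partial\Omega_{\tr}$ would already suffice. It does not use it for $P_k$, where it is essential.

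The missing step is a localisation of $P_k$. Take $\tilde\varphi\in C^\infty(\overline{\Omega})$ with $\tilde\varphi\equiv1$ near $\supp\varphi$, $\supp\tilde\varphi\cap\partial\Omega_{\tr}=\emptyset$, and $\supp\nabla\tilde\varphi$ contained in a thin collar of $\partial\Omega_{\tr}$ disjoint from $\supp\varphi\cup\Gamma_s\cup\Gamma_p$. For $N\geq1$, every term $\pm A^iP_kA^{N-i}$ of $\ad^N_AP_k$, with $A=\varphi\widetilde{\cP}\varphi$, has a factor $\varphi$ immediately to the left or right of $P_k$. By locality, $\ad^N_{\varphi\widetilde{\cP}\varphi}P_k=\ad^N_{\varphi\widetilde{\cP}\varphi}(\tilde\varphi P_k\tilde\varphi)$.

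The operator $\tilde\varphi P_k\tilde\varphi$ does satisfy the hypothesis of Lemma~\ref{lem:adNAB_domains}. It vanishes near $\partial\Omega_{\tr}$, and it coincides with $\cP$ near $\Gamma_s\cup\Gamma_p$, where $\tilde\varphi\equiv1$ and $P_k=\cP=P_k^*$. Its uniform bounds follow from the same elliptic regularity you invoke for $A$. With $B:=\tilde\varphi P_k\tilde\varphi$ (and $\tilde\varphi P_k^*\tilde\varphi$ for the adjoint), your single application of Lemma~\ref{lem:adNAB_domains} goes through.

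That repaired route is somewhat more economical than the paper's proof, which splits with a cutoff $\psi_0$. Near $\partial\Omega_-$, where $\varphi$ is constant and $P_k=\cP$, the paper reduces to $\ad^N_{\widetilde{\cP}}P_k$ and applies Lemma~\ref{lem:adNAB_domains}. On the remaining region it inserts the same kind of cutoff $\tilde\varphi$ and computes the commutators directly as differential operators.
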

\begin{proof}[Proof of Lemma \ref{l:commuteP}]
We prove \eqref{eq:HRevil1}. We write 
\beqs
\ad_{\varphi \widetilde{\cP}_k\varphi}^N P_k= \psi_0 \ad_{\varphi \widetilde{\cP}_k\varphi}^N P_k+(1-\psi_0) \ad_{\varphi \widetilde{\cP}_k\varphi}^N P_k, 
\eeqs
where $\psi_0$ is supported in a region close to $\partial\Omega$ where $\varphi$ is constant.
More precisely, let $\psi_i\in C^\infty(\overline{\Omega})$, $i=-1,0,1$ with $\supp (1-\psi_i)\cap \partial\Omega_-=\emptyset$, $\supp\psi_i\cap \supp\nabla \varphi=\emptyset$, $\supp (1-\psi_i)\cap \supp \psi_{i-1}=\emptyset$, and $\psi_1 P_k=\psi_1 P_k^*$
 (note that such functions exist since $\supp \nabla \varphi\cap \partial\Omega_-=\emptyset$).

By locality of $\cP_k$, $P_k$, and $P_k^*$ and the fact that $\cP_k=P_k=P_k^*$ on $\supp \psi_1\supset \supp\psi_0$,
$$
\psi_0 \ad_{\varphi \widetilde{\cP}_k\varphi}^N P_k =\psi_0\ad_{\widetilde{\cP}_k}^N P_k, 
$$
and thus \[
\|\psi_0\ad_{\varphi \widetilde{\cP}_k\varphi}^N P_k\|_{\mathcal{D}(\mathcal{P}^{n/2})\to \mathcal{D}(\mathcal{P}^{n/2-N/2-1})}\leq Ck^{-N}
\]
where we used Lemma~\ref{lem:adNAB_domains} with $A =\widetilde{\cP}$ and $B=P_k$.

Now, let $\tilde{\varphi}\in C^\infty(\overline{\Omega})$ with $\supp \tilde{\varphi}\cap \partial \Omega_{\tr}=\emptyset$ and $\supp \varphi\cap \supp (1-\tilde{\varphi})=\emptyset$. Then
$$
(1-\psi_0)\ad^N_{\varphi\widetilde{\cP}_k\varphi}P_k=(1-\psi_0)\ad^N_{\varphi(1-\psi_{-1})\widetilde{\cP}_k\varphi(1-\psi_{-1})}\big[(1-\psi_{-1})\tilde{\varphi}P_k(1-\psi_{-1})\tilde{\varphi}\big].
$$
Since $\supp \tilde{\varphi}\cap \supp (1-\psi_{-1})\cap \partial\Omega=\emptyset$, 
integration by parts (with all the boundary terms vanishing) shows that $\varphi(1-\psi_{-1})\widetilde{\cP}_k\varphi(1-\psi_{-1})$ and $(1-\psi_{-1})\tilde{\varphi}P_k(1-\psi_{-1})\tilde{\varphi}$ coincide with differential operators on $\overline{C^\infty}(\Omega\setminus\Gamma_p)$. The result 
$$
\|\ad^N_{\varphi\widetilde{\cP}_k\varphi}P_k\|_{\mathcal{D}(\mathcal{P}^{n/2})\to\mathcal{D}(\mathcal{P}^{n/2-N/2-1})}\leq Ck^{-N} 
$$
then follows by direct differentiation (using the product rule) and then density of $\overline{C^\infty}(\Omega\setminus\Gamma_p)$ in $\mathcal{\cP}^n$.
The proof of the analogous bound for $P_k^*$ is identical.
\end{proof}

\begin{proof}[Proof of Lemma~\ref{l:frequencyPseudolocalFinal}]
        Estimate \eqref{ineq:freq_pseudo} follows directly from \cite[Lemma 7.5]{galkowski2025numerical}.
To prove \eqref{ineq:freq_pseudo_2}, we need to prove certain commutator estimates. Let $n,N \in \mathbb{N}$ . The result \cite[Theorem 5.37]{AGS2} combined with \cite[Theorem 5.31]{AGS2} gives the result of the lemma provided that there is $C>0$ such that for all $k\geq k_0$ 
\begin{equation}\label{eq:desired_commutator_estimate}
\|\ad^N_{\widetilde{\cP}}\chi (P_k+S_k)\chi\|_{D_k^n \to D_k^{n-N-2}} \leq C k^{-N}.
\end{equation}
 Let $\tilde{\chi}\in C^\infty(\overline{\Omega})$ with $\supp \nabla\tilde{\chi}\cap \partial\Omega=\emptyset$ and $\supp (1-\tilde{\chi})\cap \supp \chi=\emptyset$. Then, by spatial locality of $\widetilde\cP$,
\beq\label{e:mrcroc2}
\ad_{\widetilde{\cP}}^N\chi (P_k+S_k)\chi= \ad_{\tilde{\chi}\widetilde{\cP}\tilde{\chi}}^N\chi (P_k+S_k)\chi.
\eeq
We now use \cite[Lemma 6.7]{AGS2} which states that the spatial cutoff function $\chi$ is a boundary compatible operator in the sense of \cite[Definition 5.35]{AGS2}, i.e., 
\begin{equation}
   \|\ad_{\tilde{\chi}\widetilde{\cP}\tilde{\chi}}^N\chi \|_{D_k^n \to D_k^{n-N}} \leq C k^{-N}.
\end{equation}
Therefore, by repeated use of the identity $\ad_{A} BC = (\ad_A B)C + B(\ad_A C)$, to prove \eqref{eq:desired_commutator_estimate} it is enough to show that 
\[
\|\ad^N_{ \tilde{\chi} \widetilde{\cP}\tilde{\chi}} (P_k+S_k)\|_{D_k^n \to D_k^{n-N-2}} \leq C k^{-N}.
\]
Recalling that $S_k = \psi(\widetilde{\cP})$, we know by \cite[Proposition 5.33]{AGS2} that
\[
\|\ad^N_{ \tilde{\chi} \widetilde{\cP}\tilde{\chi}} S_k\|_{D_k^n \to D_k^{n-N-2}} \leq C k^{-N}.
\]
Finally, by Lemma~\ref{l:commuteP} \[
\|\ad^N_{ \tilde{\chi} \widetilde{\cP}\tilde{\chi}} (P_k)\|_{D_k^n \to D_k^{n-N-2}} \leq C k^{-N}.
\]
which finishes the proof of \eqref{ineq:freq_pseudo_2}. The proof of \eqref{ineq:freq_pseudo_3} follows along similar lines and is omitted.
\end{proof}

\section{Bound on the high frequencies of the solution operator}\label{s:HF}

We now use the pseudolocality results of the previous subsection (i.e., Lemmas ~\ref{l:spatialPseudolocalFinal} and~\ref{l:frequencyPseudolocalFinal}) to 
show that 
the (adjoint) solution operator behaves well on high frequencies as measured by $(1-\psi(\widetilde{\mathcal{P}}))$, with $\widetilde{\cP}$ as in Lemma \ref{l:frequencyPseudolocalFinal}.

\begin{lemma}[High frequency elliptic-type estimate]\label{l:trainLate1}
Let $P_k$ be an $H^\infty$ Helmholtz PML operator (in the sense of Definition \ref{d:helmholtzProblem}) and let $\widetilde{\cP}$ satisfy the assumptions of Lemma \ref{l:frequencyPseudolocalFinal}.
Given $\psi_0 \in C_c^\infty$ constructed in Lemma~\ref{l:coerce}, 
let $\psi\in C_c^\infty(\mathbb{R})$ be such that $\supp (1-\psi)\cap \supp \psi_0=\emptyset$ and $\chi \in \overline{C^\infty}(\Omega)$ such that $\supp \chi\cap \partial \Omega_{\tr}=\emptyset$, $\supp \nabla \chi \cap (\Gamma_{ p}\cup \Gamma_s
)=\emptyset$. Then, for all $N>0$ $k_0>0$ there is $C>0$ such that for all $k>k_0$
\begin{equation}
\label{e:highElliptic1}
\|(1-\psi(\widetilde{\mathcal{P}}))\chi\mathcal{R}^*\|_{L^2\to H_k^2(\Omega\setminus \Gamma_{ p})\cap H_k^1(\Omega)}\leq C\big(1+k^{-N}\|\mathcal{R}^*\|_{L^2\to L^2}\big).
\end{equation}
\end{lemma}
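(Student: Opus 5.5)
The plan is to exploit the coercivity of $P_k^*+S_k$ from Lemma~\ref{l:coerce} and write $\mathcal{R}^*$ (the adjoint solution operator $(P_k^*)^{-1}$) in terms of $(P_k^*+S_k)^{-1}$. From
\[
P_k^*=(P_k^*+S_k)-S_k
\]
we get the resolvent-type identity
\[
\mathcal{R}^*=(P_k^*+S_k)^{-1}+(P_k^*+S_k)^{-1}S_k\mathcal{R}^*.
\]
Inserting this after $(1-\psi(\widetilde{\mathcal{P}}))\chi$ gives two terms, which we handle separately.

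The first term is $(1-\psi(\widetilde{\mathcal{P}}))\chi(P_k^*+S_k)^{-1}$. By Lemma~\ref{l:coerce} and Lax--Milgram, $(P_k^*+S_k)^{-1}=O(1)_{L^2\to H^1_k}$. Elliptic regularity for the coercive operator $P_k^*+S_k$ upgrades this to $O(1)_{L^2\to H^2_k(\Omega\setminus\Gamma_p)\cap H^1_k(\Omega)}$ on $\supp\chi$, since $\chi$ is away from $\partial\Omega_{\tr}$. Here one uses that $S_k=\psi_0(\widetilde{\mathcal P})$ maps $L^2$ into $\mathcal{D}(\widetilde{\mathcal P}^N)$, so it is harmless. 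Multiplying by $\chi$ keeps the range in $\mathcal{D}(\mathcal{P})$ because $\nabla\chi$ avoids the interfaces and boundaries. Finally $1-\psi(\widetilde{\mathcal{P}})$ is bounded on $\mathcal{D}(\widetilde{\mathcal{P}})$ by functional calculus, and, via the graph-norm equivalence \eqref{e:graphNorm}, on $H^2_k(\Omega\setminus\Gamma_p)\cap H^1_k$ restricted to $\mathcal{D}(\mathcal{P})$. This term is therefore $O(1)$.

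The second term is $(1-\psi(\widetilde{\mathcal{P}}))\chi(P_k^*+S_k)^{-1}S_k\mathcal{R}^*$, and it must be shown to be $O(k^{-N})$. Choose $\psi_2\in C_c^\infty$ with $\psi_2\equiv1$ near $\supp\psi_0$ and $\supp\psi_2\cap\supp(1-\psi)=\emptyset$; this is possible since $\supp(1-\psi)\cap\supp\psi_0=\emptyset$. Then $S_k=\psi_2(\widetilde{\mathcal P})S_k$. Take a cutoff $\tilde\chi$ with $\tilde\chi\equiv1$ near $\supp\chi$, satisfying the same support conditions as $\chi$ (support away from $\partial\Omega_{\tr}$, gradient away from $\Gamma_p\cup\Gamma_s$). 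Split
\[
\chi(P_k^*+S_k)^{-1}=\chi(P_k^*+S_k)^{-1}\tilde\chi+\chi(P_k^*+S_k)^{-1}(1-\tilde\chi).
\]
The piece with $1-\tilde\chi$ is $O(k^{-\infty})$ from $(H^N_k)^*$ to $H^N_k$ by Lemma~\ref{l:spatialPseudolocalFinal}, since $\supp\chi\cap\supp(1-\tilde\chi)=\emptyset$. For the remaining piece, rewrite
\[
(1-\psi(\widetilde{\mathcal P}))\chi(P_k^*+S_k)^{-1}\tilde\chi\,\psi_2(\widetilde{\mathcal P})
\]
so that it has the form \eqref{ineq:freq_pseudo_3}, with $\psi_1=1-\psi$ (polynomially bounded) and $\psi_2\in\mathcal{S}$ having disjoint supports. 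The lemma requires the same cutoff on both sides; this is handled by writing $\chi=\chi\tilde\chi$ and commuting, or by applying \eqref{ineq:freq_pseudo_3} with $\tilde\chi$ on both sides and absorbing $\chi$ by \eqref{ineq:freq_pseudo} together with spatial pseudolocality. The result is that this piece is $O(k^{-\infty})_{\mathcal{D}(\mathcal{P}^{-N})\to\mathcal{D}(\mathcal{P}^{N})}$, and $\mathcal{D}(\mathcal{P}^N)\hookrightarrow H^2_k(\Omega\setminus\Gamma_p)\cap H^1_k$ for $N\ge1$. Since $S_k=O(1)_{L^2\to L^2}$, composing with $\mathcal{R}^*$ gives the bound $Ck^{-N}\|\mathcal{R}^*\|_{L^2\to L^2}$.

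The main obstacle is the bookkeeping in this last step: fitting the operator exactly into the hypotheses of Lemma~\ref{l:frequencyPseudolocalFinal}, namely a symmetric cutoff $\chi$ on both sides, support conditions on $\nabla\chi$, and the interplay between $\chi$ and $\tilde\chi$. If that does not close directly, I would commute $\chi$ past $\psi(\widetilde{\mathcal P})$ using \eqref{ineq:freq_pseudo} with an intermediate function. The other point needing care is checking that $1-\psi(\widetilde{\mathcal P})$ preserves the $H^2_k(\Omega\setminus\Gamma_p)\cap H^1_k$ norm; this comes from the equivalence of $\mathcal{D}(\mathcal{P})$ with $D_k^2$ via \eqref{e:graphNorm}.
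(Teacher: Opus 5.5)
Your proposal is correct and follows essentially the same route as the paper. The paper uses the resolvent identity $\mathcal{R}^*=\widetilde{\mathcal{R}}^*_{S_k}+\widetilde{\mathcal{R}}^*_{S_k}S_k\mathcal{R}^*$, then coercivity plus elliptic regularity for the first term, and spatial pseudolocality to insert $\tilde\chi$ for the second. Your fallback for the cutoff mismatch is precisely its argument: insert an intermediate $1-\psi_1(\widetilde{\mathcal{P}})$ via \eqref{ineq:freq_pseudo}, then $\tilde\chi$ via spatial pseudolocality, and finally apply \eqref{ineq:freq_pseudo_3} with $\tilde\chi$ on both sides.
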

\begin{proof}
The proof follows from emulating the steps in \cite[Lemma 7.7]{galkowski2025numerical}.  
Recalling that we denote by $\mathcal{R}^*$ and $\widetilde{\mathcal{R}}_{S_k}^*$ the right, resp., left inverse of $P^*_\theta$, resp., $P_k^*+S_k$, we find that
$$
\mathcal{R}^*=\widetilde{\mathcal{R}}_{S_k}^*+\widetilde{\mathcal{R}}_{S_k}^*\psi_0(\widetilde{\mathcal{P}})\mathcal{R}^*,
$$
so that 
\begin{equation}\label{e:slough0}
(1-\psi(\widetilde{\mathcal{P}}))\chi\mathcal{R}^*= (1-\psi(\widetilde{\mathcal{P}}))\chi \mathcal{R}_{S_k}^*+(1-\psi(\widetilde{\mathcal{P}}))\chi\mathcal{R}_{S_k}^*\psi_0(\widetilde{\mathcal{P}})\mathcal{R}^*.
\end{equation}
Let $\tilde{\chi}\in \overline{C^\infty}(\Omega)$ with $\supp \nabla \tilde{\chi}\cap (\partial \Omega_{\tr} \cup\Gamma_{p}\cup \Gamma_s)=\emptyset$, $\supp (1-\tilde{\chi})\cap \supp \chi=\emptyset$, and $\supp \tilde{\chi}\cap \partial \Omega_{\tr}=\emptyset$. Then, by Lemma \ref{l:spatialPseudolocalFinal}, 
\begin{align}\nonumber
&(1-\psi(\widetilde{\mathcal{P}}))\chi\mathcal{R}_{S_k}^*\psi_0(\widetilde{\mathcal{P}})\mathcal{R}^*\\
&
=(1-\psi(\widetilde{\mathcal{P}}))\chi\mathcal{R}_{S_k}^*\tilde{\chi}\psi_0(\widetilde{\mathcal{P}})\mathcal{R}^* +(1-\psi(\widetilde{\mathcal{P}}))O(k^{-\infty})_{L^2\to H_k^N(\Omega\setminus \Gamma_{ p})}
\psi_0(\widetilde{\mathcal{P}})\mathcal{R}^*.
\label{e:slough1}
\end{align}
Now let $\psi_1\in C_c^\infty(\mathbb{R})$ be such that $\supp \psi_0\cap \supp(1-\psi_1)=\emptyset$ and $\supp \psi_1\cap \supp(1-\psi)=\emptyset$. 
By Lemma \ref{l:frequencyPseudolocalFinal} and then Lemma \ref{l:spatialPseudolocalFinal},
\begin{align}\nonumber
(1-\psi(\widetilde{\mathcal{P}}))\chi\mathcal{R}_{S_k}^*\tilde{\chi}\psi_0(\widetilde{\mathcal{P}})\mathcal{R}^*
&=
(1-\psi(\widetilde{\mathcal{P}}))\chi(1-\psi_1(\widetilde{\mathcal{P}}))\mathcal{R}_{S_k}^*\tilde{\chi}\psi_0(\widetilde{\mathcal{P}})\mathcal{R}^* \\ \nonumber
&\qquad
+O(k^{-\infty})_{\mathcal{D}(\mathcal{P}^{-N})\to \mathcal{D}(\mathcal{P}^N)}
\mathcal{R}_{S_k}^*\tilde{\chi}\psi_0(\widetilde{\mathcal{P}})\mathcal{R}^*\\ \nonumber
&=(1-\psi(\widetilde{\mathcal{P}}))\chi(1-\psi_1(\widetilde{\mathcal{P}}))\tilde{\chi}\mathcal{R}_{S_k}^*\tilde{\chi}\psi_0(\widetilde{\mathcal{P}})\mathcal{R}^* \\
&\,\quad
+O(k^{-\infty})_{L^2\to H_k^N(\Omega\setminus \Gamma_{ p})}
\mathcal{R}_{S_k}^*\tilde{\chi}\psi_0(\widetilde{\mathcal{P}})\mathcal{R}^*
\label{e:slough2}
\end{align}
(where we have combined the remainder terms).
Finally, by Lemma \ref{l:frequencyPseudolocalFinal},
\begin{align}\nonumber
&(1-\psi(\widetilde{\mathcal{P}}))\chi(1-\psi_1(\widetilde{\mathcal{P}}))\tilde{\chi}\mathcal{R}_{S_k}^*\tilde{\chi}\psi_0(\widetilde{\mathcal{P}})\mathcal{R}^*\\
&\qquad\qquad=(1-\psi(\widetilde{\mathcal{P}}))\chi O(k^{-\infty})_{\mathcal{D}(\mathcal{P}^{-N})\to \mathcal{D}(\mathcal{P}^N)}\psi_0(\widetilde{\mathcal{P}})\mathcal{R}^*;
\label{e:slough3}
\end{align}
the result then follows by combining \eqref{e:slough0}, Lemma \ref{l:coerce} (together with elliptic regularity to control the $H_k^2(\Omega\setminus \Gamma_{ p})$ norm), \eqref{e:slough1}, \eqref{e:slough2}, and \eqref{e:slough3}.
\end{proof}

\section{Abstract $hp$-FEM convergence result}\label{s:abstract}

As discussed in \S\ref{s:discussion}, the main idea of the proof of Theorem \ref{t:main} is to split the solution operator into ``high'' and ``low" frequency components, with this splitting then used to bound the ``adjoint approximability constant" $\eta_{L^2\to H^1_k}(V_{\mathcal T}^p)$ in the Schatz duality argument (see Lemma \ref{l:Schatz} below). The following result -- Theorem \ref{t:thm71_abstract} -- bounds the high-frequency components (using Lemma \ref{l:trainLate1}), leaving the low-frequency components to be controlled separately -- the advantage of stating this result is that it separates out the parts of the proof of Theorem \ref{t:main} that require differentiability assumptions, such as Gevrey-$\sigma$ or analyticity.

\begin{theorem}\label{t:thm71_abstract}
Let $d\leq 3$, $P_k$ be an $H^\infty$ Helmholtz PML operator (in the sense of Definition \ref{d:helmholtzProblem}), $\widetilde{\cP}_i$, $i=1,2$ satisfy the assumptions of Lemma \ref{l:frequencyPseudolocalFinal}, and $U$ be a neighborhood of $\partial\Omega_{\tr}$ contained in PML region. Then there is $\psi\in C^\infty_c(\R)$ such that for all
$\chi_1,\chi_2\in \overline{C^\infty}(\Omega;[0,1])$ with 
$$
\{\chi_1+\chi_2<1\}\subset U, \qquad\supp \chi_i\cap \partial\Omega_{\tr}=\emptyset,\qquad\supp \big((1-\chi_i)\chi_i\big)\cap (\Gamma_p\cup \Gamma_s)=\emptyset
$$
and $N,k_0>0$ and $\Upsilon>0$ there exist $C,\, h_0 >0$ such that for all affine-conforming $C^{p+1}$ simplicial triangulations $\mathcal{T}$   with constant $\Upsilon>0$ 
(in the sense of Definition \ref{d:Cr})
such that
    \[
    \partial \Omega \cup \Gamma_p
    \subset \bigcup_{T \in \mathcal{T}} \partial T,
    \]
    $p\geq 1,\, 0 <h(\mathcal{T})<h_0,$ and $k\geq k_0$,
    \begin{align}\nonumber
         &\eta_{L^2\to H^1_k}(V_{\mathcal{T}}^p) 
       \\
       &\leq C 
            \frac{hk}{p} 
             + C \bigg[ \left(\frac{hk}{p}\right) k^{-N}
            +\|(I-\Pi_h^p)\psi(\widetilde{\mathcal{P}}_1)\chi_1\|_{L^2(\Omega)\to H_k^1(\Omega)}
            +
\|(I-\Pi_h^p)\psi(\widetilde{\cP}_2)\chi_2\|_{L^2(\Omega)\to H_k^1(\Omega
)}
\Bigg]
\rho(k).
\label{est:estimate_thm71_abstract}
       \end{align}
\end{theorem}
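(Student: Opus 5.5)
The quantity to bound is the adjoint approximability constant
$$
\eta_{L^2\to H^1_k}(V_{\mathcal{T}}^p)=\sup_{f\neq 0}\min_{w_h\in V_{\mathcal{T}}^p}\frac{\|\mathcal{R}^*f-w_h\|_{H^1_k}}{\|f\|_{L^2}}.
$$
The plan is to split $\mathcal{R}^*f$ into three pieces and approximate each separately.

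\textbf{Step 1 (spatial splitting).} Write
$$
\mathcal{R}^*f=\chi_1\mathcal{R}^*f+\chi_2\mathcal{R}^*f+(1-\chi_1-\chi_2)\mathcal{R}^*f.
$$
The last piece is supported in $U$, which lies inside the PML region. There $P_k^*$ is semiclassically elliptic in the sense of \cite{GLS2,galkowski2025numerical}, so the standard PML estimates control it in $H^2_k$ near $U$ with the same bounds used for the high-frequency pieces below. Its $hp$-approximation error is then $O(hk/p)(1+k^{-N}\rho(k))$.

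\textbf{Step 2 (frequency splitting of the cut-off pieces).} For $i=1,2$ write
$$
\chi_i\mathcal{R}^*f=\psi(\widetilde{\cP}_i)\chi_i\mathcal{R}^*f+(1-\psi(\widetilde{\cP}_i))\chi_i\mathcal{R}^*f,
$$
with $\psi$ chosen so that $\supp(1-\psi)\cap\supp\psi_0=\emptyset$ for the $\psi_0$ of Lemma~\ref{l:coerce} attached to both $\widetilde{\cP}_i$. This requires taking $\psi$ equal to $1$ on the union of the two supports. The hypothesis $\supp((1-\chi_i)\chi_i)\cap(\Gamma_p\cup\Gamma_s)=\emptyset$ means $\nabla\chi_i$ vanishes near the interfaces, so Lemma~\ref{l:trainLate1} applies. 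It gives
$$
\|(1-\psi(\widetilde{\cP}_i))\chi_i\mathcal{R}^*\|_{L^2\to H^2_k(\Omega\setminus\Gamma_p)\cap H^1_k}\leq C(1+k^{-N}\rho(k)).
$$
The piecewise-$H^2$ approximation property of $V^p_{\mathcal T}$ (Appendix~\ref{sec:projection}) then applies, because the mesh resolves $\Gamma_p$ and $\partial\Omega$ and the element maps are $C^{p+1}$ with constant $\Upsilon$. It gives
$$
\|(I-\Pi_h^p)v\|_{H^1_k}\leq C\,\frac{hk}{p}\,\|v\|_{H^2_k(\Omega\setminus\Gamma_p)\cap H^1_k}.
$$
This produces the terms $C\,hk/p+C\,(hk/p)\,k^{-N}\rho(k)$. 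For the low-frequency pieces, bound
$$
\|(I-\Pi^p_h)\psi(\widetilde{\cP}_i)\chi_i\mathcal{R}^*f\|_{H^1_k}\leq\|(I-\Pi_h^p)\psi(\widetilde{\cP}_i)\chi_i\|_{L^2\to H^1_k}\,\rho(k)\,\|f\|_{L^2},$$
using $\|\mathcal{R}^*\|_{L^2\to L^2}=\rho(k)$ (adjoint of $\mathcal{R}$ has the same norm). Summing the three approximants gives a competitor $w_h$ and hence \eqref{est:estimate_thm71_abstract}.

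\textbf{Main obstacle.} The hardest part is applying Lemma~\ref{l:trainLate1} with a single $\psi$ that serves both $\widetilde{\cP}_1$ and $\widetilde{\cP}_2$. One also needs the $H^2_k$ control near $U$, where $\chi_i$ need not vanish near $\partial\Omega_{\tr}$. I would handle this by applying the PML-region elliptic estimate with a further cutoff, and by absorbing all $O(k^{-\infty})$ commutator remainders into the $k^{-N}\rho(k)$ term. Finally, the $p$-explicit approximation constant must be uniform over $C^{p+1}$ triangulations with fixed $\Upsilon$. I would take this from Appendix~\ref{sec:projection} as given.
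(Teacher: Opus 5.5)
Your Step~1 does not work as stated. The hypothesis gives only $\{\chi_1+\chi_2<1\}\subset U$; it does not say that $\chi_1+\chi_2=1$ outside $U$. The remainder $1-\chi_1-\chi_2$ is therefore nonzero (and negative) on $\{\chi_1+\chi_2>1\}$, and that set may lie in the non-PML region, even next to $\Gamma_s\cup\Gamma_p$. For example, $\chi_1=\chi_2=1$ away from $\partial\Omega_{\tr}$ satisfies every hypothesis. Overlap also occurs in the paper's own application (Theorem~\ref{t:thm71}): there $\chi_2\equiv1$ on a neighbourhood of $\{\chi_1<1/2\}$, so $\chi_1+\chi_2\geq 3/2$ near the transition region of $\chi_1$. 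On such an overlap $P_k^*$ is not elliptic. The only $H^2_k$ control on $(1-\chi_1-\chi_2)\mathcal{R}^*$ is then of size $\rho(k)$, which gives a term $C(hk/p)\rho(k)$. That is exactly the pollution term the theorem must avoid. So your third piece is not supported in $U$, PML ellipticity cannot handle it, and the decomposition does not yield the stated bound.

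The missing idea is to avoid using $\chi_2$ as a partition function and bring it in only through a pointwise domination in the low-frequency estimate. This is what the paper does. Take $\varphi_P$ supported in the PML with $\varphi_P\equiv1$ on $U$, and write $\mathcal{R}^*=\varphi_P\mathcal{R}^*+\chi_1(1-\varphi_P)\mathcal{R}^*+(1-\chi_1)(1-\varphi_P)\mathcal{R}^*$. The first piece is handled by PML ellipticity. The second and third are frequency-split with $\psi(\widetilde{\cP}_1)$ and $\psi(\widetilde{\cP}_2)$ respectively. Lemma~\ref{l:trainLate1} applies to both cut-offs, since they vanish near $\partial\Omega_{\tr}$ and are locally constant near $\Gamma_p\cup\Gamma_s$. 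For the last low-frequency piece, note that on $\supp(1-\varphi_P)$ you are outside $U$, so $0\le 1-\chi_1\le\chi_2$. Hence $(1-\chi_1)(1-\varphi_P)=\chi_2\,g$ with $\|g\|_{L^\infty}\le C$ (set $g=0$ where $\chi_2=0$; the numerator vanishes there because $\chi_1=1$). This gives $\|(I-\Pi_h^p)\psi(\widetilde{\cP}_2)(1-\chi_1)(1-\varphi_P)\mathcal{R}^*\|_{L^2\to H^1_k}\le C\|(I-\Pi_h^p)\psi(\widetilde{\cP}_2)\chi_2\|_{L^2\to H^1_k}\,\rho(k)$. With this decomposition the rest of your Step~2 goes through as you describe: a single $\psi$ equal to $1$ on the supports of both $\psi_0$'s, Lemma~\ref{l:trainLate1}, and the $H^2_k\to H^1_k$ approximation bound, which needs $d\le 3$.
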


\begin{proof}
The proof is similar to that of \cite[Theorem 7.1]{galkowski2025numerical}:~we split into high and low frequencies using (i) the Fourier transform away from all boundaries and interfaces and (ii) $\psi(\widetilde{\mathcal{P}})$ near all non-PML boundaries and interfaces. 
The contribution from the PML is dealt with using ellipticity of $P_k$ (and hence good solution-operator bounds) in the PML region (recalled from 
\cite[Theorem 4.2 and \S C.3]{AGS2}).

To localize the operator $\cR^*$ near and away from boundaries we introduce the following spatial cut-off functions:~let $\varphi_{P}\in \overline{C^\infty}(\Omega)$ with $\supp \varphi_P$ contained inside the PML and $\supp (1-\varphi_P)\cap U=\emptyset$.
Let also $\psi\in C_c^\infty(\mathbb{R})$ with $\supp(1-\psi)\cap [-M,M]=\emptyset$ for some $M>0$ to be chosen large enough.  We then write 
\begin{align*}
\mathcal{R}^*&=\varphi_P\mathcal{R}^* + (1-\varphi_P)\mathcal{R}^*\\
&=\varphi_P\mathcal{R}^*+\chi_1(1-\varphi_P)\mathcal{R}^*+(1-\chi_1)(1-\varphi_P)\mathcal{R}^*\\
&=
\underbrace{
\varphi_P\mathcal{R}^*+(1-\psi(\widetilde{\mathcal{P}}_1)\big)\chi_1 (1-\varphi_P)\mathcal{R}^*+(1-\psi(\widetilde{\mathcal{P}}_2)\big)(1-\chi_1)(1-\varphi_P) \mathcal{R}^*}_{\mathcal{H}\mathcal{R}^*} \\
&\qquad +\underbrace{\psi(\widetilde{\mathcal{P}}_1)\chi_1(1-\varphi_P)\mathcal{R}^*+ \psi(\widetilde{\mathcal{P}}_2)(1-\chi_1)(1-\varphi_P)\mathcal{R}^*}_{\mathcal{L}\mathcal{R}^*}.
\end{align*}
This decomposition of $\mathcal{R}^*$ into $\mathcal{H}\mathcal{R}^*$ and $\mathcal{L}\mathcal{R}^*$ is one into a well-controlled, in principle high-frequency part and a low frequency, potentially growing with $k$ part. In particular, we claim that 
\begin{equation} 
\label{e:highFreq}
\|(I-\Pi_h^p)\mathcal{H}\mathcal{R}^*\|_{L^2\to H_k^1}\leq C\frac{hk}{p}+C_Nk^{-N}\rho(k)\Big(\frac{hk}{p}\Big),
\end{equation}
and 
\begin{multline}
\label{e:lowFreq}
\|(I-\Pi_h^p)\mathcal{L}\mathcal{R}^*\|_{L^2\to H_k^1}\\
\leq \Big(\|(I-\Pi_h^p)\psi(\widetilde{\mathcal{P}}_1)\chi_1\|_{L^2(\Omega)\to H_k^1(\Omega)}
            +
\|(I-\Pi_h^p)\psi(\widetilde{\cP}_2)\chi_2\|_{L^2(\Omega)\to H_k^1(\Omega)}\Big)\rho(k)
\end{multline}
The estimate~\eqref{e:lowFreq} follows directly from the definition of $\rho(k)$ and the fact that $|(1-\varphi_P)(1-\chi_1)/\chi_2|<C$ and $|(1-\varphi_P)|<C$. Therefore, it remains to prove~\eqref{e:highFreq}.

We start with $(I-\Pi_h^p)\varphi_P\mathcal{R}^*$.  
By ellipticity of $P_k$ in the PML region (see e.g.~\cite[Theorem 4.2 and \S C.3]{AGS2}), 
\begin{equation}
\label{e:PMLEstimate}
\|\varphi_P\mathcal{R}^*\|_{L^2\to H_k^2}\leq C.
\end{equation}
By the analogue of Theorem \ref{t:polyApprox} for $C^{p+1}$ triangulations (which holds by, e.g., \cite[Theorem A.11]{galkowski2025numerical} with $\mathcal{T}_1=\Omega$ and $\mathcal{T}_2=\emptyset$),
\beq\label{e:Cpplus1}
\|(I-\Pi_h^p)\|_{H_k^{M+1}\to H_k^1}\leq \bigg(\frac{Chk}{p}\bigg)^M.
\eeq
Therefore,
\begin{equation}
\label{e:PMLApprox}
\|(I-\Pi_h^p)\varphi_P\mathcal{R}^*\|_{L^2\to H_k^1}\leq C\frac{hk}{p}.
\end{equation}

Next, we estimate $(1-\psi(\widetilde{P}_1))\chi_1(1-\varphi_P)\mathcal{R}^*$. 
By~\eqref{e:highElliptic1} with $M$ large enough,
\begin{equation}
\label{e:highElliptic}
\|(1-\psi(\widetilde{\cP}_1))(1-\varphi_P)\chi_1\mathcal{R}^*\|_{L^2\to H_k^2(\Omega\setminus \Gamma_{ p})\cap H_k^1(\Omega)}\leq C(1+k^{-N}\rho(k)),
\end{equation}
where we have used that $\supp(1-\varphi_P)\chi_1\cap \partial \Omega_{\tr}=\emptyset$ and $(1-\varphi_P)\chi_1$ is constant on $\Gamma_{ p}\cup \Gamma_s$.
Therefore, by \eqref{e:Cpplus1},
\begin{equation}
\label{e:highPart}
\|(I-\Pi_h^p)(1-\psi(\widetilde{\cP}_1))(1-\varphi_P)\chi_1\mathcal{R}^*\|_{L^2\to H_k^1}\leq C\frac{hk}{p}(1+k^{-N}\rho(k)).
\end{equation}
The estimate for $(I-\Pi_h^p)(1-\psi(\widetilde{\cP}_2))(1-\varphi_P)(1-\chi_1)\mathcal{R}^*$ is identical and hence the lemma follows. 
\end{proof}

\section{Construction of an operator $\widetilde{\cP}$ under Gevrey-type regularity assumptions}
\label{s:construction}

\subsection{Construction of $\widetilde{\cP}$}

In this section, we construct of an operator $\widetilde{\cP}$, such that for all $n \in \mathbb{N}$, $\mathcal{D}(\widetilde{\cP}^n) =\mathcal{D}( \cP^n) $. 

The crucial point is that given a function, $f$, that is Gevrey-$\sigma$ at a hypersurface, $\Gamma$ (in the sense of Definition \ref{d:Gev_at_Gamma}), there exists a function $\tilde{f}$ that is Gevrey-$\sigma$ in a neighborhood of $\Gamma$ and agrees with $f$ to infinite order at $\Gamma$. This is the content of the next lemma.

\begin{lemma}\label{eq:neighborhood_extension}
    Suppose that $\widetilde{\Gamma}\subset \overline{\Omega}$ is a $G^\sigma$ closed, embedded hypersurface and $a\in \overline{C^\infty}(\Omega\setminus\ \widetilde{\Gamma})$ is $G^\sigma$ at $\widetilde{\Gamma}$. Then, there is a neighborhood, $V$ of $\widetilde{\Gamma}$ and $\widetilde{a}\in G^\sigma(V\setminus \widetilde{\Gamma})$ such that for all $N>0$, there is $C>0$ such that 
    $$
    |\widetilde{a}(x)-a(x)|\leq C d(x,\widetilde{\Gamma})^N,\qquad x\in V\setminus \widetilde{\Gamma}.
    $$
\end{lemma}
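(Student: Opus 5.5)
The plan is to flatten $\widetilde{\Gamma}$ locally, build a Gevrey-$\sigma$ extension off the flat hypersurface by a Borel-type construction in the normal variable, and patch the local pieces with a Gevrey partition of unity.

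\textbf{Flattening and localisation.} Since $\widetilde{\Gamma}$ is $G^\sigma$, every $x_0\in\widetilde{\Gamma}$ has a neighbourhood $U$ and a $G^\sigma$ diffeomorphism $\psi:U\to V$ straightening $\widetilde{\Gamma}\cap U$ to $\{x_d=0\}$. The $G^\sigma$ class is closed under composition with $G^\sigma$ maps (a Faà di Bruno estimate), so $b:=a\circ\psi^{-1}$ is again $G^\sigma$ at $\{x_d=0\}$ from each side. By compactness of $\widetilde{\Gamma}$ finitely many such charts suffice. We then use a $G^\sigma$ partition of unity subordinate to these charts. For $\sigma>1$ compactly supported Gevrey cutoffs exist. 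For $\sigma=1$ they do not, and this case needs separate care, see below.

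\textbf{Local construction on each side.} Fix one side, $x_d>0$. Set $g_j(x'):=\partial_{x_d}^j b(x',0^+)$. The hypothesis gives $|\partial_{x'}^{\beta}g_j|\le M^{|\beta|+j+1}(\beta!\,j!)^\sigma$, so the $g_j$ are $G^\sigma$ in $x'$ with coefficients of Gevrey-$\sigma$ growth in $j$. We then form the formal normal Taylor series $\sum_j g_j(x')x_d^j/j!$ and realise it as a $G^\sigma$ function in $x_d$ by a Gevrey Borel construction:
\[
\tilde b(x',x_d)=\sum_j g_j(x')\frac{x_d^j}{j!}\,\theta\bigl(\lambda_j x_d\bigr),
\]
Here $\theta$ is a $G^{\sigma}$ cutoff equal to $1$ near $0$, and $\lambda_j\sim (Cj)^{\sigma-1}$ is chosen as in Carleson's or Petzsche's Borel lemma for Gevrey classes. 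This gives a function in $G^{\sigma}$ on each side with $\partial_{x_d}^j\tilde b(x',0)=g_j$. We do the same for $x_d<0$ and pull back by $\psi$.

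\textbf{Matching to infinite order and gluing.} Since $\tilde b-b$ is $\overline{C^\infty}$ up to $x_d=0$ from each side and vanishes to infinite order there, Taylor's theorem gives $|\tilde b-b|\le C_N x_d^N$. Pulling back, and using $d(x,\widetilde\Gamma)\sim|x_d|$ in charts, gives the stated estimate. Summing the pieces against the partition of unity preserves both the $G^\sigma$ property and the infinite-order agreement, because the partition functions sum to $1$ near $\widetilde\Gamma$. Shrinking $V$ as needed completes the construction.

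\textbf{Main obstacle.} The hard step is the Gevrey Borel lemma with loss-free index $\sigma$ together with the uniformity in $x'$. Extending Gevrey-$\sigma$ jets to Gevrey-$\sigma$ functions is known to be possible only at the cost of raising the index for $\sigma=1$, and in general one must check the precise index.

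For $\sigma>1$ I would try Whitney extension in the Gevrey (Roumieu) class, using the Bruna or Bonet–Braun–Meise–Taylor criterion that $G^\sigma$ with $\sigma>1$ satisfies the Whitney extension property for jets on a half-space.

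For $\sigma=1$ the jet data $|g_j|\le M^{j+1}j!$ define a convergent series in $x_d$ for $|x_d|<1/M$. The sum of this series is then directly an analytic extension near $\widetilde\Gamma$. In that case no cutoffs are needed locally, and the local analytic pieces agree on overlaps by uniqueness of the Taylor series, so they glue directly.
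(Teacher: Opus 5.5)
Your final plan is essentially the paper's. For $\sigma>1$ the paper applies Bruna's Whitney-type extension theorem (Theorem 3.1 of Bruna, 1980) to the jet of $a$ on $\widetilde{\Gamma}$, one side at a time. For $\sigma=1$ it sums the Taylor series of $a$ at each point of $\widetilde{\Gamma}$, whose radius is bounded below by the analytic bounds, and glues the local sums by uniqueness of analytic continuation. Your last paragraph does the same with the normal Taylor series in flattened coordinates. The flattening is harmless, and it is the natural way to check that $(g_j(x'))_j$ is a genuine $G^\sigma$ Whitney jet before invoking Bruna; the paper passes over this point.

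What you should drop is the explicit Borel formula in the ``Local construction'' paragraph. With one fixed $G^\sigma$ cutoff $\theta$ it does not give a $G^\sigma$ function, whatever $\lambda_j$ is.
\begin{itemize}
\item The $n$-th $x_d$-derivative of the single term $g_n(x')\frac{x_d^n}{n!}\theta(\lambda_n x_d)$ is $g_n(x')F_n(\lambda_n x_d)$, where $F_n(y)=\frac{1}{n!}\partial_y^n\big(y^n\theta(y)\big)=\sum_{m\le n}\binom{n}{m}\frac{y^m}{m!}\theta^{(m)}(y)$. This does not depend on $\lambda_n$.
\item On the transition region of $\theta$, the top term $\frac{y^n}{n!}\theta^{(n)}(y)$ can be of size $C^n(n!)^{\sigma-1}$. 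With $|g_n|\sim M^n(n!)^\sigma$, the term-by-term bound is therefore only $G^{2\sigma-1}$.
\item This is a genuine loss, not a weak estimate. Test on the uniformly Gevrey jets $g_j=\delta_{jn}(n!)^\sigma$; the question reduces to the size of $\sup_y|F_n(y)|$. Since $F_n(y)$ is the coefficient of $t^n$ in the Taylor expansion at $t=0$ of $(1-t)^{-1}\theta\big(y/(1-t)\big)$, a bound $\sup_y|F_n|\le KC^n$ for all $n$ would force $\theta$ to be real-analytic on $(0,\infty)$. That is impossible for a cutoff.
\end{itemize}

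The loss-free Gevrey Borel lemma for $\sigma>1$ is true (Carleson, Petzsche, since $(j!)^\sigma$ is strongly non-quasianalytic). Its proof, however, needs a more careful, $j$-dependent construction. Either cite it as a black box or go straight to Bruna, as the paper does.

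Finally, your remark that for $\sigma=1$ extension is possible ``only at the cost of raising the index'' is not correct here. As your own last paragraph shows, uniform analytic bounds on the jet give an analytic extension with no loss. What fails for $\sigma=1$ is only the cutoff-based method.
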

\begin{proof}
We treat the case $\sigma=1$ and $\sigma>1$ separately. In both cases, we find two functions $\tilde{a}_1$ and $\tilde{a}_2$ that are $G^{\sigma}$ in $V$ and each of which matches all derivatives of $a$ when restricted from one side of $\tilde{\Gamma}$. The extension $\tilde{a}$ is then given by defining $\tilde{a}$ by $\tilde{a}_1$ on one side of $\tilde{\Gamma}$ and $\tilde{a}_2$ on the other.

We now construct one such extension, denoted below by  $\tilde{a}$.

    We first treat $\sigma=1$. To do this, observe that at every point $x_0\in \widetilde{\Gamma}$, the Taylor series of $a$ at $x_0$ has a positive radius of convergence $R_{x_0}$. Denote by $a_{x_0}$ the (convergent) Taylor series in $B(x_0,R_{x_0})$. Notice that $a_{x_0}$ is then analytic in $B(x_0,R_{x_0})$ and, since $\widetilde{\Gamma}$ is analytic as are $\partial_{\nu}^ja|_{\widetilde\Gamma}$, $\partial_{\nu}^ja_{x_0}|_{\widetilde{\Gamma}\cap B(x_0,R_{x_0})}=\partial_{\nu}^ja|_{\widetilde{\Gamma}\cap B(x_0,R_{x_0})}$. By compactness, we may cover $\widetilde{\Gamma}$ by finitely many such balls. 
    
    Now, consider the overlap of $V_{x_1,x_0}:=B(x_1,R_{x_1})\cap B(x_2,R_{x_2})$. Then, on $\widetilde{\Gamma}$, $a_{x_0}$ and $a_{x_1}$ agree together with all derivatives and hence, since they are both analytic in $V_{x_1,x_0}$, the $a_{x_0}|_{V_{x_1,x_0}}=a_{x_1}|_{V_{x_1,x_0}}$. Hence, we may set 
    $$
    \widetilde{a}(x)=a_{x_0}(x),\quad x\in B(x_0,R_{x_0}).
    $$

When $\sigma>1$, the existence of an extension $\widetilde{a}$ follows from \cite{bruna1980extension}. More precisely, 
forming the Whitney jet associated with $a$ on $\Gamma$ (i.e. the collection of all derivatives of $a$ at $\Gamma$) and apply \cite[Theorem 3.1]{bruna1980extension} to obtain
$\tilde a$ such that \eqref{eq:neighborhood_extension} holds.
\end{proof}

\begin{lemma}\label{l:construction}
    Suppose that $\cP$ is a self-adjoint second-order, elliptic, semiclassical differential operator with coefficients that are $G^\sigma$ at $\Gamma_s\cup \Gamma_p $. Then there is a neighborhood $V$ of $\Gamma_s\cup \Gamma_p$ and $\widetilde{\cP}$ a self-adjoint second-order, elliptic, semiclassical differential operator such that its coefficients are $G^\sigma$ on $V$ and for all $n\in \mathbb{N}$, $\mathcal{D}(\widetilde{\cP}^n) =\mathcal{D}( \cP^n)$.
\end{lemma}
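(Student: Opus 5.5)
The plan is to replace each coefficient of $\cP$ near $\Gamma_s\cup\Gamma_p$ by its Gevrey extension from Lemma~\ref{eq:neighborhood_extension}, glue these to the original coefficients away from the interfaces with a cutoff, and then show the domains of all powers coincide. The key point is that the domain conditions defining $D_k^{2N}$ see only the Taylor jets of the coefficients at the interfaces.

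\textbf{Construction.} Write $\cP$ in divergence form,
$$
\langle \cP u,v\rangle = \langle A k^{-1}\nabla u,k^{-1}\nabla v\rangle - \langle n u, v\rangle ,
$$
on the form domain $\mathcal H$. Apply Lemma~\ref{eq:neighborhood_extension} to each entry of $A$ and to $n$ at $\widetilde\Gamma=\Gamma_s$ and at $\widetilde\Gamma=\Gamma_p$. This gives a neighbourhood $V_0$ and coefficients $\widetilde A_0,\tilde n_0$, Gevrey-$\sigma$ on each side of the interfaces in $V_0$, that agree with $A,n$ to infinite order there. Symmetrize $\widetilde A_0$, which preserves the Gevrey bounds and the infinite-order agreement. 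Choose $\chi\in C^\infty_c(V_0)$ with $\chi\equiv1$ on a smaller neighbourhood $V$. Then set
$$
\widetilde A=\chi\widetilde A_0+(1-\chi)A,\qquad \tilde n=\chi\tilde n_0+(1-\chi)n .
$$
These are piecewise smooth, Gevrey on $V$, and satisfy $\widetilde A-A=O(d(x,\Gamma_s\cup\Gamma_p)^\infty)$.

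Shrinking $V$ keeps $\widetilde A$ uniformly positive definite, since $\widetilde A\to A$ at the interfaces. The form is then symmetric, bounded, and satisfies a G\aa rding inequality on $\mathcal H$. Hence it defines a self-adjoint, elliptic, semiclassical operator $\widetilde\cP$ via Friedrichs/Lax--Milgram, on the same form domain as $\cP$.

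\textbf{Domains.} Prove $\mathcal D(\widetilde\cP^N)=D_k^{2N}=\mathcal D(\cP^N)$ by induction on $N$, using piecewise elliptic regularity for transmission and boundary problems in smooth domains with piecewise smooth coefficients.

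For $N=1$, both domains equal $\{u\in H^2(\Omega\setminus\Gamma_p)\cap\mathcal H\}$ with the natural conditions. These are the Neumann condition $\nu\cdot A\nabla u=0$ on $\Gamma_{\rm hard}$ and the conormal jump condition on $\Gamma_p$. Since $\widetilde A=A$ on the interfaces, these conditions are identical for the two operators.

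For the inductive step, write $u\in\mathcal D(\widetilde\cP^{N})$ iff $u\in\mathcal D(\widetilde\cP)$ and $\widetilde\cP u\in\mathcal D(\widetilde\cP^{N-1})$. The main observation is that
$$
\widetilde\cP u-\cP u=-k^{-2}\operatorname{div}\big((\widetilde A-A)\nabla u\big)-(\tilde n-n)u ,
$$
which is a differential operator with coefficients vanishing to infinite order at $\Gamma_s\cup\Gamma_p$. For $u\in H^{2N}(\Omega\setminus\Gamma_p)$, it therefore produces a function whose traces and normal derivatives up to order $2N-3$ vanish on the interfaces. Consequently the boundary and transmission conditions imposed on $\cP^j u$ and on $\widetilde\cP^j u$ coincide for $j\le N-1$. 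One verifies this by expanding $\widetilde\cP^j=(\cP+E)^j$ with $E=\widetilde\cP-\cP$ and noting that every term containing $E$ is flat at the interfaces. Combined with elliptic regularity applied to either operator to gain $H^{2N}$, this gives equality of the domains.

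\textbf{Main obstacle.} The delicate step is the bookkeeping in the last argument: the Sobolev regularity must be gained together with the trace identification, and both must be handled simultaneously for $\Gamma_s$ (Dirichlet and Neumann) and for the transmission conditions on $\Gamma_p$. I would handle it by working locally in boundary-flattening coordinates, where flatness of $E$ makes all boundary traces of $E\cP^{j}u$ vanish. The equivalence of graph norms \eqref{e:graphNorm} then follows from the same induction, via the closed graph theorem or from explicit elliptic estimates.
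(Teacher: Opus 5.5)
Your proposal is correct and follows the paper's proof: the same jet-matching Gevrey extension from Lemma~\ref{eq:neighborhood_extension} (applied on each side of $\Gamma_p$), the same cutoff gluing $\widetilde A=\chi\widetilde A_0+(1-\chi)A$ with ellipticity from the convex combination, and the same form-defined (Friedrichs) self-adjoint operator; your induction using that $E=\widetilde{\cP}-\cP$ vanishes to infinite order at the interfaces just spells out what the paper compresses into ``by Green's formula it suffices that all normal derivatives of the coefficients agree on $\Gamma_s\cup\Gamma_p$.'' One caution on your aside: \eqref{e:graphNorm} requires constants uniform in $k\geq k_0$, which the closed graph theorem (applied at each fixed $k$) does not give, so use the semiclassical elliptic estimates there, as the paper does.
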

\begin{proof}
Since $\cP$ is a self-adjoint, second-order, elliptic, semiclassical differential operator there exist $\{a_\alpha(\cdot, \cdot)\}_{|\alpha|\leq 2}$, $A$, and $n$ such that
    \[\cP u= \sum_{|\alpha| \leq 2} a_\alpha(x,k)\,(k^{-1} D_x)^\alpha u=-k^{-2}\nabla \cdot(A\nabla u) -n u.\]
    Without loss of generality, we assume that 
    $$   \sum_{|\alpha|=2}A^{ij} (x)\xi_{i}\xi_j>|\xi|^2>0.
    $$
        We look for an extension of the form 
    \[\widetilde{\cP}u = -k^{-2}\nabla\cdot( \tilde{A}\nabla u) -\tilde{n}u.\]
    We choose $M$ large enough so that $\mathcal{P}+M\geq 1$ and define the self-adjoint operator $\widetilde{\cP}$ as the Friedrichs extension of the sesquilinear form $\widetilde{Q}:\mathcal{D}([\mathcal{P}+M]^{1/2})\times \mathcal{D}([\mathcal{P}+M]^{1/2})\to \mathbb{C}$,
    \begin{gather*}
    \widetilde{Q}(u,v):=\langle \tilde{A}k^{-1}\nabla u,k^{-1}\nabla v\rangle-\langle \widetilde{n} u,v\rangle.
    \end{gather*}
    Observe that, using Green's formula, to ensure  $\mathcal{D}(\widetilde{\cP}^n) =\mathcal{D}( \cP^n)$ for all $n\in \mathbb{N}$, it suffices that for all multiindices $\alpha$ with $|\alpha|\leq 2$, and all $m\geq 0$,
    \beq
\label{eq:matching_dom_cond}\frac{\partial^m}{\partial \nu^m} A^{ij} = \frac{\partial^m}{\partial \nu^m} \tilde{A}^{ij},\qquad \frac{\partial^m}{\partial \nu^m} n= \frac{\partial^m}{\partial \nu^m} \tilde{n}  \quad \ton \Gamma_s\cup \Gamma_p.
    \eeq
For the equivalence of norms,~\eqref{e:graphNorm}, it is then sufficient that $\widetilde{\mathcal{P}}$ is an elliptic semiclassical differential operator of order 2.

Lemma~\ref{eq:neighborhood_extension} ensures the existence of a neighborhood $\tilde V$ of $\Gamma_s\cup \Gamma_p$ and $\dbtilde a_\alpha \in G^\sigma(\tilde V)$ such that \eqref{eq:matching_dom_cond} holds. (Strictly speaking, we use two applications for $\Gamma_p$, one on either side of the interface.)

Let $\chi \in \overline{C^\infty} (\Omega;[0,1])$ with $\supp (1-\chi)\cap (\Gamma_s\cup \Gamma_p)=\emptyset$ and $\supp \chi\subset \widetilde{V}$. Define 
$$
\widetilde{A}^{ij}= \chi\dbtilde{A^{ij}}+ (1-\chi)A^{ij},\qquad \widetilde{n}=\chi \dbtilde{n}+(1-\chi)n.
$$
To see that $\widetilde{A}$ remains elliptic, observe that, shrinking $\widetilde{V}$ if necessary, we may assume that
$$
\inf_{x\in V,|\xi|=1,\xi\in\mathbb{R}^d}\dbtilde{A}^{ij}(x)\xi_i\xi_j>0.
$$
Therefore, since $\mathcal{P}$ is elliptic,  $\supp \chi \subset \widetilde{V}$, and $0\leq \chi\leq 1$, there is $c>0$ such that
$$
\Re\sum_{|\alpha|=2}[\chi \dbtilde A^{ij}+(1-\chi)  A^{ij}]\xi_i\xi_j\geq \chi (x)c|\xi|^2+(1-\chi(x))c|\xi|^2=c|\xi|^2, 
$$
and hence $\widetilde{\mathcal{P}}$ is elliptic. Choosing $V\subset \widetilde{V}$ a neighborhood of $(\Gamma_s\cup\Gamma_p)$ with $\supp (1-\chi)\cap V=\emptyset$ then finishes the proof.
\end{proof}

\subsection{Bound on functions of $\widetilde{\cP}$}

\begin{theorem}\label{thm:psi_low_freq_est}
    Let $\sigma\ge1$. Let $k_0>0$, $\Gamma_s$, $\Gamma_p$ be Gevrey-$\sigma$ and let  $\psi \in C_c^\infty(\R)$ be a compactly supported function. Suppose $\widetilde{\cP}$ is a self-adjoint, elliptic semiclassical second-order differential operator with Gevrey-$\sigma$ coefficients in a neighborhood $V$ of $\Gamma_s \cup \Gamma_p$. Let a smaller neighbourhood $\widetilde V \Subset V $. There exists a constant $C_0\geq 1$ such that for all $\alpha\in\mathbb{N}^d$ and all $k\geq k_0$
    \beq\label{est:psi_low_freq_est}
    \| \partial^\alpha  \psi(\widetilde{\cP})\|_{L^2(\Omega)\to L^2(\widetilde{V}\setminus\Gamma_p)} \leq C_0^{|\alpha|+1} \max\{k, |\alpha|^\sigma\}^{|\alpha|}.
    \eeq
\end{theorem}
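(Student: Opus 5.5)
The plan is to reduce \eqref{est:psi_low_freq_est} to a Gevrey-type elliptic regularity estimate for the operator $\widetilde{\cP}$, applied to the function $v:=\psi(\widetilde{\cP})f$ with $f\in L^2(\Omega)$. Since $\psi$ is compactly supported, $v$ lies in $\mathcal{D}(\widetilde{\cP}^m)$ for every $m$. More quantitatively, by the spectral theorem,
\[
\|\widetilde{\cP}^m v\|_{L^2}\le \Big(\sup_{\lambda\in\supp\psi}|\lambda|\Big)^m\|\psi\|_{L^\infty}\|f\|_{L^2}\le C_1^{m+1}\|f\|_{L^2}
\]
for all $m\ge 0$, with $C_1$ independent of $k$. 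Thus $v$ is an "analytic vector" of $\widetilde{\cP}$, with constant $C_1$ uniform in $k$. Moreover, by Lemma~\ref{l:construction} and the domain identity $\mathcal{D}(\widetilde{\cP}^m)=\mathcal{D}(\cP^m)=D_k^{2m}$, $v$ satisfies all of the iterated boundary conditions ($\mathcal{B}\widetilde{\cP}^jv=0$ on $\Gamma_s$, and the transmission conditions for $\widetilde{\cP}^jv$ across $\Gamma_p$).

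The second step is to convert these bounds on powers into derivative bounds near $\Gamma_s\cup\Gamma_p$. I would use the Gevrey-$\sigma$ elliptic regularity results of Appendix~\ref{sec:elliptic_reg_theory}. These are of the classical Morrey--Nirenberg / Lions--Magenes "iterates of an elliptic operator" type, adapted to transmission problems across $\Gamma_p$ and with coefficients that are $G^\sigma$ on $V$. The semiclassical scaling is the point where I would be most careful. Write $\widetilde{\cP}=k^{-2}L$, where $L=-\nabla\cdot(\widetilde A\nabla\cdot)-k^2\tilde n$ has $k$-independent principal part. Then $\|L^m v\|\le (C_1k^2)^{m}\|f\|$. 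The theorem on iterates then gives, on a family of nested neighbourhoods $\widetilde V\Subset V_{|\alpha|}\Subset\dots\Subset V$ shrinking by steps of size $\sim 1/|\alpha|$, an estimate of the form
\[
\|\partial^\alpha v\|_{L^2(\widetilde V\setminus\Gamma_p)}\le C^{|\alpha|+1}\sum_{2m+j\le|\alpha|} k^{2m}\,(|\alpha|^{\sigma})^{j}\,\|f\|_{L^2}.
\]
Here the $k^{2m}$ comes from the powers $L^m$, and the factor $|\alpha|^{\sigma j}$ comes from the Gevrey growth of the coefficients and of the flattening of the boundary, together with the $1/|\alpha|$-shrinking cutoffs. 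Each term is bounded by $\max\{k,|\alpha|^\sigma\}^{|\alpha|}$, and summing over at most $(|\alpha|+1)^2$ terms only enlarges $C_0$. This yields \eqref{est:psi_low_freq_est}. The term $k^2\tilde n$ in $L$ is harmless, since it is a zeroth-order term of size $k^2$ and fits into the same counting.

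In practice I would carry out the regularity step by a standard induction. First flatten $\Gamma_s$ and $\Gamma_p$ locally with $G^\sigma$ diffeomorphisms. Tangential derivatives are then estimated by difference quotients / commutators, using nested cutoffs with $|\partial^\beta\chi_j|\le (C|\alpha|)^{|\beta|}$; Gevrey-$\sigma$ cutoffs are not needed for this, because only finitely many derivatives land on each cutoff at each step. Normal derivatives are recovered from the equation $Lv=g$ by solving for $\partial_\nu^2 v$. Because the boundary and transmission conditions are satisfied by every $\widetilde{\cP}^jv$, the commutators with tangential derivatives stay within the admissible class. Away from the boundary, interior estimates are simpler, but the statement only concerns $\widetilde V$.

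The main obstacle will be the combinatorial bookkeeping in this induction: one has to show that the constants compound only geometrically, i.e. as $C^{|\alpha|}$, while the Gevrey factors combine to $(|\alpha|!)^\sigma\lesssim |\alpha|^{\sigma|\alpha|}C^{|\alpha|}$ and the $k$ factors combine to $k^{|\alpha|}$, with no spurious $|\alpha|!\,k^{|\alpha|}$ cross terms. I would first try to handle this with Nirenberg-type weighted seminorms $\sup_j \varepsilon^{|\alpha|}\|\partial^\alpha v\|_{L^2(V_j)}/\max\{k,|\alpha|^\sigma\}^{|\alpha|}$ and prove the inductive step by the Leibniz rule. If that proves too cumbersome, I would fall back on directly invoking the Gevrey regularity theorem of Appendix~\ref{sec:elliptic_reg_theory}, applied to $L^m v$.
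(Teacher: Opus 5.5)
Your plan is essentially the paper's proof: the paper also applies the Gevrey estimates of Appendix~\ref{sec:elliptic_reg_theory} (with zero boundary/transmission data) to $\psi(\widetilde{\cP})u$ in flattened boundary, interface and interior charts, tracks the $1/|\alpha|$ domain shrinkage with $\rho$-weighted Morrey-type seminorms, and then covers $\widetilde V$ by finitely many charts. The only difference is organisational. Appendix~\ref{sec:elliptic_reg_theory} bounds $u$ by derivatives of $Lu$, not by $\|L^m u\|$, so instead of an iterates theorem the paper runs an induction on $|\alpha|$ for the weighted statement \eqref{est:psi_low_freq_est2_bndryb}. It closes that induction via $\widetilde{\cP}\psi(\widetilde{\cP})=\psi(\widetilde{\cP})\,\widetilde{\cP}\tilde\psi(\widetilde{\cP})$ with $\tilde\psi\equiv1$ on $\supp\psi$, so the right-hand side is again $\psi(\widetilde{\cP})$ applied to an $L^2$-bounded function; this is your fallback route. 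It must be carried out in that weighted local form rather than by iterating the fixed-domain Theorems~\ref{thm:int_Gev_reg}--\ref{thm:transmission}, whose constants depend on the pair $\Omega_1\Subset\Omega_2$.
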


To prove Theorem~\ref{thm:psi_low_freq_est}, we pay special attention to how fast the neighbourhoods  $\widetilde V \Subset V $ shrink to avoid degeneracy of the estimate. To do this,
we use the Gevrey elliptic estimates in Appendix~\ref{sec:elliptic_reg_theory}. We first start with a result on half balls $V_R:= B_R \cap \{x_n>0\}$, which allows us to obtain control on the derivatives on shrinking domains with the derivative order. 
\begin{lemma}[Flattened boundary]
\label{p:psi_low_freq_est_flattened_bndry}
    Let $\sigma\geq1$. Let $R_*,k_0>0$, let  $\psi \in C_c^\infty(\R)$ be a compactly supported function. Suppose $\widetilde{\cP}$ is a self-adjoint, elliptic semiclassical second-order differential operator on $\Omega$ with Gevrey-$\sigma$ coefficients in a half-ball $V_{R_*}$. There exists a constant $C_0\geq 1$ such that for all $\alpha\in\mathbb{N}^d$, for all $R \in (0,R_*]$, 
    and all $k\geq k_0$
    \beq\label{est:psi_low_freq_est2_bndry}
      \| \partial^\alpha  \psi(\widetilde{\cP})\|_{L^2(\Omega)\to L^2(V_{R/2})} \leq C_0^{|\alpha|+1} \max\big\{k,|\alpha|^\sigma\big\}^{|\alpha|}.
    \eeq
\end{lemma}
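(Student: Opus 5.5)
The plan is to reduce the estimate to a Gevrey elliptic regularity bound for iterates of $\widetilde{\cP}$ on nested half-balls, and then to feed in the fact that $\psi(\widetilde{\cP})$ has bounded "frequency". Write $u=\psi(\widetilde{\cP})f$ with $\|f\|_{L^2(\Omega)}=1$. By the functional calculus for the self-adjoint $\widetilde{\cP}$, for every $m\in\mathbb{N}$ we have $\widetilde{\cP}^m u=(\lambda^m\psi)(\widetilde{\cP})f$. Since $\supp\psi\subset[-L,L]$,
\[
\|\widetilde{\cP}^m u\|_{L^2(\Omega)}\leq L^m\|\psi\|_{L^\infty}.
\]
Moreover $u\in\mathcal{D}(\widetilde{\cP}^m)$ for all $m$, so $u$ satisfies all the boundary and transmission conditions encoded in the domain (in the flattened picture, those on $\{x_n=0\}$). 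The semiclassical operator $\widetilde{\cP}$ equals $k^{-2}$ times a $k$-independent elliptic operator $L_0$ whose coefficients are Gevrey-$\sigma$ on $V_{R_*}$, up to lower-order terms with Gevrey coefficients. Hence $L_0^m u=k^{2m}\widetilde{\cP}^m u$ up to such terms, and
\[
\|L_0^m u\|_{L^2}\lesssim (Lk^2)^m .
\]

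Next I will invoke the Gevrey elliptic regularity estimate from Appendix~\ref{sec:elliptic_reg_theory}, in its nested half-ball form. For $R/2\le r<r+\delta\le R$ and $\ell\geq 2$, it should give
\[
\|\partial^{\alpha}w\|_{L^2(V_r)}\leq C\Big(\|\partial^{\alpha'}L_0 w\|_{L^2(V_{r+\delta})}+\sum_{j<|\alpha|}(C\delta^{-1})^{|\alpha|-j}\,\big(\text{coefficient Gevrey factors}\big)\,\|\partial^{\le j}w\|_{L^2(V_{r+\delta})}\Big),
\]
with $|\alpha'|=|\alpha|-2$, tangential derivatives handled directly and normal derivatives recovered from the equation. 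I iterate this $\lceil|\alpha|/2\rceil$ times, using shrinking radii $R/2+jR/(2|\alpha|)$ so that each step costs a factor $\delta^{-1}\sim |\alpha|/R$. This yields
\[
\|\partial^\alpha u\|_{L^2(V_{R/2})}\leq C^{|\alpha|+1}\sum_{2m+j\le|\alpha|}(k^2L)^m\,|\alpha|^{j}\,(\text{Gevrey factor})^{j}.
\]
Bounding $k^{2m}$ by $k^{|\alpha|-j}$ and the derivatives falling on the coefficients by $(|\alpha|!)^{\sigma}$-type factors gives $\max\{k,|\alpha|^\sigma\}^{|\alpha|}$. The low-order terms $\|u\|_{H^1_k}$ are controlled by $\|u\|_{L^2}+\|\widetilde{\cP}u\|_{L^2}$ via Lemma~\ref{l:coerce}-type Gårding bounds.

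The main obstacle is the combinatorics of the iteration. The factors $|\alpha|^{j}$ produced by shrinking the domains, together with the Gevrey growth $(\gamma!)^\sigma$ of the coefficient derivatives, must be shown to combine into at most $C^{|\alpha|}|\alpha|^{\sigma|\alpha|}$, and not, for instance, $(|\alpha|!)^{\sigma+1}$. I would try to handle this with the standard Morrey–Nirenberg/Kato weighted seminorms
\[
N_j(u)=\sup_{\delta}\frac{\delta^{j}}{j!^\sigma}\,\|\nabla^j u\|_{L^2(V_{R/2+\delta})},
\]
proving by induction on $j$ a bound of the form $N_j\le C^{j+1}\max\{k^j/j!^\sigma,1\}$. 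The inductive step would use the appendix estimate applied to $L_0$ together with a Leibniz-rule sum $\sum\binom{j}{i}(i!)^\sigma((j-i)!)^\sigma\le C\,(j!)^\sigma$. The induction would close because the $k$-dependent source $k^2\widetilde{\cP}$ contributes exactly two powers of $k$ per two derivatives.
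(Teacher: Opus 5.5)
Your outline is viable, but it is organised differently from the paper's proof. The paper proves the stronger Sobolev--Morrey form
\[
\max_{0\le\rho\le R/(2\max(|\alpha|,1))}\rho^{|\alpha|\sigma}\|\partial^\alpha\psi(\widetilde{\cP})\|_{L^2(\Omega)\to L^2(V_{R-|\alpha|\rho})}\le C_0^{|\alpha|+1}\max\{k/\max(|\alpha|^\sigma,1),1\}^{|\alpha|}
\]
by a single induction on $|\alpha|$. At each step it applies Proposition~\ref{prop:isotropic} with $h=g=0$ directly to $\psi(\widetilde{\cP})u$, with source $\widetilde{\cP}\psi(\widetilde{\cP})u$. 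This means the nested-domain combinatorics you single out as the main obstacle is already absorbed by the appendix.

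The paper then controls the source via the factorisation $\widetilde{\cP}\psi(\widetilde{\cP})=\psi(\widetilde{\cP})\,\tilde\psi(\widetilde{\cP})\,\widetilde{\cP}\,\tilde\psi(\widetilde{\cP})$, with $\tilde\psi\equiv1$ on $\supp\psi$. So the source is again $\psi(\widetilde{\cP})$ applied to an $L^2$ function of norm at most $C_1\|u\|$. Because the induction hypothesis is an operator bound on all of $L^2(\Omega)$, it controls every derivative of the source of order $\le|\alpha|-2$.

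You instead work with the iterates $\widetilde{\cP}^mu$ and the bound $\|\widetilde{\cP}^mu\|_{L^2}\le L^m\|\psi\|_{L^\infty}$, which is the classical Kotake--Narasimhan ``problem of iterates'' argument. It only uses $u\in\bigcap_m\mathcal{D}(\widetilde{\cP}^m)$ with geometric bounds on the iterates, so it is slightly more general. The cost is a two-parameter induction (in derivative order and in $m$, carrying the factor $L^m$). The paper's factorisation gives a one-parameter induction that plugs straight into Proposition~\ref{prop:isotropic}.

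As written, three points need repair.

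First, your seminorm is reversed. With domain $V_{R/2+\delta}$ growing in $\delta$ and weight $\delta^j$ also growing, the supremum sits on the largest set. That leaves no room for the cut-off, so the recursion cannot close. You want $\sup_{0<\delta<R/2}\delta^j(j!)^{-\sigma}\|\nabla^ju\|_{L^2(V_{R-\delta})}$ (compare \eqref{d:Sobolev-Morrey_anisotropic}).

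Second, your final induction is stated for $u$ alone, but the inductive step needs $j$ derivatives of the source $k^2\widetilde{\cP}u$. Bounding these through $u$ is circular: it costs $j+2$ derivatives of $u$ and just reproduces $L_0u$. The hypothesis must hold simultaneously for all $w_m=\widetilde{\cP}^mu$, e.g.\ $N_j(w_m)\le C^{j+1}L^m\max\{k^j/(j!)^\sigma,1\}$, or be replaced by the factorisation above.

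Third, drop the claim $\|L_0^mu\|_{L^2}\lesssim(Lk^2)^m$. Since $k^2\widetilde{\cP}=L_0-k^2\tilde n$, the zeroth-order term is not lower order semiclassically. Expanding $(\widetilde{\cP}+\tilde n)^m$ puts up to $2m-2$ derivatives on $\tilde n$, which is Gevrey only on $V_{R_*}$. This gives growth of order $C^m\max\{k,m^\sigma\}^{2m}$, not $k^{2m}$. Keep $k^2\tilde nw$ on the right-hand side as in \eqref{eq:L_withrhs}, which the appendix already handles.

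Also, $\sum_i\binom{j}{i}(i!)^\sigma((j-i)!)^\sigma\le C(j!)^\sigma$ fails at $\sigma=1$, where the sum equals $(j+1)\,j!$. It is the geometric weights $(KR)^{t-d}$ of Lemma~\ref{lemma::commutator_estimate} that make the Leibniz sums converge.
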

\begin{proof}
We actually prove the stronger estimate that, for $|\alpha|\geq 0$,
\beq\label{est:psi_low_freq_est2_bndryb}
   \max_{0\leq \rho\leq \frac{R}{2\max(|\alpha|,1)}} \rho^{|\alpha|\sigma}   \| \partial^\alpha  \psi(\widetilde{\cP})\|_{L^2(\Omega)\to L^2(V_{R-|\alpha|\rho})} \leq C_0^{|\alpha|+1} \max\big\{\frac{k}{\max(|\alpha|^\sigma,1)},1\big\}^{|\alpha|};
    \eeq
the result \eqref{est:psi_low_freq_est2_bndry} then follows by setting $\rho=\frac{R}{2|\alpha|}$ in \eqref{est:psi_low_freq_est2_bndryb}.

To prove \eqref{est:psi_low_freq_est2_bndryb}, we proceed by induction over the order $|\alpha|$.
    For $|\alpha| \in \{0,1\}$, \eqref{est:psi_low_freq_est2_bndry} is satisfied provided $C_0$ is large enough. We additionally choose $C_0\geq A:= B_0 B\geq 1$, with $B_0, B$ as in Proposition~\ref{prop:isotropic}. Assume by induction that~\eqref{est:psi_low_freq_est2_bndry} holds for all $0\leq |\alpha|\leq a-1$ and some $a\geq2$. 
  
   Using Proposition~\ref{prop:isotropic} with boundary data $h=g=0$,  
   we write 
    \begin{equation*}
       \begin{aligned}
        &\max_{0\leq \rho\leq \frac{R}{2|\alpha|}} \rho^{|\alpha|\sigma}  \| \partial^\alpha \psi(\widetilde{\cP}) u\|_{L^2(V_{R-|\alpha|\rho})} \\
        &\leq  A^{a}\max \Big\{
        \frac{k}{ a^\sigma},1\Big\}
        ^{a}\Big(  \max_{0 \leq |\beta|\leq a-2} 
        \max\Big\{\frac{k}{(|\beta|+2)^\sigma},1\Big\}^{-|\beta|} R_*^{2\sigma} A^{-|\beta|-1} \max_{0\leq \rho\leq \frac{R}{2(|\beta|+1)}} \rho^{|\beta|\sigma} 
        \|\partial^{\beta}  \widetilde{\cP}  \psi(\widetilde{\cP}) u\|_{L^2(V_{R-|\beta|\rho})} \\& \hspace{5cm}
        + \| \psi(\widetilde{\cP})u\|_{H^1_k(V_{R})}\Big).
        \end{aligned}
        \end{equation*}

        Let $\tilde \psi(\widetilde{\cP})\in C_c^\infty (\R)$ $\supp(1-\tilde \psi) \cap \supp(\psi) = \emptyset$, we write 
        $\widetilde{\cP} \psi(\widetilde{\cP}) = (\psi \tilde \psi) (\widetilde{\cP}) \widetilde{\cP}\tilde{\psi}(\widetilde{\cP})$, and use the induction hypothesis to find
          \begin{equation*}
       \begin{aligned}
        & \max_{0\leq \rho\leq \frac{R}{2|\alpha|}} \rho^{|\alpha|\sigma}  \| \partial^\alpha \psi(\widetilde{\cP}) u\|_{L^2(V_{R-|\alpha|\rho})} \\
        & 
        \leq  
    A^{a}\max \Big\{
        \frac{k}{ a^\sigma},1\Big\}\left(e^{2\sigma} R_*^{2\sigma}( A^{-1}C_0)^{a-1}
        \|\tilde \psi(\widetilde{\cP}) \widetilde{\mathcal{P}}\tilde{\psi}(\widetilde{\cP}) u\|_{L^2(\Omega)} + \|\psi(\widetilde{\cP}) u\|_{H^1_k(V_R)}\right)\\
        & 
        \leq  e^{2\sigma} R_*^{2\sigma}A^{a}( A^{-1}C_0)^{a-1}  \Big\{
        \frac{k}{ a^\sigma},1\Big\} \left( \|\tilde \psi(\widetilde{\cP}) \widetilde{\mathcal{P}} \tilde{\psi}(\widetilde{\cP})u\|_{L^2(\Omega)} + e^{-2\sigma } R_*^{-2\sigma}(A^{-1}C_0)^{1-a}\|\psi(\widetilde{\cP}) u\|_{H^1_k(V_R)}\right)
        \\
        & 
        \leq  e^{2\sigma} R_*^{2\sigma} A {C_0}^{a-1}  \Big\{
        \frac{k}{ a^\sigma},1\Big\} \left( \|\tilde \psi(\widetilde{\cP}) \widetilde{\mathcal{P}} \tilde{\psi}(\widetilde{\cP})u\|_{L^2(\Omega )} + e^{-2\sigma } R_*^{-2\sigma}(A^{-1}C_0)^{1-a}\|\psi(\widetilde{\cP}) u\|_{H^1_k(V_R)}\right),
        \end{aligned} 
\end{equation*}
        where we have used that for all $\beta \in \mathbb{N}^d$
        \[
        \Bigg(\frac{(|\beta|+2)^\sigma}{\max\{|\beta|^\sigma,1\}}\Bigg)^{|\beta|} \leq e^{2\sigma}.
        \]
            Since $\widetilde{\psi}$ has compact support, 
        \[\|\widetilde \psi(\widetilde{\cP}) \widetilde{\mathcal{P}} \tilde{\psi}(\widetilde{\cP})u\|_{L^2(\Omega)} \leq C_1 \|u\|_{L^2(\Omega)},
        \]
        and by the induction hypothesis 
        \[
        \|\psi (\widetilde{\cP}) u\|_{H^1_k(V_R)} \leq \Big(C_0 + R_*^{-\sigma} C_0^2 \max\Big\{\frac{1}{k_0},1\Big\}\Big)\|u\|_{L^2(\Omega)} \leq c_{k_0,R_*} C_0^2\|u\|_{L^2(\Omega)},
        \]
        for some constant  $c_{k_0,R_*}>0$. Thus
         \begin{equation*}
       \begin{aligned}
        \max_{0\leq \rho\leq \frac{R}{2|\alpha|}} \rho^{|\alpha|\sigma} \| \partial^\alpha \psi (\widetilde{\cP})u\|_{L^2(V_{R-|\alpha|\rho})} 
        &\leq   e^{2\sigma} R_*^{2\sigma} A {C_0}^{a-1}  \Big\{
        \frac{k}{ a^\sigma},1\Big\} \left(  C_1 + c_{k_0,R_*}  A^a  C_0^2\right)\|u\|_{L^2(\Omega)}\\
        &
        \leq  \left( e^{2\sigma} R_*^{2\sigma} A {C_0}^{a-1}C_1   +  c_{k_0,R_*} A^a C_0^2\right) \max \left(
        k, a^\sigma\right)^{a}\|u\|_{L^2(\Omega)}\\
        &
        \leq C_0^{a+1}   \Big\{
        \frac{k}{ a^\sigma},1\Big\}  \|u\|_{L^2(\Omega)}
        \end{aligned} 
\end{equation*}
provided $C_0$ is chosen such that
\[
C_0 \geq \max\big\{\sqrt{2e^{2\sigma} R_*^{2\sigma} A C_1}, 2 c_{k_0,R_*}  A^2\big\}.
\]
\end{proof}

Next we consider estimates on flattened interfaces. To simplify presentation, we use the notation that $I_R = B_R \cap \{x_n=0\}$ and $(B\setminus I)_R = B_R\setminus I_R$.

\begin{lemma}[Flattened interface]
\label{p:psi_low_freq_est_flattened_interface}
    Let $\sigma\geq 1$. Let $R_*,k_0>0$, let  $\psi \in C_c^\infty(\R)$ be a compactly supported function. Suppose $\widetilde{\cP}$ is a self-adjoint, elliptic semiclassical second-order differential operator on $\Omega$ with Gevrey-$\sigma$ coefficients in a ball $(B\setminus I)_{R_*}$. There exists a constant $C_0\geq 1$ such that for all $\alpha\in\mathbb{N}^d$, for all $R \in (0,R_*]$,
    and all $k\geq k_0$
\beq\label{est:psi_low_freq_est2_interface}
     \| \partial^\alpha  \psi(\widetilde{\cP})\|_{L^2(\Omega)\to L^2((B\setminus I)_{R/2})} \leq C_0^{|\alpha|+1} \max\big\{k,|\alpha|^\sigma\big\}^{|\alpha|}
    \eeq
\end{lemma}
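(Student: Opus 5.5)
I would follow the proof of Lemma~\ref{p:psi_low_freq_est_flattened_bndry} essentially verbatim, replacing the half-ball boundary regularity estimate by the corresponding Gevrey transmission (interface) estimate from Appendix~\ref{sec:elliptic_reg_theory}. The plan is again to prove a stronger, nested-domain statement by induction on $|\alpha|$. That statement is
\[
\max_{0\leq \rho\leq \frac{R}{2\max(|\alpha|,1)}} \rho^{|\alpha|\sigma}\, \| \partial^\alpha \psi(\widetilde{\cP})\|_{L^2(\Omega)\to L^2((B\setminus I)_{R-|\alpha|\rho})} \leq C_0^{|\alpha|+1} \max\Big\{\frac{k}{\max(|\alpha|^\sigma,1)},1\Big\}^{|\alpha|}.
\]
Taking $\rho=R/(2|\alpha|)$ then gives \eqref{est:psi_low_freq_est2_interface}, exactly as before. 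Here $\partial^\alpha$ is understood piecewise, i.e.\ applied to the restrictions to $\{x_n>0\}$ and $\{x_n<0\}$ separately.

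The key input is an interface analogue of Proposition~\ref{prop:isotropic}. For $w$ solving $\widetilde{\cP}w=F$ on $(B\setminus I)_{R_*}$ with homogeneous transmission conditions on $I$ (continuity of $w$ and of the conormal derivative $\partial_{\nu_A}w$), it should bound $\rho^{|\alpha|\sigma}\|\partial^\alpha w\|_{L^2((B\setminus I)_{R-|\alpha|\rho})}$ in terms of
\begin{itemize}
\item the lower-order quantities $\rho^{|\beta|\sigma}\|\partial^\beta F\|$ for $|\beta|\le |\alpha|-2$, on slightly larger nested domains, and
\item $\|w\|_{H^1_k((B\setminus I)_R)}$,
\end{itemize}
with the same weights $A^{a}\max\{k/a^\sigma,1\}^a$. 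I expect the appendix to supply this, proved in the same manner as the boundary case: tangential derivatives are handled by difference quotients across the flat interface, which preserve the transmission conditions, and normal derivatives are recovered from the equation on each side. The inputs are ellipticity of $\widetilde{\cP}$ and the Gevrey-$\sigma$ bounds on the one-sided coefficients. If the appendix states this only for the boundary case, I would reduce to it by the standard folding trick. Setting $x_n\mapsto -x_n$ on the lower side produces a $2\times$ system on the half-ball, with coupled boundary conditions that are still of Dirichlet/Neumann type. The Gevrey estimates for such systems follow in the same way.

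We apply this estimate to $w=\psi(\widetilde{\cP})u$. This is legitimate because $\psi(\widetilde{\cP})u\in\mathcal{D}(\widetilde{\cP}^N)$ for all $N$, so it satisfies the transmission conditions on $\Gamma_p$, which is $I$ after flattening. We take $F=\widetilde{\cP}\psi(\widetilde{\cP})u=\psi(\widetilde{\cP})\,\widetilde{\cP}\tilde\psi(\widetilde{\cP})u$, where $\tilde\psi\in C_c^\infty(\R)$ satisfies $\supp(1-\tilde\psi)\cap\supp\psi=\emptyset$. Since $\widetilde{\cP}\tilde\psi(\widetilde{\cP})$ is bounded on $L^2$ by some constant $C_1$, the induction hypothesis controls the $F$ terms. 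The $H^1_k$ term is controlled by the cases $|\alpha|\le1$. For those base cases I would use $\|\psi(\widetilde{\cP})\|_{L^2\to H^1_k}\le C$, which follows from the G\aa rding inequality, since $\langle\widetilde{\cP}v,v\rangle$ controls $\|v\|_{H^1_k}^2$ up to $L^2$ terms and $\psi$ has compact support. Combining these via the elementary bound $\big((|\beta|+2)^\sigma/\max\{|\beta|^\sigma,1\}\big)^{|\beta|}\le e^{2\sigma}$ closes the induction, once $C_0$ is chosen larger than $A=B_0B$ and than constants depending on $C_1$, $R_*$ and $k_0$, exactly as in the boundary case.

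The main obstacle is the interface regularity estimate with the correct Gevrey constants, uniform in the shrinking parameter $\rho$. In particular, normal derivatives must be recoverable on each side separately without loss, even though $w$ is only piecewise smooth. Everything after that estimate is the same bookkeeping as in Lemma~\ref{p:psi_low_freq_est_flattened_bndry}.
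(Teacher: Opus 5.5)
Your proposal is correct and matches the paper's proof, which runs the same induction as Lemma~\ref{p:psi_low_freq_est_flattened_bndry}. That induction works on the nested-domain strengthening and feeds the induction hypothesis through $\widetilde{\cP}\psi(\widetilde{\cP})=\psi(\widetilde{\cP})\widetilde{\cP}\tilde\psi(\widetilde{\cP})$, with Proposition~\ref{prop:transmission_isotropic} (homogeneous interface data $g_I=h_I=0$) replacing Proposition~\ref{prop:isotropic}, so your folding or difference-quotient fallback is not needed.
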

\begin{proof}
The proof follows similarly to that of Proposition~\ref{p:psi_low_freq_est_flattened_bndry} by using as initial building block the estimate of Proposition~\ref{prop:transmission_isotropic}    
\end{proof}

The final building block toward proving Theorem~\ref{thm:psi_low_freq_est}, is to have interior estimates on shrinking balls with the derivative order.

\begin{lemma}[Interior estimate]
\label{p:psi_low_freq_est_flattened_interior}
    Let $\sigma\geq1$. Let $R_*,k_0>0$, let  $\psi \in C_c^\infty(\R)$ be a compactly supported function. Suppose $\widetilde{\cP}$ is a self-adjoint, elliptic semiclassical second-order differential operator on $\Omega$ with Gevrey-$\sigma$ coefficients in a ball $B_{R_*}$. There exists a constant $C_0\geq 1$ such that for all $\alpha\in\mathbb{N}^d$, for all $R \in (0,R_*]$, 
    and all $k\geq k_0$
    \beq\label{est:psi_low_freq_est2_interior}
     \| \partial^\alpha  \psi(\widetilde{\cP})\|_{L^2(\Omega)\to L^2(B_{R/2})} \leq C_0^{|\alpha|+1} \max\big\{k,|\alpha|^{\sigma}
    \big \}^{|\alpha|}.
    \eeq
\end{lemma}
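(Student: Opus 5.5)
We follow the proof of Lemma~\ref{p:psi_low_freq_est_flattened_bndry}, replacing the boundary Gevrey elliptic estimate (Proposition~\ref{prop:isotropic}) by its interior analogue from Appendix~\ref{sec:elliptic_reg_theory}. That analogue is an interior Gevrey-$\sigma$ estimate on nested balls $B_{R-|\alpha|\rho}\subset B_R$. It is the same estimate with the boundary terms absent, and it follows from the standard Caccioppoli-type cutoff argument with Gevrey cutoffs.

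As in the boundary case, we do not prove \eqref{est:psi_low_freq_est2_interior} directly but the stronger shrinking-ball statement
\[
\max_{0\leq \rho\leq \frac{R}{2\max(|\alpha|,1)}} \rho^{|\alpha|\sigma}\, \| \partial^\alpha \psi(\widetilde{\cP})\|_{L^2(\Omega)\to L^2(B_{R-|\alpha|\rho})} \leq C_0^{|\alpha|+1} \max\Big\{\frac{k}{\max(|\alpha|^\sigma,1)},1\Big\}^{|\alpha|}.
\]
Taking $\rho=R/(2|\alpha|)$ gives \eqref{est:psi_low_freq_est2_interior}. Here $\rho^{-|\alpha|\sigma}$ is comparable to $(2|\alpha|/R)^{|\alpha|\sigma}$, which is absorbed into $C_0^{|\alpha|}|\alpha|^{\sigma|\alpha|}$, and $R\leq R_*$ is fixed.

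We argue by induction on $a=|\alpha|$. The cases $a\in\{0,1\}$ hold for $C_0$ large. For $a=0$ we use $\|\psi(\widetilde{\cP})\|_{L^2\to L^2}\leq \|\psi\|_\infty$. For $a=1$ we use the G\aa rding inequality $\|\psi(\widetilde{\cP})u\|_{H^1_k}^2\lesssim \langle(\widetilde{\cP}+M)\psi(\widetilde{\cP})u,\psi(\widetilde{\cP})u\rangle\lesssim\|u\|_{L^2}^2$, which gives $\|\partial\psi(\widetilde{\cP})\|\lesssim k$.

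For the inductive step, apply the interior Gevrey estimate to $v=\psi(\widetilde{\cP})u$ with right-hand side $\widetilde{\cP}v$. This bounds $\rho^{a\sigma}\|\partial^\alpha v\|_{L^2(B_{R-a\rho})}$ by
\[
A^a\max\{k/a^\sigma,1\}^a\Big(\sum_{|\beta|\leq a-2}(\text{weights})\,\rho^{|\beta|\sigma}\|\partial^\beta \widetilde{\cP}v\|_{L^2(B_{R-|\beta|\rho})}+\|v\|_{H^1_k(B_R)}\Big).
\]
Next choose $\tilde\psi\in C_c^\infty(\R)$ with $\tilde\psi=1$ on $\supp\psi$, and write $\widetilde{\cP}v=\tilde\psi(\widetilde{\cP})\big[\widetilde{\cP}\psi(\widetilde{\cP})u\big]$. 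Applying the induction hypothesis with $\tilde\psi$ in place of $\psi$ bounds $\|\partial^\beta\widetilde{\cP}v\|$ by $C_0^{|\beta|+1}(\cdots)\,\|\widetilde{\cP}\psi(\widetilde{\cP})u\|_{L^2}\leq C_1C_0^{|\beta|+1}(\cdots)\|u\|$. For this to work, $C_0$ must be uniform over the finite family $\{\psi,\tilde\psi\}$, and the cleanest way is to run the induction for both functions simultaneously.

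Finally, the factors $\big((|\beta|+2)^\sigma/\max(|\beta|^\sigma,1)\big)^{|\beta|}\leq e^{2\sigma}$ absorb the index shift by two in the elliptic estimate. Choosing $C_0\geq\max\{\sqrt{2e^{2\sigma}R_*^{2\sigma}AC_1},\,2c_{k_0,R_*}A^2\}$ closes the induction exactly as before.

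The main obstacle is the bookkeeping in the elliptic estimate. It must lose only two derivatives per application, with geometric constants $A^a$ and the correct powers of $\rho$ and $k$, so that the induction closes with a single $C_0$. The interior case is the easiest of the three, since there are no boundary or transmission terms.
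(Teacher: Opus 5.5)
Your plan matches the paper's. The paper proves this lemma by rerunning the proof of Lemma~\ref{p:psi_low_freq_est_flattened_bndry} with an interior Gevrey estimate as the building block; Proposition~\ref{prop:Gevrey_fundamental_est} is the appropriate one. It proves the shrinking-ball bound by induction on $|\alpha|$ and closes with the same choice of $C_0$. However, one step in your version does not close as written.

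\textbf{The gap.} You factor $\widetilde{\cP}v=\tilde\psi(\widetilde{\cP})\,[\widetilde{\cP}\psi(\widetilde{\cP})u]$, which forces you to use the induction hypothesis for $\tilde\psi$. You then try to make $C_0$ uniform by inducting simultaneously over the finite family $\{\psi,\tilde\psi\}$. That family is not closed under your own recipe. The inductive step for $\tilde\psi$ needs a third function equal to $1$ on $\supp\tilde\psi$, and neither candidate works:
\begin{itemize}
\item $\tilde\psi$ cannot serve: a nonzero function in $C_c^\infty(\R)$ equal to $1$ on its own support would be a continuous $\{0,1\}$-valued function on $\R$, hence $\equiv 1$, which is not compactly supported.
\item $\psi$ cannot serve: since $\supp\psi\subset\supp\tilde\psi$, it would equal $1$ on its own support.
\end{itemize}
So you get a regress $\psi,\tilde\psi,\tilde{\tilde\psi},\dots$, and the uniformity of $C_0$, which you correctly flagged as essential, is not established.

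\textbf{The fix.} Use the factorization the paper uses. Since $\tilde\psi=1$ on $\supp\psi$, you have $\widetilde{\cP}\psi(\widetilde{\cP})=\psi(\widetilde{\cP})\,\big[\widetilde{\cP}\tilde\psi(\widetilde{\cP})\big]$. Hence $\partial^\beta\widetilde{\cP}v=\partial^\beta\psi(\widetilde{\cP})g$ with $g:=\widetilde{\cP}\tilde\psi(\widetilde{\cP})u$ and $\|g\|_{L^2}\le \sup_\lambda|\lambda\tilde\psi(\lambda)|\,\|u\|_{L^2}=:C_1\|u\|_{L^2}$. The induction hypothesis for $\psi$ itself, at orders $|\beta|\le a-2$, now applies. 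Only one function is involved, so $C_0$ is automatically uniform.

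Alternatively, the argument never uses smoothness of $\tilde\psi$, only $\sup|\tilde\psi|$ and $\sup|\lambda\tilde\psi|$. You could therefore take $\tilde\psi=1_{[-L,L]}$ with $\supp\psi\subset[-L,L]$. This is idempotent, so your two-function induction does close.

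\textbf{A smaller point.} Absorbing $(2/R)^{|\alpha|\sigma}$ into $C_0^{|\alpha|}$ is not uniform as $R\to 0$. Instead, use $B_{R/2}\subset B_{R_*/2}$ and do the final conversion $\rho=R/(2|\alpha|)$ only at $R=R_*$.
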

\begin{proof}
The proof follows similarly to that of Proposition~\ref{p:psi_low_freq_est_flattened_bndry} by using as initial building block the estimate of Proposition~\ref{prop:transmission_isotropic}    
\end{proof}

\begin{proof}[Proof of Theorem~\ref{thm:psi_low_freq_est}]
The proof follows similarly to Theorems\ref{thm:int_Gev_reg},~\ref{thm:uttb_Gev_reg}, and~\ref{thm:transmission}. 
In particular, for each $x_0 \in \overline{ \widetilde V}$, we have three cases
\begin{itemize}
    \item $x_0 \in \overline{\widetilde{V}}\setminus(\Gamma_s\cup\Gamma_p)$ is an interior point: there exists a neighbourhood $\mathcal{U}_2(x_0) \subset V$ of $x_0$ which maps to the ball $B_{R}$ through a Gevrey-$\sigma$ map $\phi$. In $\mathcal{U}_2(x_0)$, the mapped operator $\widetilde \cP$ becomes
    \[\phi^{-1} \circ \breve{\cP} \circ \phi = \widetilde{\cP}. \]
    The operator $\breve \cP$, has Gevrey-$\sigma$ coefficients on the ball $B_{R_*}$, such that the estimate established in Lemma~\ref{p:psi_low_freq_est_flattened_interior} can be applied to $\breve \cP$ with constant $C_0(x_0)$.

     We denote the neighbourhood $\mathcal{U}_1(x_0):=\phi^{-1}(B_{R/2})$.
     \item $x_0 \in \Gamma_s$ is boundary point: there exists a neighbourhood $\mathcal{U}_2(x_0) \subset V$ of $x_0$ which maps to the half-ball $V_{R}$ through a Gevrey-$\sigma$ map $\phi$. In $\mathcal{U}_2(x_0)$, the mapped operator $\breve \cP$, has Gevrey-$\sigma$ coefficients on the ball $B_{R}$, such that the estimate established in Lemma~\ref{p:psi_low_freq_est_flattened_bndry} can be applied with constant $C_0(x_0)$.
      We denote the neighbourhood $\mathcal{U}_1(x_0):=\phi^{-1}(V_{R/2})$.
     \item $x_0 \in \Gamma_p$ is an interface point: there exists a neighbourhood $\mathcal{U}_2(x_0) \subset V\setminus \Gamma_p$ of $x_0$ which maps to the ball $B_{R}\setminus I_{R}$ via a Gevrey-$\sigma$ map $\phi$, such that the estimate established in Lemma~\ref{p:psi_low_freq_est_flattened_interface} can be applied with constant $C_0(x_0)$.
      We denote the neighbourhood $\mathcal{U}_1(x_0):=\phi^{-1}((B\setminus I)_{R/2})$.
\end{itemize}
 Finally, by varying $x_0 \in \widetilde V$, we can extract a finite covering of the compact set $\widetilde V$ by open sets $\mathcal{U}_1(x_0)$. The proof is then complete by combining a finite number of estimates \eqref{est:psi_low_freq_est2_interior}, \eqref{est:psi_low_freq_est2_bndry}, and \eqref{est:psi_low_freq_est2_interface}.
\end{proof}

\color{black}

\section{Proof of Theorem \ref{t:main}}\label{s:proof}

We now control the low-frequencies in Theorem \ref{t:thm71_abstract} using Theorem \ref{thm:psi_low_freq_est}, i.e., using 
the Gevrey-$\sigma$ regularity assumptions made on the boundaries and on the coefficicient at the boundaries.

We now incorporate the low frequencies in the approximability result for $\eta_{L^2\to H^1_k}(V_{\mathcal{T}}^p)$. The low-frequency cut-off ensures ultra-differentiability; the goal is then to show that the growth of the higher-order norms grow in a controlled way with respect to the wave number $k$ and polynomial order $p$. To achieve this result, we rely here on the Gevrey-$\sigma$ regularity assumption made on the boundaries and on the coefficicient at the boundaries.

\begin{theorem}\label{t:thm71}
    Let the assumptions of Theorem~\ref{t:main} hold. Given $M,N,k_0>0$ and $\Upsilon>0$ there exist $C,\, h_0 >0$ such that for all affine-conforming $G^\sigma$ simplicial triangulations $\mathcal{T}$ with constant $\Upsilon>0$ (in the sense of Definition~\ref{d:Gsigma}) such that
    \[
    \partial \Omega \cup \Gamma_p 
    \subset \bigcup_{T \in \mathcal{T}} \partial T,
    \]
    $p\geq m\geq 1,\, 0 <h(\mathcal{T})<h_0,$ and $k\geq k_0$,
    \begin{align}\nonumber
         &\eta_{L^2\to H^1_k}(V_{\mathcal{T}}^p) 
       \\
       &\quad\leq C 
            \frac{hk}{p} 
             + C \bigg[                         \left(\frac{hk}{p}\right)k^{-N}
            +C^{m+1}(hk)^{m}k^{-m-1}
(p+1)^{-m}
\max\Big\{k,(m+1)^{\sigma}\Big\}^{m+1}
\Bigg]
\rho(k).
\label{est:estimate_thm71}
       \end{align}
\end{theorem}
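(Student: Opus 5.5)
The plan is to apply Theorem~\ref{t:thm71_abstract} with two auxiliary operators and then bound the two low-frequency terms
\[
\|(I-\Pi_h^p)\psi(\widetilde{\cP}_i)\chi_i\|_{L^2(\Omega)\to H^1_k(\Omega)}
\]
using Theorem~\ref{thm:psi_low_freq_est} together with the Gevrey polynomial-approximation results of Appendix~\ref{sec:projection}.

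\textbf{Choice of operators.}
\begin{itemize}
\item For $\widetilde{\cP}_1$ take the operator from Lemma~\ref{l:construction}, built from $\cP=\Re P_k$. Its coefficients are $G^\sigma$ on a neighbourhood $V$ of $\Gamma_s\cup\Gamma_p$.
\item Lemma~\ref{l:construction} gives $\mathcal{D}(\widetilde{\cP}_1^n)=\mathcal{D}(\cP^n)$ for all $n$. Ellipticity then gives \eqref{e:graphNorm}, so $\widetilde{\cP}_1$ satisfies the hypotheses of Lemma~\ref{l:frequencyPseudolocalFinal}.
\item For $\widetilde{\cP}_2$ take an operator that is $-k^{-2}\Delta$-like (constant coefficients) on $\supp\chi_2$. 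This is legitimate because $\chi_2$ is chosen supported away from $\Gamma_s\cup\Gamma_p$; the paper's discussion indicates Fourier multipliers can be used there.
\item Choose $\chi_1$ with $\supp\chi_1\subset\widetilde V\Subset V$ and $\chi_1\equiv1$ near $\Gamma_s\cup\Gamma_p$. Choose $\chi_2$ with $\chi_1+\chi_2\ge1$ outside the PML neighbourhood $U$, so the hypotheses of Theorem~\ref{t:thm71_abstract} hold.
\item Since $G^\sigma$ triangulations are in particular $C^{p+1}$, the right-hand side of \eqref{est:estimate_thm71_abstract} applies.
\end{itemize}

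\textbf{Near-boundary term.} By spatial pseudolocality (Lemma~\ref{l:spatialPseudolocalFinal}), $\psi(\widetilde{\cP}_1)\chi_1$ equals a cut-off version $\tilde\chi_1\psi(\widetilde{\cP}_1)\chi_1$ plus an $O(k^{-\infty})_{L^2\to H^N_k}$ error, where $\tilde\chi_1$ is supported in $\widetilde V$. The error contributes to the $(hk/p)k^{-N}$ term via \eqref{e:Cpplus1}.

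For the main part, Theorem~\ref{thm:psi_low_freq_est} gives, for every $\alpha$,
\[
\|\partial^\alpha\psi(\widetilde{\cP}_1)\|_{L^2\to L^2(\widetilde V\setminus\Gamma_p)}\le C_0^{|\alpha|+1}\max\{k,|\alpha|^\sigma\}^{|\alpha|}.
\]
So on each element $T$ in $\widetilde V$ the function $v=\tilde\chi_1\psi(\widetilde{\cP}_1)\chi_1 f$ has piecewise derivatives of all orders controlled. Elements do not cross $\Gamma_p$ because $\Gamma_p\subset\bigcup\partial T$.

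Next invoke the $p$-explicit approximation result of Appendix~\ref{sec:projection} (Theorem~\ref{t:polyApprox}) for $G^\sigma$ element maps. In the form I expect, it bounds $\|(I-\Pi_h^p)v\|_{H^1_k}$ by
\[
C^{m+1}(hk)^{m}k^{-m-1}(p+1)^{-m}\sum_{|\alpha|\le m+1}\|\partial^\alpha v\|_{L^2},
\]
with the $G^\sigma$ element maps absorbed into the constant through a Faà di Bruno argument. This is also where $p\ge m$ enters. Inserting the derivative bound with $|\alpha|=m+1$ yields exactly the term $C^{m+1}(hk)^mk^{-m-1}(p+1)^{-m}\max\{k,(m+1)^\sigma\}^{m+1}$. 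Derivatives of $\tilde\chi_1$ are handled by Leibniz, using a cutoff chosen with Gevrey-type derivative bounds.

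\textbf{Interior term.} For $\psi(\widetilde{\cP}_2)\chi_2$ away from the boundaries, $\widetilde{\cP}_2$ has analytic (constant) coefficients there. Hence the same interior estimate (Lemma~\ref{p:psi_low_freq_est_flattened_interior}, with $\sigma=1$, which is stronger) gives the same bound, and the same approximation argument applies.

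\textbf{Expected obstacle.} The hardest step is the approximation step. One must verify that the Gevrey bounds of Theorem~\ref{thm:psi_low_freq_est} for the function on physical elements transfer to reference-element Sobolev bounds of the required shape under $G^\sigma$ maps, without losing more than $C^{m+1}$. Composition with Gevrey maps preserves the class $\max\{k,|\alpha|^\sigma\}^{|\alpha|}$ only up to geometric factors, and checking this carefully is the crux.
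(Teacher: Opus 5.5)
Your architecture matches the paper's: Theorem~\ref{t:thm71_abstract} with $\widetilde{\cP}_1$ from Lemma~\ref{l:construction} and a $\widetilde{\cP}_2$ that is constant-coefficient away from $\Gamma_s\cup\Gamma_p$, followed by Theorem~\ref{thm:psi_low_freq_est} and Theorem~\ref{t:polyApprox}. However, the approximation step does not hold in the form you expect, and the step you defer as ``the crux'' is a central part of the paper's proof.

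Theorem~\ref{t:polyApprox} with $m_T=m+1$ does not absorb the $G^\sigma$ element maps into $C^{m+1}$. On each $T$ it gives $(\tfrac{h_Tk}{p+1})^{m}(\tfrac{C(m+1)^\sigma}{k})^{m+1}\|v\|_{H^{m+1}(T)}$, where the norm \eqref{d:sobolev_norms_scaled} weights $|v|_{H^\ell}$ by $(\ell!)^{-\sigma}$. Compared with your form, derivatives of order $\ell<m+1$ therefore carry an extra factor of size roughly $((m+1)!/\ell!)^\sigma$. This is the Fa\`a di Bruno cost in Proposition~\ref{prop:projection}: the chain rule produces terms like $(\partial v\circ R_T)\,\partial^{m+1}R_T$, with $|\partial^{m+1}R_T|$ of size $C^{m+1}((m+1)!)^\sigma$. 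So inserting only the $|\alpha|=m+1$ bound does not finish the argument; you need
\[
(m+1)^{\sigma(m+1)}\sum_{\ell=0}^{m+1}\frac{C^{\ell}\max\{k,\ell^{\sigma}\}^{\ell}}{(\ell!)^{\sigma}}\le C_1^{m+1}\max\{k,(m+1)^{\sigma}\}^{m+1}.
\]
The paper proves this with Stirling's formula in two regimes. When $(m+1)^\sigma\le k$, it uses that $\ell\mapsto(Ck/\ell^\sigma)^\ell$ is increasing. When $(m+1)^\sigma\ge k$, it splits the sum at $\ell\approx k^{1/\sigma}$. A quicker route combines $(m+1)^{m+1}\le e^{m+1}(m+1)!$, $(m+1)!/\ell!\le(m+1)^{m+1-\ell}$, and $(m+1)^{\sigma(m+1-\ell)}\max\{k,\ell^\sigma\}^\ell\le\max\{k,(m+1)^\sigma\}^{m+1}$. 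Your remark that composition preserves the class $\max\{k,|\alpha|^\sigma\}^{|\alpha|}$ up to geometric factors is the correct statement. It still has to be proved, and this is exactly where the specific growth in Theorem~\ref{thm:psi_low_freq_est} is used.

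Second, your cutoff step fails for $\sigma=1$, which the theorem includes. No $\tilde\chi_1$ can equal $1$ near $\supp\chi_1$, be supported in $\widetilde V$, and satisfy analytic (Gevrey-$1$) derivative bounds. The paper never multiplies by such a cutoff. Because $\mathcal{C}^p_{\mathcal T}$ is element-local, it applies Theorem~\ref{t:polyApprox} with $m_T=m+1$ on elements of $W_{\mathcal T}$ (the elements inside the Gevrey neighbourhood) and $m_T=M+1$ on all others. It chooses $\chi_1$ with $d(\supp\chi_1,\Omega\setminus W_{\mathcal T})>c$. Lemma~\ref{l:spatialPseudolocalFinal}, needing only a $C^\infty$ cutoff since $M$ is fixed, then makes the $H^{M+1}(\Omega\setminus W_{\mathcal T})$ contribution $O(k^{-\infty})$. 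Alternatively, for $\sigma=1$ you could use an $m$-dependent Ehrenpreis--H\"ormander cutoff with $|\partial^\beta\tilde\chi_1|\le C^{|\beta|+1}(m+1)^{|\beta|}$ for $|\beta|\le m+1$, together with the summation above; for $\sigma>1$ your Gevrey cutoff works as stated.

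Two smaller points:
\begin{itemize}
\item $\widetilde{\cP}_2$ must coincide with $\Re P_k$ near $\Gamma_s\cup\Gamma_p\cup\partial\Omega_{\mathrm{tr}}$ to meet the domain hypothesis of Lemma~\ref{l:frequencyPseudolocalFinal}. The paper takes $\chi_3(-k^{-2}\Delta-1)\chi_3+\chi_4\cP\chi_4$ with $\chi_3^2+\chi_4^2=1$; Theorem~\ref{t:thm71_abstract} does not use Fourier multipliers.
\item For $\sigma>1$, a $G^\sigma$ triangulation is not $C^{p+1}$ with a $p$-independent constant, since $(\alpha!)^\sigma\not\le\Upsilon^{1+|\alpha|}\alpha!$. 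Theorem~\ref{t:thm71_abstract} is usable only because its high-frequency part needs nothing beyond the fixed-order ($H^2$) approximation bound.
\end{itemize}
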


\begin{remark}
    A result analogous to Theorem \ref{t:thm71} when $m=p$ and $\sigma=1$ was proved in 
\cite[Theorem 7.1]{galkowski2025numerical}; the difference is that Theorem \ref{t:thm71} includes the 
bound on the low frequencies (obtained via 
Gevery-$\sigma$ regularity) whereas \cite[Theorem 7.1]{galkowski2025numerical} includes this low frequency bound abstractly, as in Theorem \ref{t:thm71_abstract}.
\end{remark}

\begin{proof}[Proof of Theorem~\ref{t:main} using Theorem \ref{t:thm71}]
We apply the Schatz argument (Lemma \ref{l:Schatz}) to the sesquilinear form $a(\cdot,\cdot)$ associated with \eqref{e:PML1} and the norm \eqref{e:1knorm}. This sesquilinear form has norm independent of $k$, and satisfies Assumption \ref{a:Gaarding1} with $c_G$ and $C_G$ independent of $k$ by \cite[Lemma 2.3]{galkowski2024hp} and \cite[Lemma A.6]{GLS2} (see also \cite[Lemmas 4.1 and 4.2]{galkowski2025numerical}).
It is therefore sufficient to prove that 
$\eta_{L^2\to H^1_k}(V_{\mathcal{T}}^p)$ can be made arbitrarily small. 
By \eqref{e:polyBound}, $\rho(k)$ is polynomially bounded in $k$; thus the first term in square brackets on the right-hand side of \eqref{est:estimate_thm71} can be made small by choosing $N$ sufficiently large. 
Observe that the second-term in square brackets on the right-hand side of \eqref{est:estimate_thm71} contains the free parameter $1\leq m\leq p$. We now choose $m$ depending on $p, k$, and $\sigma$ in the following two different ways -- note that both choices are designed so that the maximum on the right-hand side of \eqref{est:estimate_thm71} is proportional to $k$.
When $k_0\leq k\leq 2$ the result follows by taking $h$ sufficiently small. Therefore, we assume that $k_0\geq 2$.  

When $p+1 \leq  k^{1/\sigma}$, we choose $m=p$. Thus \eqref{est:estimate_thm71} becomes
\begin{equation}\label{est:pre_choice_params}
        \begin{aligned}
         &\eta_{L^2\to H^1_k}(V_{\mathcal{T}}^p) \begin{aligned}
       \leq C 
        \Bigg[ & 
            \frac{hk}{p} + \left(\frac{hk}{p}\right)^M k^{-N} \rho(k)  + \left(C\frac{hk}{p+1}\right)^p    \rho(k)
        \Bigg].
        \end{aligned}
       \end{aligned}
    \end{equation}
Given $\delta, \epsilon>0$, there exists $c>0$ such that if $hk/p\leq c$ and $p\geq 1+ \epsilon \log k$ (exactly as in \eqref{e:thresholds}) then $\eta_{L^2\to H^1_k}(V_{\mathcal{T}}^p)\leq \delta$. The quasi-optimality bound \eqref{e:qo} then follows from the Schatz argument (Lemma \ref{l:Schatz}).      
     When $k^{1/\sigma}\leq p+1$, we
      choose $m+1 =  \max(\lceil k^{1/\sigma}\rceil,2)$. Then 
\begin{equation}\label{est:pre_choice_params_case_2}
        \begin{aligned}
         &\eta_{L^2\to H^1_k}(V_{\mathcal{T}}^p) \begin{aligned}
       \leq C 
        \Bigg[ & 
            \frac{hk}{p} + \left(\frac{hk}{p}\right)^M k^{-N} \rho(k)  + \left(C \frac{hk}{p+1}\right)^{ \max(\lceil k^{1/\sigma}\rceil,2)-1}   \rho(k)
        \Bigg].
        \end{aligned}
       \end{aligned}
    \end{equation}
Taking $hk/(p+1)\leq c$, and using that $c^{\max(\lceil k^{1/\sigma}\rceil,2)-1}\ll k^{-N}$, then makes  $\eta_{L^2 \to H^1_k}(V_{\mathcal{T}}^p)$ small.
\end{proof}

\begin{remark}\label{r:epsquared}
We expect that elliptic regularity under the Denjoy--Carleman class 
\eqref{e:denjoyCarleman} would result in the following analogue of \eqref{est:estimate_thm71}
   \begin{align}\nonumber
         &\eta_{L^2\to H^1_k}(V_{\mathcal{T}}^p) 
       \\
       &\quad\leq C 
            \frac{hk}{p} 
             + C \bigg[                         \left(\frac{hk}{p}\right)^M k^{-N}
            +C^{m+1}(hk)^{m}k^{-m-1}
(p+1)^{-m}
\max\big\{k,e^{\sigma (m+1)}\big\}^{m+1}
\Bigg]
\rho(k).
\label{est:estimate_thm71new}
       \end{align}
Suppose that $\rho(k)$ grows like $k^{M}$. Since the maximum in~\eqref{est:estimate_thm71new} is larger than $k$, we require 
$$
\Big(\tfrac{hk}{p+1}\Big)^m\ll k^{-M},
$$
with $(hk)/(p+1)=c_0$, and hence $m\geq \e \log k$ for some $\e>M/\log(1/c_0)$. 
Furthermore, since the maximum is larger than $e^{\sigma m^2}$
and $m\geq \e \log k$,
we require also
$$
k^{-M}\gg k^{-1}\Big(\tfrac{h}{p+1}e^{\sigma (m+1)}\Big)^{m}\geq k^{-1}\big(c_0 k^{-1}e^{\sigma (m+1)}\big)^{m}\geq k^{-1}\big(c_0 k^{-1+\sigma \e }\big)^{m}.
$$
In particular, if $m= \varepsilon \log k$ then
\[
1 \gg k^{-1} \big(c_0 k^{-1+\sigma \e } e^{M/\varepsilon}\big)^{m}.
\]
Hence, for $k$ large (say greater than 1), we see that we need a double condition on $\varepsilon$ given by
$
M/\log (1/c_0)
<\e<\sigma^{-1}$. This implies that $c_0 < 
e^{-M\sigma}$, so that increasing $\sigma$ implies a corresponding increase in the dimension of the piecewise polynomial approximation space. This shows that our proof, when applied to Denjoy--Carleman classes that are not contained in the union over $\sigma$ of those in~\eqref{e:denjoyCarleman}, will require piecewise polynomial spaces whose dimension grows faster than $k^d$.
\end{remark}

\begin{proof}[Proof of Theorem \ref{t:thm71}]

Let $\widetilde{\mathcal{P}}_1$ be the operator constructed in Lemma \ref{l:construction}. Then there is a neighborhood $W$ of $\Gamma_s \cup \Gamma_p$ such that the coefficients of $\widetilde{\mathcal{P}}_1$ are Gevrey-$\sigma$ in $W$.  
 \begin{gather*}
  W_{\mathcal{T}}:=\bigcup_{T \in \cT\,:\,T \subset W} T.
 \end{gather*}
 Then, let $h_0>0$ be small enough such that $\Gamma_p\cup \Gamma_s \subset W_{\mathcal{T}}$ and let $\chi_1\in \overline{C^\infty}(\Omega)$ such that $\supp \chi_1\cap \partial\Omega_{\tr}=\emptyset$, $\supp (1-\chi_1)\cap (\Gamma_p\cup \Gamma_s)=\emptyset$ and for $0<h<h_0$, 
 $$
 d(\supp \chi_1, \Omega\setminus W_{\mathcal{T}})>c>0.
 $$

 By Theorem~\ref{t:polyApprox}, $m_{T}=M+1$ for $T\subset \Omega\setminus W_{\mathcal{T}}$ and $m_T=m+1$ for $T\subset W_{\mathcal{T}}$ and $s=1$, 
together with Lemma~\ref{l:spatialPseudolocalFinal}, we obtain 
\begin{equation} 
\label{e:lowFrequencyEstimates}
\begin{aligned}
&\|(I-\Pi_h^p)\psi(\widetilde{\cP}_1)\chi_1\|_{L^2\to H_k^1(\Omega)}\\
&\qquad\leq C\Big(\frac{h k}{p}\Big)^{M}k^{-M-1}\|\psi(\widetilde{\cP}_1)\chi_1\|_{L^2\to H^{M+1}(\Omega\setminus W_{\mathcal{T}})}\\
&\qquad\qquad\qquad+\big(Ch(\mathcal{T})k\big)^{m}     (p+1)^{-m}(m+1)^{\sigma (m+1)}  k^{-m-1}\|\psi(\widetilde{\cP}_1)\|_{L^2\to H^{m+1}(W_{\mathcal{T}}\setminus \Gamma_{\rm p})\cap H^1(W_{\mathcal{T}})},\\
&\qquad\leq C\Big(\frac{h k}{p}\Big)^{M}\|\psi(\widetilde{\cP}_1)\chi_1\|_{L^2\to H^{M+1}_k(\Omega\setminus W_{\mathcal{T}})}\\
&\qquad\qquad\qquad+\big(Ch(\mathcal{T})k\big)^{m}     (p+1)^{-m}(m+1)^{\sigma (m+1)}  k^{-m-1}\|\psi(\widetilde{\cP}_1)\|_{L^2\to H^{m+1}(W\setminus \Gamma_{\rm p})\cap H^1(W)},\\
&\qquad\leq C\Big(\frac{h k}{p}\Big)^{M}k^{-N}+\big(Ch(\mathcal{T})k\big)^{m}     (p+1)^{-m}(m+1)^{\sigma (m+1)}  k^{-m-1}\|\psi(\widetilde{\cP}_1)\|_{L^2\to H^{m+1}(W\setminus \Gamma_{\rm p})\cap H^1(W)},
\end{aligned}
\end{equation}
    where we work in the Sobolev norms~\eqref{d:sobolev_norms_scaled} scaled appropriately for Gevrey-$\sigma$ type estimates.

Let $U$ be a neighborhood of $\partial\Omega_{\tr}$ contained in the PML and $\chi_2\in 
\overline{C^\infty}(\Omega)$ such that  $\supp \chi_2\cap (\Gamma_p\cup \Gamma_s\cup \partial \Omega_{\tr})=\emptyset$ and 
$$
\supp(1-\chi_2)\cap \{ \chi_1<\frac{1}{2}\}=\emptyset.
$$
Finally, let $\widetilde{W}$ be a neighborhood of $\supp \chi_2$ such that $\overline {\widetilde{W}}\cap (\Gamma_p\cup \Gamma_s\cup \partial \Omega_{\tr})=\emptyset$.

Our goal now is to construct $\widetilde{\mathcal{P}}_2$ equal to $-k^{-2}\Delta-1$ on $\widetilde{W}$ and such that $\widetilde{P}_2$ agrees with $\mathcal{P}=\Re P_k$ near $\Gamma_p\cup\Gamma_s\cup \partial \Omega_{\tr}$. 
We complete the proof assuming such a
$\widetilde{\mathcal{P}}_2$ exists, and then construct such a $\widetilde{\mathcal{P}}_2$ at the end.
Let
 \begin{gather*}
  \widetilde{W}_{\mathcal{T}}:=\bigcup_{T \in \cT\,:\,T \subset \widetilde{W}} T;
 \end{gather*}
 then 
 there are $c>0$ and $h_0>0$ small enough such that 
 $$
 d(\supp \chi_2,\Omega\setminus \widetilde{W}_{\mathcal{T}})>c,
 $$
and hence the same sequence of estimates leading to~\eqref{e:lowFrequencyEstimates} with $\widetilde{W}_{\mathcal{T}}$ replacing $W_{\mathcal{T}}$ yields
\begin{equation} 
\label{e:lowFrequencyEstimates2}
\begin{aligned}
&\|(I-\Pi_h^p)\psi(\widetilde{\cP}_2)\chi_2\|_{L^2\to H_k^1(\Omega)}\\
&\qquad\leq C\Big(\frac{h k}{p}\Big)^{M}k^{-N}+\big(Ch(\mathcal{T})k\big)^{m}     (p+1)^{-m}(m+1)^{\sigma (m+1)}  k^{-m-1}\|\psi(\widetilde{\cP}_2)\|_{L^2\to H^{m+1}(\widetilde{W})}.
\end{aligned}
\end{equation}

By construction, $\widetilde{\cP}_1$ and $\widetilde{\cP}_2$ satisfy the assumptions of Theorem \ref{thm:psi_low_freq_est} with $V$ equal to, respectively, $W$ and $\widetilde{W}$, so that
\beq\label{e:football1}
\|\psi(\widetilde{\cP}_1)\|_{L^2\to H^{m+1}(W\setminus \Gamma_{ p})\cap H^1(W)}
+ \|\psi(\widetilde{\cP}_2)\|_{L^2\to H^{m+1}(\widetilde{W})}
 \leq 
 \sum_{\ell=0}^{m+1}\frac{1}{\ell!^\sigma}
 C^{\ell+1} \max\{k, \ell^\sigma\}^{\ell}.
\eeq
To bound the sum, 
first suppose that $(m+1)^{\sigma}\leq k$. Then, by Stirling's approximation, 
\begin{equation*}
\sum_{\ell=0}^{m+1}\frac{1}{\ell!^\sigma}C^\ell \max(k,\ell^\sigma)^\ell=\sum_{\ell=0}^{m+1}\frac{1}{\ell!^\sigma}C^{\ell}k^{\ell}
\leq \sum_{\ell=0}^{m+1}\Big(\frac{Ck}{\ell^\sigma}\Big)^{\ell}.
\end{equation*}
Now, for $Ck/\ell^\sigma \geq e$ (which is ensured since $k\geq (m+1)^\sigma$, as long as $C \geq e$) the function $\ell\mapsto(Ck/\ell^\sigma)^\ell$ is increasing. Therefore 
\begin{equation}\label{e:boundingsums1}
\sum_{\ell=0}^{m+1}\Big(\frac{Ck}{\ell^\sigma}\Big)^{\ell}
\leq (m+2) \Big(\frac{Ck}{(m+1)^\sigma}\Big)^{m+1} \leq 
\Big(\frac{Ck}{(m+1)^\sigma}\Big)^{m+1}
\end{equation}
(where we have increased $C$ in the last inequality). 

Next, when $(m+1)^\sigma\geq k$, we use 
\eqref{e:boundingsums1} (in the first sum) and 
Stirling's approximation (in the second sum) to obtain that
$$
\sum_{\ell=0}^{m+1}\frac{1}{\ell!^\sigma}C^\ell \max(k,\ell^\sigma)^\ell=\sum_{\ell=0}^{\lfloor k^{1/\sigma}\rfloor }\frac{1}{\ell!^\sigma}C^{\ell}k^{\ell}+
\sum_{\ell=\lfloor k^{1/\sigma}\rfloor}^{m+1}\frac{1}{\ell!^\sigma}C^{\ell}\ell^{\sigma \ell}\leq C^{ k^{1/\sigma}+1}+\sum_{\ell=\lfloor k^{1/\sigma}\rfloor}^{m+1}C^{\ell}\leq C^{m+2}.
$$
The result \eqref{est:estimate_thm71} then follows by combining these last inequalities with
\eqref{est:estimate_thm71_abstract},~\eqref{e:lowFrequencyEstimates}, \eqref{e:lowFrequencyEstimates2}, 
\eqref{e:football1}.

We now go back to the construction of $\widetilde{\cP}_2$. Let $\chi_3,\chi_4 \in \overline{C^\infty}(\Omega; [0,1])$ such that  $\supp \chi_3\cap (\Gamma_p\cup \Gamma_s\cup \partial \Omega_{\tr})=\emptyset$, $\supp (1-\chi_3) \cap \widetilde{W}= \emptyset$, and $\chi_3^2+\chi_4^2=1$ We then set 
\[\widetilde{\cP}_2 =  \chi_3(-k^{-2}\Delta-1)\chi_3 +  \chi_4\cP \chi_4.
\]
The so-constructed operator $\widetilde{\cP}_2$ verifies the assumptions of Theorem~\ref{thm:psi_low_freq_est} and that \(\widetilde{\cP}_2 =  -k^{-2}\Delta-1\) on $\widetilde{W}$.

\end{proof}

\appendix

\section{Recap of the Schatz duality argument} \label{sec:Schatz}

Let $\mathcal{H}\subset \mathcal{H}_0\subset \mathcal{H}^*$ be  Hilbert spaces with $\mathcal{H}_0$ identified with its dual and let $a:\mathcal{H}\times \mathcal{H}\to \mathbb{C}$ be a sesquilinear form. 

\begin{definition}[Galerkin approximation of $u$ for $a$]
Let $\mathcal{V}_h\subset \mathcal{H}$ and $u\in\mathcal{H}$. A \emph{Galerkin approximation of $u$ in $\mathcal{V}_h$}, is an element $u_h\in\mathcal{V}_h$ such that 
\begin{equation}
\label{e:galerkinOrthogonality}
a(u_h,v_h)=a( u,v_h)\quad\text{ for all }v_h\in\mathcal{V}_h.
\end{equation}
\end{definition}

\begin{definition}[Norm of a sesquilinear form]
\label{d:norm}
For $a:\mathcal{H}\times \mathcal{H}\to \mathbb{C}$ a sesquilinear form,
$$
\|a\|:=\sup_{\substack{u,v\in\mathcal{H}\\u,v\neq0}}\frac{|a(u,v)|}{\|u\|_{\mathcal{H}}\|v\|_{\mathcal{H}}}.
$$
\end{definition}

If $\|a\|<\infty$, we say that $a$ is \emph{bounded}.
We also need the notion of injectivity, surjectivity, and invertibility for sesquilinear forms.
\begin{definition}[Injectivity, surjectivity, and invertibility]
\label{d:injSurBij}
A sesquilinear form is \emph{injective} if there is an operator $\mathcal{R}^*:\mathcal{H}^*\to \mathcal{H}$ such that for all $v\in\mathcal{H}^*$,
\begin{equation}\label{e:R*}
a(w,\mathcal{R}^*v)=\langle w,v\rangle\quad \text{for all }w\in\mathcal{H}.
\end{equation}
$a$ is \emph{surjective} if there is an operator $\mathcal{R}:\mathcal{H}^*\to \mathcal{H}$ such that for all $v\in\mathcal{H}^*$,
$$
a(\mathcal{R}v,w)=\langle v,w\rangle\quad \text{for all }w\in\mathcal{H}.
$$
$a$ is \emph{invertible} if it is both surjective and injective. In this case, $\mathcal{R}^*=(\mathcal{R})^*$. 
\end{definition}

\begin{assumption}[G\aa rding inequality]
\label{a:Gaarding1}
There are $c_{\rm G}>0$ and $C_{\rm G}>0$ such that for all $u\in \mathcal{H}$
\beq\label{eq:Gaarding_ineq}
|a(u,u)|\geq c_{\rm G}\|u\|_{\mathcal{H}}^2-C_{\rm G}\|u\|_{\mathcal{H}_0}^2.
\eeq
\end{assumption}

The Schatz duality argument \cite{Sc:74, ScWa:96, Sa:06} then gives sufficient conditions for quasioptimality in terms of the following adjoint approximability constant.
\begin{definition}[Adjoint approximability]
Let $\mathcal{V}_h\subset \mathcal{H}$, $a$ be an injective sesquilinear form, and $\mathcal{W}$ be a Hilbert space with $\mathcal{W}\subset \mathcal{H}^*$. The \emph{adjoint approximability constant from $\mathcal{W}$ to $\mathcal{H}$} is defined by
\begin{equation}\label{e:eta}
\eta_{_{\mathcal{W}\to \mathcal{H}}}(\mathcal{V}_h):=\|(I-\Pi_h)\mathcal{R}^*\|_{\mathcal{W}\to \mathcal{H}}.
\end{equation} 
\end{definition}
\begin{lemma}[Schatz duality argument]
\label{l:Schatz}
Let $a:\mathcal{H}\times \mathcal{H}\to \mathbb{C}$ be an injective sesquilinear form satisfying Assumption~\ref{a:Gaarding1}, and let $0<\epsilon<1$.  Then for all $u\in\mathcal{H}$ and $\mathcal{V}_h\subset \mathcal{H}$ finite dimensional subspaces satisfying
\begin{equation}
\label{e:schatzCondition}
C_{\rm G}\|a\|^2\big(\eta_{_{\mathcal{H}_0\to \mathcal{H}}}(\mathcal{V}_h)\big)^2\leq (1-\epsilon)c_{\rm G},
\end{equation}
the Galerkin approximation, $u_h$, for $u$ in $\mathcal{V}_h$ exists, is unique, and satisfies
\begin{equation}
\label{e:SchatzFinal}
\|u-u_h\|_{\mathcal{H}}\leq \epsilon^{-1}c_{\rm G}^{-1}\|a\|\|(I-\Pi_h)u\|_{\mathcal{H}}.
\end{equation}
\end{lemma}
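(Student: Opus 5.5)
This is the standard Schatz duality argument. I would prove it in three steps: a duality bound on the error under the assumption that $u_h$ exists, an absorption step using the Gårding inequality, and a finite-dimensional argument for existence and uniqueness.

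\emph{Step 1 (duality bound on $\|e\|_{\mathcal{H}_0}$).} Suppose $u_h$ is a Galerkin approximation and set $e:=u-u_h$. Galerkin orthogonality \eqref{e:galerkinOrthogonality} gives $a(e,v_h)=0$ for all $v_h\in\mathcal{V}_h$. Apply injectivity \eqref{e:R*} with $v=e\in\mathcal{H}_0\subset\mathcal{H}^*$. This gives
\[
\|e\|_{\mathcal{H}_0}^2=\langle e,e\rangle=a(e,\mathcal{R}^*e)=a\big(e,(I-\Pi_h)\mathcal{R}^*e\big).
\]
Here $\Pi_h$ is the orthogonal projection onto $\mathcal{V}_h$ in $\mathcal{H}$, and the last equality uses Galerkin orthogonality. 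Bounding the form and using the definition \eqref{e:eta} of $\eta:=\eta_{\mathcal{H}_0\to\mathcal{H}}(\mathcal{V}_h)$, we get
\[
\|e\|_{\mathcal{H}_0}^2\le \|a\|\,\|e\|_{\mathcal{H}}\,\eta\,\|e\|_{\mathcal{H}_0},
\qquad\text{so}\qquad
\|e\|_{\mathcal{H}_0}\le \|a\|\,\eta\,\|e\|_{\mathcal{H}}.
\]

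\emph{Step 2 (Gårding and absorption).} By Assumption~\ref{a:Gaarding1} and the bound from Step 1,
\[
c_{\rm G}\|e\|_{\mathcal{H}}^2\le |a(e,e)|+C_{\rm G}\|e\|_{\mathcal{H}_0}^2\le |a(e,e)|+C_{\rm G}\|a\|^2\eta^2\|e\|_{\mathcal{H}}^2.
\]
Condition \eqref{e:schatzCondition} says the last term is at most $(1-\epsilon)c_{\rm G}\|e\|^2_{\mathcal{H}}$, so $\epsilon c_{\rm G}\|e\|_{\mathcal{H}}^2\le|a(e,e)|$. For any $w_h\in\mathcal{V}_h$, Galerkin orthogonality gives $a(e,e)=a(e,u-w_h)$, hence $|a(e,e)|\le\|a\|\|e\|_{\mathcal{H}}\|u-w_h\|_{\mathcal{H}}$. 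Dividing by $\|e\|_{\mathcal{H}}$ and choosing $w_h=\Pi_h u$ yields \eqref{e:SchatzFinal}.

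\emph{Step 3 (existence and uniqueness).} The Galerkin system is a square linear system on the finite-dimensional space $\mathcal{V}_h$, so existence for every $u$ is equivalent to uniqueness. Suppose $z_h\in\mathcal{V}_h$ satisfies $a(z_h,v_h)=0$ for all $v_h\in\mathcal{V}_h$. Then $z_h$ is the Galerkin error for the datum $u=0$, because $0-(-z_h)=z_h$ satisfies the orthogonality relation. Steps 1 and 2 therefore apply to $e=z_h$ and give $\epsilon c_{\rm G}\|z_h\|_{\mathcal{H}}\le\|a\|\,\|(I-\Pi_h)0\|_{\mathcal{H}}=0$, so $z_h=0$.

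The only delicate point is justifying the duality identity in Step 1. It needs $e\in\mathcal{H}_0$ to be viewed as an element of $\mathcal{H}^*$ through the Gelfand triple $\mathcal{H}\subset\mathcal{H}_0\subset\mathcal{H}^*$, and it needs the $\mathcal{H}$-orthogonal projection $\Pi_h$ to be the one appearing in $\eta$. Once these identifications are fixed, everything else is routine algebra.
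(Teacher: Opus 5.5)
Your proof is correct and is the standard Schatz duality argument; the paper gives no proof of its own and simply cites \cite[Lemma 6.9]{galkowski2025numerical}. Your steps are the usual ones: duality through $\mathcal{R}^*$ with Galerkin orthogonality gives $\|e\|_{\mathcal{H}_0}\le\|a\|\,\eta\,\|e\|_{\mathcal{H}}$, the G\aa rding inequality with \eqref{e:schatzCondition} absorbs the $\mathcal{H}_0$ term, and uniqueness implies existence in finite dimensions. The point you flag as delicate is milder than you suggest, since Step 1 only uses that $\Pi_h\mathcal{R}^*e\in\mathcal{V}_h$.
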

\begin{proof}
See, e.g., \cite[Lemma 6.9]{galkowski2025numerical}
\end{proof}

\section{Polynomial approximation results for Gevrey triangulations}\label{sec:projection}

In this appendix, we prove polynomial approximation results for Gevrey-$\sigma$ triangulations that are explicit in both the polynomial degree and the regularity of the approximated function. 
To do this, we define Sobolev norms with a special weighting motivated by the expected growth for Gevrey-$\sigma$ functions (compare to the treatment of the case $\sigma=1$ in \cite[\S A.5]{galkowski2025numerical}).
For an open set $U \subset \R^d$, we define 
\begin{equation}\label{d:sobolev_norms_scaled}
\begin{aligned}
\| u \|^2_{H_{k}^{p}(U)} &:= \sum_{\ell=0}^p \frac{1}{(\ell!)^{2\sigma}}|u|^2_{H_{\ell}^{p}(U)}\\
| u |^2_{H_{k}^{p}(U)} &:= \sum_{\substack{|\alpha| = p \\ \alpha \in \mathbb{N}^d}} \left(\frac{p!}{\alpha!}\right)^\sigma \|(k^{-1}\partial)^\alpha u\|^2_{L^2(U)}.
\end{aligned}
\end{equation}

The main result is as follows. 

\begin{theorem}\label{t:polyApprox}
Let $\Upsilon>0$, $\e>0$, $\sigma \geq 1$, there exists $C>0$ such that for all affine-conforming $G^\sigma$ simplicial triangulations $\mathcal{T}$ with constant $\Upsilon$ (in the sense of Definitions \ref{d:AffineConforming} and \ref{d:Gsigma} below), and all $\{m_{T}\}_{T\in\mathcal{T}}\subset\mathbb{N}$, $d/2< m_T\leq p+1$, there is $\mathcal{C}_{\mathcal{T}}^p : H^1(\Omega) \cap (\oplus_{T \in \mathcal{T}} H^{m_{T}}(T)) \to \mathcal{P}^p_{\mathcal{T}}\cap H^1(\Omega)$ such that for all $0\leq s\leq 1$, $u \in H^1(\Omega) \cap (\oplus_{T\in\mathcal{T}} H^{m_T}(T))$, and all $T\in\mathcal{T}$,

\begin{align}
    \|(I-\mathcal{C}_{\mathcal{T}}^p)u\|_{H^s_k(T)}^2 &\leq   
C    \left(\frac{h_T k}{p+1}\right)^{2(m_T-s)}\Big(\frac{Cm_T^{\sigma}}{k}\Big)^{2m_T}\|u\|^2_{H^{m_T}(T)}.\label{est:fixed_reg_interpolant_estim}
\end{align}
In addition, if $u|_{F}=0$ for $F$ a boundary face of $T$, then $\mathcal{C}^p_{\mathcal{T}}u|_{F}=0$.
As a consequence,
\[
\|(I-\Pi_{\mathcal{P}^p_{\mathcal{T}}}^{H^1})u\|_{H^1_k(\Omega)}^2 \leq  C\sum_{T \in \mathcal{T}} 
    \left(\frac{h_T k}{p+1}\right)^{2(m_T-1)} \Big(\frac{Cm_T^{\sigma}}{k}\Big)^{2m_T}\|u\|^2_{H^{m_T}(T)}, 
\]
where $\Pi_{\mathcal{P}^p_{\mathcal{T}}}^{H^1}$ is the $H^1_k$-orthogonal projection onto $\mathcal{P}^p_{\mathcal{T}} \cap H^1$ and, for any union of faces of triangles, $\Gamma$, and $u\in H^1(\Omega)\cap (\oplus_{T\in\mathcal{T}}H^{m_T}(T))$ with $u|_{\Gamma}=0$, 
\[
\|(I-\Pi_{\mathcal{P}^p_{\mathcal{T},\Gamma}}^{H^1})u\|_{H^1_k(\Omega)}^2 \leq  C\sum_{T \in \mathcal{T}} 
    \left(\frac{h_T k}{p+1}\right)^{2(m_T-1)} \Big(\frac{Cm_T^{\sigma}}{k}\Big)^{2m_T}\|u\|^2_{H^{m_T}(T)}, 
\]
where $\Pi_{\mathcal{P}^p_{\mathcal{T},\Gamma}}^{H^1}$ is the $H^1_k$-orthogonal projection onto 
$$
\big\{w\in \mathcal{P}^p_{\mathcal{T}} \cap H^1\,:\, w|_{\Gamma}=0\big\}.
$$
\end{theorem}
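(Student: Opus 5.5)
The plan is the standard three-step construction: build a local polynomial approximant on the reference simplex, transport it through the Gevrey element maps, and then repair conformity across faces. The proof of \cite[Theorem A.11]{galkowski2025numerical} (the case $\sigma=1$) is the template, and the new work is tracking the factor $m_T^{\sigma m_T}$ that comes from Gevrey rather than analytic element maps.

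\emph{Step 1 (reference element).} On $\widehat{T}$ take the $H^1$-stable polynomial projector of Melenk and collaborators, which matches vertex values and edge/face traces. Its $p$-explicit bound is
\[
\|(I-\widehat{\Pi}^p)\hat u\|_{H^s(\widehat T)}\le C\,\frac{C^{m}(p+1-m)!}{(p+1+m-2s)!}\,|\hat u|_{H^{m}(\widehat T)},
\]
up to lower-order seminorms, for $d/2<m\le p+1$ and $0\le s\le 1$. Using Stirling, this becomes $C^{m}(p+1)^{-(m-s)}|\hat u|_{H^m}$ times a factorial factor. That factorial factor should be absorbed into the weights $(\ell!)^{-2\sigma}$ of the norm \eqref{d:sobolev_norms_scaled}. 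The projector preserves zero traces on faces, which gives the boundary-face statement at once.

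\emph{Step 2 (pull-back through Gevrey element maps).} Set $\hat u=u\circ\mathcal{F}_T$ with $\mathcal{F}_T=A_T\circ R_T$, where $A_T$ is affine with $\|A_T'\|\lesssim h_T$ and $R_T$ is $G^\sigma$ with constants uniform in $T$ (Definition \ref{d:Gsigma}). By Fa\`a di Bruno,
\[
\partial^\alpha(u\circ\mathcal{F}_T)=\sum_{j\le|\alpha|} (D^j u)\circ\mathcal{F}_T\,\cdot\, B_{\alpha,j}\bigl(\mathcal{F}_T',\dots\bigr).
\]
The Gevrey bounds on $R_T$ give $|B_{\alpha,j}|\le h_T^{j}C^{|\alpha|}(|\alpha|!)^{\sigma}/(j!)^{\sigma}$, using the usual Gevrey composition estimate in which $(\gamma!)^\sigma$ factors multiply subadditively. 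Hence
\[
|\hat u|_{H^m(\widehat T)}\le C^{m}h_T^{-d/2}\sum_{j\le m}h_T^{j}\frac{(m!)^\sigma}{(j!)^\sigma}|u|_{H^j(T)} .
\]
Inserting the $k$-scalings, $h_T^j|u|_{H^j}=(h_Tk)^j\|(k^{-1}\partial)^ju\|$, and using $m!^\sigma\le C^m m^{\sigma m}$, one gets the factor $(h_Tk/(p+1))^{m-s}(Cm^\sigma/k)^{m}\|u\|_{H^m(T)}$. This works because the lower-order terms $j<m$ carry the factor $(h_Tk)^j\le(h_Tk)^m\cdot(h_Tk)^{j-m}$, and the weights $(j!)^{-\sigma}$ in the scaled norm compensate. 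Pushing forward for $s\in\{0,1\}$ costs only $h_T^{-s}$ and Jacobian factors, and interpolation covers $0<s<1$. This gives \eqref{est:fixed_reg_interpolant_estim}.

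\emph{Step 3 (conformity and consequences).} I will define $\mathcal{C}^p_{\mathcal{T}}$ elementwise by Step 1 in the pulled-back variables. Global $H^1$-conformity needs neighbouring traces to agree. The projector is built from vertex interpolation, then edge projections, then face projections, each determined only by the trace of $u$ on that sub-simplex. The affine-conforming assumption (Definition \ref{d:AffineConforming}) makes the element maps of neighbours coincide on shared faces up to an affine reparametrisation, so the trace-defined pieces match. I expect this to be the main obstacle: the trace projections must be invariant under the face-affine reparametrisations, and the $d/2<m_T$ restriction (needed for point values) must be handled uniformly. I would first try the symmetric constructions of Melenk and Rojik.

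Once conformity holds, summing over $T$ and using that the $H^1_k$-orthogonal projection is no worse than $\mathcal{C}^p_{\mathcal{T}}$ gives the global bound. For the constrained space the same argument applies, because $\mathcal{C}^p_{\mathcal{T}}u$ vanishes on $\Gamma$ whenever $u$ does.
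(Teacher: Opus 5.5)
\textbf{There is a genuine gap.} Your Step~2 fails as written, and Step~1 assumes the key new estimate rather than proving it.

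\textbf{Step~2: the composition order.} Definition~\ref{d:Gsigma} writes $\mathcal{F}_T=R_T\circ A_T$, with the affine map innermost; you used $A_T\circ R_T$. With your ordering, Fa\`a di Bruno leaves only a factor $h_T^{j}$ on the terms containing $j$-th derivatives of $u$. Your absorption argument then fails: $(h_Tk)^{j-m}$ is unbounded as $h_T\to 0$ at fixed $k$, and the weights $(j!)^{-\sigma}$ do not depend on $h_T$, so they cannot compensate.

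For elements of that form the estimate itself is false. If $u$ is linear, then $u\circ A_T\circ R_T$ is a constant plus $h_T$ times a non-polynomial Gevrey function on $\widehat T$. Relative to $\|u\|_{H^m(T)}$, the error therefore decays like $h_T$, not $h_T^{m}$.

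With the paper's order, every $\hat x$-derivative falls on $A_T$. This gives $|\hat u|_{H^m(\widehat T)}\le C^m h_T^{m-d/2}|u\circ R_T|_{H^m(A_T(\widehat T))}$ with no lower-order terms. The Gevrey map $R_T$ is then handled at unit scale by Proposition~\ref{prop:projection}: $(m!)^{-\sigma}|u\circ R_T|_{H^m}\le C^m\|u\|_{H^m(T)}$ in the weighted norms. The leftover $(m!)^{\sigma}k^{-m}\le (Cm^{\sigma}/k)^m$ is exactly the stated factor.

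Relatedly, the weights $(\ell!)^{-2\sigma}$ make the right-hand norm smaller, so they cannot ``absorb'' a factorial produced in Step~1. The only factorial budget is the $(m!)^{\sigma}$ already spent on the Gevrey pullback.

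\textbf{Step~1: the missing reference-element estimate.} The rescaling above yields $h_T^{m-s}$ only under two conditions. First, the reference-element bound must involve the top seminorm $|\hat u|_{H^m(\widehat T)}$ alone; your ``up to lower-order seminorms'' would reintroduce terms scaling like $h_T^{j}$. Second, its constant must be $C^{m}$ uniformly for $d/2<m\le p+1$, since $m$ is later taken as large as $p$ or $k^{1/\sigma}$.

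The projection-based interpolants you cite give $p$-explicit rates, but their constants depend on $m$ in an untracked way. So Step~1 assumes precisely the ingredient that needs proof. The paper supplies it as follows:
\begin{itemize}
\item Lemma~\ref{l:cubeApproximate} gives Legendre-diagonalised approximation on the cube with constant $C^{2m+1}$.
\item Lemma~\ref{l:simplexApprox} extends from $\widehat K$ to the cube with an extension operator of norm $C^m$.
\item It then subtracts $\mathcal{Z}_m u\in\mathbb{P}_{m-1}$ and applies Poincar\'e $m$ times, which removes the lower-order terms while keeping a $C^m$ constant.
\end{itemize}

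\textbf{Conformity.} The paper does not use a symmetric Melenk--Rojik-type projector. It feeds $\mathcal{I}^p_{m_T}$ into \cite[Proposition A.21]{galkowski2025numerical}, which produces boundary-compatible operators $\mathcal{C}^p_T$. Their additional error is controlled by $\|(I-\mathcal{I}^p_{m_T})v\|_{H^{\lfloor d/2\rfloor+1}_p}$, hence by $C^{m_T+1}(p+1)^{-m_T}|v|_{H^{m_T}}$, because $m_T\ge\lfloor d/2\rfloor+1$. Global $H^1$-conformity then follows as at the end of the proof of \cite[Theorem A.11]{galkowski2025numerical}, using continuity of $u$.
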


\subsection{The context of Theorem \ref{t:polyApprox}}

The closest approximation results in the literature to Theorem \ref{t:polyApprox} that we are aware of are those in \cite[\S4]{FeSc:20} and \cite[Appendix A]{galkowski2025numerical}.

The paper \cite[\S4]{FeSc:20} uses approximation results on tensorized polynomials 
(building on the results of \cite[\S3.3]{Schwab1998} and \cite[Appendix A]{costabel2005exponential}),
considers triangulations with simplicial elements, and constructs an interpolant of a function $u$ on an element $T$ that depends on the values of $u$ in some neighbouring elements.  (This last point comes from the fact that~\cite{FeSc:20} uses triangulations with affine maps $A_T:\widehat{T}\to T\subset  \Omega$ to the reference simplex that  can be extended to $A_T:\widehat{K}\to K\subset \Omega$ where $\widehat{K}$ is a parallelpiped; the interpolant is then constructed on $T$ by approximating $u|_{K}\circ A_T$, pulling the resulting polynomial back to $K$, and then restricting to $T$.)

In contrast, Theorem \ref{t:polyApprox} covers general polynomials on curved triangulations (with Gevery-$\sigma$ element maps), and the interpolant on an element $T$ depends only on the values of the function on $T$. The fact that we work directly with polynomials of degree $p$ in Theorem~\ref{t:polyApprox} rather than with tensorized polynomials allows us to obtain improved approximation with increasing regularity of the function $u$ all the way to $H^{p+1}$ rather only to $H^{\lfloor p/d\rfloor+1}$. 

In addition, Theorem \ref{t:polyApprox} generalises the results of~\cite[Theorem A.11]{galkowski2025numerical} in two ways: i) it consider Gevrey-$\sigma$ triangulations (as opposed to Gevrey-$1$ triangulations; i.e., those with analytic element maps) and ii) it is explicit in the regularity, $m$, of the function to be approximated for $d/2+\epsilon\leq m\leq p+1$ (rather than only for $m=p+1$).

\subsection{The precise definitions of triangulations}
We start by recalling the definition of triangulations and their properties from~\cite[Appendix B]{galkowski2025numerical} as well as adapting them slightly for the Gevrey setting.

\begin{definition}[Conforming triangulations]
\label{d:conformingMesh}
A finite collection of closed sets $\mathcal{T}$ is a \emph{conforming triangulation} of $\Omega\subset \mathbb{R}^d$ if
\begin{itemize}
\item[(i)] for each $T \in \mathcal{T}$, $T$ is closed, $\mathring{T}$ is nonempty and connected,
\item[(ii)] $\overline{\Omega}= \cup_{T\in \mathcal{T}}T$
\item[(iii)] if $T_1, T_2\in \mathcal{T}$ and $\mathring{T_1}\cap \mathring{T_2} \neq \emptyset$ then $T_1=T_2$.
\item[(iv)] for each $T \in \mathcal{T}$, $T$ has Lipschitz boundary and 
$$
\partial T=\bigcup_{j=1}^d \bigcup_{i=1}^{N_{j,d}}F_{j,i}(T)
$$
with $F_{j,i}(T)$ a smooth, open, embedded submanifold of dimension $d-j$ such that $\partial \big(F_{j,i}(T)\big)=\partial\overline{F_{j,i}(T)}$,
\item[(v)] if $T_1,T_2\in\mathcal{T}$ and $F_{j_1,i_1}(T_1)\cap F_{j_2,i_2}(T_2)\neq \emptyset$, then $F_{j_1,i_1}(T_1)=F_{j_2,i_2}(T_2)$. 
\end{itemize}
\end{definition}

\begin{definition}[$m$-simplex]
Let $m>0$. A set $T\Subset \mathbb{R}^d$ is an \emph{$m$-simplex} if there are $\{x_j\}_{j=0}^m\subset \mathbb{R}^d$ such that
$$
T=\text{convex hull}\big(\{ x_j\}_{j=0}^m\}\big)
$$
$T$ is an \emph{open $m$-simplex} if $T=(\widetilde{T})^\circ$ where $\widetilde{T}$ is an $m$ simplex  and $T\neq \emptyset$. 
\end{definition}

\begin{definition}[Reference element and element maps]
\label{d:referenceElement}
$\widehat{T}\Subset \mathbb{R}^d$ is a reference element for a triangulation $\mathcal{T}$ if $\operatorname{diam}(\widehat{T})=1$ and there exist a family of bi-Lipschitz maps 
$\{\mathcal{F}_{T}\}_{T\in \mathcal{T}}$ such that $T = \mathcal{F}_T( \widehat{T})$ for all $T\in \mathcal{T}$ and for all $(j,i)$ there is $i'$ such that  $F_{j,i}(T)=\mathcal{F}_T(F_{j,i'}(\widehat{T}))$. The triangulation $\mathcal{T}$ is \emph{simplicial} if $\widehat{T}$ is a $d$ simplex.
\end{definition}

We use the following measure of how star-shaped an open subset of $\mathbb{R}^d$ is. 
\begin{definition}[Shape-regularity constant]
Let $U\subset \mathbb{R}^d$ and define
$$
\rho_{\max}(U):=\sup \big\{ \rho>0\,:\, U\text{ is star-shaped with respect to a ball of radius }\rho\big\}.
$$
The \emph{shape-regularity constant of $U$} is defined by
\begin{equation}
\label{e:shapeRegularity}
\gamma(U):=\frac{\operatorname{diam}(U)}{\rho_{\max}(U)}.
\end{equation}
\end{definition}

\begin{definition}[$C^r$-triangulation]\label{d:Cr} Let $\sigma\geq 1$. A triangulation is $C^r$ with constant $\Upsilon>0$ if  $\gamma(\widehat{T})\leq \Upsilon$ 
and for each $T \in \mathcal{T}$ there is $h_T>0$ such that the element map $\mathcal{F}_T$ can be written as $R_T\circ A_T$ where $A_T$ is an affine map and 
\[
\begin{aligned}
& \|\partial A_T\|_{L^\infty} \leq \Upsilon h_T, \qquad \|(\partial A_T)^{-1}\|_{L^\infty} \leq \Upsilon h_T^{-1}, \qquad \|(\partial R_T)^{-1}\|_{L^\infty} \leq \Upsilon,\\
& \|\partial^\alpha R_T\|_{L^\infty} \leq \Upsilon^{1+|\alpha|} \alpha!\quad\tfa |\alpha|\leq r.
\end{aligned}
\]
\end{definition}

\begin{definition}[$G^\sigma$-triangulation]\label{d:Gsigma} Let $\sigma\geq 1$. A triangulation is $G^\sigma$ with constant $\Upsilon>0$ if  $\gamma(\widehat{T})\leq \Upsilon$ 
and
for each $T \in \mathcal{T}$ there is $h_T>0$ such that the element map $\mathcal{F}_T$ can be written as $R_T\circ A_T$ where $A_T$ is an affine map and 
\[
\begin{aligned}
& \|\partial A_T\|_{L^\infty} \leq \Upsilon h_T, \qquad \|(\partial A_T)^{-1}\|_{L^\infty} \leq \Upsilon h_T^{-1}, \qquad \|(\partial R_T)^{-1}\|_{L^\infty} \leq \Upsilon,\\
& \|\partial^\alpha R_T\|_{L^\infty} \leq \Upsilon^{1+|\alpha|} (\alpha!)^{\sigma},
\end{aligned}
\]
 for every multi-index $\alpha$ such that $|\alpha|<\infty$.   
\end{definition}

\begin{definition}[Affine-conforming triangulation]
\label{d:AffineConforming}
A $G^\sigma$ triangulation (in the sense of Definition \ref{d:Gsigma}) is \emph{affine conforming} if whenever $F_{j_1,i_1}(T_1)\cap F_{j_2,i_2}(T_2)\neq \emptyset$ there is an affine isomorphism $\kappa:\mathcal{F}_{T_1}^{-1}(F_{j_1,i_1}(T_1))\to \mathcal{F}_{T_2}^{-1}(F_{j_2,i_2}(T_2))$  such that 
\beq\label{e:isomorphism}
\mathcal{F}_{T_1}|_{
\overline{
\mathcal{F}_{T_1}^{-1}(F_{j_1,i_1}(T_1))
}}
=\mathcal{F}_{T_2}|_{
\overline{
\mathcal{F}_{T_2}^{-1}(F_{j_2,i_2}(T_2))
}}\circ \kappa.
\eeq
\end{definition}

\begin{remark}
Without a condition on the element maps such as Definition \ref{d:AffineConforming}, 
the space of mapped polynomials on two adjacent elements of the triangulation 
may not even intersect non-trivially when restricted to a common face. If the intersection is trivial, there cannot be any non-trivial, globally-$H^1$ mapped polynomials.
\end{remark}

\subsection{Sobolev spaces and coordinate changes in the Gevrey setting}

The following result generalises 
\cite[Lemma A.14]{galkowski2025numerical}.

\begin{proposition} \label{prop:projection}
    Let $\sigma\geq 1$, $d\geq 1$, $C_0>0$. Then there exists $C_1>0$ such that for all $t\geq 0$, $U$, $V \subset \R^d$ open, $\gamma: U \to V$ a bijection satisfying 
    \[
    \|\partial^\alpha \gamma\|_{L^\infty} \leq C_0^{|\alpha|} (\alpha!)^\sigma \ \textrm{for } \alpha \in \mathbb{N}^d, |\alpha| \leq \max(t,1), \ \textrm{and } \|(D\gamma)^{-1}\|_{L^\infty} \leq C_0,
    \]
and $u\in H^t(V)$, then 
    \
\begin{equation}
    \label{ineq:sigma_est_transformation}
    \frac{1}{t!^\sigma} |u \circ \gamma|_{H^t(U)} \leq C_1^t \|u\|_{H^t(V)}.
    \end{equation}
\end{proposition}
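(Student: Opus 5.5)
The plan is to follow the proof of \cite[Lemma A.14]{galkowski2025numerical} (the analytic case $\sigma=1$) and replace factorials by their $\sigma$-th powers. The core is the multivariate Faà di Bruno formula. For $|\beta|=t$ (first take $t$ an integer; the general case follows by interpolation),
\[
\partial^\beta (u\circ\gamma)=\sum_{1\le|\mu|\le t}\big((\partial^\mu u)\circ\gamma\big)\, B_{\beta,\mu}(\gamma),
\]
where each $B_{\beta,\mu}$ is a sum over partitions of $\beta$ into $|\mu|$ nonzero multi-indices $\beta^{(1)},\dots,\beta^{(|\mu|)}$. Each term is a product $\prod_j\partial^{\beta^{(j)}}\gamma_{i_j}$ carrying the combinatorial coefficient $\beta!/\big(\prod_j\beta^{(j)}!\,\cdot\,\text{symmetry factor}\big)$.

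First I insert the hypothesis $|\partial^{\beta^{(j)}}\gamma|\le C_0^{|\beta^{(j)}|}(\beta^{(j)}!)^\sigma$ into each product. Since $|\beta|=\sum_j|\beta^{(j)}|$, the powers of $C_0$ combine to $C_0^{t}$. This leaves a sum of terms of the form
\[
\frac{\beta!}{\prod_j\beta^{(j)}!}\prod_j(\beta^{(j)}!)^\sigma=\beta!\prod_j(\beta^{(j)}!)^{\sigma-1}.
\]
Using $\prod_j\beta^{(j)}!\le \beta!/\binom{|\mu|}{\cdots}$, and crudely $\prod_j\beta^{(j)}!\le \beta!$, each term is at most $(\beta!)^{\sigma}$ times a combinatorial weight. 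The number of partitions, together with the symmetry factors, grows at most like $C^{t}$; in the analytic case this count is exactly what yields the bound with $t!$, and it remains unchanged here.

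Second, I estimate the $L^2$ norm of $(\partial^\mu u)\circ\gamma$ by $C\|\partial^\mu u\|_{L^2(V)}$. This follows from the change of variables formula, since $|\det D\gamma|^{-1}$ is bounded via $\|(D\gamma)^{-1}\|\le C_0$. Summing over $|\beta|=t$ with the weights $(t!/\beta!)^\sigma$ from \eqref{d:sobolev_norms_scaled}, and dividing by $t!^\sigma$, gives
\[
\frac{1}{t!^\sigma}|u\circ\gamma|_{H^t(U)}\le C^t\sum_{\ell\le t}\frac{(\text{weight})}{t!^{\sigma}}\,(\beta!)^\sigma\,|u|_{\ell}.
\]
I then need to compare against $\|u\|_{H^t(V)}$, which contains $|u|_{\ell}/\ell!^{\sigma}$.

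The main obstacle is exactly this bookkeeping. A term involving $\partial^\mu u$ with $|\mu|=\ell<t$ must be bounded by $C^t(\ell!)^{-\sigma}|u|_\ell$, so the coefficient left after division by $t!^\sigma$ must carry at least the decay $(\ell!/t!)^\sigma$ up to geometric factors. To get this I use the sharper bound: for a partition into $\ell$ pieces with sizes $n_j$ summing to $t$, one has $\prod_j n_j!\le (t-\ell+1)!$. Combined with the multinomial structure, which gives Stirling-number-type counts $S(t,\ell)\le\binom{t-1}{\ell-1}\,\ell^{t-\ell}$-style bounds, this should produce
\[
\frac{\ell!^\sigma}{t!^\sigma}\cdot\frac{t!}{\ell!}\cdot\big((t-\ell+1)!\big)^{\sigma-1}\cdot(\#\text{partitions})\le C^{t}.
\]
This follows from $\ell!\,(t-\ell)!\le t!$ raised to the power $\sigma-1$. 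I expect to verify this inequality carefully, possibly via a generating-function majorant argument: compare with the composition of the majorants $\sum C_0^n(n!)^{\sigma}z^n/n!^{\sigma}$ in the Gevrey-weighted sense, as in the standard proof that Gevrey classes are closed under composition. Finally, the case $t<1$ and non-integer $t$ are handled by interpolation between integer orders, which explains why the hypothesis is only needed for $|\alpha|\le\max(t,1)$.
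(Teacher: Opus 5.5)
Your argument is correct in outline, but it handles the combinatorial core differently from the paper.

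\textbf{The paper's route.} The paper uses the Constantine--Savits form of Fa\`a di Bruno and inserts the Gevrey bounds. It then rewrites each coefficient exactly as the $\sigma$-th power of the corresponding analytic ($\sigma=1$) coefficient, times the excess $\big(\prod_j\mu_j!\,[\ell_j!]^{|\mu_j|}/\nu!\big)^{\sigma-1}$ (paper's notation). This excess is $\le 1$ by \eqref{ineq:factorial_multinomial} and \eqref{ineq:factorial_sum_prod}. After Cauchy--Schwarz over the derivatives of $u$, the convexity inequality \eqref{ineq:convexity_power} collapses the whole combinatorial sum to $b_t^\sigma$. The bound $b_t\le C^t$ is imported from the analytic case \cite[Eq.~A.5]{galkowski2025numerical}.

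\textbf{Your route.} You instead work with the ordered form
\[
\partial^\beta(u\circ\gamma)=\sum_{\ell=1}^{t}\frac{1}{\ell!}\sum_{i_1,\dots,i_\ell}\ \sum_{\substack{\beta^{(1)}+\dots+\beta^{(\ell)}=\beta\\ \beta^{(j)}\neq 0}}\frac{\beta!}{\prod_j\beta^{(j)}!}\,\big((\partial_{i_1}\cdots\partial_{i_\ell}u)\circ\gamma\big)\prod_{j=1}^{\ell}\partial^{\beta^{(j)}}\gamma_{i_j}.
\]
\begin{itemize}
\item You bound each coefficient by $C_0^t\,\frac{t!}{\ell!}\big((t-\ell+1)!\big)^{\sigma-1}$, using $\beta^{(j)}!\le|\beta^{(j)}|!$ and $\prod_j|\beta^{(j)}|!\le(t-\ell+1)!$.
\item You remove the excess with $\ell!\,(t-\ell+1)!\le t!$. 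Using only $\ell!(t-\ell)!\le t!$, as you wrote, also suffices, losing just $(t-\ell+1)^{\sigma-1}\le e^{(\sigma-1)t}$.
\item The weights $(t!/\beta!)^\sigma\in[1,d^{\sigma t}]$, the number of multi-indices, and the sum over $\ell$ all cost only geometric factors.
\end{itemize}

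\textbf{Comparison.} Your argument is self-contained and elementary: no Constantine--Savits bookkeeping and no appeal to the $\sigma=1$ lemma, at the price of cruder constants. The paper's route is shorter and shows that the Gevrey constant is essentially the analytic one raised to the power $\sigma$.

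\textbf{Points to clean up.}
\begin{itemize}
\item The count that must be $O(C^t)$ is the number of ordered decompositions of $\beta$ into $\ell$ nonzero multi-indices, at most $\prod_i\binom{\beta_i+\ell-1}{\ell-1}\le 2^{(d+1)t}$, times $d^\ell$ index tuples. Stirling numbers should not appear: the multinomial coefficient $\beta!/\prod_j\beta^{(j)}!$ already accounts for set partitions. A factor such as $\ell^{t-\ell}$ is super-exponential for $\ell\sim t/2$, so pairing it with the coefficient $t!/\ell!$ would double count and break the estimate.
\item Your preliminary ``crude'' bound $\prod_j\beta^{(j)}!\le\beta!$ loses a factor $(\ell!)^{\sigma-1}$ and should simply be dropped.
\item No interpolation step is needed, since $t$ is an integer in the statement: it enters through $t!$ and $|\cdot|_{H^t}$.
\end{itemize}
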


To prove Proposition~\ref{prop:projection} we use the following inequalities.
\begin{lemma}
(i) For $a\geq 0$ and $\ell \geq 1$,
\begin{equation}\label{ineq:factorial_multinomial}
a!(\ell!)^a \leq (\ell a)!.
\end{equation}
(ii) Let $n \geq 0$ be an integer and let $a_i \geq 0$ for $i=1,\dots,n$. Then 
\begin{equation}\label{ineq:factorial_sum_prod}
\prod_{i=1}^n a_i! \leq \left(\sum_{i=1}^n a_i\right)!.
\end{equation} 
and, for $\sigma \geq 1$,
\begin{equation}\label{ineq:convexity_power}
\sum_{i=1}^n a_i^\sigma \leq \left(\sum_{i=1}^n a_i\right)^\sigma.   
\end{equation}

\end{lemma}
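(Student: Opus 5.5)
Part (i) follows from a multinomial-coefficient argument. For integer $a\ge 0$ and $\ell\ge1$, the number of ways to partition a set of $\ell a$ labelled objects into $a$ unordered blocks of size $\ell$ is
\[
\frac{(\ell a)!}{(\ell!)^a\, a!}.
\]
This is a positive integer, hence at least $1$, which is exactly \eqref{ineq:factorial_multinomial}. For $a=0$ both sides equal $1$. If non-integer $a$ were intended, with $a!=\Gamma(a+1)$, I would instead argue by log-convexity of $\Gamma$. Since the inequality is only used for orders of derivatives, I will restrict to integer $a$.

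For \eqref{ineq:factorial_sum_prod} (integer $a_i$), the plan is the same idea. The quotient
\[
\frac{\left(\sum_i a_i\right)!}{\prod_i a_i!}
\]
is a multinomial coefficient, so it is a positive integer and therefore at least $1$. Alternatively, I can induct on $n$ using $a!\,b!\le (a+b)!$, which holds because $\binom{a+b}{a}\ge1$. The case $n=0$ is trivial, since both sides equal $1$ (empty product and $0!$).

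For \eqref{ineq:convexity_power}, set $S=\sum_i a_i$. If $S=0$ there is nothing to prove. Otherwise put $t_i=a_i/S\in[0,1]$, so $\sum_i t_i=1$. Since $\sigma\ge1$, we have $t_i^\sigma\le t_i$, and therefore $\sum_i t_i^\sigma\le \sum_i t_i=1$. Multiplying through by $S^\sigma$ gives the claim; this is superadditivity of $x\mapsto x^\sigma$.

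None of these steps is a serious obstacle. The only point needing care is the interpretation of $a!$ for non-integer arguments in (i) and (ii). I expect the integer reading to be the one used later, in the Faà di Bruno-type estimates for Proposition~\ref{prop:projection}.
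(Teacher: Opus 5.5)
Your proof is correct. Your argument for \eqref{ineq:convexity_power} is the paper's (normalise by $S$ and use $\lambda_i^\sigma\le\lambda_i$). For \eqref{ineq:factorial_sum_prod} the paper only says the induction is straightforward, which is your second option via $\binom{a+b}{a}\ge 1$.

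The genuine difference is in (i).
\begin{itemize}
\item The paper inducts on $a$, writing $\frac{(\ell(a+1))!}{(\ell!)^{a+1}}=\frac{(\ell a)!}{(\ell!)^{a}}\prod_{i=1}^{\ell}\frac{\ell a+i}{i}$. It then bounds the product below by $\prod_{i=1}^{\ell}\frac{ia+i}{i}=(a+1)^\ell\ge a+1$.
\item You instead identify $\frac{(\ell a)!}{(\ell!)^a\,a!}$ as the number of partitions of an $\ell a$-element set into $a$ blocks of size $\ell$, and conclude from integrality.
\end{itemize}
Your route is a one-liner and explains why the inequality holds. The paper's route is purely arithmetic and actually yields more than it uses. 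Keeping the factor $(a+1)^\ell$ at each step, rather than discarding it down to $a+1$, gives $(\ell a)!\ge(\ell!)^a(a!)^\ell$. That is not an integrality statement (the ratio is $3/2$ for $a=\ell=2$), so your counting argument cannot reach it. Both arguments need $a$ to be an integer.

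One correction to your side remark: falling back on log-convexity of $\Gamma$ cannot work for (i), because (i) is false for non-integer $a$. For instance, $a=\tfrac12$, $\ell=2$ gives $\Gamma(\tfrac32)\sqrt{2}=\sqrt{\pi/2}>1=1!$. For \eqref{ineq:factorial_sum_prod} the log-convexity argument does go through, since $x\mapsto\log\Gamma(1+x)$ is convex and vanishes at $0$, hence is superadditive on $[0,\infty)$. So restricting (i) to integer $a$ is necessary, not merely convenient. This is harmless here, since the lemma is only applied to integer orders of derivatives in the proof of Proposition~\ref{prop:projection}.
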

\begin{proof}
We prove \eqref{ineq:factorial_multinomial} by induction:~assume
\[
\frac{(\ell a)!}{(\ell!)^a} \geq a!
\]
is true for all $0 \leq a \leq a'$ for some $a'\geq 0$, we now show that it holds for $a'+1$.
By the induction hypothesis 
\[
\frac{(\ell (a+1))!}{(\ell!)^{a+1}} \geq \frac{(\ell a+1)\ldots(\ell a+\ell)}{1 \ldots \ell }a!.
\]
We next notice that
\[
\frac{(\ell a+1)\ldots(\ell a+\ell)}{1 \ldots \ell } = \prod_{i=1}^\ell \frac{\ell a+i}{i} \geq\prod_{i=1}^\ell \frac{i a+i}{i}   \geq (a+1)^\ell \geq (a+1),
\]
which proves \eqref{ineq:factorial_multinomial}.
The proof of \eqref{ineq:factorial_sum_prod}
by induction 
is straightforward.
We now prove \eqref{ineq:convexity_power}. Let $S = \sum\limits_{i=1}^n a_i$. If $S=0$ then $a_i = 0$ for all $1\leq i\leq n$ and the result is trivial. We may then assume $S\neq 0$ and let 
\[\lambda_i = \frac {a_i}{S} \in [0,1] \ \ \textrm{for all  } 1\leq i\leq n, \qquad\sum_{i=1}^n \lambda_i =1. \]
Since $\sigma \geq 1$, $\lambda_i^\sigma \leq \lambda_i$, and thus $\sum_{i=1}^n \lambda_i^\sigma \leq 1$.
Therefore
\[\sum_{i=1}^n a_i^\sigma \leq S^\sigma \leq \left(\sum_{i=1}^n a_i\right)^\sigma.
\]
\end{proof}

\begin{proof}[Proof of Proposition~\ref{prop:projection}]
The proof follows closely that of \cite[Lemma A.14]{galkowski2025numerical}, namely,
using the multivariate Faa di Bruno formula and bounding carefully the resulting combinations of sums and products. 
First, we define the set 
\[
p_s(\nu, \lambda) := \left\{ 
\begin{multlined}
(\mu_1, \ldots, \mu_s; \ell_1, \ldots, \ell_s) \in (\mathbb{N}^d)^{2s}\, : \, |\mu_i|>0, 0< \ell_1 < \ldots < \ell_s, \\
\sum_{i=1}^{s} \mu_i =\lambda, \sum_{i=1}^{s} |\mu_i| \ell_i = \nu
\end{multlined}
\right\},
\]
where, following the notation in \cite{constantine1996multivariate}, the order relation $ \mu \leq \nu$ for $\mu, \, \nu \in \mathbb{N}^d$ is defined by either 
\begin{itemize}
    \item[(i)] $|\mu| \leq |\nu|$, or
    \item[(ii)] there exists $1 \leq r \leq d$, such that $\mu_i = \nu_i,$ $i = 1, \ldots, r-1$ and $\mu_r \leq \nu_r$.
\end{itemize}
From the multivariate Faa di Bruno formula~\cite[Theorem 2.1]{constantine1996multivariate}
\begin{equation*}
\begin{aligned}
&\frac{1}{t!^{2\sigma}}  |u \circ \gamma|^2_{H^t(U)} \\
=& \sum_{|\nu| = t} \int_U \left(\sum_{m=1}^t \sum_{|\lambda|=m} (\partial^\lambda u) \circ \gamma \sum_{s=1}^t \sum_{p_s(\nu,\lambda)} \frac{\nu!}{\sqrt{(\nu!)^\sigma(t!)^\sigma}} \prod_{j=1}^s \frac{[\partial^\ell_j \gamma]^{\mu_j}}{\mu_j![\ell_j!]^{|\mu_j|}} \right)^2 
\\
\leq & C_0^{2t}\sum_{|\nu| = t} \int_U \left(\sum_{m=1}^t \sum_{|\lambda|=m}  \frac{1}{(m!)^\sigma}(\partial^\lambda u) \circ \gamma \sum_{s=1}^t \sum_{p_s(\nu,\lambda)} \frac{(\nu!m!)^\sigma}{\sqrt{(\nu!)^\sigma(t!)^\sigma}} \left(\prod_{j=1}^s \frac{1}{\mu_j!} \right) \left(\frac{1}{\nu!}\prod_{j=1}^s [\ell_j!]^{|\mu_j|}\right)^{\sigma - 1} \right)^2
\\
\leq & C_0^{2t}\sum_{|\nu| = t} \int_U \left(\sum_{m=1}^t \sum_{|\lambda|=m}  \frac{1}{(m!\lambda!)^{\sigma/2}} (\partial^\lambda u) \circ \gamma \left(\frac{\lambda!}{m!}\right)^{\sigma/2}   \sum_{s=1}^t \sum_{p_s(\nu,\lambda)} \left(\frac{\nu! m!}{\sqrt{\nu!t!}}\prod_{j=1}^s \frac{1}{\mu_j!} \right)^\sigma  \left(\frac{1}{\nu!}\prod_{j=1}^s \mu_j![\ell_j!]^{|\mu_j|}\right)^{\sigma - 1} \right)^2
\\
\leq & C_0^{2t}\left[\sum_{m=1}^t \sum_{|\lambda|=m}   \frac{1}{(m!\lambda!)^{\sigma}} \int_U\left|(\partial^\lambda u) \circ \gamma  \right|^2 \right] \times \\ & \hphantom{\sum \sum} \left[\sum_{|\nu| = t}  \sum_{m=1}^t \sum_{|\lambda|=m} \left(\frac{\lambda!}{m!}\right)^\sigma  \left(\sum_{s=1}^t \sum_{p_s(\nu,\lambda)} \left(\frac{\nu! m!}{\sqrt{\nu!t!}}\prod_{j=1}^s \frac{1}{\mu_j!} \right)^\sigma  \left(\frac{1}{\nu!}\prod_{j=1}^s \mu_j![\ell_j!]^{|\mu_j|}\right)^{\sigma - 1} \right)^2\right],
\end{aligned}
\end{equation*}
where in the last line we used Cauchy--Schwarz inequality. By the definition of the norm $|\cdot|_{H^t(V)}$,
\begin{equation*}
\begin{aligned}
&\frac{1}{t!^{2\sigma}}  |u \circ \gamma|^2_{H^t(U)} \\
\leq& C_0^{2t} \|u\|_{H^t(V) }\sum_{|\nu| =t} \sum_{m=1}^t \sum_{|\lambda| = m } \left(\frac{\lambda!}{m!}\right)^\sigma \left( \sum_{s=1}^t \sum_{p_s(\nu,\lambda)} \left(\frac{\nu! m!}{\sqrt{\nu!t!}}\prod_{j=1}^s \frac{1}{\mu_j!} \right)^\sigma \left(\frac{1}{\nu!}\prod_{j=1}^s \mu_j![\ell_j!]^{|\mu_j|}\right)^{\sigma - 1}  \right)^2.
\end{aligned}
\end{equation*}
By 
\eqref{ineq:factorial_multinomial}, \eqref{ineq:factorial_sum_prod}, and the fact that 
$\sum_{i=1}^s |\mu_i|\,\ell_i = \nu$ (from the definition of $p_s(\nu,\lambda)$),
\[\prod_{j=1}^s \mu_j![\ell_j!]^{|\mu_j|} \leq \prod_{j=1}^s (|\mu_j| \ell_j )! \leq \left( \sum_{i=1}^s |\mu_i|\,\ell_i \right)! \leq \nu! .\]
Therefore \[ \left(\frac{1}{\nu!}\prod_{j=1}^s \mu_j[\ell_j!]^{|\mu_j|}\right)^{\sigma - 1}  \leq 1, \]
and 
\begin{equation*}
\begin{aligned}
 \frac{1}{t!^{2\sigma}} |u \circ \gamma|^2_{H^t(U)}
&\leq C_0^{2t} \|u\|_{H^t(V) }\sum_{|\nu| =t} \sum_{m=1}^t \sum_{|\lambda| = m } \left(\frac{\lambda!}{m!}\right)^\sigma \left( \sum_{s=1}^t \sum_{p_s(\nu,\lambda)} \left(\frac{\nu! m!}{\sqrt{\nu!t!}}\prod_{j=1}^s \frac{1}{\mu_j!} \right)^\sigma \right)^2
\\
&\leq C_0^{2t} \|u\|_{H^t(V) } \left(\sum_{|\nu| =t} \sum_{m=1}^t \sum_{|\lambda| = m } \frac{\lambda!}{m!}\left( \sum_{s=1}^t \sum_{p_s(\nu,\lambda)} \frac{\nu! m!}{\sqrt{\nu!t!}}\prod_{j=1}^s \frac{1}{\mu_j!} \right)^2\right)^\sigma,
\end{aligned}
\end{equation*}
where in the last line we used \eqref{ineq:convexity_power}.
By \cite[Equation A.5]{galkowski2025numerical} 
there exist $C_1\geq 1$ such that 
\[b_t := \sum_{|\nu| =t} \sum_{m=1}^t \sum_{|\lambda| = m } \frac{\lambda!}{m!}\left( \sum_{s=1}^t \sum_{p_s(\nu,\lambda)} \frac{\nu! m!}{\sqrt{\nu!t!}}\prod_{j=1}^s \frac{1}{\mu_j!} \right)^2 \leq C_1^t\]
and the proof is complete.
\end{proof}

\subsection{Degree- and regularity-explicit polynomial approximation on simplices}
\begin{lemma}[Polynomial approximation on $(-1,1)^d$]
\label{l:cubeApproximate}
Let $d\geq 1$, $\ell\geq 0$. Then there is $C>0$ such that for all $\ell \leq m \leq p+1$ and all $u\in H^m((-1,1)^d)$
\beq\label{e:polyApprox1}
\sum_{|\gamma|\leq \ell}\|T_{\gamma}(I-\Pi_p)u\|^2_{L^2}\leq \frac{C^{2m+1}}{(p+1)^{2(m-\ell)}}
\sum_{|\alpha|=m} 
\|T_{\alpha} u\|^2_{L^2}\leq \frac{C^{2m+1}}{(p+1)^{2(m-\ell)}}
\sum_{|\alpha|=m} 
\big\|\partial^\alpha u\big\|^2_{L^2}.
\eeq
where $\Pi_{p}$ denotes the orthogonal projector onto polynomials of degree less than or equal to $p$ in $L^2((-1,1)^d)$, and 
\beq\label{e:Talpha}
T_{\alpha}u:=\prod_{j=1}^d(1-x_j^{2})^{\alpha_j/2}\partial_{x_j}^{\alpha_j}u.
\eeq
\end{lemma}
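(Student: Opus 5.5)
The plan is to use the Legendre/Jacobi structure of the cube, via tensor-product Legendre expansions and the weighted operators $T_\alpha$ of \eqref{e:Talpha}. Let $L_n$ be the Legendre polynomials on $(-1,1)$, expand $u=\sum_{\mathbf n\in\mathbb N^d}\hat u_{\mathbf n}\prod_j L_{n_j}(x_j)$, and note that $\Pi_p$ keeps the modes with $|\mathbf n|\le p$. I will first prove the one-dimensional facts and then tensorize.

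\textbf{One-dimensional facts.} The key fact is that the derivatives $L_n^{(a)}$ are orthogonal with respect to the weight $(1-x^2)^{a}$. Explicitly,
\[
\int_{-1}^1 |L_n^{(a)}|^2(1-x^2)^a\,dx=\frac{2}{2n+1}\frac{(n+a)!}{(n-a)!}\quad (n\ge a),
\]
and the integral vanishes otherwise. It follows that for each multi-index $\gamma$,
\[
\|T_\gamma v\|_{L^2}^2=\sum_{\mathbf n}|\hat v_{\mathbf n}|^2\prod_j \frac{2}{2n_j+1}\frac{(n_j+\gamma_j)!}{(n_j-\gamma_j)!}.
\]
This diagonalizes both sides of \eqref{e:polyApprox1} in the same basis, so the first inequality reduces to a comparison of weights. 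For $|\mathbf n|\ge p+1$, $|\gamma|\le \ell$, and some $\alpha$ with $|\alpha|=m$ and $\alpha\ge\gamma$ componentwise (chosen adapted to $\mathbf n$), we need
\[
\prod_j\frac{(n_j+\gamma_j)!}{(n_j-\gamma_j)!}\le \frac{C^{2m+1}}{(p+1)^{2(m-\ell)}}\sum_{|\alpha|=m}\prod_j\frac{(n_j+\alpha_j)!}{(n_j-\alpha_j)!}.
\]

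\textbf{Choosing $\alpha$.} This selection is the main obstacle. Let $j_*$ be an index with $n_{j_*}\ge |\mathbf n|/d\ge (p+1)/d$, and raise $\gamma$ to $\alpha$ by adding $m-|\gamma|$ to the $j_*$ component. Each added unit multiplies the weight by $(n_{j_*}+a+1)(n_{j_*}-a)$. This factor is only $\gtrsim n_{j_*}^2$ while $a\le n_{j_*}/2$, so the regime where $m$ is comparable to $n_{j_*}$ needs care.

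There I would spread the increments across coordinates, using $m\le p+1\le|\mathbf n|$, or simply accept a weaker lower bound. Note that $(n+a+1)(n-a)\ge n+a+1\ge (p+1)/d$ holds whenever $a<n$, which is too weak. The sharper route is to observe that $\prod_{a=\gamma_j}^{\alpha_j-1}(n+a+1)(n-a)=\frac{(n+\alpha_j)!(n-\gamma_j)!}{(n-\alpha_j)!(n+\gamma_j)!}$ and to bound this below by $(n/C)^{2(\alpha_j-\gamma_j)}$ using Stirling's formula; the factor $C^{2m}$ is there precisely to absorb the losses when $\alpha_j\sim n$. The ratios must also be shown to be at least $((p+1)/(Cd))^{2(m-|\gamma|)}\ge ((p+1)/C)^{2(m-\ell)}$ for $p+1\ge 1$. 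Summing over the $\binom{\ell+d}{d}$ choices of $\gamma$ costs only a constant absorbed in $C$. Modes with some $n_j<\gamma_j$ vanish, and modes with $|\mathbf n|\le p$ are killed by $I-\Pi_p$.

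\textbf{Second inequality.} This is immediate, because $|(1-x_j^2)^{\alpha_j/2}|\le1$ on the cube, so $\|T_\alpha u\|\le\|\partial^\alpha u\|$. A density argument (polynomials are dense in $H^m$) justifies the Legendre computations for general $u\in H^m$.
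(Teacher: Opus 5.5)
Your framework (tensor Legendre diagonalization of $\|T_\gamma v\|^2$, the trivial second inequality, density) is the same as the paper's. However, the step you flag as the main obstacle is left open, and for $d\geq 2$ that step is where all the content lies. Your explicit choice $\alpha=\gamma+(m-|\gamma|)e_{j_*}$ fails outright. You only know $n_{j_*}\geq |\mathbf n|/d$, while $m$ can be as large as $p+1\leq |\mathbf n|$. So one can have $\alpha_{j_*}>n_{j_*}$, in which case $L_{n_{j_*}}^{(\alpha_{j_*})}\equiv 0$ and the $\alpha$-weight of that mode is $0$; take, e.g., $d=2$, $\gamma=0$, $m=p+1$, $\mathbf n=((p+1)/2,(p+1)/2)$. ``Accepting a weaker lower bound'' is therefore not an option. ``Spreading the increments'' is not automatic either. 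Increments placed in a coordinate with small $n_j$ gain only about $n_j^2$ per unit, not $(p+1)^2/C^2$. So multiplying your per-coordinate Stirling bounds $(n_j/C)^{2(\alpha_j-\gamma_j)}$ does not give $((p+1)/C)^{2(m-|\gamma|)}$ without a global argument about the allocation. One such argument is $\alpha_j\approx m n_j/|\mathbf n|$ combined with a log-sum (entropy) inequality and a rounding step; none of this is in your proposal.

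The paper never selects a single $\alpha$; it bounds the whole sum over $|\alpha|=m$, $\alpha\leq\beta$. It uses three ingredients:
\begin{itemize}
\item the inequality $\frac{(\beta+\alpha)!}{(\beta-\alpha)!}\geq\big(\frac{\beta!}{(\beta-\alpha)!}\big)^2$;
\item Cauchy--Schwarz against the weights $m!/\alpha!$, whose squares sum to at most $d^{2m}$;
\item the Vandermonde identity $\sum_{|\alpha|=m,\,\alpha\leq\beta}\frac{m!}{\alpha!}\frac{\beta!}{(\beta-\alpha)!}=\frac{|\beta|!}{(|\beta|-m)!}$.
\end{itemize}
Together these give
\[
\sum_{|\alpha|=m,\ \alpha\leq\beta}\frac{(\beta+\alpha)!}{(\beta-\alpha)!}\ \geq\ d^{-2m}\Big(\frac{|\beta|!}{(|\beta|-m)!}\Big)^2\ \geq\ (de)^{-2m}|\beta|^{2m}\qquad(|\beta|\geq m).
\]
Separately, the $\gamma$-weight is bounded above by $C|\beta|^{2|\gamma|}$ with $C$ depending only on $\ell$. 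Decoupling the two bounds also removes the constraint $\alpha\geq\gamma$ that your ratio formulation imposes. Taking $|\beta|\geq p+1$ then gives the factor $C(de)^{2m}(p+1)^{-2(m-|\gamma|)}\leq C(de)^{2m}(p+1)^{-2(m-\ell)}$ directly. This global lower bound on the $m$-th order weights is the ingredient your proof is missing.
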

\begin{proof}
For $\alpha\in \mathbb{N}^d$, let  $\pi_{\alpha}$ denote that orthogonal projector onto 
$$
\prod_{j=1}^dL_{\alpha_j}(x_j),
$$
where $L_{m}$ is the normalized Legendre polynomial of degree $m$. 
Since each $x_\ell \in [-1,1], (1-x_\ell^2)^{\alpha_\ell/2}\leq 1$, so that 
\begin{align*}
\sum_{|\alpha|=m}\|\partial^\alpha u\|_{L^2}^2\geq\sum_{|\alpha|=m}\Big\|\Big(\prod_{\ell=1}^d(1-x_{\ell}^2)^{\alpha_{\ell}/2}\Big)\partial^\alpha u\Big\|_{L^2}^2=\sum_{|\alpha|=m}\|T_{\alpha}u\|_{L^2}^2.
\end{align*}
Therefore, it is enough to prove the first inequality in \eqref{e:polyApprox1}.

By expanding in Legendre polynomials,
\begin{align*}
\sum_{|\alpha|=m}\|T_{\alpha}u\|_{L^2}^2&= \sum_{|\alpha|=m}\Big\|\sum_{\beta}T_{\alpha} \pi_{\beta}u\Big\|_{L^2}^2.
\end{align*}
We now claim that
\begin{align}
\|T_{\alpha}u\|_{L^2}^2=
\Big\|\sum_{\beta}T_{\alpha} \pi_{\beta}u\Big\|_{L^2}^2
=
\sum_{\beta}\Big\|T_{\alpha}\pi_{\beta}u\Big\|_{L^2}^2.
\label{e:concise1}
\end{align}
Indeed, after integration by parts in the $L^2$ inner product, 
this last equality follows from the fact that
\beq\label{e:magic1}
\pi_{\beta'}\partial^{\alpha} \Big(\prod_{\ell=1}^d(1-x_{\ell}^2)^{\alpha_\ell}\Big)\partial^\alpha\pi_{\beta}=\delta_{\beta\beta'}\pi_{\beta'}\partial^\alpha\Big(\prod_{\ell=1}^d(1-x_\ell^2)^{\alpha_\ell}\Big)\partial^\alpha \pi_{\beta
}
\eeq
(i.e., the operator is zero unless $\beta'=\beta$).
To see \eqref{e:magic1}, recall Rodrigues' formula \cite[Equation 14.7.13]{Di:26}
\[
P_n(x)
=
\frac{1}{2^n n!}
\frac{d^n}{dx^n}(x^2-1)^n
\]
(where $P_n$ are the Legendre polynomials with the normalisation $P_n(1)=1$)
so that 
\begin{equation}
\frac{d^k}{dx^k}\left((1-x^2)^k \frac{d^k}{dx^k}P_n(x)\right)
=
(-1)^k\frac{(n+k)!}{(n-k)!}\,P_n(x),
\qquad n\ge k,
\label{eq:legendre-eigenvalue}
\end{equation}
and thus the differential operator
$$
T_\alpha^*T_\alpha =
\partial^{\alpha}\Big(\prod_{\ell=1}^d(1-x_{\ell}^2)^{\alpha_\ell}\Big)\partial^\alpha$$ 
is diagonalized by the tensor-product Legendre basis; 
therefore, \eqref{e:concise1} holds.
Combining this with  \eqref{eq:legendre-eigenvalue}, we obtain that if $\alpha_\ell\leq \beta_\ell$ for all $\ell=1,\dots,d$ then 
$$
\int_{-1}^1 |T_{\alpha}u|^2dx= \sum_{\alpha\leq \beta}
\frac{(\beta+\alpha)!}{(\beta-\alpha)!}\|\pi_\beta u\|_{L^2}^2
$$
and otherwise is 0. Therefore, for $|\gamma|\leq \ell$, 
\begin{align}\nonumber
\sum_{|\alpha|=m}\|T_{\alpha}u\|_{L^2}^2&=\sum_{\beta}\|\pi_\beta u\|_{L^2}^2\sum_{\substack{|\alpha|=m\\\alpha\leq \beta }}\frac{(\beta+\alpha)!}{(\beta-\alpha)!}\\ \nonumber
&\geq \sum_{|\beta|\geq p+1}\frac{(\beta+\gamma)!}{(\beta-\gamma)!}\|\pi_\beta u\|_{L^2}^2\frac{(\beta-\gamma)!}{(\beta+\gamma)!}\sum_{\substack{|\alpha|=m\\\alpha\leq \beta }}\frac{(\beta+\alpha)!}{(\beta-\alpha)!}\\ 
\nonumber
&= \sum_{|\beta|\geq p+1}\|T_{\gamma}\pi_\beta u\|_{L^2}^2\frac{(\beta-\gamma)!}{(\beta+\gamma)!}\sum_{\substack{|\alpha|=m\\\alpha\leq \beta }}\frac{(\beta+\alpha)!}{(\beta-\alpha)!}\\ \nonumber
&\geq \|T_{\gamma}(I-\pi_p) u\|_{L^2}^2\inf_{|\beta|\geq p+1}\frac{(\beta-\gamma)!}{(\beta+\gamma)!}\sum_{\substack{|\alpha|=m\\\alpha\leq \beta }}\frac{(\beta+\alpha)!}{(\beta-\alpha)!}\\
&\geq \|T_{\gamma}(I-\pi_p) u\|_{L^2}^2\inf_{|\beta|\geq p+1}c|\beta|^{-2|\gamma|}\sum_{\substack{|\alpha|=m\\\alpha\leq \beta }}\frac{(\beta+\alpha)!}{(\beta-\alpha)!}
\label{e:HmEstimate}
\end{align}
where we have used \eqref{e:magic1} in the penultimate step.
Using that
$$
\frac{(\beta+\alpha)!}{\beta!}\geq \frac{\beta!}{(\beta-\alpha)!},\qquad 
\sum_{|\alpha|=m}
\bigg(\frac{m!}{\alpha!}\bigg)^2
\leq 
\bigg(\sum_{|\alpha|=m}
\frac{ m!}{ \alpha!}
\bigg)^2
=d^{2m},
$$
and the Cauchy--Schwarz inequality, we obtain that
\begin{align*}
\sum_{\substack{|\alpha|=m\\\alpha\leq \beta }}\frac{(\beta+\alpha)!}{(\beta-\alpha)!}\geq \sum_{\substack{|\alpha|=m\\ \alpha\leq \beta}}\Big(\frac{\beta!}{(\beta-\alpha)!}\Big)^2&\geq \frac{\Big(\sum_{\substack{|\alpha|=m\\ \alpha\leq \beta}}\frac{m!}{\alpha!}\frac{\beta!}{(\beta-\alpha)!}\Big)^2}{\sum_{\substack{|\alpha|=m\\ \alpha\leq \beta}}\Big(\frac{ m!}{\alpha!}\Big)^2}\geq d^{-2m}\Big(\sum_{\substack{|\alpha|=m\\ \alpha\leq \beta}}\frac{m!}{\alpha!}\frac{\beta!}{(\beta-\alpha)!}\Big)^2.
\end{align*}
We now claim that
\beq\label{e:magic2}
\sum_{\substack{|\alpha|=m\\ \alpha\leq \beta}}m!\frac{\beta!}{(\beta-\alpha)!\alpha!}=\frac{|\beta|!}{(|\beta|-m)!}
\eeq
since both sides count the number of ways to choose $m$ ordered objects from a group of $|\beta|$ elements. Indeed, the right-hand side is the standard formula, and the left-hand side first breaks the $|\beta|$ elements into groups of size $\beta_1,\dots, \beta_d$  and chooses $\alpha_j$ elements from the group of size $\beta_j$. The factor $\beta!/(\beta-\alpha)!\alpha!$ is the number of ways to choose $\alpha_j$ unordered elements from the $j^{th}$ group and the factor $m!$ is the number of orderings of these elements. 

By \eqref{e:magic2},
\begin{align*}
\sum_{\substack{|\alpha|=m\\\alpha\leq \beta }}\frac{(\beta+\alpha)!}{(\beta-\alpha)!}&\geq d^{-2m}\Big(\frac{|\beta|!}{(|\beta|-m)!}\Big)^2\\
&=d^{-2m}|\beta|^{2m}\Big(\prod_{j=0}^{m-1}\Big( 1-\frac{j}{|\beta|}\Big)\Big)^2\\
&\geq d^{-2m}|\beta|^{2m}\Big(\prod_{j=0}^{m-1} \Big(1-\frac{j}{m}\Big)\Big)^2=d^{-2m}|\beta|^{2m}\frac{(m!)^2}{m^{2m}}
\geq d^{-2m}|\beta|^{2m}e^{-2m},
 \end{align*}
where the last inequality follows from Stirling's formula. Hence, the result follows from~\eqref{e:HmEstimate}.
\end{proof}

\begin{lemma}
\label{l:simplexApprox}
    Let $\widehat{K}$ be a $d$-simplex. Then for all $m\geq 0$ there is a map $\mathcal{I}_m^p:H^m(\widehat{K})\to \mathbb{P}_p(\widehat{K})$ such that for all $\ell \geq 0$ there is $C>0$ such that for all $\ell\leq m\leq p+1$, 
    $$
    \|(I-\mathcal{I}_m^p)u\|^2_{H^\ell(\widehat{K})}\leq \frac{C^{2m+2}}{(p+1)^{2m-2\ell}}\sum_{|\alpha|=m}\|\partial^\alpha u\|^2_{L^2(\widehat{K})}.
    $$ 
\end{lemma}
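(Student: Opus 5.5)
The plan is to reduce the simplex estimate to Lemma~\ref{l:cubeApproximate} by means of an extension operator. The constants must be uniform in $m$, apart from the geometric factor $C^{2m+2}$. The naive approach of pulling back to the cube by a collapsed (Duffy-type) map is unsuitable, because that map is not bi-Lipschitz. Instead, after an affine change of variables, which only changes constants by factors $C^{m}$ since derivatives of order $m$ transform with a factor $\|\partial A\|^m$, we may assume $\widehat K\subset(-1,1)^d$. We then use a \emph{Stein-type universal extension} $E:H^m(\widehat K)\to H^m((-1,1)^d)$. The required property is $|Eu|_{H^m((-1,1)^d)}\le C^{m+1}\|u\|_{H^m(\widehat K)}$ with $C$ independent of $m$, where only the top seminorm on the right would be ideal.

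To obtain a pure seminorm on the right-hand side, we first subtract a polynomial. Let $q\in\mathbb P_{m-1}$ be the averaged Taylor polynomial of $u$ over a ball $B\subset\widehat K$. By the Bramble--Hilbert/Dupont--Scott estimates with explicit constants (the constants are of the form $C^m/ (m-j)!$-type, hence bounded by $C^{m}$ since $\widehat K$ is star-shaped with respect to $B$ and $\operatorname{diam}\widehat K=1$), we get
$$\|u-q\|_{H^j(\widehat K)}\le C^{m+1}\Big(\sum_{|\alpha|=m}\|\partial^\alpha u\|^2_{L^2(\widehat K)}\Big)^{1/2},\qquad 0\le j\le m.$$
Set $w:=E(u-q)$ and define
$$\mathcal I^p_m u:=q+(\Pi_p w)|_{\widehat K}.$$
Since $m\le p+1$, we have $q\in\mathbb P_{m-1}\subset\mathbb P_p$, and therefore $\mathcal I^p_m u\in\mathbb P_p(\widehat K)$ and $(I-\mathcal I^p_m)u=((I-\Pi_p)w)|_{\widehat K}$.

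The error estimate then proceeds in two steps. First, Lemma~\ref{l:cubeApproximate} bounds the weighted derivatives $T_\gamma(I-\Pi_p)w$ for $|\gamma|\le\ell$ by $C^{2m+1}(p+1)^{-2(m-\ell)}\sum_{|\alpha|=m}\|\partial^\alpha w\|^2$. Second, the extension bound together with the Bramble--Hilbert step controls $\sum_{|\alpha|=m}\|\partial^\alpha w\|^2$ by $C^{2m+2}\sum_{|\alpha|=m}\|\partial^\alpha u\|^2_{L^2(\widehat K)}$.

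The weights $(1-x_j^2)^{\gamma_j/2}$ in $T_\gamma$ degenerate only at $\partial(-1,1)^d$. We therefore place $\widehat K$ inside $(-1/2,1/2)^d$ by a further affine scaling, which again costs only $C^m$. On this smaller cube the weights are bounded below by a constant of the form $c^{|\gamma|}$ with $|\gamma|\le\ell$, a constant depending only on $\ell$. Hence the unweighted $H^\ell(\widehat K)$ norm is bounded by the weighted sum at a cost depending only on $\ell$, which is allowed because the constant $C$ in the lemma may depend on $\ell$.

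The main obstacle is the $m$-uniformity of the extension operator. Stein's extension operator for Lipschitz domains is bounded on every $H^m$, but the standard proofs only give constants that depend on $m$ in an uncontrolled way. I would try to handle this with an explicit construction for a simplex, applying one-dimensional reflections face by face with Hestenes/Babich-type coefficients. The difficulty is that naive Hestenes reflections of order $m$ have coefficients growing factorially. If that is the case, I would fall back on Stein's kernel $\int_1^\infty\psi(\lambda)u(x',x_d-\lambda\delta)\,d\lambda$, whose moment conditions hold for all orders simultaneously and whose $H^m$ bound is $C^{m}$ when one tracks the derivatives of the regularized distance. Should this fail too, an alternative is to avoid extension entirely. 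One would use a Duffy map $D$ from the cube onto $\widehat K$ and approximate the pullback with a weighted (Jacobi) version of Lemma~\ref{l:cubeApproximate}, checking that polynomials of degree $p$ on the cube whose pushforward is a polynomial suffice.
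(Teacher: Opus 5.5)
Your plan has the same architecture as the paper's proof. You subtract a polynomial of degree $m-1$, extend to a cube in which $\widehat K$ sits compactly, apply $\Pi_p$ and Lemma~\ref{l:cubeApproximate}, and use that the weights in $T_\gamma$ are bounded below on $\widehat K$. Your averaged Taylor polynomial is an acceptable substitute for the paper's $\mathcal{Z}_m u\in\mathbb{P}_{m-1}$, which matches the moments $\int_{\widehat K}\partial^\alpha u$ for $|\alpha|\le m-1$ and is followed by $m$ Poincar\'e inequalities.

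The gap is the one non-routine ingredient. You never produce an extension $H^m(\widehat K)\to H^m((-1,1)^d)$ whose norm, measured in $\|u\|_{L^2}+\sum_{|\alpha|=m}\|\partial^\alpha u\|_{L^2}$, is at most $C^m$ with $C$ independent of $m$. The routes you list do not supply one.

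\emph{Stein's kernel.} The claim that a single Stein kernel has $H^m$ bounds of size $C^m$ is false, and the regularised distance has nothing to do with it. Already on a half-space with $\delta^*=2|x_d|$, $\partial_d^m E$ is a Mellin multiplier with symbol $\pm H(m-\frac12+i\tau)$, $\tau\in\mathbb{R}$. Here $H(z):=\int_1^\infty\psi(\lambda)(2\lambda-1)^z\,d\lambda$ is entire, bounded on vertical lines, and satisfies $H(n)=(-1)^n$. A bound $O(C^m)$ for all $m$ would make $H$ of exponential type in the right half-plane. Carlson's theorem applied to $H(z)H(z+1)+1$ would then force $H(z+2)=H(z)$, so $H$ would be bounded and hence constant, contradicting $H(0)=-H(1)=1$. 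The obstruction is using one kernel for all $m$.

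\emph{Face-by-face reflections.} These do not by themselves extend near the edges and vertices of the simplex. Gluing them needs cutoffs whose $m$-th derivatives are at least of size $m!\,\delta^{-m}$, so this is again the whole $m$-uniformity problem.

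\emph{Duffy map.} This route is only named. Its weighted, $m$-explicit estimates would be the entire proof.

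The missing idea is that $\mathcal I^p_m$, and hence the extension, may depend on $m$, and that $m$-dependent extensions with geometric bounds exist. The paper takes from \cite[Theorem 5]{VIBurenkov_1997} operators $\widetilde E_m:H^m(\widehat K)\to H^m(B(\widehat K,\delta))$ of norm at most $C_0^m$ in the above norm, with $C_0,\delta$ independent of $m$. It then reaches all of $(-1,1)^d$ without any cutoff:
\begin{itemize}
\item assume $0\in\widehat K^\circ$ and $(1+\delta_1)\widehat K\subset B(\widehat K,\delta)$;
\item restrict $\widetilde E_m u$ to $(1+\delta_1)\widehat K$ and rescale back to $\widehat K$, at a cost of $(1+\delta_1)^m$;
\item iterate this a fixed number $J$ of times, until $(1+\delta_1)^J\widehat K\supset(-1,1)^d$, and rescale once more.
\end{itemize}
With the resulting $\mathcal E_m$ in place of your $E$, the rest of your argument goes through.
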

\begin{proof}
Without loss of generality, we assume that $0\in\widehat{K}^{\circ}$ and $\widehat{K}\Subset (-1,1)^d$.
Let 
$$\|u\|_{\tilde{H}^m}:=\|u\|_{L^2}+\sum_{|\alpha|=m}\|\partial^\alpha u\|_{L^2}.
    $$
The plan is to construct an extension $\mathcal{E}_m : H^m(\widehat{K})\to H^m((-1,1)^d)$ satisfying 
   \begin{equation} 
   \label{e:extensionEstimate}
\|\mathcal{E}_m\|_{\tilde{H}^m(\widehat{K})\to \tilde{H}^m((-1,1)^d)}\leq C^m,\qquad 1_{\widehat{K}}\mathcal{E}_m=I.
   \end{equation}
Let 
$$
B(\widehat{K},\delta):=\{ x\in\mathbb{R}^d\,:\, d(x, \widetilde{K})<\delta\}.$$
    By~\cite[Theorem 5]{VIBurenkov_1997}, there are $C_0,\delta>0$ and for all $m\geq 0$ an extension operator $\widetilde{E}_m:H^m(\widehat{K})\to H^m(B(\widehat{K},\delta))$ such that 
    $$
    \|\widetilde{E}_m\|_{\tilde{H}^m\to \tilde{H}^m}\leq C_0^m,\qquad (\widetilde{E}_mu)|_{\widehat{K}}=u.
    $$

    Notice that there is $\delta_1>0$ such that  $B(\widehat{K},\delta)\supset (1+\delta_1)\widehat{K}$. Hence, we may set $E_m:=1_{(1+\delta_1)\widehat{K}}\widetilde{E}_m$ to obtain an extension $E_m$ from $\tilde{H}^m(\widehat{K})$ to $\tilde{H}^m((1+\delta_1)\widehat{K})$ satisfying
    $$
    \|E_m\|_{\tilde{H}^m(\widehat{K})\to H^m((1+\delta_1)\widehat{K})}\leq C_0^m,\qquad (E_mu)|_{\widehat{K}}=u.
    $$
    Define, for $j\geq 1$, $E_m^j:H^m(\widehat{K})\to H^m(\widehat{K})$ by
    $$
    E_m^ju(x):= E_m\Big( (E_m^{j-1}u)\Big)((1+\delta_1)x),\qquad E_m^0:=I.
    $$
    We prove by induction that there is $A>0$ such that for all $m$, 
    \begin{equation} 
    \label{e:extendJ}
    \|E_m^j\|_{\tilde{H}^m(\widehat{K})\to \tilde{H}^m(\widehat{K})}\leq A^{jm},\qquad 1_{(1+\delta_1)^{-j}\widehat{K}}E^j_mu(x)=u((1+\delta_1)^jx).
    \end{equation}

   Suppose that~\eqref{e:extendJ} holds for some $j\geq 0$. Then, we prove it with $j$ replaced by $j+1$.  The second part of~\eqref{e:extendJ} follows easily from the fact that $E_m$ is an extension. For the estimate, observe that
   $$
   \|E_m^{j+1}u\|_{\tilde{H}^m}\leq C_0^m(1+\delta_1)^m\|(E_m^{j}u)\|_{\tilde{H}^m}\leq (1+\delta_1)^m C_0^mA^{jm}\|u\|_{\tilde{H}^m(\widehat{K})}.
   $$
   Therefore, taking $A\geq C_0(1+\delta_1)$ completes the proof.
   Now, fix $J\geq 0$ so that $(-1,1)^d\subset (1+\delta_1)^J\widehat{K}$ and set  $$\mathcal{E}_m(u)(x):=(E_m^Ju)\big((1+\delta_1)^{-J}x\big),\quad x\in(-1,1)^d.$$ Then, there is $C>0$ such that for all $m$ \eqref{e:extensionEstimate} holds.

   Now set $\widetilde{\mathcal{I}}_m^p:=1_{\widehat{K}}\Pi_p\mathcal{E}_m$, where $\Pi_p$ is the $L^2((-1,1)^d)$ orthogonal projector onto polynomials of degree less than or equal to $p$. Then for all $u\in H^m(\widehat{K})$, by the definition of $T_\gamma$ \eqref{e:Talpha},  \eqref{e:polyApprox1}, and \eqref{e:extensionEstimate},
   \begin{align*}
   \sum_{|\gamma|\leq \ell}\|\partial^{\gamma}(I-\widetilde{\mathcal{I}}_m^p)u\|^2_{L^2(\widehat{K})}&\leq C\sum_{|\gamma|\leq \ell} \|T_{\gamma}(I-\Pi_p)\mathcal{E}_mu\|^2_{L^2(-1,1)^d}\\
   &\leq \frac{C^{2m+1}}{(p+1)^{2(m-\ell)}}\sum_{|\alpha|= m} \|\partial^\alpha \mathcal{E}_mu\|^2_{L^2(-1,1)^d}\\
   &\leq \frac{C^{2m+1}}{(p+1)^{2(m-
   \ell)}}\|\mathcal{E}_mu\|^2_{\tilde{H}^m(-1,1)^d}\\
   &\leq \frac{C^{2m+1}}{(p+1)^{2(m-\ell)}}\|u\|^2_{\tilde{H}^m(\widehat{K})}\\
    &\leq \frac{C^{2m+1}}{(p+1)^{2(m-\ell)}}\Big(\|u\|_{L^2(\widehat{K})}+\sum_{|\alpha|=m}\|\partial^\alpha u\|_{L^2(\widehat{K})}\Big)^2.
   \end{align*}

   Now, there is a unique map $\mathcal{Z}_m:H^{m}(\widehat{K})\to \mathbb{P}_{m-1}(\widehat{K})$ such that  
   $$
   \int_{\widehat{K}} \partial^\alpha (u-\mathcal{Z}_mu)=0,\qquad |\alpha|\leq m-1.
   $$
    Set $\mathcal{I}_m^p:=\widetilde{\mathcal{I}}_m^p(u-\mathcal{Z}_mu)+\mathcal{Z}_mu$. 
    (Here, $\mathbb{P}_{-1}=\{0\}$.)
    Then,
   \begin{align*}
   \|(I-\mathcal{I}_m^p)u\|^2_{H^\ell(\widehat{K})}&\leq C\sum_{|\gamma|\leq \ell}\|\partial^{\gamma}(I-\widetilde{\mathcal{I}}_m^p)(u-\mathcal{Z}_mu)\|^2_{L^2(\widehat{K})}\\
   &\leq \frac{C^{2m+1}}{(p+1)^{2(m-\ell)}}\Big(\|u-\mathcal{Z}_mu\|_{L^2(\widehat{K})}+\sum_{|\alpha|=m}\|\partial^\alpha u\|_{L^2(\widehat{K})}\Big)^2.
   \end{align*}
   Now, applying the Poincar\'e inequality $m$ times on $\widehat{K}$, we obtain 
\begin{align*}
   \|(I-\mathcal{I}_m^p)u\|^2_{H^\ell(\widehat{K})}
   &\leq \frac{C^{2m+1}}{(p+1)^{2(m-\ell)}}\Big(\sum_{|\alpha|=m}\|\partial^\alpha u\|_{L^2(\widehat{K})}\Big)^2\\
   &\leq \frac{C^{2m+1}}{(p+1)^{2(m-\ell)}}\sum_{|\alpha|=m}\|\partial^\alpha u\|^2_{L^2(\widehat{K})},
   \end{align*}
   which completes the proof. 

\end{proof}

\subsection{
Proof of Theorem \ref{t:polyApprox}}
\begin{proof}
Let $\widehat{T}$ be the reference element for $\mathcal{T}$, and let $\mathcal{I}_m^p$ the map constructed in Lemma~\ref{l:simplexApprox}.

By \cite[Proposition A.21]{galkowski2025numerical} applied with $\mathcal{J}^p=\mathcal{I}_{m_T}^p$, and Lemma~\ref{l:simplexApprox} with $\ell=0,\dots, d+1$, for each $T\in\mathcal{T}$,  there is a boundary compatible operator 
(in the sense of \cite[Definition A.20]{galkowski2025numerical}) 
$\mathcal{C}_{T}^{p}:H^\ell(\widehat{T})\to \mathbb{P}^{p}$ such that, 
\begin{align}\label{e:run1}
\|(I-\mathcal{C}_{T}^p)v\|_{H_p^1(\widehat{T})}\leq C\|(I-\mathcal{I}_m^p)v\|_{H_p^{\lfloor d/2\rfloor +1}(\widehat{T})}\leq \frac{C^{m_T+1}}{(p+1)^{m_T}}|v|_{H^{m_T}(\widehat{T})},
\end{align}
and 
\beq\label{e:lastDay77}
\mathcal{C}_T^p v|_{\partial\widehat{T}}=\mathcal{C}_{T'}^p v|_{\partial\widehat{T}}
\eeq
(i.e., the different operators agree on the boundary of $\widehat{T}$).

Then, define $\mathcal{C}^p_{\mathcal{T}}:C^0(\Omega)\cap \oplus_{T\in\mathcal{T}}H_k^{m_T}(T)\to \mathcal{P}_{\mathcal{T}}^p$ by
\beq\label{e:CTp}
\mathcal{F}_T^*(\mathcal{C}^p_{\mathcal{T}}u)|_{T}:= \mathcal{C}_{T}^p(\mathcal{F}_T^*u|_{T}).
\eeq
Observe that $\mathcal{C}_{\mathcal{T}}^p$ is triangulation preserving (in the sense of~\cite[Definition A.20]{galkowski2025numerical}
) since $\mathcal{C}_T^p$ are boundary compatible. Furthermore, since $\mathcal{T}$ is a $G^{\sigma}$ simplicial triangulation, by \eqref{e:run1}
and Proposition \ref{prop:projection}, for $\ell=0,1$
\begin{align*}
k^{-2\ell}|(I-\mathcal{C}_{\mathcal{T}}^p)u|^2_{H_{p}^\ell(T)}&\leq C h_{T}^{d-2\ell}k^{-2\ell}\|(I-\mathcal{C}^{p}_T)\mathcal{F}_T^*u\|^2_{H_{p}^\ell(\widehat{T})}\\
&\leq \frac{C^{2m_T+1}}{(p+1)^{2(m_T-\ell)}}k^{-2\ell}h_{T}^{d-2\ell}|A_T^*R_T^*u|^2_{H^{m_T}(\widehat{T})}\\
&\leq \frac{C^{2m_T+1}}{(p+1)^{2(m_T-\ell)}}h_{T}^{2(m_T-\ell)}k^{-2\ell}|R_T^*u|^2_{H^t(A_{T}(\widehat{T}))}\\
&\leq\frac{C^{2m_T+1}(m_T!)^{2\sigma}}{(p+1)^{2(m_T-\ell)}}h_{T}^{2(m_T-\ell)}k^{-2\ell}\frac1{(m_T!)^{2\sigma}}|R_T^*u|^2_{H^{m_T}(A_{T}(\widehat{T}))}\\
    &\leq  C\Big(\frac{h_{T}k}{p+1}\Big)^{2(m_T-\ell)}\Big(\frac{Cm_T^{\sigma}}{k}\Big)^{2m_T}\|u\|^2_{H^{m_T}(T)},
\end{align*}
where, in the last line, we have used Stirling's formula.
Therefore, to establish the result we only need 
to show that $\mathcal{C}_{\mathcal{T}}^pu\in H^1(\Omega)$. 
For this, we refer the reader to the end of the proof of~\cite[Theorem A.11]{galkowski2025numerical} after noticing that $u\in C(\overline{\Omega})$
(with this used in the proof of 
\cite[Theorem A.11]{galkowski2025numerical} just before \cite[Equation A.50]{galkowski2025numerical}). Indeed, 
since $u\in H^1(\Omega)\cap \oplus_{T\in\mathcal{T}}H^{\frac{d}{2}+\e}(T)$, $u|_{T}\in C^{0}(\overline{T})$ for all $T\in \mathcal{T}$ and, on the other hand, since $u\in H^1(\Omega)$, if $F_{1,i_1}(T_1)=F_{1,i_2}(T_2)$, then, $(u|_{T_1})|_{F_{1,i_1}(T_1)}=(u|_{T_2})|_{F_{1,i_2}(T_2)}$. 
\end{proof}

\begin{remark}
    We point out here a small error in the statement of~\cite[Proposition A.21]{galkowski2025numerical}. It is stated there that the operators $Q_{j,i}$ are bounded from $H^{1/2}(F_{j,i})\to \mathbb{P}_{p-1}(F_{j,i})$. However, the proof shows only that they are bounded from $C^0(F_{j,i})\cap H^{1/2}(F_{j,i})\to \mathbb{P}_{p-1}$. 
\end{remark}

\section{Gevrey elliptic regularity case}\label{sec:elliptic_reg_theory}
In this section, we establish elliptic regularity results for the Helmholtz equation when the coefficients and boundaries are Gevrey-$\sigma$ regular. 
Our results apply, more generally, to second-order elliptic semiclassical differential operator (in the sense of Definition~\ref{d:semicl_diff_op}) of the form
\[
P =  \sum_{|\alpha| \leq 2} a_\alpha(x,k)\,(k^{-1} \partial)^\alpha.
\]
To simplify the task of tracking the dependence on $k$ of the Gevrey estimates, we let
$$Lu:=\sum_{|\alpha|=2}a_{\alpha}(x,k)\partial^\alpha. $$
 We then establish estimates for the solution of $Pu =f$, by considering
a boundary value problem 
involving
\begin{equation}\label{eq:L_withrhs}
    L u = k^2(f- k^{-1}b \cdot \nabla u + n u),
\end{equation}
where $b$ and $n$ are spatially-dependent coefficients and are $k$-bounded (see Definition~\ref{d:semicl_diff_op}) for all wavenumbers $k\geq k_0>0$. 

By Definition \ref{d:semicl_diff_op},
$a_\alpha(x)$ are bounded in $k$ and elliptic in the sense that
\beq\label{d:ellipticity}
L^{pr} (\boldsymbol{\xi}) := \sum_{|\alpha|= 2} a_\alpha(x_0) (i \boldsymbol{\xi})^\alpha \neq 0,
\eeq
for all $\boldsymbol{\xi} \in \R^n\setminus\{0\}$ and all $x_0 \in \Omega$. We also assume that $a_{\alpha}$ are Gevrey-$\sigma$ regular for some $\sigma \geq 1$; see Definition \ref{d:Gevrey_class},
We consider boundary conditions of order 1
\beqs
    Tu =g|_{\partial\Omega\setminus\Gamma} \qquad \textrm{on } \partial\Omega\setminus\Gamma,
\eeqs
or of order 0
\beqs
    Du =h|_{\Gamma} \qquad \textrm{on } \Gamma,
\eeqs
where $\Gamma = \emptyset$ or $\Gamma=\partial \Omega$.
The boundary conditions are assumed to satisfy the \emph{complementing condition} (also known as \emph{Shapiro--Lopatinskii conditions}) which ensures the compatibility between the interior differential operator $L$ and the boundary conditions, thus producing unique solvability of the boundary value problem; see, e.g., \cite{agmon1964estimates} for the precise definition.

The boundary conditions are assumed to be given by the restriction to the boundary of functions $g$ and $h$ defined on $\Omega$.

\begin{remark}[Ellipticity for systems]
The definition of ellipticity,~\ref{d:ellipticity}, is given for a single equation ($Lu$ scalar-valued) but extending it to a system of equations is straightforward by choosing the definition of ellipticity of systems in the Petrovsky sense as done in \cite[Definition 1.A]{costabel2010corner}. The Gevrey regularity results given in Theorems~\ref{thm:int_Gev_reg} and~\ref{thm:uttb_Gev_reg} generalize in a natural manner to the case of elliptic $N\times N$ systems of equations.
\end{remark}

\subsection{Gevrey elliptic regularity results}
We provide here the main results on the Gevrey regularity for elliptic problems. The first one deals with the interior regularity, the second extends the Gevrey regularity of the solutions up to the boundary. 
\begin{theorem}[Interior Gevrey regularity]\label{thm:int_Gev_reg}
Let $\Omega\subset \R^n$ be open. 
Let $\sigma\geq 1, k_0>0$, and $L$ be a second-order elliptic partial differential operator with Gevrey-$\sigma$ coefficients. Let $b$ and $n$ be Gevrey-$\sigma$ on $\Omega$. Then for any bounded subdomains $\Omega_1$, $\Omega_2$ such that $\overline{\Omega_1} \subset \Omega_2 \subset \Omega$, there exists a constant $A\geq 1$ such that for all $\kappa\geq 0$, $k\geq k_0$, $f\in H^\kappa(\Omega_2)$,  and $u\in H^2(\Omega_2)$ a solution of \eqref{eq:L_withrhs}, 
\begin{equation}\label{est:main_estimate}
        \begin{aligned}
        | u|_{\kappa+2;\Omega_1} 
        \leq A^{\kappa +2}\max \left(
        k, (\kappa+2)^\sigma\right)^{\kappa+2} \bigg(& \max_{0\leq \ell\leq \kappa}  A^{-\ell-1} \frac{1}{\max (k, (\ell+2)^{\sigma})^{\ell}}  
        |f|_{\ell; \Omega_2} \\
        &
        \quad+ k^{-1} |u|_{1;\Omega_2} +|u|_{0;\Omega_2}\bigg).
        \end{aligned}
\end{equation}
In particular, 
if $f\in G^\sigma(\Omega_2)$ then 
the solution $u \in G^\sigma(\Omega_1)$. 
\end{theorem}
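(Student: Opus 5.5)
We prove \eqref{est:main_estimate} by induction on $\kappa$, following the nested-domain scheme of Morrey--Nirenberg for analytic interior regularity; it is the same scheme used in the proof of Lemma~\ref{p:psi_low_freq_est_flattened_bndry}. The point is to keep the dependence on $\kappa$ explicit. Fix $R$ with $0<R<\operatorname{dist}(\Omega_1,\partial\Omega_2)$, cover $\overline{\Omega_1}$ by finitely many balls $B_R(x_0)\subset\Omega_2$, and work on concentric balls $B_{R-j\rho}$. The quantity to propagate is the weighted seminorm
\[
M_\ell:=\max_{0<\rho\le R/(2(\ell+1))}\rho^{\ell\sigma}\,|u|_{\ell;B_{R-\ell\rho}},
\]
as in \eqref{est:psi_low_freq_est2_bndryb}. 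Taking $\rho\sim R/\kappa$ at the end converts the weight $\rho^{-\kappa\sigma}$ into $(C\kappa^\sigma)^{\kappa}$. The $k$-dependence enters only through the lower-order terms $k^2(f-k^{-1}b\cdot\nabla u+nu)$. These produce the factor $\max(k,(\kappa+2)^\sigma)^{\kappa+2}$: each step raising the order by two costs either $k^2$ or $(\kappa\sigma\text{-weighted})$ factorial growth, whichever is larger.

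\emph{Base estimate.} On a ball $B_{r}$ and a concentric $B_{r-\rho}$, we use the standard interior $H^2$ estimate for the frozen-coefficient operator together with a cutoff whose derivatives are $O(\rho^{-1})$, $O(\rho^{-2})$:
\[
|v|_{2;B_{r-\rho}}\le C\big(|Lv|_{0;B_r}+\rho^{-1}|v|_{1;B_r}+\rho^{-2}|v|_{0;B_r}\big).
\]
We apply this to $v=\partial^\beta u$ with $|\beta|=\kappa$. Then
\[
L\partial^\beta u=\partial^\beta\big(k^2(f-k^{-1}b\cdot\nabla u+nu)\big)-[\partial^\beta,L]u,
\]
and the commutator equals $\sum_{0<\gamma\le\beta}\binom{\beta}{\gamma}(\partial^\gamma a_\alpha)\partial^{\beta-\gamma}\partial^\alpha u$. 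Each term of this commutator and of the Leibniz expansions of $\partial^\beta(bu')$ and $\partial^\beta(nu)$ is bounded using the Gevrey bound $|\partial^\gamma a|\le M^{|\gamma|+1}(\gamma!)^\sigma$ and the induction hypothesis on lower-order seminorms of $u$ on slightly larger balls.

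\emph{Combinatorial step (main obstacle).} We must show that the resulting sums close under the induction hypothesis with the same constant $A$. This needs two things. First, the binomial-Gevrey products satisfy $\binom{\beta}{\gamma}(\gamma!)^\sigma\,((\kappa-|\gamma|)!)^\sigma\lesssim (\kappa!)^\sigma\,c^{|\gamma|}$ in the weighted sense, using $\binom{\kappa}{j}\le \binom{\kappa}{j}^\sigma$ and \eqref{ineq:factorial_sum_prod}. Second, the $k$-weights are monotone: for $\ell<\kappa$,
\[
\max(k,(\ell+2)^\sigma)^{\ell+2}\max(k,(\kappa+2)^\sigma)^{\kappa-\ell}\le C^{\kappa}\max(k,(\kappa+2)^\sigma)^{\kappa+2},
\]
which follows from $((\ell+2)/(\ell))^{\sigma\ell}\le e^{2\sigma}$, exactly as used in the proof of Lemma~\ref{p:psi_low_freq_est_flattened_bndry}. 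The term $k^2 nu$ raises the order by zero at a cost of $k^2$. It is absorbed because $k^2\le\max(k,\cdot)^2$ while the order drops by two, and $k\,b\cdot\nabla u$ is treated likewise. The term $k^2\partial^\beta f$ gives the $\max_\ell A^{-\ell-1}\max(k,(\ell+2)^\sigma)^{-\ell}|f|_\ell$ contribution. Choosing $A\ge$ a constant depending on $M$, $d$, $R$, the ellipticity constant and $e^{2\sigma}$ makes the geometric series $\sum_j (c/A)^j$ converge with sum below $1/2$, which closes the induction. The base cases $\kappa=0,1$ follow from the $H^2$ estimate and interpolation, which give the $k^{-1}|u|_1+|u|_0$ terms.

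Finally, the Gevrey statement follows directly. If $|f|_\ell\le C^{\ell+1}(\ell!)^\sigma$, then the bracket in \eqref{est:main_estimate} is bounded independently of $\kappa$. Since $\max(k,(\kappa+2)^\sigma)^{\kappa+2}\le (k+(\kappa+2)^\sigma)^{\kappa+2}\le C_k^{\kappa+1}(\kappa!)^\sigma$ by Stirling's formula, we get $u\in G^\sigma(\Omega_1)$.
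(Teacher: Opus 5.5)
Your proposal is correct and is essentially the paper's argument. It uses Morrey--Nirenberg nested balls with the $\rho^{\sigma\ell}$-weighted Sobolev--Morrey seminorms, and the cut-off $H^2$ estimate (Lemma~\ref{lem:truncated_basic_ell}) applied to $\partial^\beta u$. It bounds the commutator by Leibniz and the Gevrey bounds via $\binom{\kappa}{j}\le\binom{\kappa}{j}^\sigma$ (Lemma~\ref{lemma::commutator_estimate}). It absorbs the $k\,b\cdot\nabla u$ and $k^2 nu$ terms using $((\ell+2)/\ell)^{\sigma\ell}\le e^{2\sigma}$, closes an induction with a large constant (Proposition~\ref{prop:Gevrey_fundamental_est}), and finishes with a finite covering.

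The differences are cosmetic. You cover $\overline{\Omega_1}$ directly by balls instead of using Gevrey charts, which are unnecessary in the interior. Your range $\rho\le R/(2(\ell+1))$ also calls for a slightly different, routine choice of rescaled radius when passing to lower-order seminorms than the paper's $\rho'_d=(t-1)\rho/d$.
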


\begin{theorem}[Up-to-the-boundary Gevrey regularity]\label{thm:uttb_Gev_reg}
Let $\sigma\geq 1, k_0>0$. Let $\Omega$ be a Gevrey-$\sigma$ regular domain in $\R^n$. Let $L,\,T,\,D$ be, respectively, second-, first- and zero-order partial differential operator with Gevrey-$\sigma$ coefficients on $\overline \Omega$. Let $\{L,T,D\}$ be elliptic on $\overline \Omega$. Then for any open bounded subdomains $\Omega_1 = \mathcal{U}_1 \cap \Omega$, $\Omega_2 = \mathcal{U}_2 \cap \Omega $ defined through $\mathcal{U}_1,\, \mathcal{U}_2$ bounded opens of $\R^n$ verifying $\overline{\mathcal{U}}_1\subset \mathcal{U}_2$, there exists a constant $A\geq 1$ such that for all $\kappa\geq 0$, $k\geq k_0$,
    $f\in H^\kappa(\Omega_2)$, $h \in H^{\kappa+2}(\Omega_2)$ and $g \in H^{\kappa+1}(\Omega_2)$,
    and $u\in H^2(\Omega_2)$ 
    a solution of \eqref{eq:L_withrhs},
\begin{equation}\label{est:main_estimate_bdry}
       \begin{aligned}
        |u|_{\kappa+2;\Omega_1} 
        \leq A^{\kappa +2}\max \left(
        k, (\kappa+2)^\sigma\right)^{\kappa+2} \bigg(&  \max_{0\leq \ell\leq \kappa}
        A^{-\ell-1} \frac{1}{\max (k, (\ell+2)^{\sigma})^{\ell}}
        |f|_{\ell; \Omega_2} \\&+
         \max_{0\leq \ell\leq \kappa+2}  
         A^{-\ell+1} \frac{1}{\max (k, \ell^{\sigma})^{\ell}}
         |h|_{\ell;\Omega_2}  
        \\ &
        +   \max_{0\leq \ell\leq \kappa+1} A^{-\ell} \frac{1}{\max (k, (\ell+1)^{\sigma})^{\ell+1}}
        |g|_{\ell;\Omega_2} \\
        &+ k^{-1} |u|_{1;\Omega_2} +|u|_{0;\Omega_2} \bigg)
        \end{aligned}
\end{equation}
where $\Gamma_2 := \partial\Omega_2\cap \partial\Omega$.
\end{theorem}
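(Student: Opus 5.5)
The plan is to flatten the boundary locally and prove a $k$-explicit Gevrey estimate by induction on $\kappa$. The key device is a family of nested, shrinking domains, as in Lemma~\ref{p:psi_low_freq_est_flattened_bndry}. For each $x_0\in\partial\Omega\cap\overline{\mathcal U_1}$ we take a Gevrey-$\sigma$ diffeomorphism straightening $\partial\Omega$ to $\{x_n=0\}$. Proposition~\ref{prop:projection} shows that the pulled-back operators $L,T,D$ and the data $f,g,h$ keep Gevrey-$\sigma$ bounds, with only a geometric loss $C^\kappa$; this loss is absorbed into $A^{\kappa+2}$. Away from $\partial\Omega$ we invoke Theorem~\ref{thm:int_Gev_reg}, and a finite cover then reduces the theorem to a half-ball statement on $V_R=B_R\cap\{x_n>0\}$. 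We prove the stronger weighted form
\[
\max_{0\le\rho\le R/(2(\kappa+1))}\rho^{(\kappa+2)\sigma}\,|u|_{\kappa+2;V_{R-(\kappa+1)\rho}},
\]
which is the analogue of \eqref{est:psi_low_freq_est2_bndryb}. Taking $\rho\sim R/\kappa$ then recovers \eqref{est:main_estimate_bdry}.

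\textbf{Tangential derivatives.} Fix $\beta$ with $|\beta|=\kappa$ and no $x_n$-component. Apply the base $H^2$ estimate of Agmon--Douglis--Nirenberg, which follows from the complementing condition with a cutoff between $V_{R-j\rho}$ and $V_{R-(j+1)\rho}$, to $\partial^\beta u$. This function solves
\[
L\partial^\beta u=\partial^\beta\bigl(k^2(f-k^{-1}b\cdot\nabla u+nu)\bigr)-[\partial^\beta,L]u,
\]
with boundary data $\partial^\beta g-[\partial^\beta,T]u$ and $\partial^\beta h-[\partial^\beta,D]u$. The cutoff costs $\rho^{-1}$ per derivative, which is why the weight $\rho^{\sigma|\cdot|}$ is used. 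By Leibniz, the commutator terms are sums over $\gamma<\beta$ of $\binom{\beta}{\gamma}$ times a Gevrey factor $M^{|\beta-\gamma|}((\beta-\gamma)!)^\sigma$ times lower-order norms of $u$. Each lower-order norm is bounded by the induction hypothesis, and the resulting sum is geometric once $A\gg M$, using $\binom{\beta}{\gamma}(|\beta-\gamma|!)^\sigma(|\gamma|!)^\sigma\le(|\beta|!)^\sigma$.

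\textbf{Tracking $k$.} The zeroth-order terms $k^2nu$ and $kb\cdot\nabla u$ enter with two and one fewer derivatives. Relative to the top-order quantity they therefore carry a factor $k^2/\max(k,\kappa^\sigma)^2\le1$. This is precisely where $\max(k,(\kappa+2)^\sigma)^{\kappa+2}$ arises: each derivative costs either $k$ (from the semiclassical lower-order terms) or $\kappa^\sigma$ (from the Gevrey coefficients and the shrinking of domains), whichever is larger.

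\textbf{Normal derivatives.} These are recovered from the equation. Since $\partial\Omega$ is non-characteristic, ellipticity gives $a_{nn}\neq0$, so
\[
\partial_n^2 u=a_{nn}^{-1}\Bigl(k^2(\cdots)-\sum_{(\alpha)\neq(0,\dots,0,2)}a_\alpha\partial^\alpha u\Bigr).
\]
We induct on the number $j$ of normal derivatives, expressing $\partial^{\beta'}\partial_n^{j+2}u$ through terms with at most $j+1$ normal derivatives. Again a Leibniz expansion together with the Gevrey bound on $a_{nn}^{-1}$ (Gevrey-$\sigma$ is closed under inversion of nonvanishing functions) yields the same combinatorics.

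\textbf{Main obstacle.} The delicate step is the combinatorial bookkeeping that keeps constants of the form $A^{\kappa}$ rather than $A^{\kappa}\kappa!$ while the two kinds of growth, $k$ and $\kappa^\sigma$, mix. The point to check is that $\max(k,(\ell+2)^\sigma)^{\ell}/\max(k,(\kappa+2)^\sigma)^{\ell}$, combined with the binomial factors, sums to a bounded geometric series. This holds using $(\frac{\kappa+2}{\kappa})^{\sigma\kappa}\le e^{2\sigma}$, the inequality already used in the proof of Lemma~\ref{p:psi_low_freq_est_flattened_bndry}. For $\sigma=1$ this is the classical Morrey--Nirenberg argument; for general $\sigma$ we follow it, replacing $\ell!$ by $(\ell!)^\sigma$ throughout and relying on \eqref{ineq:factorial_sum_prod} and \eqref{ineq:convexity_power}.
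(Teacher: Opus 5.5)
Your proposal is correct and follows essentially the same route as the paper: a Gevrey flattening of the boundary, combined with Theorem~\ref{thm:int_Gev_reg} and a finite cover, followed by a $\rho$-weighted nested-half-ball induction. In that induction, almost-tangential derivatives come from the cut-off ADN estimate applied to $\partial^\beta u$ together with Leibniz commutator bounds (Lemma~\ref{lem:truncated_basic_ell_boundary}, Proposition~\ref{prop:almost_tangetial}), and normal derivatives are recovered from the equation via the non-characteristic condition by induction on their number (Proposition~\ref{prop:isotropic}).
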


When $\sigma=1$, Theorems \ref{thm:int_Gev_reg} and \ref{thm:uttb_Gev_reg} recovers the anlytic regularity case~\cite[Chapter 5]{melenk2004hp}; see also \cite[Theorems 1.7.1 and 2.7.1]{costabel2010corner}. The proofs of Theorems~\ref{thm:int_Gev_reg} and \ref{thm:uttb_Gev_reg} are the subject of Sections~\ref{sec:int_reg} and \ref{sec:uttb_reg} respectively.

\begin{remark}[Boundary condition defined on $\partial \Omega$ versus $\Omega$]\label{r:boundary_cond}
    We remark here that formulating the result for boundary functions $g$ and $h$ only defined on the boundary or on the whole domain is equivalent thanks to the existence of continuous extension operators for Gevrey-$\sigma$ functions $(\sigma>1)$; see, e.g.,~\cite{bruna1980extension}. For the analytic case ($\sigma =1$), the existence of extensions is generally not true. However, we expect the analytic result holds for boundary functions only defined on the boundary $\partial \Omega$ in light of the result of \cite[Theorem 2.7.1]{costabel2010corner}.
\end{remark}

\subsection{Notation for elliptic regularity results}
In general, we follow here the notation used in \cite{costabel2010corner} with modifications, where necessary, to accommodate the Gevrey regularity setting. 

For $\Omega \subset \R^n$, we denote
\[
\|u\|_{\Omega} : = \|u\|_{L^2(\Omega)}.
\]
The norms and semi-norms associated with the Sobolev space $H^m(\Omega)$ are given by
\[
\|u\|_{m,\Omega} := \left(\sum_{|\alpha| \leq m} \|\freqk{\partial}^\alpha u\|_{\Omega}^2\right)^{\frac12}, \qquad |u|_{m,\Omega} := \left(\sum_{|\alpha| = m} \|\freqk{\partial}^\alpha u\|_{\Omega}^2\right)^{\frac12}.
\]
We denote by $H_0^m(\Omega)$ the closure of the space of smooth functions with compact support in $\Omega$, $C_0^\infty(\Omega)$, with respect to the norm $\|\cdot\|_{m,\Omega}$.

We introduce weighted semi-norms of Sobolev--Morrey type adapted to the present Gevrey context, i.e., they depend on the parameter $\sigma$ in the following way:
\begin{equation}\label{d:Sobolev-Morrey}
\begin{aligned}
    [|u|]_{0;B_R} &:= \|u\|_{B_R},  \\
    [|u|]_{\ell;B_R} &:= \max_{0<\rho \leq \frac{R}{2\ell}} \max_{|\delta| = \ell} \rho^{ \sigma \ell} \|\freqk{\partial}^\delta u\|_{B_{R-\ell\rho}}\qquad \textrm{for } \ell>0, 
     \\
    \rho_*^2[|f|]_{\ell;B_R} &:= \max_{0<\rho \leq \frac{R}{2(\ell+1)}} \max_{|\delta| = \ell} \rho^{ \sigma (2+\ell)} \|\freqk{\partial}^\delta f\|_{B_{R-(\ell+1)\rho}},
\end{aligned}
\end{equation}
for $\ell \in \mathbb{N}$. Similarly to \cite[Notation 1.6.1]{costabel2010corner}, $\rho_*^2$ is not a real number but a symbolic notation.

\subsection{General strategy of the proofs}
We discuss here the general strategy towards proving Theorems~\ref{thm:int_Gev_reg} and \ref{thm:uttb_Gev_reg} which also justifies the notation introduced in the preceding section. First, we discuss the proof of the interior regularity result for a ball of radius $R/2$.

The basic building block is the standard elliptic regularity estimate
\begin{equation}\label{ineq:basic_ell_est_nonscaled}
\sum_{|\alpha| = 2} \|\partial ^\alpha u \|_{B_{R_*}} \leq A_{-1} \left( \|L u \|_{B_{R_*}} + \sum_{|\alpha|\leq 1}  \|\partial^\alpha u \|_{B_{R_*}} \right)
\end{equation}
which holds for some $A_{-1}>0$ for all $u \in H^2_0(B_{R_*})$ ($R_*>0$) and is a consequence of the ellipticity of the operator $L$. Let $k_0>0$, then estimate~\eqref{ineq:basic_ell_est_nonscaled} generalizes to frequency-scaled norms as
\begin{equation}\label{ineq:basic_ell_est}
\sum_{|\alpha| = 2} \|\freqk{\partial}^\alpha u \|_{B_{R_*}} \leq A_0 \left( \|L u \|_{B_{R_*}} + \sum_{|\alpha|\leq 1}  \|\freqk{\partial}^\alpha u \|_{B_{R_*}} \right)
\end{equation}
for all $k\geq k_0$. Above $A_0$ depends on $k_0$.

A first step is to derive a version of estimate \eqref{ineq:basic_ell_est} which does not rely on the vanishing of the solution. This is obtained by first applying a cut-off function (whose definition depends on $\rho$ in \eqref{d:Sobolev-Morrey}) to a function $u \in H^2(B_{R_*})$, this is the object of Lemma~\ref{lem:truncated_basic_ell}. This process leads to an inequality involving nested balls $B_{R-\varrho}$ and $B_{R}$ (with $R\in (0,R_*]$), partly justifying the form of the introduced Sobolev--Morrey semi-norms.

A second step is to successively apply Lemma~\ref{lem:truncated_basic_ell} to the derivatives of the solution $\partial^\beta u$ to improve its regularity. This requires estimating terms of the form $L\partial^\beta u$ on the right-hand side of \eqref{ineq:basic_ell_est}, which we achieve by bounding the difference $[L, \partial^\beta] u $ with
\[
[L, \partial^\beta] := L\partial^\beta - \partial^\beta L
\]
which has the advantage of being of order at most $|\beta|-1$.
The commutator $[L, \partial^\beta]$ measures in some sense how the space-dependent case differs from the constant-coefficient one. Indeed, when the coefficients of the differential operator $L$ are not constant, the commutator is nonzero and we estimate it through Lemma~\ref{lemma::commutator_estimate}. 

Once these ingredients are built, we recursively build the Gevrey estimate in Proposition~\ref{prop:Gevrey_fundamental_est} by iterating over the derivative order $\beta$, using the Sobolev--Morrey norms to more easily track the nesting of the domains (nested balls Figure~\ref{fig:balls}). 
The other key point about these norms is that 
\begin{equation}\label{e:whyNorms1}
[| u|]_{\ell;B_{R}} \geq \left(\frac{R}{2\ell}\right)^{\ell\sigma } | u|_{\ell;B_{R/2}},
\end{equation}
\begin{equation}\label{e:whyNorms2}
\sum\limits_{\ell=0}^{1} [|u|]_{\ell;B_{R}} \leq 2 \max\left\{\left(\frac{R}2\right)^\sigma, 1\right\} \|u\|_{1;B_{R}}
\end{equation}
and 
\begin{equation} \label{e:whyNorms3}    
            \rho_*^2 [|Lu|]_{\ell; B_R} \leq \left(\frac{R}{2(\ell+1)}\right)^{(\ell+2)\sigma} |Lu|_{\ell;B_{R}},
    \end{equation}
with \eqref{e:whyNorms1} giving us the left-hand side of \eqref{est:main_estimate} and \eqref{e:whyNorms2} and \eqref{e:whyNorms3} giving us the right-hand side.
Finally, to prove Theorems~\ref{thm:int_Gev_reg} and \ref{thm:uttb_Gev_reg} for general Gevrey domains, we use Gevrey-regular mappings from neighbourhoods of $x_0 \in \overline\Omega_1$ contained in $\Omega_2$ to $B_R$.

The proof of \ref{thm:uttb_Gev_reg} follows, in spirit, similar steps with special attention paid to the behavior near the boundary. A flattening of the boundary is performed such that nested half-balls are used; see Fig.~\ref{fig:balls}. Higher derivatives are derived, in a first step, in the tangential direction of the boundary; see Proposition~\ref{prop:almost_tangetial}. This is because the tangential derivatives compose more naturally with the boundary operators (as opposed to differentiation in the normal direction which requires knowledge of the solution on the interior of the domain). A second step is to assume that the boundary $x_n =0$ is non-characteristic to obtain isotropic higher-order estimates (i.e., with normal derivatives of order higher than 2) in Proposition~\ref{prop:isotropic}. 

\subsection{Interior regularity}\label{sec:int_reg}

The next lemma generalizes the commutator estimates \cite[Lemmas 1.6.2]{costabel2010corner} to the Gevrey-coefficient case.

\begin{lemma}[Commutator estimate]\label{lemma::commutator_estimate}
Let $M\geq 0$, $\sigma \geq 1$, and $k_0,\, R_*>0$.
There exists two constants $c$ and $K$ such that for all radii $R\in (0,R_*]$, all commutators $[a(x), \freqk{\partial}^\beta]$ with Gevrey coefficients $a(x) \in G^\sigma(B_{R})$ with constant $M$, and all multi-indices $\alpha$, $\beta$ with $|\alpha|+|\beta| = t \geq 2$, for all $\rho \in \left(0,\frac{R}{2t}\right]$ 
\begin{equation}
    \rho^{\sigma t} \|[a(x), \freqk{\partial}^\beta] \freqk{\partial}^\alpha u\|_{B_{R-(t-1)\rho}} \leq c \sum_{d=0}^{t-1} \left( KR\right)^{t-d} [| u|]_{d;B_{R}}.
\end{equation}    
Above, $\alpha$ is a multi-index with $|\alpha|\leq2$. 
\end{lemma}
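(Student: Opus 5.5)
We expand the commutator with the Leibniz rule and bound each term using the Gevrey bounds on $a$ together with the Sobolev--Morrey seminorms $[|u|]_{d;B_R}$. Concretely,
\[
[a,\partial^\beta]\partial^\alpha u=-\sum_{0<\gamma\le\beta}\binom{\beta}{\gamma}(\partial^\gamma a)\,\partial^{\beta-\gamma+\alpha}u ,
\]
so each term involves a derivative of $u$ of order $d:=t-|\gamma|\in\{|\alpha|,\dots,t-1\}\subset\{0,\dots,t-1\}$. Grouping terms by $j=|\gamma|$ and using $\sum_{|\gamma|=j,\gamma\le\beta}\binom{\beta}{\gamma}=\binom{|\beta|}{j}$ (Vandermonde), together with $\|\partial^\gamma a\|_{L^\infty}\le M^{j+1}(\gamma!)^\sigma\le M^{j+1}(j!)^\sigma$, we get
\[
\|[a,\partial^\beta]\partial^\alpha u\|_{B_{R-(t-1)\rho}}\le \sum_{j=1}^{|\beta|}\binom{|\beta|}{j}M^{j+1}(j!)^\sigma \max_{|\delta|=t-j}\|\partial^\delta u\|_{B_{R-(t-1)\rho}} .
\]

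Next we convert each $L^2$ norm into a seminorm $[|u|]_{d;B_R}$ with $d=t-j$. Set $\rho'=\frac{t-1}{d}\rho$ for $d\ge1$. Then $R-d\rho'=R-(t-1)\rho$, and $\rho'\le\frac{t-1}{d}\cdot\frac{R}{2t}\le \frac{R}{2d}$, so $\rho'$ is admissible. Therefore
\[
\|\partial^\delta u\|_{B_{R-(t-1)\rho}}\le \rho'^{-\sigma d}[|u|]_{d;B_R},
\qquad
\rho^{\sigma t}\rho'^{-\sigma d}=\rho^{\sigma j}\Big(\frac{d}{t-1}\Big)^{\sigma d}\le\rho^{\sigma j}.
\]
When $d=0$ we use $[|u|]_{0;B_R}=\|u\|_{B_R}$ directly, and the factor is $\rho^{\sigma t}=\rho^{\sigma j}$. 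Multiplying through by $\rho^{\sigma t}$ thus leaves the task of bounding
\[
\binom{|\beta|}{j}M^{j+1}(j!)^\sigma\rho^{\sigma j}\quad\text{by}\quad c\,(KR)^{j}.
\]

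This combinatorial bound is the main obstacle: the factorial growth of $(j!)^\sigma$ must be absorbed by $\rho^{\sigma j}$. We have $\rho\le R/(2t)$ and $\binom{|\beta|}{j}\le t^j/j!$, so
\[
\binom{|\beta|}{j}(j!)^\sigma\rho^{\sigma j}\le \frac{t^j}{j!}(j!)^\sigma\Big(\frac{R}{2t}\Big)^{\sigma j}=(j!)^{\sigma-1}\,\frac{t^{j}}{t^{\sigma j}}\Big(\frac{R}{2}\Big)^{\sigma j}\le \frac{j^{(\sigma-1)j}}{t^{(\sigma-1)j}}\Big(\frac R2\Big)^{\sigma j}\le \Big(\frac R2\Big)^{\sigma j},
\]
using $j!\le j^j$ and $j\le t$. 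Since $R\le R_*$, we have $R^{\sigma j}\le R_*^{(\sigma-1)j}R^{j}$. Taking
\[
K=\max\{1,\,M\,R_*^{\sigma-1}2^{-\sigma}\}\cdot(\text{const}),\qquad c=M,
\]
gives the term bound $c(KR)^{t-d}$. Summing over $d$ gives the claim.

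Two points need care. The bookkeeping uses only $|\beta|\le t$. The restriction $|\alpha|\le2$ only ensures $d\ge0$ and plays no further role. The scaled derivatives $\freqk{\partial}$ are just $\partial$, since \freqk is the identity macro, so no $k$ factors appear.
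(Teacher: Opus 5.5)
Your proof is correct and follows the same route as the paper. Both arguments use the Leibniz expansion of the commutator, the Gevrey bound on $\partial^\gamma a$, and the rescaled radius $\rho'_d=(t-1)\rho/d$ to turn the $L^2$ norms on $B_{R-(t-1)\rho}$ into $[|u|]_{d;B_R}$. The difference is only in the bookkeeping. Your Vandermonde identity and the elementary bounds $\binom{|\beta|}{j}\le t^j/j!$ and $j!\le t^j$ replace the paper's counting factor $(g+1)^{n-1}$ and its Stirling estimate. As a result you get explicit constants ($c=M$, $K\approx MR_*^{\sigma-1}2^{-\sigma}$) without having to absorb polynomial factors into an exponential.
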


\begin{proof}
Our proof strategy is adapted from that of \cite[pp. 48--49]{costabel2010corner}. Let \[N: = \rho^{\sigma t} \|[a(x), \freqk{\partial}^\beta] \freqk{\partial}^\alpha u\|_{B_{R-(t-1)\rho}}. \]
    We use the Leibniz formula
    \[\partial^\beta(a \partial^\alpha u) = \sum_{\gamma\leq \beta } \frac{\beta!}{\gamma!(\beta-\gamma)!} \partial^\gamma a \partial^{\beta - \gamma} \partial^\alpha u\]
    together with the combinatorial inequality
    \[
    \frac{\beta!}{\gamma!(\beta-\gamma)!} \leq \frac{|\beta|!}{|\gamma|!(|\beta|-|\gamma|)!}
    \]
    and find that since the contribution of $\gamma = 0$ is absent from $N$:
    \[
    \begin{aligned}
    N \leq &  \rho^{\sigma t} \sum_{1\leq |\gamma|,\, \gamma\leq \beta } M^{|\gamma| + 1}  \frac{|\beta|!\,|\gamma|!^{\sigma-1}}{(|\beta|-|\gamma|)!}  \|\freqk{\partial}^{\beta - \gamma} \freqk{\partial}^\alpha u\|_{B_{R-(t-1)\rho}} \\
    \leq &  \rho^{\sigma t} \sum_{g =1 }^{|\beta|}\sum_{|\gamma|=g,\, \gamma\leq \beta } M^{g + 1}  \frac{|\beta|!\,g!^{\sigma-1}}{(|\beta|-g)!}\max_{|\delta|=|\beta -\gamma + \alpha|}\| \freqk{\partial}^\delta u\|_{B_{R-(t-1)\rho}}.
    \end{aligned}
    \]
    By induction over $n$, we check that $\sum_{|\gamma| =g} 1 \leq (g+1)^{n-1}$. This implies that 
     \[
    \begin{aligned}
    N \leq & \rho^{\sigma t} \sum_{g =1 }^{t-|\alpha|} (g+1)^{n-1} M^{g + 1}  \frac{(t-|\alpha|)!\,g!^{\-1}}{(t-|\alpha|-g)!}\max_{|\delta|=|\beta -\gamma + \alpha|}\| \freqk{\partial}^\delta u\|_{B_{R-(t-1)\rho}}
    \\
    \leq  & \rho^{\sigma t} \sum_{d = |\alpha| }^{t-1} (t-d+1)^{n-1} M^{t-d + 1}  \frac{(t-|\alpha|)!\,(t-d)!^{\sigma-1}}{(d-|\alpha|)!}\max_{|\delta|=|\beta -\gamma + \alpha|}\|  \freqk{\partial}^\delta u\|_{B_{R-(t-1)\rho}}
    \end{aligned}
    \]
    where in the last line, we operated the variable change $d = t-g $.
\\ A contribution for $d=0$ appears only if $|\alpha|=0$. We denote it by $N_0$:
    \[N_0 = \rho^{\sigma t} (t+1)^{n-1} M^{t+1} t!^\sigma \|u\|_{B_{R-(t-1)\rho}}. \]
    
    To deal with the $\sigma$ term $(t-d)!^{\sigma-1}$, which arose from the assumption that the coefficients are Gevrey, we use the inequalities
    \[\frac{(t-|\alpha|)!}{(d-|\alpha|)!} \leq \frac{t!}{d!}, \qquad (t-d)! \leq \frac{t!}{d!},\]
    for all $1 \leq d \leq t-1$ 
    and $0\leq |\alpha| \leq t-1$. We denote the remaining part $N_1 := N-N_0$ and inject the binomial inequality for $(g!)^{\sigma-1} =((t-d)!)^{\sigma-1} $ to obtain
    \[ N_1 \leq \rho^{\sigma t} \sum_{d=1}^{t-1} (t-d+1)^{n-1} M^{t-d+1} \left(\frac{t!}{d!}\right)^\sigma \max_{|\delta| =d}\|\freqk{\partial}^\delta u\|_{B_{R-(t-1)\rho}}. \]
    
    For the purposes of our proof, we want to associate a derivative $\freqk{\partial}^\delta u$ of length $d$ with $\rho^{\sigma d}$. To this end, we write
    \[ N_1 \leq  \sum_{d=1}^{t-1} (t-d+1)^{n-1} \rho^{\sigma(t-d)} M^{t-d+1} \left(\frac{t!}{d!}\right)^\sigma \left(\rho^{\sigma d} \max_{|\delta| =d}\|\freqk{\partial}^\delta u\|_{B_{R-(t-1)\rho}} \right). \]

    Using Stirling's formula, there exists a constant $c_s$, such that for all $t,\,  d>0$:
    \[\left(\frac{t!}{d!}\right)^\sigma \leq  c_s e^{(d-t)\sigma} \left(\frac{t^{(t+1/2)}}{d^{d+1/2}}\right)^\sigma\]

    We now use \[\rho \leq \frac{R}{2t}, \]
    to infer  
    \[N_1 \leq c_s \sum_{d=1}^{t-1} (t-d+1)^{n-1} M \left( \frac{RM}{2e^\sigma}\right)^{t-d} \sqrt{\frac{t}{d}}^\sigma \left(\frac{t^{d}}{d^{d}}\right)^\sigma  \left(\rho^{\sigma d} \max_{|\delta| =d}\|\freqk{\partial}^\delta u\|_{B_{R-(t-1)\rho}} \right).\]

    Similarly to \cite[p. 49]{costabel2010corner}, we define the new distance
    \(\rho_d' := (t-1) \rho/d \). Then $B_{R-(t-1)\rho} = B_{R-d\rho'_d}$.

    We also note the following inequalities 
    \[ \left(\frac{t}{t-1}\right)^{d\sigma}\leq e^\sigma \qquad \mathrm{and}\qquad \sqrt{\frac{t}{d}} \leq t - d +1 \qquad \mathrm{for}\ 1\leq d \leq t-1,\]
    which we use to estimate:
    \begin{equation}
        \label{N1_estimate} 
    N_1 \leq c_0 e^\sigma \sum_{d=1}^{t-1} (t-d+1)^{n-1+\sigma} M \left( \frac{RM}{2e^\sigma}\right)^{t-d}  \left(\rho_d'^{\sigma d} \max_{|\delta| =d}\|\freqk{\partial}^\delta u\|_{B_{R-d\rho'_d}} \right).
    \end{equation}

Concerning $N_0$, following similar calculations as above, we obtain the following inequality:
\[N_0 \leq c_0 M (t+1)^{n-1+\sigma} \left(\frac{RM}{2e^\sigma}\right)^t \|u\|_{B_{R-(t-1)\rho}}\]

Instead of keeping track of $\rho_d'$ in $N_1$, we use the Sobolev-Morrey seminorms introduced in~\eqref{d:Sobolev-Morrey} to write
\begin{equation}
\begin{aligned}
    \rho^{\sigma t} \|[a(x), \partial^\beta] \partial^\alpha u\|_{B_{R-(t-1)\rho}} &\leq c_0 M \sum_{d=0}^{t-1} (t-d+1)^{n-1+\sigma} \left( \frac{RM}{2e^\sigma}\right)^{t-d} [| u|]_{d;B_{R}}\\
    &\leq c_1 \sum_{d=0}^{t-1} \left( KR\right)^{t-d} [| u|]_{d;B_{R}}.
    \end{aligned}
\end{equation}    
with positive constants $c_1$ and $K$, independent of $t$ and $R$ for $R \leq R_*$. In the last line, we have used that polynomial growth is dominated by exponential growth, specifically that there exist constants $c_2$, $J$, such that for all values of $\kappa + 3 - d$
    \[
    (\kappa +3 -d )^{n-1+\sigma} \leq c_2 J^{\kappa + 3 -d} \leq c_2J^2 J^{\kappa+ 1-d}.
    \]
    Consequently we can express $c_1=c_0Mc_2J^2(\frac{R_*M}{2e^\sigma})$, and $K = \frac{JM}{2e^\sigma}.$
    \end{proof}

Next, we adapt \cite[Lemma 1.4.1]{costabel2010corner} to the Gevrey case, i.e., with the desired $\rho^\sigma$ scaling. Lemma~\ref{lem:truncated_basic_ell} will be the central building block of the desired Gevrey estimates.
\begin{lemma}\label{lem:truncated_basic_ell}
    Let $R^*>0$ and $\sigma \geq 1$. Let $L$ be a second-order elliptic operator with $C^0(\overline{\Omega})$ coefficients and assume that \eqref{ineq:basic_ell_est} holds for all functions in $H_0^2(B_{R_*})$. Then there exists $A_1>0$ such that for any $u \in H^2(B_R)$ with $R \leq R_*$, for all $\rho \in (0, R/2)$
    \begin{equation}\label{ineq::basic_elliptic}
        \sum_{|\alpha| \leq 2} \rho^{\sigma|\alpha|} \|\freqk{\partial}^\alpha u\|_{B_{R-|\alpha|\rho}} \leq A_1 \left(\rho^{2\sigma} \| L u\|_{B_{R-\rho}}  +  \sum_{|\alpha| \leq 1} \rho^{\sigma |\alpha|} \|\freqk{\partial}^\alpha u\|_{B_{R-|\alpha|\rho}} \right).
    \end{equation}
\end{lemma}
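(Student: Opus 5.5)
The plan is a cutoff argument modelled on \cite[Lemma 1.4.1]{costabel2010corner}. The only changes are that the semiclassical derivatives $\partial$ replace ordinary ones and that distances are weighted by $\rho^{\sigma}$ rather than $\rho$. Fix $R\le R_*$ and $\rho\in(0,R/2)$.

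\textbf{Step 1 (cutoff).} Choose $\chi\in C_c^\infty(B_{R-\rho})$ with $\chi\equiv1$ on $B_{R-2\rho}$ and $|\partial^\gamma\chi|\le C\rho^{-|\gamma|}$ for $|\gamma|\le2$. Apply \eqref{ineq:basic_ell_est} to $\chi u\in H^2_0(B_{R_*})$, which gives
\[
\sum_{|\alpha|=2}\|\partial^\alpha u\|_{B_{R-2\rho}}\le A_0\Big(\|\chi Lu\|+\|[L,\chi]u\|+\sum_{|\alpha|\le1}\|\partial^\alpha(\chi u)\|\Big).
\]
The commutator $[L,\chi]u$ involves $\partial\chi\,\partial u$ and $(\partial^2\chi) u$. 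Since the coefficients are $C^0$, these terms are bounded by $C(\rho^{-1}\|\partial u\|_{B_{R-\rho}}+\rho^{-2}\|u\|_{B_{R-\rho}})$. In semiclassical scaling there are additional factors $k^{-1}$ and $k^{-2}$, which only help for $k\ge k_0$.

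\textbf{Step 2 (multiply by $\rho^{2\sigma}$).} After multiplying through, the terms become $\rho^{2\sigma-1}\|\partial u\|$ and $\rho^{2\sigma-2}\|u\|$. Because $\sigma\ge1$ and $\rho<R_*/2$, we have $\rho^{2\sigma-1}\le C_{R_*}\rho^{\sigma}$ and $\rho^{2\sigma-2}\le C_{R_*}$. So the right side is controlled by $\rho^{2\sigma}\|Lu\|_{B_{R-\rho}}+\rho^\sigma\|\partial u\|_{B_{R-\rho}}+\|u\|_{B_R}$.

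\textbf{Step 3 (domain mismatch).} The problem is that the left side is on $B_{R-2\rho}$, as desired, but the first-order term on the right is on $B_{R-\rho}$ rather than on $B_{R-|\alpha|\rho}=B_{R-\rho}$. This is fine for $|\alpha|=1$. The $L^2$ term on $B_R$ matches $|\alpha|=0$. The $|\alpha|\le1$ terms on the left side of \eqref{ineq::basic_elliptic} are trivially bounded by the identical terms on the right. If a mismatch does arise, I would follow Costabel et al.\ and use $\rho/2$-shifted cutoffs: first apply the estimate with $\rho$ replaced by $\rho/2$, then absorb the resulting constants $2^{2\sigma}$ into $A_1$.

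The main obstacle is confirming that the lower-order powers of $\rho$ created by differentiating the cutoff are dominated by the weights $\rho^{\sigma|\alpha|}$. This is where $\sigma\ge1$ and the boundedness of $R_*$ enter, and $A_1$ will depend on $R_*$, $\sigma$, $k_0$ and $A_0$.
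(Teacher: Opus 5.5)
Your argument is correct and is the same cutoff argument as the paper's, following \cite[Lemma 1.4.1]{costabel2010corner}: apply \eqref{ineq:basic_ell_est} to a cutoff times $u$, bound the commutator using only boundedness of the $C^0$ coefficients, and use $\sigma\ge 1$ and $\rho\le R_*$ to trade powers of $\rho$. The paper builds that trade into the cutoff bound $|\partial^\alpha\chi_{R,\rho}|\le D\rho^{-\sigma|\alpha|}$, whereas you do it after multiplying by $\rho^{2\sigma}$; your Step~3 hedge is unnecessary, since a cutoff equal to $1$ on $B_{R-2\rho}$ and supported in $B_{R-\rho}$ already gives exactly the domains in the statement.
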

\begin{proof}
    The idea of the proof is to use a cut-off function $\chi_{R,\rho}$, along the lines of \cite[Eq. (1.40)]{costabel2010corner}. Indeed we define 
    \[\label{truncation_function}
    \chi_{R,\rho}: x \mapsto \chi\left( \frac{|x|-R+\rho}{\rho}\right),
    \]
    where $\chi \in C^\infty(\R)$ is a smooth function which verifies
    \[
    \chi\equiv \left\{ \begin{array}{cl}
         1& \quad \mathrm{on } \qquad (-\infty,0] \\
         0& \quad \mathrm{on } \qquad [1,\infty).
    \end{array}
    \right.
    \] 
    Note that $\chi_{R,\rho}$ equals 1 inside $B_{R-\rho}$ and 0 outside $B_R$. A basic computation shows that for all $R_*>0$ and $\sigma \geq 1$, there exists a constant $D>0$ such that for all $R \in (0,R_*]$, $\rho \in (0,R)$, and $\alpha$ with $|\alpha|\leq 2$
    \begin{equation}\label{ineq:cut_off_Gev_est}
    |\partial^\alpha \chi_{R,\rho}| \leq D \rho^{-\sigma |\alpha|}.
    \end{equation}
    The rest of the proof follows then by applying \eqref{ineq:basic_ell_est} to $\chi_{R,\rho} u$ in a similar manner as in \cite[Lemma 1.4.1]{costabel2010corner}, the details are omitted.
\end{proof}

We now prove an intermediate result towards Proposition~\ref{prop:Gevrey_fundamental_est}, \emph{cf.} \cite[Lemma 5.5.12]{melenk2004hp}
\begin{lemma}\label{lemma:elliptic_reg_kappa}
    Let $R_*>0$. Assume the basic elliptic regularity estimate~\ref{ineq:basic_ell_est} holds.
Then there exists a constant $A_2>1$ such that for all $R \in (0,R_*]$ and for all $\kappa\in \mathbb{N}$,
     \begin{equation}\label{ineq:commutator_4}
        \begin{aligned}
        [| u|]_{\kappa+2;B_{R}} & \leq A_2 \Bigg(\rho_*^{2}  [|Lu|]_{\kappa; B_R}  + \sum_{d=0}^{\kappa} \left(KR\right)^{\kappa-d}     [|u|]_{d;B_{R}} + [| u|]_{\kappa+1;B_{R}} \Bigg).
        \end{aligned}
    \end{equation}    
\end{lemma}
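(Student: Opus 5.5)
Fix a multi-index $\delta$ with $|\delta|=\kappa+2$ and write $\delta=\beta+\alpha$ with $|\alpha|=2$, $|\beta|=\kappa$. Given $\rho\in(0,\frac{R}{2(\kappa+2)}]$, we apply Lemma~\ref{lem:truncated_basic_ell} to $v:=\partial^\beta u$ on the ball $B_{R'}$ with $R':=R-\kappa\rho$, using the same $\rho$. Since $|\alpha|=2$, the left-hand side of \eqref{ineq::basic_elliptic} controls $\rho^{2\sigma}\|\partial^\delta u\|_{B_{R-(\kappa+2)\rho}}$. Multiplying through by $\rho^{\sigma\kappa}$ gives
\begin{equation*}
\rho^{\sigma(\kappa+2)}\|\partial^\delta u\|_{B_{R-(\kappa+2)\rho}}\leq A_1\Big(\rho^{\sigma(\kappa+2)}\|L\partial^\beta u\|_{B_{R-(\kappa+1)\rho}}+\sum_{|\gamma|\leq 1}\rho^{\sigma(\kappa+|\gamma|)}\|\partial^{\beta+\gamma}u\|_{B_{R-(\kappa+|\gamma|)\rho}}\Big).
\end{equation*}
The $|\gamma|=1$ term is bounded by $[|u|]_{\kappa+1;B_R}$, because $\rho\leq R/(2(\kappa+1))$. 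The $|\gamma|=0$ term is bounded by $[|u|]_{\kappa;B_R}$, which is the $d=\kappa$ term of the sum.

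Next we commute: $L\partial^\beta u=\partial^\beta Lu+[L,\partial^\beta]u$, where $[L,\partial^\beta]u=\sum_{|\alpha'|=2}[a_{\alpha'},\partial^\beta]\partial^{\alpha'}u$. For the first piece, $\rho^{\sigma(\kappa+2)}\|\partial^\beta Lu\|_{B_{R-(\kappa+1)\rho}}\leq \rho_*^2[|Lu|]_{\kappa;B_R}$, since $\rho\leq R/(2(\kappa+1))$ matches the definition \eqref{d:Sobolev-Morrey}.

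For the commutator we use Lemma~\ref{lemma::commutator_estimate} with $t=\kappa+2$. That lemma requires the ball $B_{R-(t-1)\rho}=B_{R-(\kappa+1)\rho}$ and $\rho\leq R/(2t)$, which is exactly our situation. It yields
\begin{equation*}
\rho^{\sigma(\kappa+2)}\|[L,\partial^\beta]u\|_{B_{R-(\kappa+1)\rho}}\leq c\sum_{d=0}^{\kappa+1}(KR)^{\kappa+2-d}[|u|]_{d;B_R},
\end{equation*}
after summing over the finitely many $\alpha'$. The term $d=\kappa+1$ is absorbed into $[|u|]_{\kappa+1;B_R}$ with constant $cKR_*$. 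For the remaining terms, $(KR)^{\kappa+2-d}\leq KR_*(KR)^{\kappa+1-d}$, which gives the sum $\sum_{d=0}^{\kappa}(KR)^{\kappa-d}[|u|]_{d;B_R}$ up to a factor $(KR_*)^{2}$. If we were strict about matching exponents we might need to enlarge $K$, but the constants depend only on $R_*$, so they are absorbed into $A_2$.

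Finally we take the maximum over $|\delta|=\kappa+2$ and over $\rho$. The main subtlety is the restriction $\rho\leq R/(2(\kappa+2))$ in the definition of $[|u|]_{\kappa+2;B_R}$. We need every intermediate ball and range of $\rho$ to stay within the ranges allowed in the definitions of $[|u|]_{d}$, $\rho_*^2[|Lu|]_\kappa$ and in Lemma~\ref{lemma::commutator_estimate}. This is the step where care is needed: it holds because smaller $\rho$ is always admissible, as the larger index $\kappa+2$ gives the most restrictive bound. Setting $A_2:=A_1\max\{1,c(1+KR_*)^2\}$ then gives \eqref{ineq:commutator_4}.
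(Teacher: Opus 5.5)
Your proposal is correct and is essentially the paper's argument: apply Lemma~\ref{lem:truncated_basic_ell} to $\partial^\beta u$ on $B_{R-\kappa\rho}$ (admissible since $(\kappa+2)\rho\le R/2$), split $L\partial^\beta u=\partial^\beta Lu+[L,\partial^\beta]u$, bound the commutator by Lemma~\ref{lemma::commutator_estimate} with $t=\kappa+2$, and absorb the $d=\kappa+1$ and $d\le\kappa$ terms into $A_2$. Your bookkeeping of the admissible ranges of $\rho$ is somewhat cleaner than the paper's, which sums over $|\beta|=\kappa$ before maximising; the one cosmetic point is that your explicit formula for $A_2$ needs the commutator constant $c\ge 1$ (and, if necessary, a max with $2$ to get $A_2>1$), both of which can be assumed without loss of generality.
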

\begin{proof} 
    The proof relies on recursively applying Lemmas~ \ref{lemma::commutator_estimate} and \ref{lem:truncated_basic_ell}. 
    First, let $\beta$ be a multi-index of length $|\beta| = \kappa$. We use  \eqref{ineq::basic_elliptic} with $\freqk{\partial}^\beta u$ and $R = R-|\beta|\rho$, to obtain:
     \begin{equation}\label{ineq::commutator_elliptic}
        \begin{aligned}
        \sum_{|\alpha| = 2} \rho^{\sigma |\alpha|} \|\freqk{\partial}^{\alpha+\beta} u\|_{B_{R-|\beta|\rho-|\alpha| \rho}} \leq A_1 \Bigg(\rho^{2\sigma} & \| L \freqk{\partial}^\beta u\|_{B_{R-|\beta|\rho-\rho}} \\ &+  \sum_{|\alpha| \leq 1} \rho^{\sigma |\alpha|} \|\freqk{\partial}^{\alpha+\beta} u\|_{B_{R-|\beta|\rho -  |\alpha|\rho}} \Bigg),
        \end{aligned}
    \end{equation}
    subsequently, introducing the commutator
        \begin{equation}
        \begin{aligned}
        \sum_{|\alpha| = 2} \rho^{\sigma |\alpha|} \|\freqk{\partial}^{\alpha+\beta}u\|_{B_{R-(|\beta|+|\alpha|) \rho}} \leq A_1 \Bigg(&\rho^{2\sigma} \left(\|\freqk{\partial}^\beta L u\|_{B_{R-|\beta|\rho-\rho}} +\|[\freqk{\partial}^\beta, L] u\|_{B_{R-|\beta|\rho-\rho}} \right) \\& +  \sum_{|\alpha| \leq 1} \rho^{\sigma|\alpha|} \|\freqk{\partial}^{\alpha+\beta} u\|_{B_{R-(|\beta|+  |\alpha|)\rho}} \Bigg).
        \end{aligned}
    \end{equation}

    We multiply by the scaling $\rho^{\sigma|\beta|}$ and sum over $|\beta| = \kappa$ to obtain 
     \begin{equation}\label{ineq:commutator_3}
        \begin{aligned}
        \sum_{|\alpha| = \kappa+2} \rho^{\sigma |\alpha|} \|\freqk{\partial}^\alpha u\|_{B_{R-|\alpha| \rho}} \leq A_1 \Bigg(&\rho^{(\kappa+2)\sigma} \sum_{|\alpha| =\kappa}\left(\|\freqk{\partial}^\alpha L u\|_{B_{R-(\kappa+1)\rho}} +\|[\freqk{\partial}^\alpha, L] u\|_{B_{R-(\kappa+1)\rho}} \right)\\& +  \sum_{|\alpha| = \kappa}^{\kappa+1} \rho^{\sigma|\alpha|} \|\freqk{\partial}^\alpha u\|_{B_{R-|\alpha|\rho}} \Bigg).
        \end{aligned}
    \end{equation}    

    Now using Lemma~\ref{lemma::commutator_estimate}, with $|\beta| = \kappa$ and $|\alpha| \leq 2$ (thus $t \leq \kappa+ 2$), we obtain
\begin{equation}
\begin{aligned} \label{ineq:applying_lem_com_plus}
        \rho^{(\kappa+2)\sigma} \|[\freqk{\partial}^\alpha, L] u\|_{B_{R-(\kappa+1)\rho}}  &\leq 
        c_0        \rho^{\sigma b} \|[\freqk{\partial}^\alpha, L] u\|_{B_{R-(b-1)\rho}} \\
        & \leq c_0c_1  \sum_{d=0}^{b-1} \left( KR\right)^{b-d} [| u|]_{d;B_{R}} \\
     &\leq c_2 \sum_{d=0}^{\kappa+1} \left(KR\right)^{\kappa+2-d}[| u|]_{d;B_{R}}, \\
\end{aligned}
    \end{equation}
    
Putting back in \eqref{ineq:commutator_3}, taking the maximum over $\rho$, and using the previously introduced notation~\eqref{d:Sobolev-Morrey} yields
     \begin{equation}
        \begin{aligned}
        [|u|]_{\kappa+2;B_{R}} & \leq A_1' \Bigg(\rho_*^{2}  [|Lu|]_{\kappa; B_R}  +  c_2\sum_{d=0}^{\kappa+1} \left(KR\right)^{\kappa+2-d}     [| u|]_{d;B_{R}} + \sum_{d = \kappa}^{\kappa+1} [| u|]_{d;B_{R}} \Bigg)\\
        &
        \leq A_2 \Bigg(\rho_*^{2}  [|Lu|]_{\kappa; B_R}  + \sum_{d=0}^{\kappa} \left(KR\right)^{\kappa-d}     [|u|]_{d;B_{R}} + [| u|]_{\kappa+1;B_{R}} \Bigg).
        \end{aligned}
    \end{equation}    
\end{proof}

We recall at this stage that we are interested in $k$-explicit high regularity bounds for the equation: 
\[
L u = k^2(f- k^{-1}b \cdot \nabla u + n u),
\]
which prompts us to bound the quantities $\rho_*^{2}  [|b\cdot \nabla u |]_{\kappa; B_R}$ and $\rho_*^{2}  [|n u|]_{\kappa; B_R} $; see \eqref{ineq:commutator_4}. \emph{Cf.} \cite[Lemma 5.5.13]{melenk2004hp}.
\begin{lemma}\label{lemma:rhs_k_scaling_partial_estimate}
    Let $\sigma \geq 1$ and $R_*>0$
    Let $b$ and $n$ be of Gevrey-$\sigma$ regularity with a Gevrey constant $M$, then there exist constants $c_b$ and $c_n$ such that for all $R \leq R_*$, all $\rho \in (0,R/2)$  
    \begin{equation}
    \begin{aligned}
    \rho_*^{2}  [|b\cdot \nabla u |]_{\kappa; B_R} &\leq  c_b \frac{1}{(\kappa+1)^{2\sigma}} \sum_{d=0}^{\kappa} \left( KR\right)^{\kappa-d} [| \nabla u|]_{d;B_{R}}\\
    \rho_*^{2}  [|n u |]_{\kappa; B_R}  &\leq c_n \frac{1}{(\kappa+1)^{2\sigma}} \sum_{d=0}^{\kappa} \left( KR\right)^{\kappa-d} [| u|]_{d;B_{R}}
    \end{aligned}
    \end{equation}
\end{lemma}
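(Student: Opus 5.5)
The plan is to treat the product terms $b\cdot\nabla u$ and $nu$ directly with the Leibniz formula, following the same pattern as the proof of Lemma~\ref{lemma::commutator_estimate}. The only differences from that proof are that the full product appears, including the $\gamma=0$ term, and that the weight in $\rho_*^2[|\cdot|]_{\kappa;B_R}$ is $\rho^{\sigma(\kappa+2)}$ rather than $\rho^{\sigma\kappa}$. The two spare factors of $\rho^{\sigma}$ are what generate the gain $(\kappa+1)^{-2\sigma}$. I treat $nu$; the estimate for $b\cdot\nabla u$ is identical, applied componentwise to $b_j\partial_j u$ with $u$ replaced by $\nabla u$.

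Fix $|\delta|=\kappa$ and $\rho\in(0,R/(2(\kappa+1))]$. Expand
\[
\partial^\delta(nu)=\sum_{\gamma\le\delta}\binom{\delta}{\gamma}\partial^\gamma n\,\partial^{\delta-\gamma}u .
\]
Then bound $|\partial^\gamma n|\le M^{|\gamma|+1}(|\gamma|!)^\sigma$, use $\binom{\delta}{\gamma}\le\binom{\kappa}{g}$ with $g=|\gamma|$, and count the multi-indices with $|\gamma|=g$ by $(g+1)^{n-1}$. With $d=\kappa-g$ this gives
\[
\rho^{\sigma(\kappa+2)}\|\partial^\delta(nu)\|_{B_{R-(\kappa+1)\rho}}\le\sum_{d=0}^{\kappa}(\kappa-d+1)^{n-1}M^{\kappa-d+1}\frac{\kappa!\,((\kappa-d)!)^{\sigma-1}}{d!}\,\rho^{\sigma(\kappa-d+2)}\Big(\rho^{\sigma d}\max_{|\delta'|=d}\|\partial^{\delta'}u\|_{B_{R-(\kappa+1)\rho}}\Big).
\]

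As in Lemma~\ref{lemma::commutator_estimate}, bound $\kappa!\,((\kappa-d)!)^{\sigma-1}/d!\le(\kappa!/d!)^\sigma$. Then split $\rho^{\sigma(\kappa-d+2)}=\rho^{\sigma(\kappa-d)}\rho^{2\sigma}$.

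For the first factor, use $\rho\le R/(2(\kappa+1))$ and Stirling to get
\[
(\kappa!/d!)^\sigma\rho^{\sigma(\kappa-d)}\lesssim\big(RM/(2e^\sigma)\big)^{\kappa-d}\,\big(\kappa^{d}/d^{d}\big)^\sigma\cdot\mathrm{poly}.
\]
Next, pass to $\rho'_d:=(\kappa+1)\rho/d$, so that $B_{R-(\kappa+1)\rho}=B_{R-d\rho'_d}$ and $\rho'_d\le R/(2d)$, which is admissible in $[|u|]_{d;B_R}$. The factor $(\kappa/d)^{\sigma d}$ is absorbed because $\rho^{\sigma d}(\kappa/d)^{\sigma d}\le\rho_d'^{\sigma d}$, and the ratio $(\kappa+1)/\kappa$ costs at most $e^\sigma$. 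For $d=0$ no rescaling is needed.

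For the second factor, $\rho^{2\sigma}\le(R_*/2)^{2\sigma}(\kappa+1)^{-2\sigma}$, which is exactly the claimed decay. Finally, absorb the polynomial factors $(\kappa-d+1)^{n-1+\sigma}$ into $J^{\kappa-d}$ by enlarging $K$, and take the maximum over $\rho$ and $\delta$ to conclude.

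The main obstacle is the bookkeeping in the Stirling and $\rho'_d$ step. The decay $(\kappa+1)^{-2\sigma}$ must come out cleanly; it must not be consumed by the $(\kappa/d)^{\sigma d}$ conversion. The point is that the $\rho^{2\sigma}$ factor is set aside before rescaling, so only $\rho^{\sigma\kappa}$ participates in the comparison with $\rho_d'^{\sigma d}$. I would also check that the constant $K$ can be taken to be the one of Lemma~\ref{lemma::commutator_estimate}, enlarged if necessary, with $c_b,c_n$ depending only on $M,R_*,\sigma,n$.
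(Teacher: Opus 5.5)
Your proposal is correct and is essentially the paper's argument. The paper splits $\partial^\delta(nu)=n\,\partial^\delta u+[n,\partial^\delta]u$, extracts $\rho^{2\sigma}\le (R/(2(\kappa+1)))^{2\sigma}$, and applies Lemma~\ref{lemma::commutator_estimate} as a black box to the commutator, with the $\gamma=0$ term handled separately. You instead redo the same Leibniz, Stirling and $\rho'_d$ bookkeeping inline with the $\gamma=0$ term included, and you correctly set aside $\rho^{2\sigma}$ before the rescaling so that the $(\kappa+1)^{-2\sigma}$ gain survives.
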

\begin{proof} The proof relies on the commutator estimate from Lemma~\ref{lemma::commutator_estimate}. By definition,
    \begin{equation}\label{eq:cu_commutator}
    \begin{aligned}\rho_*^{2}  [|n u |]_{\kappa; B_R} &= 
    \max_{0<\rho \leq \frac{R}{2(\kappa+1)}} \max_{|\delta| = \kappa} \rho^{ \sigma (2+\kappa)} \|\partial^\delta (nu)\|_{B_{R-(\kappa+1)\rho}} \\
    &=\max_{0<\rho \leq \frac{R}{2(\kappa+1)}} \max_{|\delta| = \kappa} \rho^{ \sigma (2+\kappa)} \left(\|n \partial^\delta u\|_{B_{R-(\kappa+1)\rho}}+ \|[n, \partial^\delta] u\|_{B_{R-(\kappa+1)\rho}} \right),
    \end{aligned}
    \end{equation}

    By Lemma~\ref{lemma::commutator_estimate},  
    \begin{equation}
    \begin{aligned}
        \max_{0<\rho \leq \frac{R}{2(\kappa+1)}} & \max_{|\delta| = \kappa} \rho^{ \sigma (2+\kappa)} \|[n, \partial^\delta] u\|_{B_{R-(\kappa+1)\rho}} 
         \\ & \leq  \max_{0<\rho \leq \frac{R}{2(\kappa+1)}} \rho^{2\sigma} \max_{|\delta| = \kappa} \rho^{ \sigma \kappa} \|[n, \partial^\delta] u\|_{B_{R-\kappa\rho}} 
         \\ &\leq C_{R_*,\sigma} \frac{1}{(\kappa+1)^{2\sigma}} c_1 \sum_{d=0}^{\kappa-1} \left( KR\right)^{\kappa-d} [| u|]_{d;B_{R}}
    \end{aligned}
    \end{equation}
   Taking into account the contribution of $\delta =\kappa$ in \eqref{eq:cu_commutator}, there exist a constant $c_n$ such that
    \[
    \begin{aligned}\rho_*^{2}  [|n u |]_{\kappa; B_R} &\leq c_n \frac{1}{(\kappa+1)^{2\sigma}} \sum_{d=0}^{\kappa}  \left( KR\right)^{\kappa-d} [| u|]_{d;B_{R}}.
    \end{aligned}
    \]
    
    The bound on $b\cdot \nabla u$ follows similarly by using \eqref{N1_estimate} instead of the main estimate of Lemma~\ref{lemma::commutator_estimate} to reflect that we do not need the 0-order term. 
\end{proof}
\begin{proposition}\label{prop:Gevrey_fundamental_est} Let $R_*,k_0>0$. Assume the basic elliptic regularity estimate~\eqref{ineq:basic_ell_est} holds.
Then there exists a constant $B>1$ such that for all $R \in (0,R_*]$, all $k\geq k_0$, and all $\kappa\geq -2$,
\begin{equation}\label{ineq:desired_est}
       \begin{aligned}
        [| u|]_{\kappa+2;B_{R}} 
        \leq  B^{\kappa +2}\max \left\{
        \frac{k}{(\kappa+2)^\sigma},1\right\}^{\kappa+2} \Bigg( &\, \max_{0\leq \ell\leq \kappa}  \Gamma_{B,k}^{\ell} (\ell+2)^{(\ell +2)\sigma}\ \rho^2_*[|f|]_{\ell; B_R} \\
        &+ k^{-1} [|u|]_{1;B_{R}} +[|u|]_{0;B_{R}}\Bigg),
        \end{aligned}
\end{equation} 
    with $\Gamma_{B,k}^{\ell} = B^{-\ell-1} \left(\max \left\{k,(\ell+2)^\sigma\right\}\right)^{-\ell} .$
\end{proposition}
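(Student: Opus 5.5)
We prove \eqref{ineq:desired_est} by induction on $\kappa\geq -2$, following the structure of \cite[Proposition 5.5.4 / Lemma 5.5.14]{melenk2004hp}, with the Gevrey weights replacing the analytic ones.

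\textbf{Base cases.} For $\kappa=-2$ and $\kappa=-1$ the left-hand side is $[|u|]_{0;B_R}$, respectively $[|u|]_{1;B_R}$. The max over $\ell$ is empty, and the prefactor $B^{\kappa+2}\max\{k/(\kappa+2)^\sigma,1\}^{\kappa+2}$ equals $1$, respectively is at least $Bk\geq k$. So the bound is immediate once $B\geq 1$.

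\textbf{Inductive step.} Fix $\kappa\geq 0$ and assume the estimate for all smaller indices. Since $u$ solves \eqref{eq:L_withrhs}, we have $Lu=k^2(f-k^{-1}b\cdot\nabla u+nu)$. Apply Lemma~\ref{lemma:elliptic_reg_kappa} to get
\[
[|u|]_{\kappa+2;B_R}\leq A_2\Big(k^2\rho_*^2[|f|]_{\kappa;B_R}+k\,\rho_*^2[|b\cdot\nabla u|]_{\kappa;B_R}+k^2\rho_*^2[|nu|]_{\kappa;B_R}+\sum_{d=0}^{\kappa}(KR)^{\kappa-d}[|u|]_{d;B_R}+[|u|]_{\kappa+1;B_R}\Big).
\]
The two lower-order terms are bounded by Lemma~\ref{lemma:rhs_k_scaling_partial_estimate}. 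This produces sums $k^2(\kappa+1)^{-2\sigma}\sum_{d\leq\kappa}(KR)^{\kappa-d}[|u|]_{d;B_R}$ and $k(\kappa+1)^{-2\sigma}\sum_{d\leq\kappa}(KR)^{\kappa-d}[|\nabla u|]_{d;B_R}$. For the gradient term we use $[|\nabla u|]_{d;B_R}\lesssim[|u|]_{d+1;B_R}$, after adjusting the radii via $\rho\mapsto\rho d/(d+1)$ at the cost of a factor $e^\sigma$. Every remaining term $[|u|]_{d;B_R}$ with $d\leq\kappa+1$ is then replaced by the induction hypothesis.

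\textbf{Bookkeeping, the main obstacle.} We must check that each contribution is bounded by $B^{\kappa+2}\max\{k/(\kappa+2)^\sigma,1\}^{\kappa+2}$ times the bracket. The key elementary facts are the following.
\[
k^2(\kappa+1)^{-2\sigma}\max\{k/d^\sigma,1\}^{d}\lesssim e^{c\sigma}\max\{k/(\kappa+2)^\sigma,1\}^{\kappa+2}\quad\text{for }d\leq\kappa,
\]
and similarly with $k$ in place of $k^2$ and $d=\kappa+1$. Both follow from monotonicity of $x\mapsto(k/x^\sigma)^x$ on the region $k/x^\sigma\geq e$ (as used after \eqref{e:football1}), together with $\big((d+j)/d\big)^{\sigma d}\leq e^{j\sigma}$.

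For the $f$ term, we need $k^2\leq B^{\kappa+2}\max\{k/(\kappa+2)^\sigma,1\}^{\kappa+2}\,\Gamma^\kappa_{B,k}(\kappa+2)^{(\kappa+2)\sigma}$. This holds because $\max\{k/(\kappa+2)^\sigma,1\}^{\kappa+2}(\kappa+2)^{(\kappa+2)\sigma}=\max\{k,(\kappa+2)^\sigma\}^{\kappa+2}$ and the $\Gamma$ factor removes exactly $\max\{k,(\kappa+2)^\sigma\}^{\kappa}B^{-\kappa-1}$, leaving $B\max\{k,\cdot\}^2\geq k^2$. The $f$-contributions coming through the induction hypothesis, with $\ell<\kappa$, are absorbed in the same way, using that $\max\{k,(\ell+2)^\sigma\}$ is monotone in $\ell$.

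The sum $\sum_{d}(KR)^{\kappa-d}B^{d}(\cdots)$ is controlled by a geometric series in $KR_*/B$. This forces us to choose $B$ large compared with $A_2$, $c_b$, $c_n$, $KR_*$ and $e^{c\sigma}$, so that the combined constant multiplying $B^{\kappa+1}$ is at most $B$. Since $B$ is fixed once and independently of $\kappa$ and $k$, the induction closes. The delicate point is tracking the factors $e^{\sigma}$ coming from the shifts of the Morrey radii. We expect these to be uniformly bounded because each comparison involves only a shift of the index by at most $2$.
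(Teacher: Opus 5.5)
Your skeleton is the paper's: induction on $\kappa$, Lemma~\ref{lemma:elliptic_reg_kappa} with $Lu=k^2(f-k^{-1}b\cdot\nabla u+nu)$, Lemma~\ref{lemma:rhs_k_scaling_partial_estimate}, the induction hypothesis for every $[|u|]_{d;B_R}$ with $d\le\kappa+1$, and $B$ large. Your base cases and your handling of $k^2\rho_*^2[|f|]_{\kappa;B_R}$ are right.

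\textbf{First gap: the index-shift losses are not uniformly bounded.} The step you flag as delicate is resolved incorrectly. The lower-order sums in both lemmas run over all $d\in\{0,\dots,\kappa\}$, so the index shift is $\kappa+2-d$, not at most $2$. Your ``key elementary facts'' are therefore false with a $\kappa$-independent constant $e^{c\sigma}$. For $k=(\kappa+2)^\sigma$ and $d\approx(\kappa+2)/e$ one has $\max\{k/d^\sigma,1\}^d=((\kappa+2)/d)^{\sigma d}\approx e^{\sigma(\kappa+2)/e}$, whereas $\max\{k/(\kappa+2)^\sigma,1\}^{\kappa+2}=1$. The correct comparison, for $1\le d\le\kappa+2$, is
\[
\max\Big\{\frac{k}{d^\sigma},1\Big\}^{d}\le\Big(\frac{\kappa+2}{d}\Big)^{\sigma d}\max\Big\{\frac{k}{(\kappa+2)^\sigma},1\Big\}^{\kappa+2}\le e^{\sigma(\kappa+2-d)}\max\Big\{\frac{k}{(\kappa+2)^\sigma},1\Big\}^{\kappa+2}.
\]
It follows from $\max\{k,d^\sigma\}^d\le\max\{k,(\kappa+2)^\sigma\}^d$ and $\max\{k,(\kappa+2)^\sigma\}^{d-\kappa-2}\le(\kappa+2)^{-\sigma(\kappa+2-d)}$. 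No monotonicity of $x\mapsto(k/x^\sigma)^x$ is needed; that monotonicity holds only where $k/x^\sigma\ge e^\sigma$, not $e$, in any case.

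The loss grows geometrically in $\kappa+2-d$. The induction closes only because it is paired with the factor $B^{d-\kappa-2}$ supplied by the induction hypothesis. So you need $B$ large compared with $e^\sigma\max\{1,KR_*\}$ per unit of shift (a geometric series in $e^\sigma\max\{1,KR_*\}/B$), not with a single constant $e^{c\sigma}$.

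\textbf{Second gap: the gradient comparison.} The bound $[|\nabla u|]_{d;B_R}\lesssim[|u|]_{d+1;B_R}$ cannot be obtained by rescaling $\rho$. The two seminorms weight derivatives of the same order $d+1$ by $\rho^{\sigma d}$ and $\rho^{\sigma(d+1)}$ respectively. The missing factor $\rho^\sigma$ is arbitrarily small as $\rho\to 0$; already for $d=0$, $\|\nabla u\|_{B_R}$ is not controlled by $\max_{\rho\le R/2}\rho^\sigma\|\nabla u\|_{B_{R-\rho}}$.

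The fix is to spend only one of the two factors $\rho^\sigma$ contained in $\rho_*^2$ on the term $b\cdot\nabla u$:
\begin{itemize}
\item The principal part $b\,\partial^\delta\nabla u$, $|\delta|=\kappa$, lives on exactly the ball $B_{R-(\kappa+1)\rho}$ of $[|u|]_{\kappa+1;B_R}$. Hence $\rho^{\sigma(\kappa+2)}\|b\,\partial^\delta\nabla u\|_{B_{R-(\kappa+1)\rho}}\le\|b\|_{L^\infty}\big(R/(2(\kappa+1))\big)^\sigma[|u|]_{\kappa+1;B_R}$.
\item The commutator part is handled by Lemma~\ref{lemma::commutator_estimate} with $t=\kappa+1$.
\end{itemize}
This gives a factor $(\kappa+1)^{-\sigma}$ rather than $(\kappa+1)^{-2\sigma}$. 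That is enough, because $k(\kappa+1)^{-\sigma}\max\{k/(\kappa+1)^\sigma,1\}^{\kappa+1}\le(2e)^\sigma\max\{k/(\kappa+2)^\sigma,1\}^{\kappa+2}$.

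The paper's written proof has the same two issues. Just before \eqref{ineq:max_dichotomy} it bounds $(\kappa+2)^{\sigma(\kappa+2)}/d^{\sigma d}$ by $(\kappa+2)^{\sigma(\kappa+2-d)}$, discarding $((\kappa+2)/d)^{\sigma d}\ge 1$ in the wrong direction. It also asserts $[|\nabla u|]_{d;B_R}\le[|u|]_{d+1;B_R}$. These corrections are needed in either version; with them your argument closes.
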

\begin{proof}
Verifying the claim for $\kappa \in\{-2,-1\}$ is straightforward. We proceed by induction on $\kappa \geq 0$, assuming \eqref{ineq:desired_est} holds for all $-1 \leq \kappa'<\kappa$. By Lemma~\ref{lemma:elliptic_reg_kappa}
\begin{equation}
[| u|]_{\kappa+2;B_{R}}  \leq A_2 \Bigg(\rho_*^{2}  [| k^2(f- k^{-1}b \cdot \nabla u + n u)|]_{\kappa; B_R} + \sum_{d=0}^{\kappa} \left(KR\right)^{\kappa-d}     [|u|]_{d;B_{R}} + [| u|]_{\kappa+1;B_{R}} \Bigg)
\end{equation}

For $d \in \{0,\ldots,\kappa+1\}$
\begin{equation}
\begin{aligned}
[| u|]_{d;B_{R}} \leq &  \frac{1}{(\kappa+2)^{\sigma(\kappa+2)}} \frac{(\kappa+2)^{\sigma(\kappa+2)}}{d^{\sigma d} } B^{d} \max(k, d^{\sigma })^d C_{u,d-2} \\
\leq & \frac{1}{(\kappa+2)^{\sigma(\kappa+2)}} (\kappa+2)^{\sigma(\kappa+2-d)} B^{d} \max(k, d^{\sigma })^d C_{u,d-2}
\\
\leq & \frac{1}{(\kappa+2)^{\sigma(\kappa+2)}}  B^{d } \max(k, (\kappa+2)^{\sigma})^ {\kappa+2} C_{u,d-2}\\
\leq &  B^{\kappa+2 } \max\left(\frac{k}{(\kappa+2)^{\sigma}}, 1\right)^ {\kappa+2} C_{u,\kappa } B^{d-\kappa-2},
\end{aligned}
\end{equation}
with \(C_{u,\kappa }= \left( \max\limits_{0\leq \ell\leq \kappa}  \Gamma_{B,k}^{\ell} (\ell+2)^{(\ell +2)\sigma} \ \rho^2_*[|f|]_{\ell; B_R}+ k^{-1} [|u|]_{1;B_{R}} +[|u|]_{0;B_{R}}\right)\).
Above, we used that for all $0 \leq d \leq \kappa+1$
\begin{equation}\label{ineq:max_dichotomy}
(\kappa+2)^{\sigma(\kappa+2-d)} \max(k, d^{\sigma})^d 
\leq   \max(k, (\kappa+2)^{\sigma})^ {\kappa+2}.
\end{equation}

Thus 
\begin{equation}\label{est:of_interest_for_isotropic_later}
\begin{aligned}
    & A_2 \sum_{d=0}^{\kappa} \left(KR\right)^{\kappa-d}  [|u|]_{d;B_{R}} + [| u|]_{\kappa+1;B_{R}} \\ & \leq \left(\frac{1}{(\kappa+2)^{\sigma(\kappa+2)}}  B^{\kappa+2 } \max(k, (\kappa+2)^{\sigma})^ {\kappa+2} C_{u,\kappa}\right) A_2\left(   \sum_{d=0}^{\kappa}  \left(KR\right)^{\kappa-d} B^{d-\kappa-2} + B^{-1} \right).
\end{aligned}
\end{equation}

Similarly, using the estimates of Lemma~\ref{lemma:rhs_k_scaling_partial_estimate},
\begin{equation}
    \begin{aligned}
    &\rho_*^{2}  [|b\cdot \nabla u |]_{\kappa; B_R} \\
    &\leq  c_b \frac{1}{(\kappa+1)^{2\sigma}}\sum_{d=0}^{\kappa} \left( KR\right)^{\kappa-d} [| \nabla u|]_{d;B_{R}}\\
    &\leq c_b \frac{1}{(\kappa+1)^{2\sigma}}\sum_{d=0}^{\kappa} \left( KR\right)^{\kappa-d} [|  u|]_{d+1;B_{R}}
    \\
    & 
    \leq c_b \frac{1}{(\kappa+1)^{2\sigma}}\sum_{d=0}^{\kappa} \left( KR\right)^{\kappa-d}\frac{1}{(\kappa+2)^{\sigma(\kappa+2)}} \frac{(\kappa+2)^{\sigma(\kappa+2)}}{(d+1)^{\sigma(d+1)}} B^{d+1} \max(k, (d+1)^{\sigma})^{d+1} C_{u,d-1}
    \\
     & 
    \leq c_b \frac{1}{(\kappa+1)^{2\sigma}}\sum_{d=0}^{\kappa} \left( KR\right)^{\kappa-d} \frac{1}{(\kappa+2)^{\sigma(\kappa+2)}}  (\kappa+2)^{\sigma(\kappa-d+1)}B^{d+1} \max(k, (d+1)^{\sigma})^{d+1} C_{u,d-1}
    \\
    &
    \leq c_b \frac{(\kappa+2)^\sigma}{(\kappa+1)^{2\sigma}}\sum_{d=0}^{\kappa} \left( KR\right)^{\kappa-d} \frac{1}{(\kappa+2)^{\sigma(\kappa+2)}}  (\kappa+2)^{\sigma(\kappa-d)}B^{d+1} \max(k, (d+1)^{\sigma})^{d+1} C_{u,d-1}.
    \end{aligned}
    \end{equation}
    We now exploit that for $\kappa\geq 0$,  $\frac{(\kappa+2)^\sigma}{(\kappa+1)^{2\sigma}}\leq 2$ and \eqref{ineq:max_dichotomy} to find that 
    \begin{equation}
    \begin{aligned}
    &\rho_*^{2}  [|b\cdot \nabla u |]_{\kappa; B_R} \\
    &\leq \left(\frac{1}{(\kappa+2)^{\sigma(\kappa+2)}}   B^{\kappa+2} \max(k, (\kappa+2)^{\sigma})^{\kappa+1} C_{u,\kappa}\right) \left(c_b \sum_{d=0}^{\kappa} \left( KR\right)^{\kappa-d} B^{d-1-\kappa}\right).
    \end{aligned}
    \end{equation}
    Taking into account the $k$-scaling of the term $b\cdot \nabla u$:
     \begin{equation}
    \begin{aligned}
    &A_2 k \rho_*^{2}  [|b\cdot \nabla u |]_{\kappa; B_R} \\
    &\leq \left( B^{\kappa+2} \max\left(\frac{k}{(\kappa+2)^{\sigma}}, 1\right)^{\kappa+2} C_{u,\kappa}\right) A_2\left( c_b \sum_{d=0}^{\kappa} \left( KR\right)^{\kappa-d} B^{d-1-\kappa}\right).
    \end{aligned}
    \end{equation}
    One treats similarly the term $nu$
    \begin{equation}
    \begin{aligned}
    &\rho_*^{2}  [|n u |]_{\kappa; B_R} \\
    &\leq  c_n \frac{1}{(\kappa+1)^{2\sigma}}\sum_{d=0}^{\kappa} \left( KR\right)^{\kappa-d} [|  u|]_{d;B_{R}}\\
    &\leq  c_n \frac{1}{(\kappa+1)^{2\sigma}}\sum_{d=0}^{\kappa} \left( KR\right)^{\kappa-d}\frac{1}{(\kappa+2)^{\sigma(\kappa+2)}}  (\kappa+2)^{\sigma(\kappa+2-d)} B^{d} \max(k, d^{\sigma})^{d} C_{u,d-2}\\
    &\leq  c_n \frac{(\kappa+2)^{2\sigma}}{(\kappa+1)^{2\sigma}}\sum_{d=0}^{\kappa} \left( KR\right)^{\kappa-d} \frac{1}{(\kappa+2)^{\sigma(\kappa+2)}} (\kappa+2)^{\sigma(\kappa-d)} B^{d} \max(k,  d^{\sigma})^{d} C_{u,d-2}\\
    &\leq  c_{n,\sigma}\sum_{d=0}^{\kappa} \left( KR\right)^{\kappa-d} \frac{1}{(\kappa+2)^{\sigma(\kappa+2)}}  B^{d} \max(k, (\kappa+2)^{\sigma})^{\kappa} C_{u,d-2}\\
    \end{aligned}
    \end{equation}
    where we have used that for $\kappa \geq 0$, \[\frac{(\kappa+2)^{2\sigma}}{(\kappa+1)^{2\sigma}} \leq 4^{\sigma}\]
    and $c_{n,\sigma} = 4^{\sigma} c_n $. Taking into account the $k^2$ scaling of $nu$, we obtain    
     \begin{equation}
    \begin{aligned}
    & A_2 k^2 \rho_*^{2}  [|n u |]_{\kappa; B_R} \\
    &\leq \left(   B^{\kappa+2} \max\left(\frac{k}{ (\kappa+2)^{\sigma}},1\right)^{\kappa+2} C_{u,\kappa}\right) A_2\left(  c_{n,\sigma} \sum_{d=0}^{\kappa} \left( KR\right)^{\kappa-d} B^{d-2-\kappa}\right).
    \end{aligned}
    \end{equation}
    Finally, we turn our attention to the source term $f$, which readily verifies:
    \[k^2 \rho_*^{2} [| f |]_{\kappa; B_R} 
        \leq  B^{\kappa +2} \max\left(\frac{k}{ (\kappa+2)^{\sigma}},1\right)^{\kappa+2} C_{u,\kappa} B^{-1}.
    \]

    The proof is then completed by choosing $B$ large enough such that for all $\kappa\geq0$
    \[\begin{aligned}
A_2\Big(   \sum_{d=0}^{\kappa}  \left(\frac{KR}{B}\right)^{\kappa-d} B^{-2} + B^{-1} &+ c_b \sum_{d=0}^{\kappa} \left(\frac{KR}{B}\right)^{\kappa-d} B^{-1} +c_{n,\sigma} \sum_{d=0}^{\kappa} \left(\frac{KR}{B}\right)^{\kappa-d} B^{-2} + B^{-1}\Big) \leq 1.
    \end{aligned}
    \]
\end{proof}

\begin{proof}[Proof of Theorem \ref{thm:int_Gev_reg}]
    The proof follows similarly to that of \cite[Theorem 1.7.1]{costabel2010corner}. In particular, there exists a neighbourhood $\mathcal{U}_2(x_0) \subset \Omega_2$ of a point $x_0 \in \Omega_1$ which maps to the ball $B_{R_*}$ in $\mathbb{R}^n$ through a Gevrey-$\sigma$ map $\phi$. The equation $L u =f$ in $\mathcal{U}_2(x_0)$ becomes:
    \[\breve L \breve u = \breve f \textrm{ in } B_{R_*} \quad \textrm{with} \quad  \breve u \circ \phi = u, \ \ \breve f \circ \phi = f, \ \textrm{ and } \ \phi^{-1} \circ \breve L \circ \phi = L. \]
    The operator $\breve L$, has Gevrey-$\sigma$ coefficients on the ball $B_{R_*}$, and so the estimate established in Proposition~\ref{prop:Gevrey_fundamental_est} can be applied to $\breve L$.
    
    The only difference with the proof in \cite[p. 53]{costabel2010corner} is that the inequalities \cite[(1.53)]{costabel2010corner} have to be adjusted to our definition of the Sobolev--Morrey seminorms~\eqref{d:Sobolev-Morrey}:
    \begin{equation}\arraycolsep=1.4pt\def\arraystretch{2.2}
        \left\{\begin{array}{rl}
             [|\breve u|]_{\kappa;B_{R}} &\geq \left(\frac{R}{2\kappa}\right)^{\kappa\sigma } |\breve u|_{\kappa;B_{R/2}} \\
            \rho_*^2 [|\breve L\breve u|]_{\ell; B_R} &\leq \left(\frac{R}{2(\ell+1)}\right)^{(\ell+2)\sigma} |\breve L \breve u|_{\ell;B_{R}}.
        \end{array}
        \right. 
    \end{equation}
    The rest of the arguments are similar to the aformentioned proof in \cite{costabel2010corner}. In particular, a finite covering of the domain $\Omega_1$ is used so that a global estimate~\eqref{est:main_estimate} is inferred from a finite number of local estimates over neighbourhoods of $x_0 \in \Omega_1$. The details are omitted.
    
\end{proof}

\subsection{Regularity up to the boundary}\label{sec:uttb_reg}
\begin{figure}
    \centering
    \includegraphics[width=1.1\linewidth,trim=1cm 20.2cm 1cm 2.1cm, clip]{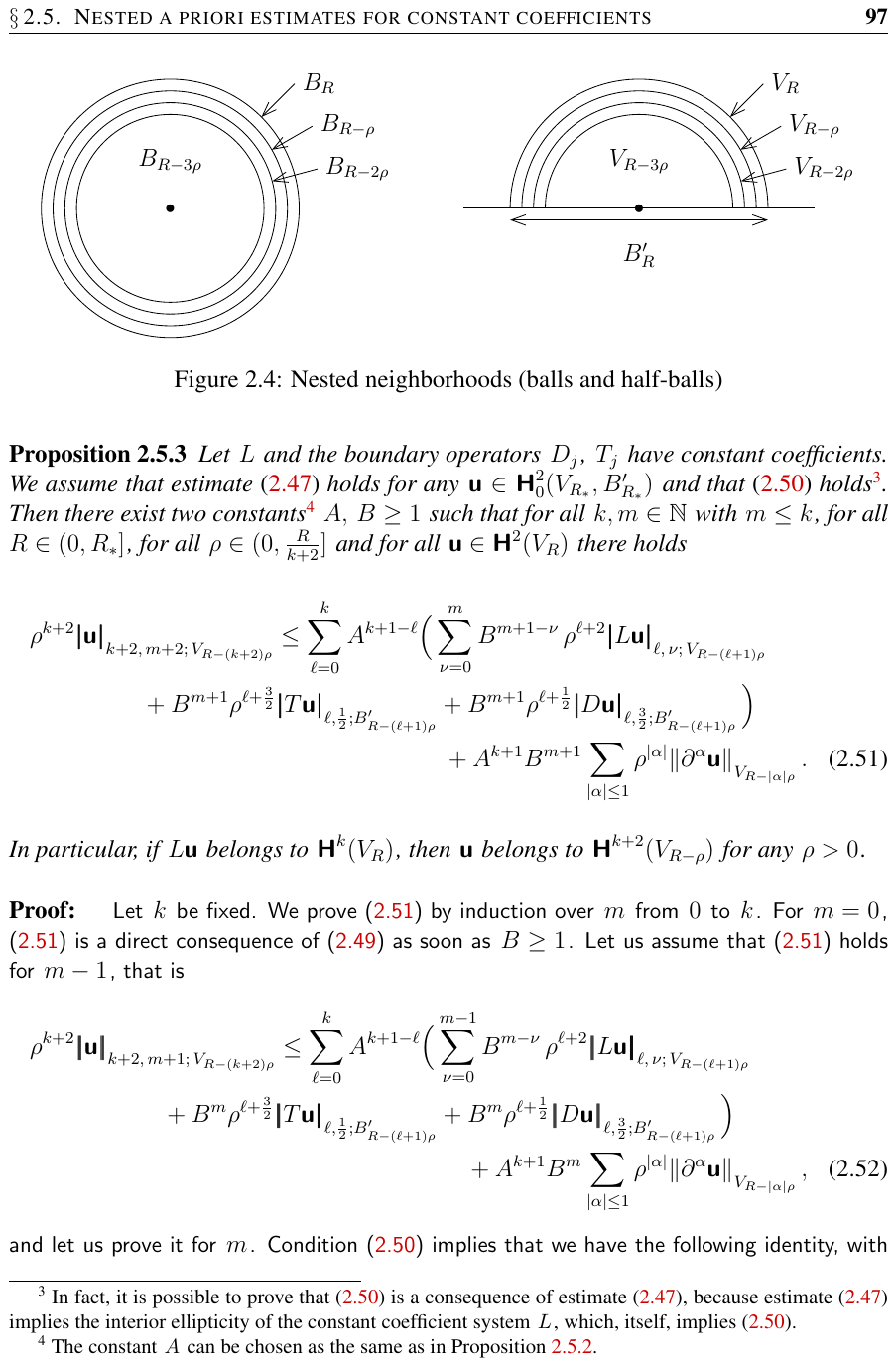}
    \caption{Nested balls and nested half-balls neighbourhoods, from \cite[p. 97]{costabel2010corner}}
    \label{fig:balls}
\end{figure}

In what follows, we establish estimates proving the solution to the elliptic boundary-value problem:
\[
\left\{\begin{array}{rcl}
    Lu &=f \qquad &\textrm{in } \Omega \\
    Tu &=g \qquad &\textrm{on } \partial\Omega\setminus\Gamma\\
    Du &=h \qquad &\textrm{on } \Gamma\\
\end{array}
\right.\]
is Gevrey-$\sigma$ regular whenever the domain, the coefficients, and the problem data are Gevrey-$\sigma$ regular. Above $L$ is a second-order differential operator and $T$, $D$ are differential operator of order 1 and 0, respectively. On the triplet $\{L,T,D\}$, we assume that it is elliptic on $\overline\Omega$ in the sense of \cite[Definition 2.2.33]{costabel2010corner}, and  $\Gamma = \emptyset$ or $\Gamma=\partial \Omega$.

Let us introduce the notation used for the basic domain we consider in this section, i.e., half balls with their flat side corresponding to a flattening of the boundary $\partial \Omega$; see Figure~\ref{fig:balls}. We denote by $B_R$ the ball of radius $R$, by $V_r$ the half ball obtained by intersecting $B_R$ with the half space $\{x_n>0\}$ ($V_R := B_R \cap \{x_n>0\}$), and by $B'_R$ the flat side of the boundary ($B'_R := \partial V_R \cap \{x_n=0\}$).

Let us define the space \[
H^\kappa_0(V,B') =\{ u \in H^\kappa(V), \quad \partial^\ell_r u = 0 \textrm{ on } \partial V \setminus B'  , \quad0\leq \ell \leq \kappa-1\}.
\]

If we assume that $L$ is elliptic and that the boundary operators $T$ and $D$ verify the complementing conditions, then by \cite[Corollary 2.2.16]{costabel2010corner}, the basic elliptic regularity estimate 
\begin{equation}
\label{ineq:basic_elliptic_reg_boundary}
\|u\|_{2,V_{R_*}} \leq \|Lu\|_{0,V_{R_*}} + \|Tu\|_{\frac12,B'_{R_*}} + \|Du\|_{\frac32,B'_{R_*}} + \|u\|_{1,V_{R_*}}
\end{equation}
holds for $u \in H^2_0(V_{R_*},B'_{R_*})$, where the norm $\|\cdot\|_{s,\Omega}$ associated with the fractional Sobolev space $H^s(\Omega)$ is defined as
\[
\|u\|_{s,B'_{R_*}}^2 = \|u\|_{\lfloor s\rfloor,B'_{R_*}}^2 + \sum_{|\alpha| =\lfloor s\rfloor} |\freqk{\partial}^\alpha u |_{s-\lfloor s\rfloor, B'_{R_*}}^2  ,
\]
with
\[
|v |_{s, B'_{R_*}}^2 = \int\int_{B'_{R_*} \times B'_{R_*}} \frac{|u(x)-u(y)|^2}{|x-y|^{(n-1)+2s}}\,\textup{d} x\,\textup{d}y.
\]
We next introduce the analogue of Lemma~\ref{lem:truncated_basic_ell} for points close to the boundary, i.e., we want this estimate to include norms of the boundary operators $T$ and $D$. Along the lines of \cite[Chapter 5]{melenk2004hp}, we state this result for $T u|_{B'_R} =g|_{B'_R} \in H^1(V_R)$ and $Du|_{B'_R} = h|_{B'_R} \in H^2(V_R)$, i.e. for the extension of the boundary condition to the domain of interest $V_R$.

\begin{lemma}\label{lem:truncated_basic_ell_boundary}
    Let $\freqk{k_0}, R^*>0$ and $\sigma \geq 1$. Let $\{L,T,D\}$ be an elliptic triplet with smooth coefficients and assume that \eqref{ineq:basic_elliptic_reg_boundary} holds for all functions in $H^2_0(V_{R_*},B'_{R_*})$. In addition, let $g\in H^1(V_R)$ and $h \in H^2(V_R)$. Then there exists $A_1>0$ such that for any $u \in H^2(V_R)$ with $R \leq R_*$, for all $\rho \in (0, R/2)$ and $k \geq k_0$
    \begin{equation}\label{ineq::basic_elliptic_boundary}
        \begin{aligned}
        \sum_{|\alpha| = 2} \rho^{\sigma|\alpha|} \|\freqk{\partial}^\alpha u\|_{V_{R-|\alpha|\rho}} \leq A_1 \Bigg(\rho^{2\sigma} \| L u\|_{V_{R-\rho}}  &+  \sum_{|\alpha| \leq 1} \rho^{\sigma |\alpha|} \|\freqk{\partial}^\alpha u\|_{V_{R-|\alpha|\rho}} \\& +  \sum_{|\alpha|\leq 2}\rho^{\sigma|\alpha|} \| \partial^\alpha h\|_{V_{R-\rho}}  + \sum_{|\alpha|\leq 1} \rho^{(1+|\alpha|)\sigma} \|\partial^\alpha g
        \|_{V_{R-\rho}} \Bigg).
        \end{aligned}
    \end{equation}
\end{lemma}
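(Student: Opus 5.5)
The plan is to mimic the proof of Lemma~\ref{lem:truncated_basic_ell}: cut off $u$ with the radial function $\chi_{R,\rho}$ from that proof, apply the basic boundary estimate \eqref{ineq:basic_elliptic_reg_boundary} to the cut-off function, and rescale. Since $\chi_{R,\rho}$ depends only on $|x|$, it vanishes together with all its derivatives near the curved part $\partial V_R\setminus B'_R$, while it need not vanish on $B'_R$. Hence $\chi_{R,\rho}u\in H^2_0(V_{R_*},B'_{R_*})$ for $u\in H^2(V_R)$, extended by zero. We may replace $\rho$ by $\rho/2$ and use the nesting $V_{R-2\rho}\subset V_{R-\rho}$ to place the cut-off where it is needed; as in the interior case, we actually apply the argument with the cut-off $\chi_{R-\rho,\rho}$ to handle the $|\alpha|=2$ term on $V_{R-2\rho}$.

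\textbf{Step 1 (interior terms).} Apply \eqref{ineq:basic_elliptic_reg_boundary}, in its frequency-scaled form analogous to \eqref{ineq:basic_ell_est}, to $w=\chi u$. The second derivatives of $u$ on the region where $\chi\equiv1$ are controlled by $\|Lw\|+\|Tw\|_{1/2,B'}+\|Dw\|_{3/2,B'}+\|w\|_{1}$. We expand $Lw=\chi Lu+[L,\chi]u$. The commutator involves $\partial\chi\,\partial u$ and $\partial^2\chi\,u$. By \eqref{ineq:cut_off_Gev_est} these are bounded by $D\rho^{-\sigma}\|\partial u\|+D\rho^{-2\sigma}\|u\|$ on the support of $\chi$. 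Multiplying through by $\rho^{2\sigma}$ yields exactly the first two groups of terms on the right of \eqref{ineq::basic_elliptic_boundary}. The low-order norm $\|w\|_1$ is handled in the same way, using $\rho\le R_*$ to absorb powers of $\rho$.

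\textbf{Step 2 (boundary terms, the main obstacle).} On $B'$ we have $Tw=\chi g+[T,\chi]u$ and $Dw=\chi h$. The commutator $[T,\chi]u=(T\chi)u$ is of order zero in $u$. The fractional trace norms must be converted to volume norms over $V_{R-\rho}$, and the scaling in $\rho$ must be tracked carefully. For this I would use the trace inequality $\|v\|_{1/2,B'}\lesssim\|v\|_{1,V}$ applied to the extensions, which gives $\|\chi g\|_{1/2,B'}\lesssim\|\chi g\|_{1,V}$ and $\|\chi h\|_{3/2,B'}\lesssim\|\chi h\|_{2,V}$. Differentiating the products produces factors $\rho^{-\sigma j}$, and after multiplying by $\rho^{2\sigma}$ each term $\partial^\alpha h$ carries weight at least $\rho^{\sigma|\alpha|}$. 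Likewise each $\partial^\alpha g$ carries weight $\rho^{(1+|\alpha|)\sigma}$ (because $T$ has order one), with $\rho\le R_*$ used to replace higher powers of $\rho$ by the stated ones. The term $(T\chi)u$ contributes $\|(T\chi)u\|_{1,V}\lesssim\rho^{-\sigma}\|u\|_1+\rho^{-2\sigma}\|u\|$, which is again absorbed into the lower-order $u$ terms.

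The delicate point is that the trace constant must be uniform in $R\le R_*$ and $\rho$. Working on the fixed half ball $V_{R_*}$ with zero extension guarantees this, and working with the non-semiclassical $\partial$ is consistent with how the data norms appear in \eqref{ineq::basic_elliptic_boundary}. Collecting the constants, which depend only on $D$, $R_*$, $\sigma$, $k_0$, the coefficient bounds and the basic constant, gives $A_1$.
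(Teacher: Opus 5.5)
Your proposal is correct and follows essentially the same route as the paper. You cut off with the radial function, which vanishes near the curved part of $\partial V_R$ so that \eqref{ineq:basic_elliptic_reg_boundary} applies. You then expand $L(\chi u)$, $T(\chi u)=\chi\,Tu+\sum_{|\alpha|=1}t_\alpha(\partial^\alpha\chi)u$ and $D(\chi u)=\chi\,Du$. Finally you convert the $H^{1/2}$ and $H^{3/2}$ norms on $B'$ into volume norms using the trace theorem and the slope bound \eqref{ineq:cut_off_Gev_est}.

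Your shifted cut-off $\chi_{R-\rho,\rho}$ matches the domain nesting in the statement ($Lu$, $g$, $h$ measured on $V_{R-\rho}$) more precisely than the paper's sketch. That sketch uses $\chi_{R,\rho}$ and therefore measures these norms on $V_R$.
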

\begin{proof}
    The arguments leading to \eqref{ineq:desired_est} in Lemma~\ref{lem:truncated_basic_ell} can be repeated here using the truncation function $\chi_{R,\rho}$ defined in~\eqref{truncation_function}. 
    We recall that $D$ and $T:= \sum\limits_{|\alpha| = 1} t_\alpha \partial^\alpha + t_0 \operatorname{id}$ are zero and first order respectively, thus 
    \[
    D\chi_{R,\rho}  u = \chi_{R,\rho} D u, \qquad T \chi_{R,\rho}  u = \sum\limits_{|\alpha| = 1} t_\alpha u \partial^\alpha  \chi_{R,\rho} +  \chi_{R,\rho} T u.
    \]
    Using the above identity, the slope estimate~\eqref{ineq:cut_off_Gev_est}, and the continuity of the trace operator from $H^s(\R^n)$ into $H^{s-1/2}(\R^{n-1})$ (with $s>1/2$), we obtain
    \[\begin{aligned}
    \|D\chi_{R,\rho}  u\|_{\frac32,B'_{R_*}} \leq& C  \sum_{|\alpha|\leq 2}\rho^{\sigma(|\alpha|-2)} \| \partial^\alpha h\|_{V_{R}}, 
    \\
    \|T \chi_{R,\rho}  u\|_{\frac12,B'_{R_*}} \leq& C \sum_{|\alpha|\leq 1} \rho^{\sigma(|\alpha|-1)} \|\partial^\alpha g\|_{V_{R}} + \sum_{|\alpha|\leq 1} \rho^{\sigma(|\alpha|-2)}\|\partial^\alpha u \|_{V_R},
    \end{aligned}
    \] 
    for some constant $C$ which depends on $R_*$, $T$ and the continuity norm of the aforementioned trace operator.
    The rest of the proof is similar to \ref{lem:truncated_basic_ell} (see also \cite[pp. 100-103]{costabel2010corner}). The details are omitted.
\end{proof}

Our first regularity result will mimic that of \cite[Proposition 2.5.2]{costabel2010corner}. It is an anisotropic estimate, yielding controld over \emph{almost tangential derivatives} of the solution $u$. The motivation behind this approach is the fact that tangential derivatives compose more naturally with the boundary conditions (indeed, taking tangential derivatives $Du =h$ only depend on the boundary function $h$).
To this end let us introduce, the following semi-norms: Let $\mathcal{U}$ be a domain in $\R^n$ and let $\kappa,\,m\in \mathbb{N}$
\[
|u|_{\kappa,m,\mathcal{U}} = \max_{\substack{|\alpha| = \kappa\\ \alpha_n\leq m}} \|\freqk{\partial}^\alpha u\|_{\mathcal{U}},
\]
where $\alpha = (\alpha',\alpha_n)$, $\alpha' \in \mathbb{N}^{n-1}$.
On the boundary, we use the semi-norm
\[
|u|_{\kappa,s,\mathcal{U}'} = \max_{\substack{|\alpha| = \kappa}} \|\freqk{\partial}^\alpha u\|_{s,\mathcal{U}'}.
\]
Accordingly, we also introduce weighted anisotropic semi-norms of the Sobolev-Morrey type: We set
\begin{equation}\label{d:Sobolev-Morrey_anisotropic}
\begin{aligned}
[|u|]_{0,0,V_R} &:= \|u\|_{V_R},
\\
[|u|]_{\kappa,m,V_R} &:= \max_{0\leq \rho\leq \frac{R}{2\kappa}} \rho^{\sigma \kappa}|u|_{\kappa,m,V_{R-\kappa\rho}},\\
\rho^2_*[|f|]_{\kappa,m,V_R} &:= \max_{0\leq \rho\leq \frac{R}{2(\kappa+1)}} \rho^{\sigma(2+ \kappa)}|f|_{\kappa,m,V_{R-(\kappa+1)\rho}},\\
\rho_*[|h|]_{\kappa,\alpha,B'_R} &:= \max_{0\leq \rho\leq \frac{R}{2(\kappa+1)}} \rho^{\sigma\kappa} | h|_{\kappa, \alpha, V_{R-(\kappa+1)\rho}} \\
\rho^{1}_*[|g|]_{\kappa,\alpha,B'_R} &:= \max_{0\leq \rho\leq \frac{R}{2(\kappa+1)}} \rho^{(\kappa+1)\sigma} |g
        |_{\kappa, \alpha, V_{R-(\kappa+1)\rho}}
\end{aligned}
\end{equation}
To fully exploit the dichotomy between normal and tangential derivatives we state the analogue of the commutator estimate given in Lemma~\ref{lemma::commutator_estimate} for the anisotropic case (see also \cite[Lemma 2.6.2]{costabel2010corner}):
\begin{lemma}\label{lemma::comutator_anisotropic}
    Let $M\geq 0$, $\sigma \geq 1$, and $\freqk{k_0},\, R_*>0 $. There exists constants $c_1$ and $K$ such that for all $R\in (0,R_*]$, all commutators $[a(x), \freqk{\partial}^\beta]$ with Gevrey coefficients $a(x) \in G^\sigma(B_{R})$ with constant $M$, and all multi-indices $|\alpha|$, $\beta$ with $|\alpha|+|\beta| = b \geq 2$, for all $\rho \in \left(0,\frac{R}{2b}\right]$ and $k \geq k_0$
\begin{equation}\label{}
    \rho^{\sigma b} \|[a(x), \freqk{\partial}^\beta] \freqk{\partial}^\alpha u\|_{V_{R-(b-1)\rho}} \leq c_1 \sum_{d=0}^{b-1} \left( KR\right)^{b-d} [| u|]_{d,m;V_{R}}.
\end{equation}    
Above, $\alpha$ is a multi-index with $|\alpha|\leq2$, and $m = \alpha_n + \beta_n$ is the total number of normal derivative involved in the commutator. 
\end{lemma}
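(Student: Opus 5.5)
The plan is to follow the proof of Lemma~\ref{lemma::commutator_estimate} almost verbatim, replacing balls by half-balls and keeping track of how many normal derivatives fall on $u$. Set
\[
N := \rho^{\sigma b}\|[a(x),\partial^\beta]\partial^\alpha u\|_{V_{R-(b-1)\rho}}.
\]
First, expand with the Leibniz formula:
\[
[a,\partial^\beta]\partial^\alpha u = -\sum_{0\neq\gamma\le\beta}\binom{\beta}{\gamma}\,\partial^\gamma a\,\partial^{\beta-\gamma+\alpha}u .
\]
The key observation is that each derivative falling on $u$ has normal order $(\beta-\gamma+\alpha)_n \le \beta_n+\alpha_n = m$. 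Hence every term is controlled by the anisotropic seminorm,
\[
\|\partial^{\beta-\gamma+\alpha}u\|_{V_{R-(b-1)\rho}} \le |u|_{d,m,V_{R-(b-1)\rho}}, \qquad d=b-|\gamma|.
\]
The Gevrey bound $|\partial^\gamma a|\le M^{|\gamma|+1}(\gamma!)^\sigma$ is used exactly as in the isotropic case.

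Second, group the terms by $g=|\gamma|=b-d$. Then repeat the combinatorics of Lemma~\ref{lemma::commutator_estimate}:
\begin{itemize}
\item $\binom{\beta}{\gamma}\le\binom{|\beta|}{|\gamma|}$;
\item $\#\{|\gamma|=g\}\le(g+1)^{n-1}$;
\item $\frac{(b-|\alpha|)!}{(d-|\alpha|)!}\le\frac{b!}{d!}$ and $(b-d)!\le\frac{b!}{d!}$.
\end{itemize}
Together these give a bound by $\sum_d (b-d+1)^{n-1}M^{b-d+1}(b!/d!)^\sigma\rho^{\sigma b}|u|_{d,m,V_{R-(b-1)\rho}}$. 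Apply Stirling and $\rho\le R/(2b)$ to convert $\rho^{\sigma(b-d)}(b!/d!)^\sigma$ into $(RM/2e^\sigma)^{b-d}(b/d)^{\sigma d}$ times polynomial factors.

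Third, introduce $\rho'_d=(b-1)\rho/d$, so that $V_{R-(b-1)\rho}=V_{R-d\rho'_d}$. We have $\rho'_d\le R/(2d)$ since $\rho\le R/(2b)$, and $(b/(b-1))^{d\sigma}\le e^\sigma$. This bounds $\rho^{\sigma d}(b/d)^{\sigma d}|u|_{d,m,V_{R-d\rho'_d}}$ by $e^\sigma\rho_d'^{\sigma d}|u|_{d,m,V_{R-d\rho'_d}}\le e^\sigma[|u|]_{d,m;V_R}$. The $d=0$ term, present only if $|\alpha|=0$, is handled as $N_0$ was. Finally, absorb the polynomial factors $(b-d+1)^{n-1+\sigma}$ into an exponential $J^{b-d}$, which yields constants $c_1,K$ independent of $R\le R_*$ and $b$.

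The only genuinely new point, and the step I would check most carefully, is the bookkeeping of normal derivatives. Since $\gamma\le\beta$ componentwise, no term has more than $m$ normal derivatives on $u$, so taking the maximum over $\alpha_n\le m$ in the definition of $|u|_{d,m}$ is legitimate. Because half-balls are nested the same way balls are, the domain-shrinking argument goes through unchanged. The coefficient being $G^\sigma$ on $B_R$ restricts fine to $V_R$. The $k$-scaling plays no role beyond the notation, exactly as in the isotropic case.
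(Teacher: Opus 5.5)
Your proposal is correct and follows the same route as the paper, which omits this proof as ``very similar'' to Lemma~\ref{lemma::commutator_estimate}: Leibniz expansion, the observation that $\gamma\le\beta$ forces $(\beta-\gamma+\alpha)_n\le m$ so that every term is controlled by $|u|_{d,m}$, and the same Stirling and $\rho'_d$ rescaling carried over to half-balls. One small bookkeeping point, which you share with the paper: Stirling together with $\rho\le R/(2b)$ actually gives $(R/(2e))^{\sigma(b-d)}$ rather than $(R/(2e^\sigma))^{b-d}$, which is harmless because $R\le R_*$ gives $R^{(\sigma-1)(b-d)}\le R_*^{(\sigma-1)(b-d)}$, and this factor can be absorbed into $K$.
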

\begin{proof}
    The proof of this lemma is very similar to that of Lemma~\ref{lemma::commutator_estimate} and is omitted.
\end{proof}

Our goal is to state a proposition establishing the bounds on \emph{almost tangential derivatives} of the solution $u$ (\emph{cf.} \cite[Propostion 2.6.6]{costabel2010corner}). To achieve this goal, we plan to follow the same plan as in Section~\ref{sec:int_reg}. In particular, we establish an intermediate estimate for the $\kappa+2$ semi-norm of the solution as a function of $Lu$, we then substitute $Lu$ for its components and bound them. The following lemma extends Lemma~\ref{lemma:elliptic_reg_kappa} up to the boundary.
\begin{lemma}\label{lemma:elliptic_reg_kappa_uttb}
    Let $R_*>0$. Assume the basic elliptic regularity estimate~\ref{ineq::basic_elliptic_boundary} holds.
Then there exists a constant $A_2>1$ such that for all $R \in (0,R_*]$ and for all $\kappa\in \mathbb{N}$,
     \begin{equation}\label{ineq:commutator_4_uttb}
        \begin{aligned}
        [|u|]_{\kappa+2,2;V_{R}} & \leq A_2 \Bigg(\rho_*^{2}  [|Lu|]_{\kappa,0; V_R}  + c_2 \sum_{d=0}^{\kappa} \left(KR\right)^{\kappa-d} [| u|]_{d,2 ;V_{R}} \\
        & \hphantom{\leq A_1' \Bigg(} +  [|u|]_{\kappa+1,1;V_{R}} + \sum_{|\alpha|\leq 2} \rho_*[|h|]_{\kappa+|\alpha|,\alpha,B'_R} + \sum_{|\alpha|\leq 1} \rho^{1}_*[|g|]_{\kappa+|\alpha|,\alpha,B'_R} \Bigg).
        \end{aligned}
    \end{equation}      
\end{lemma}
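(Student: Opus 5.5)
The plan is to mirror the proof of Lemma~\ref{lemma:elliptic_reg_kappa}, now in the half-ball, using Lemma~\ref{lem:truncated_basic_ell_boundary} in place of Lemma~\ref{lem:truncated_basic_ell} and Lemma~\ref{lemma::comutator_anisotropic} in place of Lemma~\ref{lemma::commutator_estimate}. Fix $\rho\in(0,R/(2(\kappa+2))]$ and a \emph{tangential} multi-index $\beta$ with $|\beta|=\kappa$, $\beta_n=0$. Apply \eqref{ineq::basic_elliptic_boundary} to $\partial^\beta u$ on $V_{R-\kappa\rho}$. Since $\beta$ is tangential, $\partial^\beta$ preserves $V_R$, commutes with restriction to $B'_R$, and gives $D\partial^\beta u=\partial^\beta h+[D,\partial^\beta]u$ and $T\partial^\beta u=\partial^\beta g+[T,\partial^\beta]u$. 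These are the data for the boundary problem satisfied by $\partial^\beta u$. The left-hand side then controls all derivatives $\partial^{\alpha+\beta}u$ with $|\alpha|=2$, hence at most two normal derivatives. This explains the index $(\kappa+2,2)$ on the left of \eqref{ineq:commutator_4_uttb}.

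Next, write $L\partial^\beta u=\partial^\beta Lu+[L,\partial^\beta]u$. Multiply through by $\rho^{\sigma\kappa}$, using $\rho^{\sigma(\kappa+2)}\|\partial^\beta Lu\|_{V_{R-(\kappa+1)\rho}}\le \rho_*^2[|Lu|]_{\kappa,0;V_R}$, which holds because $\beta_n=0$. For the commutator $[L,\partial^\beta]u=\sum_{|\alpha|=2}[a_\alpha,\partial^\beta]\partial^\alpha u$, apply Lemma~\ref{lemma::comutator_anisotropic} with $b=\kappa+2$ and $m=\alpha_n\le2$. This yields $c_2\sum_{d\le\kappa+1}(KR)^{\kappa+2-d}[|u|]_{d,2;V_R}$. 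The $d=\kappa+1$ term is absorbed into the lower-order term $[|u|]_{\kappa+1,1;V_R}$ after adjusting constants, as in Lemma~\ref{lemma:elliptic_reg_kappa}. The first-order lower terms $\sum_{|\alpha|\le1}\rho^{\sigma(\kappa+|\alpha|)}\|\partial^{\alpha+\beta}u\|$ involve at most one normal derivative and are bounded by $[|u|]_{\kappa,1}+[|u|]_{\kappa+1,1}$. The term $[|u|]_{\kappa,1}$ is included in the sum over $d$.

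For the boundary data terms, the contributions $\rho^{\sigma(\kappa+|\alpha|)}\|\partial^{\alpha}\partial^\beta h\|$ and $\rho^{\sigma(\kappa+1+|\alpha|)}\|\partial^\alpha\partial^\beta g\|$ go into $\rho_*[|h|]_{\kappa+|\alpha|,\alpha,B'_R}$ and $\rho_*^1[|g|]_{\kappa+|\alpha|,\alpha,B'_R}$ by definition. The commutators $[D,\partial^\beta]u$ and $[T,\partial^\beta]u$ involve Gevrey coefficients times derivatives of $u$ of order at most $\kappa+1$. They are estimated by the same Leibniz/Stirling bookkeeping as in Lemma~\ref{lemma::commutator_estimate}, and are again absorbed into $c_2\sum_d(KR)^{\kappa-d}[|u|]_{d,2}$ and $[|u|]_{\kappa+1,1}$.

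Finally, take the maximum over tangential $\beta$ and over $\rho$ to obtain \eqref{ineq:commutator_4_uttb}. The main obstacle is the trace-norm commutator step. Lemma~\ref{lem:truncated_basic_ell_boundary} measures $Tu$ and $Du$ in $H^{1/2}$ and $H^{3/2}$, so we must bound $[T,\partial^\beta]u$ in the extended sense, namely in $H^1$ and $H^2$ of $V_{R-\rho}$ with the matching $\rho^\sigma$ weights. This requires one more derivative of the commutator than in the interior case while keeping the shrinking-radius bookkeeping consistent. I would handle this by applying Lemma~\ref{lemma::comutator_anisotropic} to $\partial^\gamma[T,\partial^\beta]u$ with $|\gamma|\le1$ and treating it as a commutator of total order $\kappa+2$.
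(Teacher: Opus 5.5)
Your plan is the paper's proof. You take a tangential $\beta$ with $|\beta|=\kappa$ and apply Lemma~\ref{lem:truncated_basic_ell_boundary} to $\partial^\beta u$ on $V_{R-\kappa\rho}$, with boundary data $\partial^\beta h+[D,\partial^\beta]u$ and $\partial^\beta g+[T,\partial^\beta]u$. You then estimate all commutators, including the extra derivatives falling on the boundary ones, by Lemma~\ref{lemma::comutator_anisotropic} at total order at most $\kappa+2$, and take a maximum over $\beta$ and $\rho$.

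One step does not go through as you wrote it. The $d=\kappa+1$ term $c_2KR\,[|u|]_{\kappa+1,2;V_R}$ produced by the commutators cannot be absorbed into $[|u|]_{\kappa+1,1;V_R}$. Take $\alpha=(0,\dots,0,2)$ and $\gamma\le\beta$ with $|\gamma|=1$. The Leibniz expansion of $[a_\alpha,\partial^\beta]\partial^\alpha u$ then contains $\partial^\gamma a_\alpha\,\partial^{\beta-\gamma}\partial_n^2u$, a derivative of order $\kappa+1$ with two normal derivatives. The seminorms compare the wrong way for absorption: $[|u|]_{\kappa+1,1;V_R}\le[|u|]_{\kappa+1,2;V_R}$. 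The boundary commutators generate the same kind of term, e.g.\ $\partial_n^2[D,\partial^\beta]u$, or $\partial_n[T,\partial^\beta]u$ when $T$ involves $\partial_n$. In Lemma~\ref{lemma:elliptic_reg_kappa} the analogous absorption works only because the interior seminorms carry no normal index, so ``as in Lemma~\ref{lemma:elliptic_reg_kappa}'' does not transfer.

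The paper's written proof takes the same step silently, passing from $c_2\sum_{d=0}^{\kappa+1}(KR)^{\kappa+2-d}[|u|]_{d,2;V_R}$ to $c_2\sum_{d=0}^{\kappa}(KR)^{\kappa-d}[|u|]_{d,2;V_R}$. What the argument (yours and the paper's) actually proves is the estimate with the additional term $KR\,[|u|]_{\kappa+1,2;V_R}$ on the right-hand side. That is, keep the commutator sum up to $d=\kappa+1$ exactly as Lemma~\ref{lemma::comutator_anisotropic} delivers it; this is the version you should state.

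This is harmless downstream. In the induction behind Proposition~\ref{prop:almost_tangetial}, $[|u|]_{\kappa+1,2;V_R}=[|u|]_{(\kappa-1)+2,2;V_R}$ is controlled by the induction hypothesis and enters with a factor of order $KR_*/B$, like the other lower-order terms. For $\kappa=0$ the commutators vanish.

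I do not see how to obtain the literal statement from the listed right-hand side. Using the non-characteristic condition to trade $\partial_n^2u$ for $Lu$ brings in $\rho_*^2[|Lu|]_{\kappa-1,0;V_R}$, which is not there either.

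A minor point: the $h$ and $g$ terms live on $V_{R-(\kappa+1)\rho}$ rather than $V_{R-(\kappa+|\alpha|+1)\rho}$. They therefore fit the weighted seminorms only after rescaling $\rho$, not literally by definition, at the cost of a factor at most $e^{4\sigma}$.
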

\begin{proof} 
    The proof relies on recursively applying Lemmas~\ref{lem:truncated_basic_ell_boundary} and  \ref{lemma::comutator_anisotropic}.  First, let $\beta$ be a multi-index of length $|\beta| = \kappa$ such that its normal component $\beta_n=0$. Because $\beta$ is tangential, the boundary conditions for $\partial^\beta u$ can be written as
    \begin{equation}
        \begin{aligned}
            D \partial^\beta u &= \partial^\beta D u + [\partial^\beta,D] u =  \partial^\beta h + [\partial^\beta,D] u\\
            T \partial^\beta u &
            = \partial^\beta T u + [\partial^\beta,T] u =  \partial^\beta g + [\partial^\beta,T] u.
        \end{aligned}
    \end{equation}
    We use  \eqref{ineq::basic_elliptic_boundary} with $\freqk{\partial}^\beta u$ and $R = R-|\beta|\rho$, to obtain
        \begin{equation}
        \begin{aligned}
        \sum_{|\alpha| = 2} \rho^{\sigma|\alpha|} \|\freqk{\partial}^{\alpha+\beta} u\|_{V_{R-|\beta|\rho-|\alpha|\rho}} \leq A_1' \Bigg(&\rho^{2\sigma}\left( \|\partial^\beta  L u\|_{V_{R-|\beta|\rho-\rho}} + \|[\partial^\beta, L] u\|_{V_{R-|\beta|\rho-\rho}}\right) \\& 
        +  \sum_{|\alpha| \leq 1} \rho^{\sigma |\alpha|} \|\freqk{\partial}^{\alpha+\beta} u\|_{V_{R-|\beta|\rho-|\alpha|\rho}} \\& +  \sum_{|\alpha|\leq 2}\rho^{\sigma|\alpha|} \left(\| \partial^{\alpha+\beta} h\|_{V_{R-|\beta|\rho-\rho}}  + \| [\partial^\beta, \partial^\alpha D]  u\|_{V_{R-|\beta|\rho-\rho}} \right) 
        \\& + \sum_{|\alpha|\leq 1} \rho^{(1+|\alpha|)\sigma} \left( \|\partial^{\alpha+\beta} g
        \|_{V_{R-|\beta|\rho-\rho}} + \| [\partial^\beta, \partial^\alpha T]  u\|_{V_{R-|\beta|\rho-\rho}}\right) \Bigg).
        \end{aligned}
    \end{equation}

    We multiply by the scaling $\rho^{\sigma|\beta|}$ \begin{equation}\label{ineq:commutator_3_uttb}
        \begin{aligned}
     \rho^{\sigma(\kappa+2)} \|\partial^{\alpha+\beta} u\|_{V_{R-|\alpha|\rho}} \leq A_1 \Bigg(&\rho^{(\kappa+2)\sigma}\left( \|\partial^\beta  L u\|_{V_{R-(\kappa+1)\rho}} + \|[\partial^\beta, L] u\|_{V_{R-(\kappa+1)\rho}}\right) \\& 
        +  \sum_{|\alpha|\leq 1} \rho^{\sigma (\kappa+|\alpha|)} \|\partial^{\alpha+\beta} u\|_{V_{R-|\alpha|\rho}}\\& +  \sum_{|\alpha|\leq 2}\rho^{\sigma(\kappa+|\alpha|)} \left(\| \partial^{\alpha+\beta} h\|_{V_{R-(\kappa+1)\rho}}  + \| [\partial^\beta, \partial^\alpha D]  u\|_{V_{R-(\kappa+1)\rho}} \right) 
        \\& + \sum_{|\alpha|\leq 1} \rho^{(\kappa+1+|\alpha|)\sigma} \left( \|\partial^{\alpha+\beta} g
        \|_{V_{R-(\kappa+1)\rho}} + \| [\partial^\beta, \partial^\alpha T]  u\|_{V_{R-(\kappa+1)\rho}}\right) \Bigg).
        \end{aligned}
    \end{equation}    

    Similarly to \cite[Proposition 2.6.3]{costabel2010corner}, we use Lemma~\ref{lemma::comutator_anisotropic}, with $|\beta| = \kappa +2,\ \kappa+1, \kappa$ and with $m=2$ to estimate the commutators
\begin{equation}
\begin{aligned} \label{ineq:applying_lem_com_plus_uttb}
        \rho^{(\kappa+2)\sigma} \|[\freqk{\partial}^\alpha, L]& u\|_{V_{R-(\kappa+1)\rho}}  +  \sum_{|\alpha|\leq 2}\rho^{\sigma(\kappa+|\alpha|)} \| [\partial^\beta, \partial^\alpha D]  u\|_{V_{R-(\kappa+1)\rho}}
       \\ &+ \sum_{|\alpha|\leq 1} \rho^{(\kappa+1+|\alpha|)\sigma}\| [\partial^\beta, \partial^\alpha T]  u\|_{V_{R-(\kappa+1)\rho}}
     \leq c_2 \sum_{d=0}^{\kappa+1} \left(KR\right)^{\kappa+2-d}[| u|]_{d,2;V_{R}},\\
\end{aligned}
    \end{equation}
    
Putting back in \eqref{ineq:commutator_3_uttb}, taking the maximum over $\rho$, and using the previously introduced notation~\eqref{d:Sobolev-Morrey_anisotropic} yields
     \begin{equation}
        \begin{aligned}
        [|u|]_{\kappa+2,2;V_{R}} & \leq A_1' \Bigg(\rho_*^{2}  [|Lu|]_{\kappa,0; V_R}  + c_2 \sum_{d=0}^{\kappa} \left(KR\right)^{\kappa-d} [| u|]_{d,2 ;V_{R}} \\
        & \hphantom{\leq A_1' \Bigg(} +  [|u|]_{\kappa+1,1;V_{R}} + \sum_{|\alpha|\leq 2} \rho_*[|h|]_{\kappa+|\alpha|,\alpha,B'_R} + \sum_{|\alpha|\leq 1} \rho^{1}_*[|g|]_{\kappa+|\alpha|,\alpha,B'_R} \Bigg).
        \end{aligned}
    \end{equation}    
\end{proof}

\begin{proposition}\label{prop:almost_tangetial}
     Let $\sigma\geq 1$ and $k_0, \,R_*>0$. Let the elliptic operator $L$ and the boundary operators $D$, $T$ have Gevrey-$\sigma$ coefficients. Let $b,\, n \in G^\sigma(\Omega)$. We assume that \eqref{ineq:basic_elliptic_reg_boundary} holds for any $u \in H^2_0(V_{R_*},B'_{R_*})$. Then there exists a constant $B\geq1$ such that for all $\kappa \in \mathbb{N}$, all $R \in (0,R_*]$, and for any $u\in H^2(V_R)$ satisfying~\eqref{eq:L_withrhs}
       \begin{equation}\label{bndry_first}
        \begin{aligned}
        [| u|]_{\kappa+2,2;V_{R}} 
        \leq  B^{\kappa +2}\max \left\{
        \frac{k}{(\kappa+2)^\sigma},1\right\}^{\kappa+2} \Bigg(&  \max_{0\leq \ell\leq \kappa}\Gamma_{B,k}^{\ell} (\ell+2)^{(\ell +2)\sigma} \ \rho^2_*[|f|]_{\ell,0; V_R} 
        \\ &+ \max_{0\leq \ell\leq \kappa+2} \Upsilon_{B,k}^{\ell} \ell^{\ell\sigma}\rho_*[|h|]_{\ell,2,B'_R} 
        \\ & +  \max_{0\leq \ell\leq \kappa+1}
        \Upsilon_{B,k}^{\ell+1} (\ell+1)^{(\ell +1)\sigma}\rho^{1}_*[|g|]_{\ell,1,B'_R} \\
        &+ k^{-1} [|u|]_{1;V_{R}} +[|u|]_{0;V_{R}}\Bigg),
     \end{aligned}    
    \end{equation}
    with $\Gamma_{B,k}^{\ell} = B^{-\ell-1} \left(\max \left\{k,(\ell+2)^\sigma\right\}\right)^{-\ell}$
        and 
        $\Upsilon_{B,k}^{\ell} = B^{-\ell+1} \left(\max \left\{k,\ell^\sigma\right\}\right)^{-\ell} .$
\end{proposition}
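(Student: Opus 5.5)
The proof runs by induction on $\kappa$, following the template of Proposition~\ref{prop:Gevrey_fundamental_est}. Lemma~\ref{lemma:elliptic_reg_kappa_uttb} replaces Lemma~\ref{lemma:elliptic_reg_kappa}, and the anisotropic Sobolev--Morrey seminorms \eqref{d:Sobolev-Morrey_anisotropic} replace the isotropic ones. For $\kappa\in\{-2,-1\}$ the claim is immediate once $B\geq 1$, since then the right-hand side contains $k^{-1}[|u|]_{1;V_R}+[|u|]_{0;V_R}$ with a constant at least one. Assume \eqref{bndry_first} holds for all $\kappa'<\kappa$. Write $C_{u,\kappa}$ for the bracketed quantity on the right of \eqref{bndry_first}; it is monotone in $\kappa$. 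Then apply Lemma~\ref{lemma:elliptic_reg_kappa_uttb} with $Lu=k^2(f-k^{-1}b\cdot\nabla u+nu)$.

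The first group of terms to control is the lower-order ones, $\sum_{d\leq\kappa}(KR)^{\kappa-d}[|u|]_{d,2;V_R}$ and $[|u|]_{\kappa+1,1;V_R}$. Since $[|u|]_{d,m;V_R}\leq [|u|]_{d,2;V_R}$ for $m\le 2$ (and this is at most the full isotropic seminorm when $d\le 2$), the induction hypothesis applies at order $d$. The dichotomy \eqref{ineq:max_dichotomy} then gives
\[
[|u|]_{d,2;V_R}\leq B^{\kappa+2}\max\Big\{\tfrac{k}{(\kappa+2)^\sigma},1\Big\}^{\kappa+2}C_{u,\kappa}\,B^{d-\kappa-2},
\]
exactly as in \eqref{est:of_interest_for_isotropic_later}. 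The terms $k\rho_*^2[|b\cdot\nabla u|]_{\kappa,0;V_R}$ and $k^2\rho_*^2[|nu|]_{\kappa,0;V_R}$ are treated with the tangential analogue of Lemma~\ref{lemma:rhs_k_scaling_partial_estimate}, which is proved from Lemma~\ref{lemma::comutator_anisotropic}. Only tangential $\partial^\beta$ are applied, so the normal order of $\nabla u$ is at most one and is dominated by the $m=2$ seminorm. This yields the gain factor $(\kappa+1)^{-2\sigma}$, which absorbs the $k$ and $k^2$ scalings through $\max(k,(\kappa+2)^\sigma)$, precisely as in the interior case. The $f$ term is bounded by $B^{-1}$ times the target, directly from the definition of $\Gamma^\kappa_{B,k}$.

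The new ingredient is the boundary data terms $\rho_*[|h|]_{\kappa+|\alpha|,\alpha,B'_R}$ and $\rho^1_*[|g|]_{\kappa+|\alpha|,\alpha,B'_R}$. Each is compared with the corresponding entry $\Upsilon^{\ell}_{B,k}\ell^{\ell\sigma}\rho_*[|h|]_{\ell,2,B'_R}$ at $\ell=\kappa+|\alpha|\leq\kappa+2$, and likewise for $g$ at $\ell+1=\kappa+1+|\alpha|$. The required inequality is
\[
1\leq B^{\kappa+2}\max\Big\{\tfrac{k}{(\kappa+2)^\sigma},1\Big\}^{\kappa+2}\Upsilon^{\ell}_{B,k}\,\ell^{\ell\sigma}\cdot B^{-1}.
\]
This follows from $\ell^{\ell\sigma}\max(k,\ell^\sigma)^{-\ell}=\max(k/\ell^\sigma,1)^{-\ell}$ together with \eqref{ineq:max_dichotomy}-type monotonicity in the order. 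The powers of $B$ match because $\Upsilon$ carries $B^{-\ell+1}$ with $\ell\le\kappa+2$. A fixed factor $B^{-1}$ can be spared by enlarging $B$, but this has to be checked against the $\alpha$ with $|\alpha|=2$ for $h$.

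The main obstacle is bookkeeping rather than a conceptual difficulty. One must ensure that the commutators $[\partial^\beta,\partial^\alpha D]$ and $[\partial^\beta,\partial^\alpha T]$ from the Gevrey boundary operators are bounded by $m=2$ anisotropic seminorms of order at most $\kappa+1$, which Lemma~\ref{lemma::comutator_anisotropic} provides. One must also ensure that after all terms are summed, $B$ can be chosen independently of $\kappa$ and $R\le R_*$. As in the end of the proof of Proposition~\ref{prop:Gevrey_fundamental_est}, this reduces to requiring
\[
A_2\Big(\textstyle\sum_{d}(KR_*/B)^{\kappa-d}(B^{-2}+c_bB^{-1}+c_{n,\sigma}B^{-2})+5B^{-1}\Big)\leq 1,
\]
which holds for all $B$ sufficiently large relative to $KR_*$, $A_2$, $c_b$ and $c_{n,\sigma}$. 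This closes the induction.
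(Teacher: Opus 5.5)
Your proposal is correct and follows the paper's route. The paper's own proof is only a sketch ("similar to Proposition~\ref{prop:Gevrey_fundamental_est}, by induction"); you fill it in with Lemma~\ref{lemma:elliptic_reg_kappa_uttb} as the inductive step and absorb the new boundary-data terms at orders $\kappa,\kappa+1,\kappa+2$ into the $\Upsilon$-weights.

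One small caveat, which the paper's argument shares: $n\mapsto\max\{k/n^\sigma,1\}^n$ is not monotone. It only satisfies $\max\{k/d^\sigma,1\}^d\le e^{\sigma(n-d)}\max\{k/n^\sigma,1\}^n$ for $d\le n$. Your factor $B^{d-\kappa-2}$ should therefore read $(e^{\sigma}/B)^{\kappa+2-d}$, which is harmless after enlarging $B$.
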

\begin{proof}
   The proof follows similarly to that of Proposition~\ref{prop:Gevrey_fundamental_est}, by using Lemma~\ref{lem:truncated_basic_ell_boundary} and reasoning by induction. See also \cite[Propositions 2.5.2 \& 2.6.6]{costabel2010corner}. The details are omitted.
\end{proof}

The estimates for general derivatives of order less than $\kappa + 2$ (as opposed to \emph{almost tangential ones}) rely on the assumption that the boundary is non-characteristic, that is,
\begin{equation}\label{nonchar_condition}
a^{(0,\ldots,0,2)} \neq 0.
\end{equation}
along the boundary in the notation of \eqref{d:ellipticity}.
We note that this concept can be extended to $N\times N$ system of equations by adopting the definition given in \cite[Eq. (2.50)]{costabel2010corner}.

\begin{proposition}\label{prop:isotropic}
     Let $\sigma\geq 1$ and $R_*>0$. Let the elliptic operator $L$ and the boundary operators $D$, $T$ have Gevrey-$\sigma$ coefficients. Let $b,\, n \in G^\sigma(\Omega)$. We assume that \eqref{ineq:basic_elliptic_reg_boundary} holds for any $u \in H^2_0(V_{R_*},B'_{R_*})$. Then there exist constants $B_0,\,B\geq1$ such that for all $\kappa,m \in \mathbb{N}$ with $m\leq \kappa$, all $R \in (0,R_*]$, and for any $u\in H^2(V_R)$ satisfying~\eqref{eq:L_withrhs}
    \begin{equation}\label{bndry_isotropic}
        \begin{aligned}
        [| u|]_{\kappa+2,m+2;V_{R}} 
        \leq  B_0^{m} B^{\kappa +2}\max \left\{
        \frac{k}{(\kappa+2)^\sigma},1\right\}^{\kappa+2} \Bigg(& \max_{\substack{0\leq \ell\leq \kappa\\0\leq \nu \leq \min\{\ell,m\}}}  \Gamma_{B,B_0,k}^{m,\ell} (\ell+2)^{(\ell +2)\sigma} \ \rho^2_*[|f|]_{\ell,\nu; V_R}
        \\ &+  \max_{0\leq \ell\leq \kappa+2} \Upsilon_{B,B_0,k}^{m,\ell} \ell^{\ell\sigma}\rho_*[|h|]_{\ell,2,B'_R} 
        \\ & +   \max_{0\leq \ell\leq \kappa+1} \Upsilon_{B,B_0,k}^{m,\ell+1} (\ell+1)^{(\ell +1)\sigma}\rho^{1}_*[|g|]_{\ell,1,B'_R}    \\
        &+ k^{-1} [|u|]_{1;V_{R}} +[|u|]_{0;V_{R}}\Bigg),
     \end{aligned}    
    \end{equation}
    with $$\Gamma_{B,B_0,k}^{m,\ell} = B_0^{-m+1} B^{-\ell-1} \left(\max \left\{k,(\ell+2)^\sigma\right\}\right)^{-\ell}$$ and $$\Upsilon_{B,B_0,k}^{m,\kappa} = B_0^{-m+1}  B^{-\ell+1} \left(\max \left\{k,\ell^\sigma\right\}\right)^{-\ell}.$$
\end{proposition}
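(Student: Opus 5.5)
We prove \eqref{bndry_isotropic} by induction on the number $m$ of normal derivatives, for fixed $\kappa$, following \cite[Proposition 2.6.7]{costabel2010corner}. The idea is that the non-characteristic condition \eqref{nonchar_condition} lets us solve the equation \eqref{eq:L_withrhs} for $\partial_n^2 u$. Each additional pair of normal derivatives is then traded for tangential derivatives, lower-order terms, and data.

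\textbf{Base case $m=0$.} Since $[|u|]_{\kappa+2,2;V_R}$ already controls every derivative with at most two normal derivatives, this case is exactly Proposition~\ref{prop:almost_tangetial}. The only point to check is that $B_0^{0}$ and the factor $B_0^{1}$ in $\Gamma^{0,\ell}_{B,B_0,k}$ and $\Upsilon^{0,\ell}_{B,B_0,k}$ are harmless for $B_0\geq1$.

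\textbf{Induction step.} Assume the estimate holds for $m-1$, and let $\partial^\alpha$ have $|\alpha|=\kappa+2$ and $\alpha_n=m+2\geq 3$. Write $\partial^\alpha=\partial^{\beta}\partial_n^2$ with $|\beta|=\kappa$ and $\beta_n=m$. Dividing \eqref{eq:L_withrhs} by $a^{(0,\dots,0,2)}$, we get
\[
\partial_n^2u=\frac{1}{a^{(0,\dots,0,2)}}\Big(k^2f-kb\cdot\nabla u+k^2nu-\sum_{|\gamma|=2,\gamma_n\leq1}a_\gamma\partial^\gamma u\Big).
\]
The function $1/a^{(0,\dots,0,2)}$ is again Gevrey-$\sigma$ on $V_{R_*}$ (shrinking $R_*$ if needed), with a constant $M'$ depending only on $M$ and the lower bound of $a^{(0,\dots,0,2)}$. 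We apply $\partial^\beta$ and the Leibniz rule, and estimate the resulting commutators by the anisotropic commutator estimate, Lemma~\ref{lemma::comutator_anisotropic}, in the Morrey form $\rho^{\sigma(\kappa+2)}\|\cdot\|_{V_{R-(\kappa+1)\rho}}$. This bounds $[|u|]_{\kappa+2,m+2;V_R}$ by four groups of terms:
\begin{itemize}
\item[(i)] $C[|u|]_{\kappa+2,m+1;V_R}$, coming from $\partial^\beta a_\gamma\partial^\gamma u$ with $\gamma_n\leq 1$, which have at most $m+1$ normal derivatives;
\item[(ii)] the $f$-terms $k^2\rho^2_*[|f|]_{\kappa,m;V_R}$;
\item[(iii)] $k\,\rho^2_*[|b\cdot\nabla u|]$ and $k^2\rho^2_*[|nu|]$, handled exactly as in Lemma~\ref{lemma:rhs_k_scaling_partial_estimate};
\item[(iv)] commutator sums $\sum_{d}(KR)^{\kappa+2-d}[|u|]_{d,m+1;V_R}$.
\end{itemize}
Terms (i) and (iv) are controlled by the induction hypothesis at level $m-1$ (for (iv), at lower orders $d$). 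Terms (iii) are absorbed as in the proof of Proposition~\ref{prop:Gevrey_fundamental_est}, using \eqref{ineq:max_dichotomy}.

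\textbf{Main obstacle.} The crux is the constant bookkeeping. Term (i) carries a fixed multiplicative constant $C$, the sup of the tangential coefficients divided by $a^{(0,\dots,0,2)}$, and it multiplies the level-$(m-1)$ bound $B_0^{m-1}B^{\kappa+2}\cdots$. The induction closes only if $C B_0^{m-1}\leq \tfrac12 B_0^{m}$, which fixes $B_0\geq 2C$ independently of $B$. This is why $B_0$ must be introduced separately from $B$, and why the data weights $\Gamma^{m,\ell}$ and $\Upsilon^{m,\ell}$ carry the factor $B_0^{-m+1}$: with that factor, the data contributions also propagate with the correct power $B_0^{m}$.

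\textbf{Ordering of constant choices.} The remaining lower-order and commutator contributions are geometric sums in $KR/B$. These are made small by taking $B$ large after $B_0$ is fixed, exactly as in the final step of Proposition~\ref{prop:Gevrey_fundamental_est}. Finally, the Gevrey factorial weights $(\ell+2)^{(\ell+2)\sigma}$ are matched using \eqref{ineq:max_dichotomy} together with the bound $\big((\kappa+2)/\max(\kappa,1)\big)^{\kappa\sigma}\leq e^{2\sigma}$.
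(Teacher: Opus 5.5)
Your route is the paper's: induction on $m$, with the base case from Proposition~\ref{prop:almost_tangetial}, solving for $\partial_n^2u$ via \eqref{nonchar_condition}, and Lemma~\ref{lemma::comutator_anisotropic} for the commutators. Solving for $\partial_n^2u$ before differentiating, so that at most $m+1$ normal derivatives appear on the right, is a harmless variant.

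\textbf{The gap.} It lies in the step you call the crux, and it fails for the data terms. Because $\Gamma^{m,\ell}_{B,B_0,k}$ and $\Upsilon^{m,\ell}_{B,B_0,k}$ contain $B_0^{-m+1}$, every data term in \eqref{bndry_isotropic} carries total weight $B_0^{m}B_0^{-m+1}=B_0$, independent of $m$. Inserting the level-$(m-1)$ bound into your term (i) therefore produces the data with weight $CB_0$. This is not $\le B_0$ unless $C\le 1$.

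The inequality $CB_0^{m-1}\le\tfrac12 B_0^m$ only disposes of $k^{-1}[|u|]_{1;V_R}+[|u|]_{0;V_R}$, which carry an uncompensated $B_0^m$. Unrolling the recursion, data entering at level $0$ gets multiplied by $C^m$. No $B_0,B$ independent of $m$ can absorb this on the $\ell=\kappa$ term, whose weight is $B_0B$ times a polynomial in $\kappa$. So your claim that the factor $B_0^{-m+1}$ makes the data ``propagate with the correct power $B_0^m$'' is backwards. The paper's own proof, which multiplies the level-$(m-1)$ bound by $B_0$, has the same defect: $B_0^2$ appears where $B_0$ is claimed.

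\textbf{The estimate as stated fails.} This is not merely bookkeeping; with these weights \eqref{bndry_isotropic} is false. Take $n=3$, $L=\Delta$, Dirichlet condition with $h=0$, vanishing lower-order coefficients, $k=1$, and $\omega'=(1,1)/\sqrt2$. Set $f=e^{i\lambda x'\cdot\omega'}$ and $u=\lambda^{-2}f\,(e^{-\lambda x_n}-1)$, so $\Delta u=f$ and $u|_{x_n=0}=0$.
\begin{itemize}
\item Left-hand side. With $m=\kappa$, $|\partial_n^{\kappa+2}u|=\lambda^\kappa e^{-\lambda x_n}$, so the left-hand side is $\gtrsim (R/(2(\kappa+2)))^{\sigma(\kappa+2)}\lambda^{\kappa-1/2}$.
\item Data. $|\partial^\alpha f|=\lambda^{|\alpha|}2^{-|\alpha|/2}$ for every tangential $\alpha$, and $\partial^\alpha f=0$ otherwise.
\item Comparison. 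Choose $\lambda=c(\kappa+2)^\sigma$ with $c\gg B_0B/R^\sigma$. The $\ell=\kappa$ data term is smaller than the left side by a factor $\lesssim B_0B\,c^{1/2}\,\mathrm{poly}(\kappa)\,2^{-\kappa/2}$, and the terms with $\ell<\kappa$ are smaller still. The $u$-terms have size $O((B_0B)^\kappa B^2\lambda^{-1})$ and are smaller by $\mathrm{poly}(\kappa)\,(2^\sigma B_0B/(R^\sigma c))^{\kappa}$.
\end{itemize}
Hence the inequality fails for large $\kappa$. In two dimensions, $L=4\partial_1^2+\partial_2^2$ with a boundary layer built from $e^{i\lambda x_1-2\lambda x_2}$ gives the same conclusion.

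\textbf{What is missing.} The boundary layer converts tangential frequency into normal derivatives at a fixed exponential cost per order, and the data weights must record that cost. Your argument does close if the factor $B_0^{-m+1}$ is deleted, so that data carry $B_0^m$ just as the $u$-terms do. In that case, however, Theorem~\ref{thm:uttb_Gev_reg} (with $A=B_0B$ and data weights $A^{-\ell-1}$) and its use in Lemma~\ref{p:psi_low_freq_est_flattened_bndry} have to be revisited.
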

\begin{proof} The proof follows along the lines of \cite[Proposition 5.5.2]{melenk2004hp} and reasons by induction. Indeed, for $m=0$, estimate \eqref{bndry_isotropic} is a direct consequence of \eqref{bndry_first}. Assume then that \eqref{bndry_isotropic} holds for $0, \ldots,m-1$:
 \begin{equation}\label{est:hyp_induction}
        \begin{aligned}
        [| u|]_{\kappa+2,m+1;V_{R}} 
        \leq  B_0^{m-1} B^{\kappa +2}\max \left\{
        \frac{k}{(\kappa+2)^\sigma},1\right\}^{\kappa+2} \Bigg(&\max_{\substack{0\leq \ell\leq \kappa\\0\leq \nu \leq \min\{\ell,m-1\}}} \Gamma_{B,B_0,k}^{m-1,\ell} (\ell+2)^{(\ell +2)\sigma} \ \rho^2_*[|f|]_{\ell,\nu; V_R}
        \\ &+ \max_{0\leq \ell\leq \kappa+2}
        \Upsilon_{B,B_0,k}^{m-1,\ell} \ell^{\ell\sigma}\rho_*[|h|]_{\ell,2,B'_R} 
        \\ & +   \max_{0\leq \ell\leq \kappa+1} \Upsilon_{B,B_0,k}^{m-1,\ell+1} (\ell+1)^{(\ell +1)\sigma}\rho^{1}_*[|g|]_{\ell,1,B'_R}    \\
        &+ k^{-1} [|u|]_{1;V_{R}} +[|u|]_{0;V_{R}}\Bigg).
        \end{aligned}
    \end{equation} 
Condition \eqref{nonchar_condition} yields
\[
\partial^2_n u = {a^{(0,\ldots,2)}}^{-1} L u + \sum_{\substack{|\alpha|\leq 2\\ \alpha_n<2}} N^\alpha \partial^\alpha u,
\]
with $N^\alpha = -\frac{a_\alpha}{a^{(0,\ldots,2)}}$.
As a consequence, there exists $B_0>0$ such that for all $R\in (0,R_*)$ and all $u \in H^2(V_R)$ (note that here no boundary conditions are needed)
\begin{equation}\label{est:make_isotropic}
|u|_{2,2,V_R}\leq B_0 \left(|Lu|_{0,0,V_R} + |u|_{2,1,V_R}\right).
\end{equation}
Applying \eqref{est:make_isotropic} in $V_{R-(\kappa+2)\rho}$ for $\freqk{\partial}^{\beta}\freqk{\partial_n}^m u $ with $\beta = (\beta',0)$ and $|\beta| = \kappa-m$ 
\begin{equation}\label{est:make_isotropic_2}
\begin{aligned}
|u|_{\kappa+2,m+2,V_{R-(\kappa+2)\rho}}\leq B_0 \big(|L u|_{\kappa,m,V_{R-(\kappa+2)\rho}} &+ \left| [L, \freqk{\partial}^{\beta}\freqk{\partial_n}^m] u\right|_{0,0,V_{R-(\kappa+2)\rho}}\\&+ |u|_{\kappa+2,m+1,V_{R-(\kappa+2)\rho}}\big).
\end{aligned}
\end{equation}
We multiply by $\rho^{(\kappa+2)\sigma}$ and use Lemma~\ref{lemma::comutator_anisotropic} to obtain
\[
\rho^{(\kappa+2)\sigma} \left| [L, \freqk{\partial}^{\beta}\freqk{\partial_n}^m] u\right|_{0,0,V_{R-(\kappa+2)\rho}} \leq c_1 \sum_{d=0}^{\kappa+1} \left(KR\right)^{\kappa+2-d}[| u|]_{d,\min\{d,m+2\};V_{R}},
\]
for constants $c_1$ and $K$ independent of $R\in(0,R_*]$ and $\kappa$. Plugging \eqref{est:hyp_induction} into \eqref{est:make_isotropic_2}, we obtain
\begin{equation}
        \begin{aligned}
        [| u|]_{\kappa+2,m+2;V_{R}} 
        \leq  B_0^{m} B^{\kappa +2}\max \left\{
        \frac{k}{(\kappa+2)^\sigma},1\right\}^{\kappa+2} \Bigg(& \max_{\substack{0\leq \ell\leq \kappa\\0\leq \nu \leq \min\{\ell,m\}}} \Gamma_{B,B_0,k}^{m,\ell} (\ell+2)^{(\ell +2)\sigma} \ \rho^2_*[|f|]_{\ell,\nu; V_R}
        \\ &+  \max_{0\leq \ell\leq \kappa+2} \Upsilon_{B,B_0,k}^{m,\ell} \ell^{\ell\sigma}\rho_*[|h|]_{\ell,2,B'_R} 
        \\ & +  \max_{0\leq \ell\leq \kappa+1} \Upsilon_{B,B_0,k}^{m,\ell+1}  (\ell+1)^{(\ell +1)\sigma}\rho^{1}_*[|g|]_{\ell,1,B'_R}    \\
        &+ k^{-1} [|u|]_{1;V_{R}} +[|u|]_{0;V_{R}}\Bigg)\\&+
        c_1 \sum_{d=0}^{\kappa+1} \left(KR\right)^{\kappa+2-d}[| u|]_{d,\min\{d,m+2\};V_{R}}.
        \end{aligned}
    \end{equation} 
    To bound the last term, we use similar arguments as in the proof of Proposition~\ref{prop:Gevrey_fundamental_est} (see \eqref{est:of_interest_for_isotropic_later}). By eventually adjusting the value of $B$, we obtain the desired estimate.
\end{proof}

We are now ready to present the proof of the up-to-the-boundary Gevrey regularity estimates.
\begin{proof}[Proof of Theorem~\ref{thm:uttb_Gev_reg}]
    Let $x_0 \in \overline \Omega_1$. We identify two cases:
    \begin{itemize}
        \item $x_0 \in \Omega$, and is thus an interior point. Then, there exist two neighbourhoods of $x_0$:
        $\mathcal{U}_1, \,\mathcal{U}_2$ with $\overline{\mathcal{U}}_1 \subset \mathcal{U}_2\subset \Omega$ such that Theorem~\ref{thm:int_Gev_reg} can be applied, with a constant $A = A_{x_0}$ (where we stress the dependence of the constant on the spatial point $x_0$):
        \begin{equation}\label{est:lego_inter}
       \begin{aligned}
        |u|_{\kappa+2;\mathcal{U}_1} 
        \leq A^{\kappa +2}\max \left(
        k, (\kappa+2)^\sigma\right)^{\kappa+2} \Bigg(&  \max_{0\leq \ell\leq \kappa} A^{-\ell-1} \frac{1}{\max (k, (\ell+2)^{\sigma})^{\ell}} 
        |f|_{\ell; \mathcal{U}_2} \\
        &+ k^{-1} |u|_{1;\mathcal{U}_2} +|u|_{0;\mathcal{U}_2} \Bigg)
        \end{aligned}
    \end{equation}
        \item $x_0 \in \partial \Omega$, and is thus a boundary point. In this case, the proof follows from mapping a neighbourhood $\mathcal{U}_2$ of the the point $x_0$ to the half-ball $V_R$ using a Gevrey-$\sigma$-regular map $\phi$. Applying Proposition~\ref{prop:isotropic} with $m =\kappa$ and $A= B_0B$ (depending again on the point $x_0$), we obtain
        \begin{equation}\label{est:lego_bdry}
       \begin{aligned}
       |u|_{\kappa+2;\mathcal{U}_1} 
        \leq A^{\kappa +2}\max \left(
        k, (\kappa+2)^\sigma\right)^{\kappa+2} \Bigg(& \max_{0\leq \ell\leq \kappa} A^{-\ell-1} \frac{1}{\max (k, (\ell+2)^{\sigma})^{\ell}}|f|_{\ell; \mathcal{U}_2} \\&+\max_{0\leq \ell\leq \kappa+2}
        A^{-\ell+1} \frac{1}{\max (k, \ell^{\sigma})^{\ell}}  |h|_{\ell;\mathcal{U}_2}  
        \\ &
        +   \max_{0\leq \ell\leq \kappa+1} A^{-\ell} \frac{1}{\max (k, (\ell+1)^{\sigma})^{\ell+1}} |g|_{\ell;\mathcal{U}_2} \\
        &+ k^{-1} |u|_{1;\mathcal{U}_2} +|u|_{0;\mathcal{U}_2} \Bigg)
        \end{aligned}
\end{equation}
The neighbourhood $\mathcal{U}_1:=\phi^{-1}(V_{R/2})$. The steps to obtain \eqref{est:lego_bdry} from \eqref{bndry_isotropic} are similar to those taken in the proof of Theorem~\ref{thm:int_Gev_reg} and are omitted.
    \end{itemize}
    Finally, by varying $x_0 \in \overline \Omega_1$, we can extract a finite covering of the compact set $\overline \Omega_1$ by open sets $\mathcal{U}_1$. The proof is then complete by combining a finite number of estimates \eqref{est:lego_inter} and \eqref{est:lego_bdry}.
\end{proof}

\subsection{Transmission/piecewise-regular problems}
In the study of scattering problems or when truncating the domain using PML (Perfectly Matched Layer) for example, one often deals with two (or more) equations simultaneously, involving `similar' operator $L^{\pm}$, one in an interior domain $\Omega^+$ and the other over its complementary domain in $\Omega$, $\Omega^- = \Omega \setminus (\Omega^+ \cup I)$, with $I = \partial \Omega^- \cap \partial\Omega^+$ being the interface between the two domains $\Omega^+$ and $\Omega^-$.

We illustrate in Figure~\ref{fig:transmission_problem} a typical situation of interest, where the exterior boundary is $\partial\Omega$ and
\[
\partial \Omega^+ = I \quad\tand \quad \partial \Omega^- = I \cup \partial \Omega.
\]
Let $\Gamma = \emptyset$ or $\Gamma = \partial\Omega$.

In this section, we present a result dealing with the Gevrey regularity for such transmission problems. We note that Sobolev and analytic regularity results can be found in, e.g.~\cite[Theorem 5.2.2]{costabel2010corner} and \cite[Theorem 4.20]{mclean2000strongly}. 

\begin{figure}
    \centering
    \begin{tikzpicture}[scale=1]

  \filldraw[
    fill=blue!15,
    draw=black,
    thick
  ]
  (0,0) ellipse (4cm and 2.5cm);

  \filldraw[
    fill=red!20,
    draw=black,
    thick
  ]
  (0,0) ellipse (2cm and 1.2cm);

  \node at (0,1.6) {$\Omega^-$};
  \node at (0,0) {$\Omega^+$};

  \node[black] at (3.2,2.0) {$\partial \Omega$};
  \node[black]  at (1.7,0.9) {$I$};

\end{tikzpicture}
    \caption{Interface configuration with two subdomains}
    \label{fig:transmission_problem}
\end{figure}

The problem under consideration is of the form:
\begin{equation}
    \left\{
    \begin{array}{rll}
         L^+u^+ = & k^2(f^+- k^{-1}b^+ \cdot \nabla u^+ + n u^+) & \mathrm{in} \quad \Omega^+  \\
         L^-u^-  = & k^2(f^-- k^{-1}b^- \cdot \nabla u^- + n u^-) & \mathrm{in} \quad \Omega^- \\
         Tu^-  = & g & \mathrm{on} \quad \partial \Omega \setminus \Gamma \\
         Du^-  = & h & \mathrm{on} \quad \Gamma \\
         T^+u^+ + T^-u^-  = & g_I & \mathrm{on} \quad I \\
         D^+u^+ + D^-u^-  = & h_I & \mathrm{on} \quad I 
    \end{array}
    \right.\label{e:transmission}
\end{equation}
where the system $\{L^+,L^-,C_\Gamma, C_I\}$ with 
\[C_\Gamma = (T,D) \qquad C_I = (T^+ \ T^-, D^+ \ D^-)\]
is elliptic in the sense of \cite[Def 5.1.2]{costabel2010corner}. Once again, we assume the boundary and interface conditions to be given by the restriction to the boundary of functions $g,\,g_I$ and $h,\,h_I$ defined on $\Omega$; see Remark~\ref{r:boundary_cond}.

To present our piecewise regularity result, it is convenient to introduce the space of piecewise Sobolev spaces
\[
\textup{P}H^\kappa(\Omega) := \{u = (u^-,u^+): u^+\in H^\kappa(\Omega^+) , u^-\in H^\kappa(\Omega^-)\}
\]
for $\kappa \in \mathbb{N}$, equipped with the natural (semi-)norms
\[
\|u\|_{\textup{P}\kappa;\Omega}^2 := \|u^+\|_{\kappa;\Omega^+}^2 +\|u^-\|_{\kappa;\Omega^-}^2, \qquad 
|u|_{\textup{P}\kappa;\Omega}^2 := |u^+|_{\kappa;\Omega^+}^2 +|u^-|_{\kappa;\Omega^-}^2.
\]

\begin{theorem}\label{thm:transmission}
Let $\sigma\geq 1$, $k_0>0$. Let the domain $\Omega$ be Gevrey and let it be subdivided into two subdomains $\Omega^+$ and $\Omega^-$ with a Gevrey interface (as in Figure~\ref{fig:transmission_problem}). Let the system $\{L^+,L^-,C_\Gamma, C_I\}$ be elliptic and let their coefficients be Gevrey-$\sigma$ on $\overline\Omega^{\pm}$. Let $b^\pm$, $n^{\pm}$  be Gevrey-$\sigma$ on $\overline\Omega^{\pm}$. 
Let $\mathcal{U}_1$, $\mathcal{U}_2$ be bounded opens of $\mathbb{R}^n$ verifying  $\overline{\mathcal{U}}_1\subset \mathcal{U}_2$.
We denote 
\[
\Omega_j^\pm = \mathcal{U}_j\cap \Omega^\pm \qquad \mathrm{for } \ j=1,2, \qquad I_2 = \mathcal{U}_2\cap I, \qquad \Gamma_2 = \mathcal{U}_2 \cap \partial \Omega.
\]

There exists a constant $A\geq 1$ such that for all $\kappa\in\mathbb{N}$, $k\ge k_0$, $f \in \textup{P}H^\kappa(\Omega_2)$, $h,h_I \in H^{\kappa+2}(\Omega_2)$, $g,g_I \in H^{\kappa+1}(\Omega_2)$,
and $u\in \textup{P}H^2(\Omega_2)$ satisfying \eqref{e:transmission} in $\Omega_2$,
\begin{equation}\label{ineq:transmission}
\begin{aligned}
        |u|_{\textup{P}\kappa+2;\Omega_1} 
        \leq A^{\kappa +2}\max \left(
        k, (\kappa+2)^\sigma\right)^{\kappa+2} \Bigg(& \max_{0\leq \ell\leq \kappa}  A^{-\ell-1} \frac{1}{\max (k, (\ell+2)^{\sigma})^{\ell}} |f|_{\textup{P}\ell; \Omega_2} \\&+
        \max_{0\leq \ell\leq \kappa+2} A^{-\ell+1} \frac{1}{\max (k, \ell^{\sigma})^{\ell}}  |h|_{\ell;\Omega_2}  
        \\ &
        +   \max_{0\leq \ell\leq \kappa+1} A^{-\ell} \frac{1}{\max (k, (\ell+1)^{\sigma})^{\ell+1}} |g|_{\ell;\Omega_2} \\
         \\&+
        \max_{0\leq \ell\leq \kappa+2}   A^{-\ell+1} \frac{1}{\max (k, \ell^{\sigma})^{\ell}}|h_I|_{\ell;\Omega_2}  
        \\ &
        +  \max_{0\leq \ell\leq \kappa+1} A^{-\ell} \frac{1}{\max (k, (\ell+1)^{\sigma})^{\ell+1}} |g_I|_{\ell;\Omega_2} \\
        &+ k^{-1} |u|_{\textup{P}1;\Omega_2} +|u|_{\textup{P}0;\Omega_2} \Bigg)
\end{aligned}
\end{equation}
\end{theorem}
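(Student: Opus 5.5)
The plan is to follow the architecture already used for Theorems~\ref{thm:int_Gev_reg} and~\ref{thm:uttb_Gev_reg}: localise, flatten, prove a Sobolev--Morrey estimate on a model domain by induction on the derivative order, and then patch by a finite covering. Points $x_0\in\overline{\mathcal{U}_1}$ fall into three classes. Interior points of $\Omega^\pm$ away from $I$ are handled by Theorem~\ref{thm:int_Gev_reg} applied to $u^\pm$ separately. Points of $\partial\Omega$ (which is at positive distance from $I$) are handled by Theorem~\ref{thm:uttb_Gev_reg} applied to $u^-$. The only new case is $x_0\in I$. There we choose a Gevrey-$\sigma$ diffeomorphism $\phi$ that maps a neighbourhood of $x_0$ onto $B_{R_*}$, with $I$ sent to $\{x_n=0\}$ and $\Omega^\pm$ sent to the two half balls $V^\pm_{R_*}$. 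By Proposition~\ref{prop:projection}, or by direct Fa\`a di Bruno bounds, the pulled-back operators $\breve L^\pm$, $\breve T^\pm$, $\breve D^\pm$ and the lower-order coefficients remain Gevrey-$\sigma$ up to the flat interface, and ellipticity together with the complementing condition is preserved.

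On the model configuration we reflect $V^-_R$ across $x_n=0$, setting $U(x',x_n):=(u^+(x',x_n),u^-(x',-x_n))$ on $V_R^+$. This turns the transmission problem into a $2\times2$ elliptic system on a half ball. Its boundary conditions are exactly the two interface conditions, and these satisfy the Shapiro--Lopatinskii condition by the assumed ellipticity of $\{L^+,L^-,C_\Gamma,C_I\}$ in the sense of \cite[Def 5.1.2]{costabel2010corner}. This is the standard reduction, cf.\ \cite[\S5.2]{costabel2010corner}. The reflection is affine, so Gevrey constants are unchanged, and the seminorms of $U$ on $V_R$ are comparable to the piecewise seminorms on $(B\setminus I)_R$. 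The remark following the definition of ellipticity notes that Theorems~\ref{thm:int_Gev_reg}--\ref{thm:uttb_Gev_reg} extend to $N\times N$ systems. I therefore rerun Lemma~\ref{lem:truncated_basic_ell_boundary} (basic estimate with the cut-off $\chi_{R,\rho}$), Lemma~\ref{lemma::comutator_anisotropic} (anisotropic commutators, now applied entrywise to matrix coefficients), and then Propositions~\ref{prop:almost_tangetial} and~\ref{prop:isotropic}, with $(g,h)$ replaced by $(g_I,h_I)$. The almost-tangential step proceeds as before, since tangential derivatives commute with the flattened interface operators modulo commutators. The normal-derivative induction needs the interface to be non-characteristic for the system, meaning the $2\times2$ matrix of $\partial_n^2$ coefficients is invertible. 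This follows from ellipticity of $L^\pm$, since the matrix is diagonal with entries $a^{\pm,(0,\dots,0,2)}\neq0$. So \eqref{est:make_isotropic} holds with a matrix inverse and a constant $B_0$.

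Finally we convert the Sobolev--Morrey bounds back to ordinary seminorms using \eqref{e:whyNorms1} and \eqref{e:whyNorms3}, as in the proof of Theorem~\ref{thm:int_Gev_reg}. This gives \eqref{ineq:transmission} on $\phi^{-1}((B\setminus I)_{R/2})$. A finite subcover of $\overline{\mathcal{U}_1}$ then yields the global estimate, with $A$ the maximum of the local constants.

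The main obstacle is the $k$-explicit bookkeeping in the induction of Proposition~\ref{prop:almost_tangetial}/\ref{prop:isotropic} for the system. The lower-order terms $k\,b^\pm\cdot\nabla u^\pm$ and $k^2 n u^\pm$ must be absorbed using the factor $\max\{k/(\kappa+2)^\sigma,1\}^{\kappa+2}$ and the inequality \eqref{ineq:max_dichotomy}. At the same time, the interface data terms must carry the weights $\max(k,\ell^\sigma)^{-\ell}$, which come from trace estimates for $\chi_{R,\rho}$-truncated data. I expect this to go through verbatim as in Proposition~\ref{prop:Gevrey_fundamental_est}, by enlarging $B$ so that the geometric sums in $(KR/B)^{\kappa-d}$ are at most $1$.
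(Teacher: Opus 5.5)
Your proposal is correct. It shares the paper's overall architecture: a three-case localisation, Gevrey flattening of $I$, a Sobolev--Morrey induction on the model configuration, conversion back via \eqref{e:whyNorms1} and \eqref{e:whyNorms3}, and a finite covering. The difference is how you treat the model interface problem.

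The paper does not fold. In Proposition~\ref{prop:transmission_isotropic} it keeps the two half balls $B_R\cap\mathbb{R}^n_\pm$ and replaces the basic boundary estimate \eqref{ineq:basic_elliptic_reg_boundary} by the transmission estimate \eqref{ineq:basic_elliptic_reg_boundary_transmission}, which is simply assumed as a hypothesis. It then reruns the almost-tangential and isotropic inductions of Propositions~\ref{prop:almost_tangetial} and~\ref{prop:isotropic} directly on the pair $(u^+,u^-)$, recovering normal derivatives on each side from $a^{\pm,(0,\dots,0,2)}\neq 0$.

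Your reflection $U=(u^+,u^-(x',-x_n))$ instead turns the interface into a genuine boundary for a diagonal $2\times 2$ system with one order-$0$ and one order-$1$ boundary condition. This buys two things. The basic $H^2$ estimate is no longer an assumption: it follows from the Shapiro--Lopatinskii condition of the folded system via the standard a priori estimate. The non-characteristic condition also reduces to invertibility of a diagonal matrix.

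The price is that you rely on the $N\times N$ extension of Lemma~\ref{lem:truncated_basic_ell_boundary} through Proposition~\ref{prop:isotropic}, which the paper only asserts in a remark, with mixed-order conditions imposed simultaneously on the same flat boundary. You also rely on the standard equivalence between transmission ellipticity in the sense of \cite[Def 5.1.2]{costabel2010corner} and the covering condition for the folded system. The paper's route avoids systems and pays instead with the assumed transmission estimate.

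One small correction: Proposition~\ref{prop:projection} bounds Sobolev seminorms of compositions, not Gevrey bounds on coefficients. The Gevrey-$\sigma$ regularity of $\breve L^\pm$, $\breve T^\pm$, $\breve D^\pm$ should instead rest on the Fa\`a di Bruno composition argument you also mention.
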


\begin{figure}
    \centering
    \begin{tikzpicture}[scale=1.2]

\begin{scope}

    \draw[thick] (-2,1) .. controls (-1.95,0.8) and (-0.1,0.) .. (0,-1);

    \draw[dashed, rotate=30] (-.7,0.34) ellipse (0.9 and 0.6);

    \fill (-0.6,-0.04) circle (1.5pt);
    \node[above right] at (-0.6,-0.04) {$x_0$};

    \node at (-1,1.2) {$\Omega^+$};
    \node at (-1,-1.2) {$\Omega^-$};
    \node at (-1.7,0.4) {$I$};

    \draw[->, thick] (0.3,0) -- (1.7,0) node[midway, above] {$\phi_{x_0}$};
\end{scope}

\begin{scope}[shift={(4,0)}]


    \draw[thick] (-2,0) -- (2,0);

    \draw[dashed] (0,0) circle (1);

    \fill (0,0) circle (1.5pt);
    \node[below right] at (0,0) {$0$};

    \node at (0,1.2) {$B_R \cap \mathbb{R}^n_+$};
    \node at (0,-1.2) {$B_R \cap \mathbb{R}^n_-$};
    \node[above right] at (1.2,0) {$x_n = 0$};

\end{scope}

\end{tikzpicture}
    \caption{Flattening of interface via local diffeomorphism $ \phi_{x_0}: \mathcal{U} \mapsto \mathcal B_R$.}
    \label{fig:flatten_interface}
\end{figure}

The general strategy of the proof of Theorem~\ref{thm:transmission} follows along the lines of, e.g., \cite[Theorem 4.20]{mclean2000strongly} and \cite[Chapter 5]{costabel2010corner}. It relies on a flattening of the interface $I$ akin to what we did for the study of the regularity up to the boundary in Section~\ref{sec:uttb_reg}; see Fig.~\ref{fig:flatten_interface}. The arguments leading up to Theorem~\ref{thm:uttb_Gev_reg} are then used to obtain the desired piecewise regularity result. In particular, the following intermediate result is established.

Let us define the space \[
\textup{P}H^\kappa_0(B^-\cup B^+, B') =\{ u = (u^-,u^+) \in \textup{P}H^\kappa(B^-\cup B^+), \quad \partial^\ell_r u^\pm = 0 \textrm{ on } \partial B^\pm\setminus B'  , \quad0\leq \ell \leq \kappa-1\}.
\]
We assume the triplet \{L, T, D\} is such that the basic elliptic regularity estimate
   \begin{equation}
\label{ineq:basic_elliptic_reg_boundary_transmission}
\begin{aligned}
\|u^+\|_{2;B_R\cap \R^n_+} + \|u^-\|_{2;B_R\cap \R^n_-} \leq& \|L^+u^+\|_{0,B_R\cap \R^n_+} + \|L^-u^-\|_{0,B_R\cap \R^n_-}\\& + \|T^+u^++T^-u^-\|_{\frac12,B'_{R_*}} + \|D^+u^++D^-u^-\|_{\frac32,B'_{R_*}} \\ 
&+ \|u^+\|_{1,B_R\cap \R^n_+}+\|u^-\|_{1,B_R\cap \R^n_-}.
\end{aligned}
\end{equation}
holds for all $u\in \textup{P}H^2_0(B_{R_*},B'_{R_*})$.

\begin{proposition}\label{prop:transmission_isotropic}
     Let $\sigma\geq 1$ and $R_*>0$. Let the elliptic operators $L^\pm$ and the boundary operators $D^\pm$, $T^\pm$ have Gevrey-$\sigma$ coefficients. Let $b^\pm,\, n^\pm \in G^\sigma(\Omega)$. We assume that \eqref{ineq:basic_elliptic_reg_boundary_transmission} holds for any $u \in \textup{P}H^2_0(B_{R_*},B'_{R_*})$. 
     Then there exist constants $B_0,\,B\geq1$ such that for all $\kappa,m \in \mathbb{N}$ with $m\leq \kappa$, all $R \in (0,R_*]$, and for any 
    $u\in \textup{P}H^2(\Omega_2)$ satisfying \eqref{e:transmission},
    \begin{equation}\label{e:trans_bndry_isotropic}
        \begin{aligned}
        [| u^+|]_{\kappa+2,m+2; B_R\cap \R^n_+} + [| u^-|]_{\kappa+2,m+2; B_R\cap \R^n_-}\\  
        \leq  B_0^{m} B^{\kappa +2}\max \left\{
        \frac{k}{(\kappa+2)^\sigma},1\right\}^{\kappa+2} \Bigg(& \max_{\substack{0\leq \ell\leq \kappa\\0\leq \nu \leq \min\{\ell,m\}}}  \Gamma_{B,B_0,k}^{m,\ell} (\ell+2)^{(\ell +2)\sigma} \ \rho^2_*[|f|]_{\ell,\nu; B_R\cap \R^n_+}\\&
        + \max_{\substack{0\leq \ell\leq \kappa\\0\leq \nu \leq \min\{\ell,m\}}}  \Gamma_{B,B_0,k}^{m,\ell} (\ell+2)^{(\ell +2)\sigma} \  \rho^2_*[|f|]_{\ell,\nu; B_R\cap \R^n_-} \\
         &+  \max_{0\leq \ell\leq \kappa+2} \Upsilon_{B,B_0,k}^{m,\ell} \ell^{\ell\sigma}\rho_*[|h_I|]_{\ell,2,B'_R} 
        \\ & +   \max_{0\leq \ell\leq \kappa+1} \Upsilon_{B,B_0,k}^{m,\ell+1} (\ell+1)^{(\ell +1)\sigma}\rho^{1}_*[|g_I|]_{\ell,1,B'_R}    \\
        &+ k^{-1} ([|u^+|]_{1;B_R\cap \R^n_+} + [|u^-|]_{1;B_R\cap \R^n_-}  ) \\
        &+[|u^+|]_{0;B_R\cap \R^n_+} +[|u^-|]_{0;B_R\cap \R^n_-} \Bigg),
     \end{aligned}    
    \end{equation}
    with $$\Gamma_{B,B_0,k}^{m,\ell} = B_0^{-m+1} B^{-\ell-1} \left(\max \left\{k,(\ell+2)^\sigma\right\}\right)^{-\ell}$$ and $$\Upsilon_{B,B_0,k}^{m,\kappa} = B_0^{-m+1}  B^{-\ell+1} \left(\max \left\{k,\ell^\sigma\right\}\right)^{-\ell}.$$
\end{proposition}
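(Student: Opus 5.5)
The plan is to copy the proof of Proposition~\ref{prop:isotropic}, with the half-ball $V_R$ replaced by the pair of half-balls $B_R\cap\R^n_\pm$ and the boundary conditions replaced by the interface conditions on $B'_R$. The induction is on $m$, the number of normal derivatives. The base case $m=0$ is an almost-tangential estimate, proved as in Proposition~\ref{prop:almost_tangetial}.

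\textbf{Step 1: truncated basic estimate.} I would first prove the analogue of Lemma~\ref{lem:truncated_basic_ell_boundary} for the transmission system. Multiply both $u^+$ and $u^-$ by the same radial cut-off $\chi_{R,\rho}$ and apply \eqref{ineq:basic_elliptic_reg_boundary_transmission}. Since $\chi_{R,\rho}$ is a single function of $x$, it commutes with $D^\pm$. Its commutator with $T^\pm$ is zeroth order and involves $\partial\chi_{R,\rho}$. Together with the trace theorem on each side and \eqref{ineq:cut_off_Gev_est}, this gives a bound for $\sum_\pm\sum_{|\alpha|=2}\rho^{2\sigma}\|\partial^\alpha u^\pm\|$. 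The right-hand side contains $\rho^{2\sigma}\|L^\pm u^\pm\|$, the lower-order terms, and $h_I,g_I$ terms weighted exactly as in \eqref{ineq::basic_elliptic_boundary}.

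\textbf{Step 2: tangential derivatives.} For $\beta$ with $\beta_n=0$, the function $\partial^\beta u^\pm$ satisfies a transmission problem of the same type. Its data are $\partial^\beta f^\pm$ together with commutators $[\partial^\beta,L^\pm]u^\pm$. Its interface data are $\partial^\beta h_I+\sum_\pm[\partial^\beta,D^\pm]u^\pm$ and the analogous expression for $g_I$. These commutators are controlled on each side separately by Lemma~\ref{lemma::comutator_anisotropic}. Summing over $|\beta|=\kappa$ gives the analogue of Lemma~\ref{lemma:elliptic_reg_kappa_uttb} for the sum of the $\pm$ seminorms. The $k$-dependent terms $b^\pm\cdot\nabla u^\pm$ and $n u^\pm$ are absorbed as in Lemma~\ref{lemma:rhs_k_scaling_partial_estimate} and Proposition~\ref{prop:Gevrey_fundamental_est}, applied separately on each side. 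The induction on $\kappa$ then yields the case $m=0$ of \eqref{e:trans_bndry_isotropic}, provided $B$ is chosen large, uniformly in $\kappa$.

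\textbf{Step 3: normal derivatives.} Ellipticity of each $L^\pm$ makes $\{x_n=0\}$ non-characteristic on each side, so \eqref{nonchar_condition} holds for $L^+$ and for $L^-$. I would solve for $\partial_n^2u^\pm$ on each side separately, exactly as in \eqref{est:make_isotropic}; no interface conditions are needed here. The step from $m-1$ to $m$ then proceeds as in Proposition~\ref{prop:isotropic}, using the anisotropic commutator bound on each half-ball and the absorption estimate \eqref{est:of_interest_for_isotropic_later}. This produces the factor $B_0^m$.

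I expect the main obstacle to be Step~2. The interface conditions couple $u^+$ and $u^-$, so the commutator terms $[\partial^\beta,D^\pm]u^\pm$ and $[\partial^\beta,T^\pm]u^\pm$ must be estimated by the sum of the $\pm$ seminorms, and the Gevrey weights $\rho^{\sigma(\kappa+|\alpha|)}$ must be tracked across the trace norms. I would handle this with the extension convention of Remark~\ref{r:boundary_cond}: take $h_I,g_I$ and their commutator corrections to be functions on the half-balls, and bound their traces by volume norms on $B_{R-(\kappa+1)\rho}\cap\R^n_\pm$. This reduces each interface term to terms of the same form as in the one-sided boundary case, and the remaining combinatorics are identical to that case.
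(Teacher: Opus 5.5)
Your proposal is correct and follows the paper's route. The paper omits the proof, saying only that one repeats the chain leading to Proposition~\ref{prop:isotropic} with \eqref{ineq:basic_elliptic_reg_boundary_transmission} in place of \eqref{ineq:basic_elliptic_reg_boundary}: the truncated basic estimate, then the almost-tangential estimate by induction on $\kappa$, then induction on the number $m$ of normal derivatives via non-characteristicity. Your details are what that adaptation needs: the common cut-off commuting with $D^\pm$, one-sided commutator and trace bounds for the coupled interface data, and solving for $\partial_n^2u^\pm$ separately on each side.
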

\begin{proof}
    The proof is similar to that leading up to Proposition~\ref{prop:isotropic}, with the difference that the basic elliptic estimate is given by 
\eqref{ineq:basic_elliptic_reg_boundary_transmission}.
To avoid unnecessary repetition, the proof is omitted.
\end{proof}

\begin{proof}[Proof of Theorem~\ref{thm:transmission}]
    Let $x_0 \in \overline \Omega_1$. We identify three cases (the first two are treated as in in the proof of Theorem~\ref{thm:uttb_Gev_reg}):
    \begin{itemize}
        \item $x_0 \in \Omega\setminus I$, and is thus an interior point.
        \item $x_0 \in \partial \Omega$, and is thus a boundary point.
        \item $x_0 \in I$ is an interface point. In this case, the proof follows from mapping a neighbourhood $\mathcal{U}_2$ of the the point $x_0$ to the ball $B_R$ using a Gevrey-$\sigma$-regular map $\phi$. Applying Proposition~\ref{prop:transmission_isotropic} with $m =\kappa$ and $A= B_0B$ (which depends on $x_0$), we obtain
        \begin{equation}\label{est:lego_transmission_bdry}
       \begin{aligned}
           |u|_{\textup{P}\kappa+2;\mathcal{U}_1} 
        \leq A^{\kappa +2}\max \left(
        k, (\kappa+2)^\sigma\right)^{\kappa+2} \Bigg(& \max_{0\leq \ell\leq \kappa}  A^{-\ell-1} \frac{1}{\max (k, (\ell+2)^{\sigma})^{\ell}} |f|_{\textup{P}\ell; \mathcal{U}_2} \\&+
        \max_{0\leq \ell\leq \kappa+2}   A^{-\ell+1} \frac{1}{\max (k, \ell^{\sigma})^{\ell}}|h_I|_{\ell;\mathcal{U}_2}  
        \\ &
        +  \max_{0\leq \ell\leq \kappa+1} A^{-\ell} \frac{1}{\max (k, (\ell+1)^{\sigma})^{\ell+1}} |g_I|_{\ell;\mathcal{U}_2} \\
        &+ k^{-1} |u|_{\textup{P}1;\mathcal{U}_2} +|u|_{\textup{P}0;\mathcal{U}_2} \Bigg)
        \end{aligned}
\end{equation}
        where the neighbourhood $\mathcal{U}_1:=\phi^{-1}(V_{R/2})$. The steps to obtain \eqref{est:lego_transmission_bdry} from \eqref{e:trans_bndry_isotropic} are similar to those taken in the proof of Theorem~\ref{thm:int_Gev_reg} and are omitted.
    \end{itemize}
    Finally, by varying $x_0 \in \overline \Omega_1$, we can extract a finite covering of the compact set $\overline \Omega_1$ by open sets $\mathcal{U}_1$. The proof is then complete by combining a finite number of estimates \eqref{est:lego_inter}, \eqref{est:lego_bdry}, and \eqref{est:lego_transmission_bdry}.
\end{proof}

\section*{Acknowledgements}

 JG, MM, and ES were supported by ERC Synergy Grant ``PSINumScat" 101167139.
JG was supported by EPSRC grants EP/V001760/1 and EP/V051636/1 and  the Leverhulme Trust under Research Project Grant  RPG-2023-325. 

ChatGPT version 5.5 was used (i) to prove some of the combinatorial relations in Lemma \ref{l:cubeApproximate}, (ii) to search the literature, and (iii) to help proofread various parts of the paper.

\footnotesize{
\bibliographystyle{plain}
\bibliography{references}

@book{costabel2010corner,
  title={Corner Singularities and Analytic Regularity for Linear Elliptic Systems. Part I: Smooth domains.},
  author={Costabel, M. and Dauge, M. and Nicaise, S.},
  year={2010},
publisher={}
}

@article{galkowski2025numerical,
  title={{Numerical analysis of the high-frequency Helmholtz equation using semiclassical analysis}},
  author={Galkowski, J. and Spence, E. A.},
     journal={Acta Numerica},
  volume={35},
  pages={1--171},
  year={2026},
  publisher={Cambridge University Press}
}

@article{VIBurenkov_1997,
doi = {10.1070/IM1997v061n01ABEH000103},
url = {https://doi.org/10.1070/IM1997v061n01ABEH000103},
year = {1997},
month = {feb},
publisher = {},
volume = {61},
number = {1},
pages = {1},
author = {V. I. Burenkov and A. L. Gorbunov},
title = {Exact bounds for the minimum norm of extension operators for Sobolev spaces},
journal = {Izvestiya: Mathematics}
}

@article{agmon1964estimates,
	title={Estimates near the boundary for solutions of elliptic partial differential equations satisfying general boundary conditions {II}},
	author={Agmon, S. and Douglis, A. and Nirenberg, L.},
	journal={Communications on Pure and Applied Mathematics},
	volume={17},
	number={1},
	pages={35--92},
	year={1964},
	publisher={Wiley Online Library}
}

@book{mclean2000strongly,
  title={Strongly Elliptic Systems and Boundary Integral Exquations},
  author={McLean, W.},
  year={2000},
  publisher={Cambridge university press}
}

@article{constantine1996multivariate,
  title={A multivariate {F}aa di {B}runo formula with applications},
  author={Constantine, G. and Savits, T.},
  journal={Transactions of the American Mathematical Society},
  volume={348},
  number={2},
  pages={503--520},
  year={1996}
}

@book{melenk2004hp,
  title={{hp}-Finite Element Methods for Singular Perturbations},
  author={Melenk, J. M.},
  year={2004},
  publisher={Springer}
}

@article{lafontaine2021most,
  title={For Most Frequencies, Strong Trapping Has a Weak Effect in Frequency-Domain Scattering},
  author={Lafontaine, D. and Spence, E. A. and Wunsch, J.},
  journal={Communications on Pure and Applied Mathematics},
  volume={74},
  number={10},
  pages={2025--2063},
  year={2021},
  publisher={Wiley Online Library}
}

@article{galkowski2024hp,
  title={{The $ hp $-FEM applied to the Helmholtz equation with PML truncation does not suffer from the pollution effect}},
  author={Galkowski, J. and Lafontaine, D. and Spence, E. A. and Wunsch, J.},
  journal={Communications in Mathematical Sciences},
  volume={22},
  number={7},
  pages={1761--1816},
  year={2024},
  publisher={International Press of Boston}
}

@article{galkowski2023decompositions,
  title={Decompositions of high-frequency Helmholtz solutions via functional calculus, and application to the finite element method},
  author={Galkowski, J. and Lafontaine, D. and Spence, E. A. and Wunsch, J.},
  journal={SIAM Journal on Mathematical Analysis},
  volume={55},
  number={4},
  pages={3903--3958},
  year={2023},
  publisher={SIAM}
}

@article{melenk2010convergence,
  title={Convergence analysis for finite element discretizations of the {H}elmholtz equation with {D}irichlet-to-{N}eumann boundary conditions},
  author={Melenk, J. M. and Sauter, S. A.},
  journal={Math. Comp},
  volume={79},
  number={272},
  pages={1871--1914},
  year={2010}
}

@article{melenk2011wavenumber,
  title={Wavenumber explicit convergence analysis for Galerkin discretizations of the Helmholtz equation},
  author={Melenk, J. M. and Sauter, S. A.},
  journal={SIAM Journal on Numerical Analysis},
  volume={49},
  number={3},
  pages={1210--1243},
  year={2011},
  publisher={SIAM}
}

@article{lafontaine2022wavenumber,
  title={{Wavenumber-explicit convergence of the $hp$-FEM for the full-space heterogeneous Helmholtz equation with smooth coefficients}},
  author={Lafontaine, D. and Spence, E. A. and Wunsch, J.},
  journal={Comp. Math. Appl.},
  volume={113},
  pages={59-69},
  year={2022},
}

@article{bernkopf2025wavenumber,
  title={{Wavenumber-explicit stability and convergence analysis of $hp$ finite element discretizations of Helmholtz problems in piecewise smooth media}},
  author={Bernkopf, M. and Chaumont-Frelet, T. and Melenk, J. M.},
  journal={Mathematics of Computation},
  volume={94},
  number={351}, 
  pages={73--122},
  year={2025}
}

@article{babuska2000pollution,
  title={Is the pollution effect of the {FEM} avoidable for the {H}elmholtz equation considering high wave numbers?},
  author={Babu\v{s}ka, I. M and Sauter, S. A.},
  journal={SIAM Review},
  pages={451--484},
  year={2000},
}

@article{spence2023simple,
  title={A simple proof that the {$hp$}-{FEM} does not suffer from the pollution effect for the constant-coefficient full-space {H}elmholtz equation},
  author={Spence, E. A.},
  journal={Advances in Computational Mathematics},
  volume={49},
  number={2},
  pages={27},
  year={2023},
  publisher={Springer}
}

@article{bruna1980extension,
  title={An Extension Theorem of {W}hitney Type for Non Quasi-Analytic Classes of Functions},
  author={Bruna, J.},
  journal={Journal of the London Mathematical Society},
  volume={2},
  number={3},
  pages={495--505},
  year={1980},
  publisher={Wiley Online Library}
}

@article{FeSc:20,
  title={Exponential convergence in {$H^1$} of $hp$-{FEM} for {G}evrey regularity with isotropic singularities},
  author={Feischl, M. and Schwab, Ch.},
  journal={Numerische Mathematik},
  volume={144},
  number={2},
  pages={323--346},
  year={2020},
  publisher={Springer}
}

@article{AGS2,
  title={Non-uniform finite-element meshes defined by ray dynamics for {H}elmholtz problems},
  author={Averseng, M. and Galkowski, J. and Spence, E. A.},
  journal={arXiv:2506.15630},
  year={2025}
}

@article{ihlenburg1995finite,
  title={Finite element solution of the {H}elmholtz equation with high wave number {P}art {I}: The $h$-version of the {FEM}},
  author={Ihlenburg, F. and Babu{\v{s}}ka, I.},
  journal={Computers \& Mathematics with Applications},
  volume={30},
  number={9},
  pages={9--37},
  year={1995},
  publisher={Elsevier}
}

@article{ihlenburg1997finite,
  title={Finite element solution of the {H}elmholtz equation with high wave number {P}art {II}: the $hp$-version of the {FEM}},
  author={Ihlenburg, F. and Babu\v{s}ka, I.},
  journal={SIAM Journal on Numerical Analysis},
  volume={34},
  number={1},
  pages={315--358},
  year={1997},
  publisher={SIAM}
}

@book{DyZw:19,
  title={Mathematical theory of scattering resonances},
  author={Dyatlov, S. and Zworski, M.},
  publisher={American Mathematical Society},
  series={Graduate Studies in Mathematics},
  volume={200},
  year={2019}
}

@article{GLS2,
  title={Perfectly-matched-layer truncation is exponentially accurate at high frequency},
  author={Galkowski, J. and Lafontaine, D. and Spence, E. A.},
  journal={SIAM J. Math. Anal.},
  volume={55},
  number={4},
 pages={3344-3394},
  year={2023}
}

@article{galkowski2025sharp,
  title={{Sharp preasymptotic error bounds for the Helmholtz $h$-FEM}},
  journal={SIAM Journal on Numerical Analysis},
  volume={63},
  author={Galkowski, J. and Spence, E. A.},
  number={1},
  pages={1--22},
  year={2025},
  publisher={SIAM}
}

@book{schmudgen2012unbounded,
  title={Unbounded self-adjoint operators on Hilbert space},
  author={Schm{\"u}dgen, K.},
  year={2012},
  publisher={Springer Science \& Business Media}
}

@article{Sa:06,
  title={{A refined finite element convergence theory for highly indefinite Helmholtz problems}},
  author={Sauter, S. A.},
  journal={Computing},
  volume={78},
  number={2},
  pages={101--115},
  year={2006},
  publisher={Springer}
}

@article{Sc:74,
  title={{An observation concerning Ritz-Galerkin methods with indefinite bilinear forms}},
  author={Schatz, A. H.},
  journal={Math. Comp.},
  volume={28},
  number={128},
  pages={959--962},
  year={1974}
}

@article{ScWa:96,
  title={{Some new error estimates for Ritz--Galerkin methods with minimal regularity assumptions}},
  author={Schatz, A. H. and Wang, J.},
  journal={Mathematics of Computation},
  volume={65},
  number={213},
  pages={19--27},
  year={1996}
}

@article{ChNi:20,
  title={Wavenumber explicit convergence analysis for finite element discretizations of general wave propagation problems},
  author={Chaumont-Frelet, T. and Nicaise, S.},
  journal={IMA Journal of Numerical Analysis},
  volume={40},
  number={2},
  pages={1503--1543},
  year={2020},
  publisher={Oxford University Press}
}

@article{BabuskaGuo1988RegularityI,
  author  = {I. Babu{\v{s}}ka and B. Guo},
  title   = {Regularity of the Solution of Elliptic Problems with Piecewise Analytic Data. {Part I}: Boundary Value Problems for Linear Elliptic Equation of Second Order},
  journal = {SIAM Journal on Mathematical Analysis},
  volume  = {19},
  number  = {1},
  pages   = {172--203},
  year    = {1988},
  doi     = {10.1137/0519014}
}

@book{Schwab1998,
  author    = {C. Schwab},
  title     = {$p$- and $hp$-Finite Element Methods:
               Theory and Applications in Solid and Fluid Mechanics},
  publisher = {Clarendon Press},
  address   = {Oxford},
  year      = {1998},
  series    = {Numerical Mathematics and Scientific Computation},
  isbn      = {978-0-19-850390-3}
}

@article{GuoBabuska1986Part1,
  author  = {B. Guo and I. Babu{\v{s}}ka},
  title   = {The h-p Version of the Finite Element Method. {Part 1}: The Basic Approximation Results},
  journal = {Computational Mechanics},
  volume  = {1},
  number  = {1},
  pages   = {21--41},
  year    = {1986},
  doi     = {10.1007/BF00298660}
}

@article{GuoBabuska1986Part2,
  author  = {B. Guo and I. Babu{\v{s}}ka},
  title   = {The h-p Version of the Finite Element Method. {Part 2}: General Results and Applications},
  journal = {Computational Mechanics},
  volume  = {1},
  number  = {4},
  pages   = {203--220},
  year    = {1986},
  doi     = {10.1007/BF00270303}
}

@article{BabuskaGuo1989RegularityII,
  author  = {I. Babu{\v{s}}ka and B. Q. Guo},
  title   = {Regularity of the Solution of Elliptic Problems with Piecewise Analytic Data. {II}: The Trace Spaces and Application to the Boundary Value Problems with Nonhomogeneous Boundary Conditions},
  journal = {SIAM Journal on Mathematical Analysis},
  volume  = {20},
  number  = {4},
  pages   = {763--781},
  year    = {1989},
  doi     = {10.1137/0520054}
}

@article{melenk2026wavenumber,
  title={{Wavenumber-explicit $hp$-FEM analysis of Maxwell's equations with impedance boundary conditions in piecewise smooth media}},
  author={Melenk, J. M. and W{\"o}rg{\"o}tter, D.},
  journal={arXiv preprint arXiv:2603.17467},
  year={2026}
}

@article{melenk2024wavenumber,
  title={{Wavenumber-explicit $hp$-FEM analysis for Maxwell’s equations with impedance boundary conditions}},
  author={Melenk, J. M. and Sauter, S. A.},
  journal={Foundations of Computational Mathematics},
  volume={24},
  number={6},
  pages={1871--1939},
  year={2024},
  publisher={Springer}
}

@article{melenk2021wavenumber,
  title={{Wavenumber-explicit $hp$-FEM analysis for Maxwell’s equations with transparent boundary conditions}},
  author={Melenk, J. M. and Sauter, S. A.},
  journal={Foundations of Computational Mathematics},
  volume={21},
  number={1},
  pages={125--241},
  year={2021},
  publisher={Springer}
}

@misc{Di:26,
	Howpublished = {Digital Library of Mathematical Functions, \url{http://dlmf.nist.gov/}},
	Title = {{Digital Library of Mathematical Functions}},
	Author = {{NIST}},
	Year = 2026}

@article{costabel2005exponential,
  title={Exponential convergence of hp-FEM for Maxwell equations with weighted regularization in polygonal domains},
  author={Costabel, M. and Dauge, M. and Schwab, C.},
  journal={Mathematical Models and Methods in Applied Sciences},
  volume={15},
  number={04},
  pages={575--622},
  year={2005},
  publisher={World Scientific}
}
}
\end{document}